\def\acts{\curvearrowright}
\tikzset{
    >=stealth,
    every picture/.style={thick},
    graphs/every graph/.style={empty nodes},
}
\tikzstyle{vertex}=[
\tikzstyle{printersafe}=[decoration={snake,amplitude=0pt}]
\newcommand{\codim}{\operatorname{codim}}
\newcommand{\supp}{\operatorname{supp}}
\newcommand{\pp}{\mathbb{P}}
\newcommand{\qq}{\mathbb{Q}}
\newcommand{\zz}{\mathbb{Z}}
\newcommand{\kk}{\mathbb{K}}
\def\O#1.{\mathcal {O}_{#1}}			
\def\pr #1.{\mathbb P^{#1}}				
\def\af #1.{\mathbb A^{#1}}			
\def\ses#1.#2.#3.{0\to #1\to #2\to #3 \to 0}	
\def\xrar#1.{\xrightarrow{#1}}			
\def\K#1.{K_{#1}}						
\def\bA#1.{\mathbf{A}_{#1}}			
\def\bM#1.{\mathbf{M}_{#1}}				
\def\bL#1.{\mathbf{L}_{#1}}				
\def\bB#1.{\mathbf{B}_{#1}}				
\def\bK#1.{\mathbf{K}_{#1}}			
\def\subs#1.{_{#1}}					
\def\sups#1.{^{#1}}
  \newtheorem{introthm}{Theorem}
  \newtheorem{introcor}{Corollary}
  \newtheorem{theorem}{Theorem}[section]
  \newtheorem{lemma}[theorem]{Lemma}
  \newtheorem{proposition}[theorem]{Proposition}
  \newtheorem{corollary}[theorem]{Corollary}
  \newtheorem{notation}[theorem]{Notation}
  \newtheorem{definition}[theorem]{Definition}
  \newtheorem{example}[theorem]{Example}
  \newtheorem{question}[theorem]{Question}
\newtheorem{remark}[theorem]{Remark}
\theoremstyle{remark}
\numberwithin{equation}{section}
\begin{document}

\title[On a toroidalization for klt singularities]
{On a toroidalization for klt singularities}

\author[J.~Moraga]{Joaqu\'in Moraga}
\address{Department of Mathematics, Princeton University, Fine Hall, Washington Road, Princeton, NJ 08544-1000, USA
}
\email{jmoraga@princeton.edu}

\subjclass[2010]{Primary 14E30, 
Secondary 14M25.}
\maketitle

\begin{abstract}
In this article, we prove a toroidalization principle for
finite actions on klt singularities.
As an application, we prove that the Jordan property for the regional fundamental group of klt singularities can be realized geometrically: by extracting a toric singularity over the klt germ.
In the course of the proof, 
we will prove statements about finite actions on dual complexes and
almost fixed points in the fibers of equivariant Fano type morphisms.
Furthermore, we will prove that the rank of a fundamental group of the klt singularity is bounded above by its regularity.
\end{abstract}

\setcounter{tocdepth}{1} 
\tableofcontents

\section{Introduction}

Toroidalization principles play a fundamental role in the solution of many important problems 
in algebraic geometry.
In Hironaka's resolution of singularities~\cite{Hir64},
the local monomialization of 
ideal sheaves is one of the main steps. 
In Abramovich-Karu's proof of weak semistable reduction~\cite{AK00}, a crucial step is a reduction to weak toroidal embeddings.
In Wlodarczyk's proof of the weak factorization theorem~\cite{Wol06},
the toroidal version of Morse theory plays a fundamental role.
In Birkar's theorem on boundedness of Fano varieties~\cite{Bir21}, the reduction to the boundedness of toric Fano varieties is a main step of the proof. 
Furthermore,
ideas from toric geometry also appear
in semistable reduction~\cite{ALT20} and
embedded resolutions via weighted blow-ups~\cite{ATW19,McQ20}.
The aim of this article is to prove a toroidalization principle for Kawamata log terminal singularities from a topological perspective.
We expect this to be a starting point for other toroidalization principles for klt singularities.

Kawamata log terminal singularities are the main class of singularities that appear in the minimal model program~\cite{Kol13}.
In dimension two, klt singularities are quotient singularities~\cite{Tsu83}.
In dimension three, terminal singularities are classified as deformation of compound DuVal singularities~\cite{Rei87}.
In higher dimensions, the geometry of klt singularities remains a mystery (see~\cite{Kol11} for some interesting examples).
In~\cite{Bir19}, Birkar proves the existence of bounded complements for $n$-dimensional klt singularities. 
A complement is a boundary $B$ so that $(X,B)$ is log canonical and the index of $K_X+B$ at $x$ is $O_n(1)$.
The existence of bounded complements has been used to understand {\em exceptional klt singularities} in the works~\cite{Mor18a,HLS19}.
An exceptional singularity is a klt singularity that admits a unique divisorial valuation that can compute a log canonical place.
Exceptional singularities are higher-dimensional analogues of the $E_6,E_7$ and $E_8$ singularities.
By~\cite{HLM20}, in a fixed dimension, these singularities deform to cones over exceptional Fano varieties.
By~\cite{Bir19}, exceptional Fano varieties form a bounded family when their dimension is fixed. 
Hence, exceptional singularities, of a fixed dimension, are bounded up to deformation.
From the topological perspective,
exceptional singularities are rather simple, the fundamental group of the link of a $n$-dimensional exceptional singularity is finite and has order bounded by $O_n(1)$.

On the other hand, the topology of klt singularities is well-understood. 
By~\cite{Xu14}, the pro-finite completion of the local fundamental group of a klt singularity is finite.
By~\cite{Bra20}, the regional fundamental group of a klt singularity is finite. 
In~\cite{BFMS20}, the authors prove that the regional fundamental group of a $n$-dimensional klt singularity admits a normal abelian subgroup of rank at most $n$ and index at most $O_n(1)$.
This result is known as the Jordan property for klt singularities.
It means that topologically, klt singularities resemble quotient singularities. 
In~\cite{Mor20a,Mor20b}, the author studies $n$-dimensional klt singularities whose regional fundamental group contains $\zz_m^n$ for $m$ large enough.
It is proved that these singularities deform to cones over $\zz_m^n$-equivariant equivalent log crepant models of $(\pp^{n-1},H_1+\dots+H_n)$.
These singularities are called {\em klt singularities of full rank} since their fundamental groups are as large as possible.
In particular, the rank of the abelian part of the fundamental group is as large as possible.
Kawamata log terminal singularities of full rank are, in many ways, the opposite of exceptional singularities.
The former have complements with $(n-1)$-dimensional dual complex, while the latter have complements with $0$-dimensional dual complex.
The former have fundamental groups of rank $n$, while the latter have fundamental groups of rank $0$.
Recently, in~\cite{BM21}, the authors proved that the iteration of Cox rings of a Fano type variety is controlled by the topology of a klt singularity.
See~\cite{LLM19}, for related results about $\mathbb{T}$-varieties.

The previous results point that the topology of klt singularities, even if it is simple, often reflects in the geometry of the singularity.
In many of the aforementioned results, the proof goes through finding toroidal models for the singularities (or varieties).
The philosophy introduced in the articles~\cite{Mor20a,Mor20b}, is that large finite actions on Fano type varieties (or singularities) resemble toric actions;
and that, in many cases, that torus action can be realized by passing to a different birational model.
The main results of~\cite{Mor20a} and~\cite{Mor20b} are projective and mostly concern with the case of full rank, with some applications to klt singularities (that require deformations). 
In this article, we prove that finite actions on klt singularities can be toroidalized up to a birational transformation.
Many of the results stated in the introduction will hold for generalized pairs. 
However, throughout the introduction, we will give the simplest statement possible.

\subsection{Toroidalization of finite actions on klt singularities}

In this subsection, we introduce the main theorem of this article.
We start with the motivation for our first theorem.

Let $G$ be a finite group acting on a smooth $n$-dimensional variety $X$ over $\kk$.
Let $x\in X$ be a closed point which is fixed by $G$.
We have an embedding 
$G\leqslant {\rm Aut}(T_x X) \simeq 
{\rm GL}_n(\kk)$. By the Jordan property for finite subgroups of ${\rm GL}_n(\kk)$, we can find a normal abelian subgroup $A\leqslant G$ of rank at most $n$ and index at most $c_0(n)$. Say that $A$ has rank $r\leq n$. Then, we can find $r$ hyperplanes $H_1,\dots,H_r\subset X$ through $x$ so that $(X,H_1+\dots+H_r;x)$ is log smooth, 
$A$ acts as the identity on the intersection $Z:=H_1\cap \dots \cap H_r$, and $A$ acts as the multiplication by roots of unity on the log smooth germ
$(X, H_1+\dots+H_r; \eta_Z)$.
In particular, we have a monomorphism
$A<\mathbb{G}_m^r\leqslant {\rm Aut}(X,H_1+\dots+H_r;\eta_Z)$.
This means that any finite action on a $n$-dimensional smooth point,
up to pass to a subgroup of index $c_0(n)$, admits a toroidalization.
In general klt singularities may not admit torus actions, so the above monomorphism can only be attained after passing to a higher model.
Our first theorem states that any finite action on a $n$-dimensional klt singularity, up to pass to a subgroup of index $c(n)$, admits a birational toroidalization.

\begin{introthm}\label{introthm:group-toroidal}
Let $n$ be a positive integer.
There exists a constant $c(n)$, only depending on $n$, satisfying the following.
Let $(X,x)$ be a $n$-dimensional klt singularity
and $G\leqslant {\rm Aut}(X,x)$ be a finite group.
Then, there exists:
\begin{enumerate} 
\item[(i)] A normal abelian subgroup $A\triangleleft G$ of index at most $c(n)$ and rank $r$,
\item[(ii)] A $G$-equivariant boundary $B$ so that $(X,B;x)$ is log canonical, and 
\item[(iii)] An $A$-equivariant projective birational morphism $\pi\colon Y\rightarrow X$,
\end{enumerate}
so that the following statements hold:
\begin{enumerate}
    \item $\pi$ extracts $r$ prime divisors $E_1,\dots,E_r$, 
    \item each $E_i$ is $A$-invariant and a log canonical place of $(X,B)$,
    \item the intersection $E_1\cap\dots\cap E_r$ is non-trivial,
    \item $A$ fixes a component $Z$ of $E_1\cap\dots\cap E_r$,
    \item $(Y,E_1+\dots+E_r)$ is formally toric at the generic point $\eta_Z$ of $Z$, 
    \item $A$ acts as the identity on $Z$, and 
    \item $A<\mathbb{G}_m^r \leqslant {\rm Aut}(Y,E_1+\dots+E_r,\eta_Z)$.
\end{enumerate}
In particular, the action of $A$ on $Y$ is toroidal at the generic point of $Z$.
\end{introthm}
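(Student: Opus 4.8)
The plan is to pass to a controlled birational model via bounded complements, to extract the abelian part of the Jordan decomposition from the induced finite action on a dual complex, and finally to linearize the transverse action using the behaviour of almost fixed points in equivariant Fano type fibrations.

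First, I would construct the pair of item (ii). Using the existence of bounded complements for klt singularities and averaging an $N(n)$-complement over the finite group $G$, one obtains a $G$-equivariant boundary $B$ with $(X,B;x)$ log canonical. Among all $G$-equivariant complements at $x$ I would pick one whose $\qq$-factorial $G$-equivariant dlt modification $g\colon (X',B')\to(X,B)$ carries, over $x$, a dual complex $\ccc:=\ccc(X',B')$ of the largest possible dimension; by definition this dimension equals the regularity $\reg(X,x)$, and by the inequality between the rank of the fundamental group and the regularity proved separately in the paper it is at least the abelian rank $r$ produced below. By functoriality of dual complexes, $G$ acts on $\ccc$.

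Second, I would invoke the Jordan property for klt singularities to obtain the normal abelian subgroup $A\triangleleft G$ of rank $r$ and index at most $c(n)$ of item (i), chosen so that the arguments below apply, and then study the restricted $A$-action on $\ccc$. Combining the statements on finite actions on dual complexes with the maximality of $\dim\ccc$, I would produce an $(r-1)$-dimensional face $\tau\subset\ccc$ fixed \emph{pointwise} by $A$; its vertices are $A$-invariant log canonical places $E_1,\dots,E_r$ of $(X,B)$ whose common stratum $W:=E_1\cap\dots\cap E_r$ on $X'$ is non-empty. An $A$-equivariant partial resolution extracting exactly $E_1,\dots,E_r$ — which exists because each $E_i$ is a log canonical place and the set $\{E_1,\dots,E_r\}$ is $A$-invariant — gives the morphism $\pi\colon Y\to X$ of item (iii). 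This yields items (1)--(3): the $E_i$ are $A$-invariant log canonical places of $(X,B)$, and the image of $W$ shows $E_1\cap\dots\cap E_r\ne\emptyset$ on $Y$.

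Finally, it remains to choose the component $Z$ and to pin down the transverse $A$-action, which is the heart of the matter. The stratum $E_1\cap\dots\cap E_r$, together with the restriction of $\pi$, carries a Fano type fibration structure over a neighbourhood of $x$; applying the theorem on almost fixed points in fibres of equivariant Fano type morphisms, I would find a component $Z$ of $E_1\cap\dots\cap E_r$ on which $A$ acts trivially, which is items (4) and (6). Since $(Y,\sum_i E_i)$ is dlt and $\eta_Z$ is the generic point of a stratum of codimension $r$, the pair is snc, hence formally toric, near $\eta_Z$; this is item (5), and concretely $\widehat{\mathcal O}_{Y,\eta_Z}$ is a formal power series ring over $\kappa(\eta_Z)$ in transverse coordinates $t_1,\dots,t_r$ with $E_i=\{t_i=0\}$. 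As $|A|$ is invertible in $\kk$, as $A$ preserves each ideal $(t_i)$, and as $A$ acts trivially on $\kappa(\eta_Z)$, the $A$-action linearizes and diagonalizes to $a\cdot t_i=\chi_i(a)\,t_i$ for characters $\chi_i\colon A\to\mathbb{G}_m$; the care taken in choosing $A$, $\tau$ and $Z$ forces $a\mapsto(\chi_1(a),\dots,\chi_r(a))$ to be injective, so $A$ embeds into the torus $\mathbb{G}_m^r\leqslant{\rm Aut}(Y,E_1+\dots+E_r,\eta_Z)$ acting by these characters, which is item (7). I expect the genuinely hard points — which should require an induction on $n$ (or on the regularity) together with careful control of indices so that $c(n)$ remains finite — to be the existence of a pointwise $A$-fixed face of the exact dimension $r-1$, and the production of a stratum $Z$ along which $A$ is simultaneously trivial and transversally faithful; the latter is precisely what the almost-fixed-point analysis in Fano type fibres is meant to deliver.
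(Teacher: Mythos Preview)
Your outline tracks the paper's architecture closely (bounded $G$-equivariant complement, control of the action on the dual complex, extraction of $r$ log canonical places, linearization at the generic point of a stratum), and your endgame argument for item~(7) --- faithfulness of $A$ on $X$ forcing injectivity of $(\chi_1,\dots,\chi_r)$ once $A$ acts trivially on $Z$ --- is correct. The gap is in how you produce $Z$.

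The almost-fixed-point theorem for equivariant Fano type morphisms (Theorem~\ref{introthn:almost-fixed-point-morphism}) gives, after passing to a subgroup of bounded index, a single \emph{fixed point} in the fibre over $x$. It does not give a component of $E_1\cap\dots\cap E_r$ on which $A$ acts as the \emph{identity}. When $r<n$ the stratum $Z$ has positive dimension, and a fixed point on $Z$ is strictly weaker than $A|_Z=\id$; without the latter you cannot conclude that the transverse characters determine $A$, so item~(7) collapses. Relatedly, you invoke the rank-versus-regularity inequality only to know that $\mathcal{D}(X,B;x)$ contains an $(r-1)$-simplex, but the paper's version of that result (Theorem~\ref{thm:rank-vs-reg}) carries much more: it already outputs a codimension-$r$ center fixed \emph{pointwise} by $A$ together with the embedding $A<\mathbb{G}_m^r$. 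That extra content is exactly what your argument is missing, and it is not recoverable from the dual-complex and almost-fixed-point statements alone.

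In the paper, the route to $Z$ is different. One restricts by adjunction to a divisorial log canonical center $E$ of a $G$-equivariant dlt modification, obtaining a $(n-1)$-dimensional $G_E$-equivariant $N$-complement $(E,B_E+M_E)$, and then applies Theorem~\ref{thm:rank-vs-reg} in dimension $n-1$. That theorem --- whose proof is the real induction, using Mori fibre spaces, the canonical bundle formula, and the boundedness of $\mathrm{Aut}$ for rationally connected $N$-complemented klt log Calabi--Yau pairs (Theorem~\ref{introthm:RC-action}) to force the action on a minimal log canonical center to be trivial --- directly produces a center $W\subset E'$ of codimension $r-1$ with $A_E|_W=\id$ and $A_E<\mathbb{G}_m^{r-1}$. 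Setting $E_r=E'$ gives a codimension-$r$ center in the ambient model with $A|_W=\id$ and $A<\mathbb{G}_m^r$; a final MMP trims the exceptional locus to exactly $E_1,\dots,E_r$ without disturbing the generic point of $W$. The almost-fixed-point theorem is not used here at all (it enters only later, in the $\pi_1$ corollary, to handle the passage through a small $\qq$-factorialization).
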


We note that Theorem~\ref{introthm:group-toroidal}
contains implicitly many results about klt singularities.
First, the action of $A$ on $Y$ fixes each divisor $E_i$ and acts as the identity on a component of their intersection.
This means that the action of $A$ on the dual complex $\mathcal{D}(X,B;x)$ fixes a $r$-dimensional simplex.
Hence, Theorem~\ref{introthm:group-toroidal} is related to the existence of almost fixed simplices for the action of finite groups on dual complexes. 
Here, almost fixed means that the action is fixed for a subgroup whose index is bounded by some constant on the dimension.
We will discuss this result in a more general setting in Subsection~\ref{subsec:intro-finite-act-dual-comp}.
Secondly, we not only have an almost fixed point on the dual complex.
Indeed, since $A$ acts as the identity on $Z$, we have an almost fixed point for the $G$-action in the fiber $\pi^{-1}(x)$.
We will prove that given a $G$-equivariant Fano type morphism 
$\pi\colon Y\rightarrow X$, for every fixed point in the base, there is an almost fixed point in the fiber over the fixed point. This result and some application will be explained in Subsection~\ref{subsec:intro-almost-fix-fiber}.
Thirdly, in the statement of Theorem~\ref{introthm:group-toroidal}, we can see that existence of a large finite group acting on a klt germ implies the existence of enough log canonical places.
More precisely, the action of a large abelian group of rank $r$ on the klt singularity $(X;x)$ implies that we can find a boundary $B$ so that $(X,B)$ is log canonical around $x$, and the dual complex $\mathcal{D}(X,B;x)$ has dimension at least $r-1$.
This says that the rank of the acting group on the singularity is at most the regularity of the singularity, i.e., 
the largest dimension of a dual complex of a lc singularity supported on $(X;x)$.
We will state a general statement about the rank and regularity of log Calabi-Yau pairs of Fano type in Subsection~\ref{subsec:intro-rank-vs-reg}.
In Subsection~\ref{subsec:intro-p1}, we will show some implications of Theorem~\ref{introthm:group-toroidal} 
to the study of fundamental groups of Fano type varieties and klt singularities.
In what follows, we proceed to elaborate more on these results and introduce some global results as well.

\subsection{Finite actions on dual complexes}\label{subsec:intro-finite-act-dual-comp}
In this subsection, we collect some results regarding finite actions on dual complexes.
The dual complex of a log canonical singularity is defined in~\cite{dFKX17}.
This object retrieves the combinatorial structure of the log canonical places of a dlt modification.
In~\cite{dFKX17}, it is proved that the PL-homeomorphism type of the dual complex does not depend on the chosen dlt modification.
In~\cite{KX16}, the authors study the dual complex of log Calabi-Yau pairs $(X,\Delta)$, its fundamental group, and connections with the fundamental group of the smooth locus of $X$.
In~\cite[Question 4]{KX16} it is asked whether the dual complex of a $n$-dimensional log Calabi-Yau pair is PL-homeomorphic to the quotient of a sphere $\mathbb{S}^{k-1}$ by a finite group $G< O_k(\mathbb{R})$ where $k\leq n$. This question naturally motivates to study which finite actions on dual complexes can be realized.
For instance, for an arbitrary $m$, is it possible to obtain $\zz_m \acts \mathcal{D}(X,\Delta)\simeq \mathbb{S}^1$
where $\zz_m$ acts as the rotation by $\frac{2 \pi}{m}$.
Note that in the case of rotations of the circle,
the action is not detected by the quotient.
Our following theorem states that the action is almost trivial with respect to the dimension of the log pair.
In particular, once we fix the dimension of $X$, 
the possible realizable rotations on the dual complex of a log Calabi-Yau pair $(X,B)$ of regularity one will attain only finitely many possible angles.

\begin{introthm}\label{introthm:almost-fixed-global}
Let $n$ be a positive integer.
There exists a constant $c(n)$,
only depending on $n$,
satisfying the following.
Let $(X,B)$ be a log Calabi-Yau pair 
of dimension $n$
and $G\leqslant {\rm Aut}(X,B)$ be a finite subgroup.
Then, there exists a normal subgroup
$A\triangleleft G$ of index at most $c(n)$ so that
the action 
of $A$ on $\mathcal{D}(X,B)$ is trivial.
\end{introthm}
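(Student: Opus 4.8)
The plan is to reduce to a combinatorial statement about simplicial automorphisms of a pseudomanifold via an equivariant dlt modification, and then to feed the almost-fixed-point technology of this paper into the single remaining geometric input. If $\mathcal{D}(X,B)=\emptyset$ the theorem is vacuous (take $A=G$), so we assume it is nonempty. First, choose a $G$-equivariant $\mathbb{Q}$-factorial dlt modification $\pi\colon(Y,\Delta_Y)\to(X,B)$; such a modification exists because the necessary runs of the MMP can be carried out $G$-equivariantly. As $\pi$ is crepant, $(Y,\Delta_Y)$ is again log Calabi-Yau, and by~\cite{dFKX17} there is a $G$-equivariant PL identification $\mathcal{D}(Y,\Delta_Y)\cong\mathcal{D}(X,B)$, with $G$ acting by simplicial automorphisms. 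By~\cite{KX16}, $\mathcal{D}:=\mathcal{D}(Y,\Delta_Y)$ is a pseudomanifold without boundary of dimension $d\le n-1$; in particular it is pure $d$-dimensional, every codimension-one face lies in exactly two maximal faces, and it is strongly connected, meaning that any two maximal faces are joined by a chain of maximal faces in which consecutive ones share a codimension-one face.

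The combinatorial heart is the following propagation principle, which I would isolate as a lemma: if a finite group $H$ acts simplicially on a strongly connected pseudomanifold without boundary and fixes one maximal simplex $\sigma_0$ vertex by vertex, then $H$ acts as the identity. Indeed, a maximal simplex $\sigma_1$ sharing a codimension-one face $F$ with $\sigma_0$ is determined by $F$ together with its unique remaining vertex; $H$ fixes $F$ pointwise and preserves the unordered pair of maximal simplices containing $F$, and since it fixes $\sigma_0$ it must fix $\sigma_1$; as $H$ fixes the vertices of $\sigma_1$ lying in $F$, it also fixes the last vertex, hence fixes $\sigma_1$ vertex by vertex. Iterating along codimension-one adjacencies and invoking strong connectivity, $H$ fixes every maximal simplex pointwise, hence by purity every vertex, hence acts trivially. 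It therefore suffices to produce a normal subgroup $A\triangleleft G$ with $[G:A]\le c(n)$ fixing some maximal simplex of $\mathcal{D}$ vertex by vertex.

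This last step is where the geometry of log Calabi-Yau pairs is indispensable: no purely combinatorial argument can succeed, since the rotation of $\mathcal{D}\cong\mathbb{S}^1$ by $2\pi/m$ is a bona fide simplicial action with no nontrivial subgroup acting trivially, and the content of the theorem is precisely that such actions are not realized by automorphisms of log Calabi-Yau pairs once $n$ is fixed. Now a maximal simplex of $\mathcal{D}$ corresponds to a minimal lc center $W$ of $(Y,\Delta_Y)$, and near the generic point of $W$ the pair $(Y,\Delta_Y)$ is formally toric with $d+1$ boundary branches; the Jordan property applied to the linearized action on a transverse slice produces, inside the stabilizer of $W$, a subgroup of bounded index fixing each of these $d+1$ branches, hence fixing the associated maximal simplex vertex by vertex. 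What remains is to exhibit a minimal lc center whose $G$-stabilizer has index $O_n(1)$ in $G$. For this I would regard $\pi\colon Y\to X$ as an equivariant morphism of Fano type over the base, replace $(X,B)$ by a local model along its minimal lc center so that a $G$-fixed point in the base becomes available, apply the results of this paper on almost fixed points in the fibers of equivariant Fano type morphisms, and read off an almost fixed maximal cell from the almost fixed point in the central fiber; feeding this into the propagation lemma then completes the proof.

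The main obstacle is precisely this geometric input: producing, with index bounded only in terms of $n$, an almost fixed maximal simplex of $\mathcal{D}$ — equivalently, a minimal lc center whose stabilizer in $G$ has index $O_n(1)$ — together with the bookkeeping that the passage to the local Fano-type situation and back does not inflate the index beyond $O_n(1)$. Everything else is either standard (equivariant dlt modification, the pseudomanifold structure of the dual complex) or elementary (the propagation lemma).
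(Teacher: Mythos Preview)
Your reduction is sound and matches the paper's architecture at the combinatorial level: the propagation principle you isolate is precisely Lemma~\ref{lem:fix-max-dim-cell}, and the paper likewise reduces everything to fixing one maximal simplex vertexwise. The Jordan-type step at a minimal center with bounded-index stabilizer is also fine.

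The gap is exactly where you place it, and your proposed remedy does not work. You suggest regarding $\pi\colon Y\to X$ as a Fano type morphism, ``replac[ing] $(X,B)$ by a local model along its minimal lc center so that a $G$-fixed point in the base becomes available,'' and then invoking Theorem~\ref{introthn:almost-fixed-point-morphism}. But $(X,B)$ is a projective log Calabi--Yau pair and there is no $G$-fixed point in $X$ to begin with: the minimal lc centers of $(X,B)$ are permuted by $G$, and exhibiting one with bounded-index stabilizer is precisely the problem you are trying to solve. Even granting an almost-fixed closed point $y$ in some fiber of $\pi$, there is no mechanism to convert $y$ into an almost-fixed maximal simplex of $\mathcal D$: a point of $Y$ need not lie on any lc stratum, and if it does, its stabilizer need not fix the branches through it.

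The paper's route to this step is entirely different and does not pass through Theorem~\ref{introthn:almost-fixed-point-morphism}. It proceeds by induction on $n$ (Theorem~\ref{thm:trivial-action-on-D}): after canonicalizing a dlt model one runs a $G$-equivariant $K$-MMP to a Mori fiber space $X_2\to Z_2$ of minimal relative dimension (Lemma~\ref{lem:contr-minimal-rel-dim}) and splits according to whether the general fiber $F$ or the base carries a nontrivial dual complex (Lemma~\ref{lem:reg-fib-or-base-non-trivial}). In the fiber case, $F$ is canonical Fano hence bounded, so the number of horizontal lc centers in each dimension is at most $c(n)$ (Lemma~\ref{lem:fix-lcc}); the kernel of the permutation action then fixes a horizontal minimal center, on which one does adjunction and applies induction. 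In the base case one applies induction to $(Z_2,B_{Z_2}+M_{Z_2})$, lifts via a compatible dlt model (Lemma~\ref{lem:compatible-dlt-mod}) and a further MMP, and uses Lemma~\ref{lem:dlt-antiample pair} to extract an almost-fixed prime divisor upstairs. In both cases Lemma~\ref{lem:triv-act-in-out} transfers fixedness from the divisor to the ambient pair, and then the propagation lemma finishes. The key finiteness comes from boundedness of canonical Fano varieties, not from almost-fixed-points-in-fibers.
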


The above theorem is motivated by toric geometry.
Let $(T,B_T)$ be a projective log Calabi-Yau toric pair of dimension $n$.
Let $G$ be a finite group acting on the log pair $(T,B_T)$.
By~\cite[Corollary 1]{Mor20c}, we know that there exists a normal abelian subgroup $A\triangleleft G$ of index bounded by a constant in $n$, so that 
$A<\mathbb{G}_m^n\leqslant {\rm Aut}(T,B_T)$.
Since $\mathbb{G}_m^n$ fixes all the toric strata, we conclude that the same happens for $A$.
Thus, $A$ acts as the identity on the dual complex $\mathcal{D}(T,B_T)$.
The above argument also goes through if we consider toric singularities.
Let $(T,B_T; t)$ be the germ of a toric singularity so that $B_T$ is the reduced torus invariant boundary. 
Let $G<{\rm Aut}(T,B_T; t)$ be a finite subgroup.
By~\cite[Theorem 2]{Mor20c}, we know that there exists a normal abelian subgroup $A\triangleleft G$, of index at most $c(n)$, so that 
$A<\mathbb{G}_m^n \leqslant {\rm Aut}(T,B_T;t)$.
In particular, both $A$ and the torus $\mathbb{G}_m^n$ acts as the identity on the dual complex $\mathcal{D}(T,B_T;t)$.
Analogously, we have a similar statement for log canonical germs supported on klt singularities.

\begin{introthm}\label{introthm:almost-fixed-local} 
Let $n$ be a positive integer.
There exists a constant $c(n)$,
only depending on $n$,
satisfying the following.
Let $(X,B;x)$ be a $n$-dimensional log canonical 
singularity.
Assume that $X$ is klt.
Let $G\leqslant {\rm Aut}(X,B)$ be a finite subgroup.
Then, there exists a normal abelian subgroup
$A\leqslant G$ of index at most $c(n)$ so that
the action 
of $A$ on $\mathcal{D}(X,B;x)$ is trivial.
\end{introthm}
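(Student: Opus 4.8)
The plan is to deduce Theorem~\ref{introthm:almost-fixed-local} from its global counterpart, Theorem~\ref{introthm:almost-fixed-global}, by realizing the local dual complex $\mathcal{D}(X,B;x)$, together with the $G$-action, as the dual complex of a projective log Calabi--Yau pair of dimension $n-1$. The hypothesis that $X$ is klt is precisely what forces the relevant exceptional fiber to behave like a global log Calabi--Yau pair, mirroring the toric motivation recorded above.

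First I would reduce to the case that $G$ is abelian: by the Jordan property for finite automorphism groups of klt germs we may, at the cost of bounded index, replace $G$ by a normal abelian subgroup, and at the end replace the subgroup that is produced by its normal core in the original group --- which is again abelian, normal in $G$, of bounded index, and still trivial on $\mathcal{D}(X,B;x)$. This also yields the word ``abelian'' in the conclusion. Now take a $G$-equivariant $\qq$-factorial dlt modification $f\colon (Y,\Delta_Y)\to (X,B)$, so $\Delta_Y=f_{*}^{-1}B+\sum_i E_i$ with the $E_i$ the prime $f$-exceptional divisors --- all of them lc places of $(X,B)$ --- and $K_Y+\Delta_Y=f^{*}(K_X+B)$; the vertices of $\mathcal{D}(X,B;x)$ correspond to those $E_i$ mapping to $x$, and $G$ acts on $\mathcal{D}(X,B;x)$. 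Let $E\subseteq\lfloor\Delta_Y\rfloor$ be the reduced sum of those $E_i$; running a further $G$-equivariant relative MMP over $X$ we may assume $-E$ is $f$-ample, so that $f^{-1}(x)=E$ set-theoretically and, by~\cite{dFKX17}, $\mathcal{D}(X,B;x)$ is connected and contractible.

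With $\Theta_E:=\operatorname{Diff}_E(\Delta_Y-E)$ the different, $(E,\Theta_E)$ is a projective, demi-normal, semi-log canonical pair with $K_E+\Theta_E\equiv 0$, since $(K_Y+\Delta_Y)|_E=f^{*}(K_X+B)|_E$ is pulled back from the point $x$; thus $(E,\Theta_E)$ is a projective, possibly reducible, log Calabi--Yau pair of dimension $n-1$ with the $G$-action, and $\mathcal{D}(E,\Theta_E)=\mathcal{D}(X,B;x)$ $G$-equivariantly. On the normalization $\nu\colon\widetilde E=\coprod_j\widetilde E_j\to E$, each $(\widetilde E_j,\widetilde\Theta_j)$ --- with $\widetilde\Theta_j$ the birational transform of $\Theta_E$ plus the conductor --- is a \emph{normal} projective log Calabi--Yau pair of dimension $n-1$; $G$ permutes the $\widetilde E_j$, and $\mathcal{D}(X,B;x)$ is glued from the $\mathcal{D}(\widetilde E_j,\widetilde\Theta_j)$ along the conductor strata. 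Applying Theorem~\ref{introthm:almost-fixed-global} to each $(\widetilde E_j,\widetilde\Theta_j)$ produces a bounded-index normal subgroup trivial on every $\mathcal{D}(\widetilde E_j,\widetilde\Theta_j)$, and once such a subgroup also fixes each component $\widetilde E_j$ --- equivalently each vertex of $\mathcal{D}(X,B;x)$ --- it is trivial on the whole glued complex.

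The main obstacle, then, is to bound the orbits of $G$ on the components $\widetilde E_j$, i.e.\ on the vertices of $\mathcal{D}(X,B;x)$: a priori these orbits can be large, so the stabilizer of a single component need not have bounded index, and the componentwise use of Theorem~\ref{introthm:almost-fixed-global} does not by itself close the argument. The point is that the part of $G$ which genuinely moves the strata is controlled up to bounded index by a crystallographic-type restriction --- exactly as in the toric model, where by~\cite{Mor20c} the automorphism group acts on the dual complex through a bounded quotient --- after one peels off the ``torus directions'' along each stratum; propagating this through the gluing, and using the topological rigidity of $\mathcal{D}(X,B;x)$ (contractibility rules out a surviving ``rotational'' action of unbounded order), is the heart of the matter, and is most cleanly packaged as the semi-log canonical generalization of Theorem~\ref{introthm:almost-fixed-global}. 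The remaining points --- existence of the $G$-equivariant dlt modification and of the $G$-equivariant relative MMP, the identification $\mathcal{D}(E,\Theta_E)=\mathcal{D}(X,B;x)$ in the semi-log canonical setting, and that the final subgroup is normal in $G$ and abelian --- are routine, with the base case $n\le 2$ being the classical description of dual complexes of log canonical surface germs.
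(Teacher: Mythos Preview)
Your strategy of reducing to the global theorem via adjunction to the exceptional fiber is the right idea, and it is also the paper's strategy. However, you have correctly identified the obstacle in your own approach and then not resolved it: after taking a full $G$-equivariant dlt modification, the exceptional divisor $E$ over $x$ may have many prime components, and nothing in your argument bounds the size of the $G$-orbits on these components. Your appeal to ``crystallographic-type restrictions'' and ``topological rigidity'' is not an argument --- a contractible (even collapsible) simplicial complex can carry simplicial automorphisms of arbitrarily large order permuting its vertices, so contractibility alone does not force the vertex action to factor through a bounded quotient. Packaging the issue as ``the slc generalization of Theorem~\ref{introthm:almost-fixed-global}'' is circular: that generalization contains the very statement you are trying to prove.

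The paper sidesteps this difficulty by a different choice of extraction. Rather than a dlt modification, one takes a $G$-equivariant \emph{plt} blow-up $\phi\colon Y\to X$ (Lemma~\ref{lem:gen-inv-plt}): this is constructed by passing to the quotient $X/G$, producing an ordinary plt blow-up there --- which is exactly where the hypothesis that $X$ is of klt type is used --- and pulling back; connectedness of log canonical centers forces the exceptional locus to be a \emph{single} $G$-invariant prime divisor $E$. Now $(E,B_E+M_E)$ is a $G_E$-equivariant log Calabi--Yau pair on a normal irreducible projective variety of dimension $n-1$, and Theorem~\ref{introthm:almost-fixed-global} (in its generalized form, Theorem~\ref{thm:trivial-action-on-D}) applies directly, with no component-permutation issue. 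The remaining step --- passing from ``$A$ acts trivially on $\mathcal{D}(E,B_E+M_E)$'' to ``$A$ acts trivially on $\mathcal{D}(X,B;x)$'' --- is handled by Lemmas~\ref{lem:triv-act-in-out} and~\ref{lem:fix-max-dim-cell}: once $A$ fixes every stratum meeting $E$, in particular a maximal simplex, the $\pp^1$-link structure forces it to fix the whole complex.
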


In general, we need to take a non-trivial subgroup to have a fixed
point on the dual complex.
For instance, we can consider the symmetric group $S_n$ acting on $\pp^n$.
This action has no fixed points on the dual complex of $\pp^n$ with its reduced toric boundary.
Furthermore, no normal subgroup will have such a fixed point.
The above example is one of the worst examples that one could expect among toric varieties.

In Theorem~\ref{introthm:almost-fixed-local}, we need the singularities to be log canonical.
Moreover, we need the germ to support a klt singularity.
Otherwise, the dual complex of a lc singularity can be rather arbitrary (see, e.g.,~\cite{Kol13}).
Thus, we do not expect the existence of an almost fixed point on the dual complex for finite actions on lc singularities (see Example~\ref{ex:elliptic}).

The strategy for the proof of both Theorem~\ref{introthm:almost-fixed-global} and Theorem~\ref{introthm:almost-fixed-local} is similar. 
We try to construct a subgroup $A\leqslant G$ and a $A$-invariant log canonical place of $(X,B)$.
Then, we extract such log canonical place $E$ in a higher model and apply the adjunction formula to it.
Then, by induction on the dimension, possibly passing to a normal subgroup of $A$, we may assume that $A$ fixes every single log canonical center of $(X,B)$ which intersects $E$.
In particular, $A$ will fix a simplex of maximal dimension in $\mathcal{D}(X,B)$.
Then, an argument using $\pp^1$-links on the simplicial complex associated to $\mathcal{D}(X,B)$, will imply that $A$ fixes $\mathcal{D}(X,B)$ pointwise.

As an application of Theorem~\ref{introthm:almost-fixed-global}, we obtain that dual complexes of log Calabi-Yau pairs of a fixed dimension can have only finitely many possible fundamental groups.

\begin{introcor}\label{introcor-finiteness-pi1-dual}
Let $n$ be a positive integer.
There exists a finite set $\mathcal{G}_n$ of finite groups satisfying the following.
Let $(X,B)$ be a log Calabi-Yau pair of dimension $n$, then $\pi_1(\mathcal{D}(X,B))\simeq G$
for some $G\in \mathcal{G}_n$.
\end{introcor}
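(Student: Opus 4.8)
The plan is to deduce the finiteness of $\{\pi_1(\mathcal{D}(X,B))\}$ from Theorem~\ref{introthm:almost-fixed-global} together with boundedness of dual complexes of log Calabi-Yau pairs. First I would recall from~\cite{dFKX17,KX16} that, for a log Calabi-Yau pair $(X,B)$ of dimension $n$, the dual complex $\mathcal{D}(X,B)$ is a regular $\Delta$-complex (or, after passing to a suitable dlt modification, a simplicial complex) of dimension at most $n-1$. The key structural input will be that there are only finitely many \emph{combinatorial types} of such dual complexes up to PL-homeomorphism; this follows from the fact that dual complexes of log Calabi-Yau pairs of fixed dimension have bounded "complexity" — concretely, one uses that coregularity zero log Calabi-Yau pairs admit a bounded dlt modification after passing to the crepant birational equivalence class, and the number of log canonical centers, hence the number of faces of $\mathcal{D}(X,B)$, is bounded by a constant depending only on $n$. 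Given finitely many combinatorial types, each yields finitely many fundamental groups, so $\pi_1(\mathcal{D}(X,B))$ lies in a finite set $\mathcal{G}_n$ purely for "collapsible complex" reasons, \emph{provided} we know the relevant boundedness.

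If one does not wish to invoke full boundedness of the dual complexes, the alternative route — and the one I expect the paper to take, since it is the one for which Theorem~\ref{introthm:almost-fixed-global} is tailored — is the following. By~\cite{KX16}, the fundamental group $\pi_1(\mathcal{D}(X,B))$ is itself a finite group, and in fact it arises as (a quotient of) the Galois group of the maximal "crepant-étale" cover of the pair; more precisely, there is a finite log Calabi-Yau cover $(X',B')\to (X,B)$, Galois with group $G = \pi_1(\mathcal{D}(X,B))$ (up to the subtleties in~\cite{KX16}), such that $\mathcal{D}(X',B')\to \mathcal{D}(X,B)$ is the universal cover and $G$ acts on $\mathcal{D}(X',B')$ with quotient $\mathcal{D}(X,B)$. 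Now apply Theorem~\ref{introthm:almost-fixed-global} to the finite group $G \leqslant {\rm Aut}(X',B')$: there is a normal subgroup $A \triangleleft G$ with $[G:A] \leq c(n)$ acting trivially on $\mathcal{D}(X',B')$. Since the action of $G$ on the universal cover $\mathcal{D}(X',B')$ is free (deck transformations), the subgroup $A$ acting trivially must be trivial, i.e. $A = \{1\}$; hence $|G| = [G:A] \leq c(n)$. Therefore $\pi_1(\mathcal{D}(X,B))$ has order at most $c(n)$, and there are only finitely many isomorphism classes of groups of order at most $c(n)$, giving the finite set $\mathcal{G}_n$.

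The main obstacle is making the second argument precise: the correspondence between $\pi_1(\mathcal{D}(X,B))$ and an honest Galois cover of the pair acting freely on the universal cover of the dual complex needs to be set up carefully, because dual complexes are defined only up to PL-homeomorphism and the deck action is naturally an action on a dlt modification rather than on $(X,B)$ itself. One must check that the cover $(X',B')$ can be chosen so that $G$ acts on a dlt modification $(Y',\Delta')$ compatibly, that $\mathcal{D}(Y',\Delta')$ is the universal cover of $\mathcal{D}(Y,\Delta)$, and that Theorem~\ref{introthm:almost-fixed-global} — stated for ${\rm Aut}(X,B)$ — indeed applies (this is where one uses that the statement is really about actions on the crepant birational class, equivalently on any dlt modification, which the proof via $\mathbb{P}^1$-links delivers). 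Once the freeness of the deck action is in hand, the bound $|\pi_1(\mathcal{D}(X,B))| \leq c(n)$ and the finiteness of $\mathcal{G}_n$ are immediate.
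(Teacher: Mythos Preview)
Your second approach is essentially the paper's proof: pass to the Galois quasi-\'etale cover realizing the universal cover of $\mathcal{D}(X,B)$, apply Theorem~\ref{introthm:almost-fixed-global} to the deck group $G$ acting on the covering pair, and use that a subgroup acting trivially on the universal cover must be trivial (freeness of deck transformations) to get $|G|\le c(n)$.

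Two points where the paper is more careful than your sketch. First, the finiteness of $\pi_1(\mathcal{D}(X,B))$ is not immediate from~\cite{KX16} alone for an arbitrary log Calabi--Yau pair: the paper first uses~\cite[Corollary~58]{KX16} to replace $(X,B)$ by a PL-equivalent dlt pair $(Y,B_Y)$ with $Y$ of Fano type, then invokes the surjection $\pi_1^{\rm reg}(Y)\twoheadrightarrow \pi_1(\mathcal{D}(Y,B_Y))$ from~\cite[Theorem~2.(3)]{KX16} together with Braun's finiteness~\cite{Bra20}. You should not skip this reduction. Second, your first paragraph's alternative route via ``boundedness of combinatorial types of dual complexes'' is not available: no such boundedness result is known for log Calabi--Yau pairs of fixed dimension, and the paper does not use or prove anything of that kind. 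Drop that paragraph; the freeness-of-deck-transformations argument is the actual proof.
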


The following corollary shows that
for $G$-equivariant $n$-dimensional log pairs $(X,B)$ with $K_X+B\sim_\qq 0$, we can still find almost fixed points on $\mathcal{D}(X,B)$ regardless of the singularities of the pair.

\begin{introcor}\label{introcor:almost-fixed-point-not-lc}
There exists a constant $c(n)$, only depending on $n$,
satisfying the following. 
Let $(X,B)$ be a projective log pair.
Let $G\leqslant {\rm Aut}(X,B)$ be a finite group.
Assume that $K_X+B\sim_\qq 0$.
Then, there exists a normal subgroup $A\triangleleft G$ so that every element of $A$ acts on $\mathcal{D}(X,B)$ with a fixed point.
\end{introcor}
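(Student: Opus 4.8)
The plan is to follow the proof of Theorem~\ref{introthm:almost-fixed-global} and to observe that global log canonicity of $(X,B)$ is only invoked in its final step. Recall first that, for a log pair with $K_X+B\sim_\qq 0$, the complex $\mathcal{D}(X,B)$ is built from the log canonical places of $(X,B)$: every log canonical center lies in the (open, $G$-invariant) locus $U\subseteq X$ over which $(X,B)$ is log canonical, so $\mathcal{D}(X,B)=\mathcal{D}(U,B|_U)$, where $(U,B|_U)$ is log canonical with $K_U+(B|_U)\sim_\qq 0$ and carries a $G$-action. If $(X,B)$ is klt then $\mathcal{D}(X,B)=\emptyset$ and there is nothing to prove, so we may assume it is nonempty. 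I would then prove the slightly stronger statement, by induction on $n$: there is a normal subgroup $A\triangleleft G$ with $[G:A]\le c(n)$ which stabilizes a maximal-dimensional simplex of $\mathcal{D}(X,B)$ (equivalently, $A$ maps a minimal log canonical center of $(X,B)$ to itself).

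For the inductive step I would reproduce the first half of the argument for Theorem~\ref{introthm:almost-fixed-global}: first produce a normal subgroup $A_0\triangleleft G$ of index bounded in terms of $n$ together with an $A_0$-invariant log canonical place $E$ of $(X,B)$; then extract $E$ on an $A_0$-equivariant model $\pi\colon Y\to X$, so that $K_Y+E+\Gamma=\pi^*(K_X+B)\sim_\qq 0$ for some $\Gamma\ge 0$, and apply adjunction to obtain a pair $(E,B_E)$ with $K_E+B_E\sim_\qq 0$, of dimension $n-1$, whose dual complex is the link of the vertex $E$ in $\mathcal{D}(X,B)$. By the inductive hypothesis applied to $(E,B_E)$ with its $A_0$-action, there is $A\triangleleft A_0$ of index bounded in terms of $n$ stabilizing a maximal-dimensional simplex $\tau$ of $\mathcal{D}(E,B_E)$; then $A$ stabilizes the join $E\ast\tau$, a maximal-dimensional simplex of $\mathcal{D}(X,B)$. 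Since a simplicial automorphism that maps a simplex to itself fixes that simplex's barycenter, every element of $A$ fixes the barycenter of $E\ast\tau$, which is the conclusion of the corollary. I emphasize that the additional $\pp^1$-link propagation used in Theorem~\ref{introthm:almost-fixed-global} to upgrade ``$A$ stabilizes a maximal simplex'' to ``$A$ acts trivially on $\mathcal{D}(X,B)$'' genuinely requires $(X,B)$ to be log canonical along all of $X$ (so that $\mathcal{D}(X,B)$ is a pseudomanifold with the expected $\pp^1$-link structure), and this is precisely the step one forgoes here.

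The main obstacle is that all of the preliminary steps must be carried out without assuming $(X,B)$ is log canonical. First, one must know that a non-klt, possibly non-log-canonical, pair with $K_X+B\sim_\qq 0$ still has a log canonical place; this is handled by perturbing $B$ to a sub-boundary supported away from the non-log-canonical locus --- using $K_X+B\sim_\qq 0$ to control that locus --- and taking a log canonical place of the perturbed pair, which remains one of $(X,B)$. Second, the adjunction to the $A_0$-invariant log canonical place $E$ may produce a pair $(E,B_E)$ that is itself non-log-canonical, which is exactly why the induction is set up in this generality rather than within log Calabi--Yau pairs. Third, one must verify that each descent --- the passage to $A_0$ and then the inductive passage to $A$ --- costs only an index factor bounded purely in terms of $n$, so that after at most $n$ steps the total index remains $\le c(n)$. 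Finally, the resolution, the extraction of $E$, and the intermediate pairs must all be chosen $G$-equivariantly; this is routine (take resolutions of $G$-orbits, average, and pass to closures of orbits) but needs to be tracked throughout. Once these points are in place the induction closes and the corollary follows.
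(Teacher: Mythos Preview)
The paper's proof takes a completely different and much shorter route. It splits into two cases. If $(X,B)$ is log canonical, then Theorem~\ref{introthm:almost-fixed-global} applies directly and gives the stronger conclusion that $A$ acts trivially on $\mathcal{D}(X,B)$. If $(X,B)$ is not log canonical, the paper invokes \cite[Theorem~1.7]{FS20}, which says that under $K_X+B\sim_\qq 0$ the dual complex $\mathcal{D}(X,B)$ is collapsible; the Lefschetz fixed-point theorem then forces every continuous self-map --- in particular every $g\in G$ --- to have a fixed point, so one may take $A=G$. No induction, no extraction of places, no bookkeeping of indices is needed in the non-lc case.

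Your inductive plan, by contrast, tries to rerun the mechanism behind Theorem~\ref{introthm:almost-fixed-global} while dropping log canonicity, and this has a genuine gap. The step ``produce $A_0\triangleleft G$ of bounded index together with an $A_0$-invariant log canonical place'' is not a preliminary in that proof; it is essentially the whole argument (see the proof of Theorem~\ref{thm:trivial-action-on-D}). It requires equivariant dlt modifications, equivariant $K_X$-MMPs terminating in Mori fiber spaces, boundedness of canonical Fanos, and the canonical bundle formula --- all of which use that the ambient pair is log canonical. Restricting to the lc locus $U$ loses projectivity, so none of this machinery is available there. Your perturbation sketch does not repair this: lowering coefficients of $B$ destroys $K_X+B\sim_\qq 0$, and a log canonical place of the perturbed pair need not be one of $(X,B)$. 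There is also a normality issue you do not address: $A\triangleleft A_0$ obtained from the inductive call need not be normal in $G$. The collapsibility/Lefschetz argument sidesteps every one of these difficulties.
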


\subsection{Almost fixed points in fibers}\label{subsec:intro-almost-fix-fiber}
In this subsection, we discuss the existence of almost fixed points in fibers.
The aim is a simple one: 
given a $G$-equivariant morphism $Y\rightarrow X$, and a fixed point $x\in X$, try to find an almost fixed point in the fiber over $x$.
This question is already interesting for projective varieties, i.e.,
for the structure morphism $Y\rightarrow {\rm Spec}(\mathbb{K})$.
In the case of elliptic curves or curves of general type, we can find arbitrarily large finite group actions so that no subgroup fixes a point.
These examples can be realized relatively by taking the cone
over the projective variety.
For instance, we can consider the cone over an elliptic curve $C$ and the blow-up at the vertex $X\rightarrow C$.
For each $n$-torsion point of the elliptic curve, we get the action of a group of order $n$ that makes $X\rightarrow C$ an equivariant birational morphism.
However, even if the vertex is fixed, no point of the fiber is fixed. 
Similar examples can be constructed by considering cones over curves of general type (see Example~\ref{ex:elliptic}). 
The above examples suggest that, in order to obtain an almost fixed point in the fiber, 
we need to control the singularities of the fixed point on the base. 

On the other hand, 
we know that the action of a finite group $G$ on a rationally connected variety $X$ of dimension $n$ admits an almost fixed point with respect to $n$~\cite{PS14,PS16}.
More precisely, given a positive integer $n$, there exists a constant $c(n)$ satisfying the following. If $X$ is a $n$-dimensional rationally connected variety and $G\leqslant {\rm Aut}(X)$ is a finite subgroup, then there exists a normal abelian subgroup $A\leqslant G$ of index at most $c(n)$ so that $A$ has a fixed point on $X$. 
The above projective results also apply to Fano type varieties because these are rationally connected~\cite{Zha06}.
On the other hand, the 
fibers of Fano type morphisms are rationally chain connected~\cite{HMc07}.
These three results together suggest that the fibers over fixed points of $G$-equivariant Fano type morphisms contain almost fixed points.
The next theorem shows the existence of almost fixed points in the fibers for Fano type morphisms.

\begin{introthm}\label{introthn:almost-fixed-point-morphism}
Let $n$ be a positive integer.
There exists a constant $c(n)$, only depending on $n$, satisfying the following.
Let $G$ be a finite group.
Let $\phi\colon X\rightarrow Z$ be a $G$-equivariant Fano type morphism where $X$ has dimension $n$.
Let $z\in Z$ be a fixed point for the action on the base.
Then, there exists a normal abelian subgroup $A\triangleleft G$ of index at most $c(n)$, so that $A$ has a fixed point in $\phi^{-1}(z)$.
\end{introthm}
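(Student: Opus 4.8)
The plan is to combine three inputs: varieties of Fano type are rationally connected~\cite{Zha06}, fibres of Fano type morphisms are rationally chain connected~\cite{HMc07}, and the almost fixed point theorem for rationally connected varieties~\cite{PS14,PS16}; the bridge between the local and the projective statements will be a $G$-equivariant relative minimal model program that descends $\phi$ to a Mori fibre space. First I would make the standard reductions: averaging a boundary witnessing the Fano type condition over the finite group $G$, I may assume there is a $G$-invariant $\qq$-divisor $\Delta\geq 0$ with $(X,\Delta)$ klt and $-(K_X+\Delta)$ ample over $Z$; I may also localize $Z$ at $z$ and pass to a small $\qq$-factorialization. The statement is immediate when $\phi^{-1}(z)$ is $0$-dimensional, since then $\phi^{-1}(z)$ is proper, connected (a fibre of a contraction) and $G$-invariant, hence a single fixed point, so one takes $A=G$; and it is immediate when $\dim Z=0$, since then $X$ is a projective variety of Fano type, so rationally connected by~\cite{Zha06}, and~\cite{PS14,PS16} directly provides a normal abelian $A\triangleleft G$ of index at most $c(\dim X)$ with a fixed point on $X=\phi^{-1}(z)$. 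The argument then runs by induction, the main inductive parameter being $\dim\phi^{-1}(z)$ (with $\dim Z$ and $\rho(X/Z)$ breaking ties).

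For the inductive step I would run a $G$-equivariant $(K_X+\Delta)$-MMP over $Z$. Since $-(K_X+\Delta)$ is big over $Z$, $K_X+\Delta$ is not pseudo-effective over $Z$, so this program terminates with a $G$-equivariant Mori fibre space $\psi\colon X'\to T$ over $Z$ (replacing $Z$ by a birational model in the degenerate case where $\phi$ is already birational). Two standard facts drive the reduction. First, the base of a Mori fibre space whose source is of Fano type over $Z$ is again of Fano type over $Z$; hence both $\psi$ and $T\to Z$ are Fano type morphisms, $\dim T<\dim X'$, and $z$ is still a fixed point on the target of $T\to Z$, so the inductive hypothesis applied to $T\to Z$ produces a normal abelian subgroup $A_1\triangleleft G$ of index bounded in terms of $n$ with a fixed point $t\in T$ over $z$. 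Second, applying the inductive hypothesis to the Fano type morphism $\psi$ and the fixed point $t$ — a strictly smaller instance, since $\psi^{-1}(t)$ together with the drop in $\dim Z$ lowers the inductive parameter — yields a further normal abelian subgroup $A'\triangleleft A_1$, of index still bounded in terms of $n$, with a fixed point $y'\in\psi^{-1}(t)\subseteq(X')^{-1}(z)$.

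It remains to transport $y'$ back across the birational modification $X\dashrightarrow X'$ over $Z$. This is a composition of $G$-equivariantly defined flips and divisorial contractions, each of which is (an inverse of) a Fano type morphism whose fibres over a point have dimension strictly smaller than $\dim\phi^{-1}(z)$; applying the inductive hypothesis to these contractions and tracing the fixed point step by step produces a fixed point in $\phi^{-1}(z)$ for a subgroup of $A'$ obtained by finitely many — boundedly many, in terms of $n$ — further passages to subgroups of index bounded in terms of $n$. Replacing the resulting group by its normal core and then, by Jordan's theorem for linear groups as in~\cite{PS16}, by a normal abelian subgroup, one obtains the group $A\triangleleft G$ of index at most $c(n)$ asserted in the statement.

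The main obstacle, in my view, is precisely the interaction of the minimal model program with fixed points in fibres: a flip or a divisorial contraction need not preserve a fixed point in $\phi^{-1}(z)$, since that fibre can be reducible or non-normal and its components may be permuted or blown up, so one cannot merely pull $y'$ back along $X\dashrightarrow X'$; this is what forces the induction on a finer invariant than $\dim X$ and the careful bookkeeping ensuring the acting group shrinks only boundedly. A secondary difficulty is that, unlike the situation in~\cite{PS14,PS16}, the fibres appearing here are neither smooth nor necessarily rationally connected, only rationally chain connected by~\cite{HMc07}; it is only after descending to the Fano type base $T$ and its structure morphism that one reaches a setting where the rational connectedness input of~\cite{Zha06,PS14,PS16}, or the inductive hypothesis, genuinely applies.
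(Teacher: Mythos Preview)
Your inductive scheme has a genuine gap at the transport step. You write that $X\dashrightarrow X'$ is a composition of finitely many --- \emph{boundedly many, in terms of $n$} --- flips and divisorial contractions, and you pass to a subgroup of bounded index at each step. But the number of steps in an MMP over $Z$ is not bounded by any function of $n$ alone: termination of flips gives finiteness for a fixed $X$, not a uniform bound depending only on the dimension, and the number of divisorial contractions is controlled by $\rho(X/Z)$, which is again unbounded in $n$. Consequently the accumulated index of your final subgroup is not bounded by any $c(n)$, and the argument does not close. (There is also a secondary looseness in the induction: the claim that $\psi\colon X'\to T$ is a ``strictly smaller instance'' rests on a somewhat ad hoc composite invariant, and the inequality $\dim\psi^{-1}(t)<\dim\phi^{-1}(z)$ is not obvious when $X\dashrightarrow X'$ is an isomorphism over $z$; but this is fixable, whereas the step-count issue is not.)

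The paper sidesteps the whole problem by never iterating over MMP steps. For a birational morphism onto a klt type base, it takes a $G$-equivariant plt blow-up of the base at $z$; the exceptional divisor is a $G$-invariant Fano type --- hence rationally connected --- variety, and its image in $X$ under a common roof is a $G$-invariant rationally connected subvariety of $\phi^{-1}(z)$, to which a \emph{single} application of the Prokhorov--Shramov almost fixed point theorem suffices. For a genuine Fano type fibration, the paper builds the relative affine cone $C_X\to Z$ for $-(K_X+B)$ and its blow-up $\widetilde{C}_X\to C_X$ along the vertex section; this converts the fibration into \emph{one} $G$-equivariant birational morphism whose target is of klt type at the cone point over $z$, and then the birational case applies. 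In both cases the price is paid once, so the index stays bounded by a constant depending only on $n$.
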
 

\subsection{Rank and regularity}
\label{subsec:intro-rank-vs-reg}
In this subsection, we discuss the results regarding the rank and the regularity of a klt singularity, or analogously, of a Fano type variety.
A $n$-dimensional toric singularity naturally admits the action of $\zz_m^n$ for every $m$ by the multiplication of roots of unity on the big torus.
On the other hand, if we consider the dual complex of the torus invariant $1$-complement, we obtain a simplicial complex structure on a compact slice of the defining rational polyhedral cone. 
In particular, this dual complex has dimension $n-1$.
Similar examples can be achieved with singularities admitting a torus action, i.e., 
for a $n$-dimensional singularity $(X;x)$ admitting a $k$-dimensional torus action, we can find a boundary $B$ so that $\mathcal{D}(X,B;x)$ is at least $(k-1)$-dimensional.
This boundary can be chosen to be torus invariant.
The following result states that this phenomenon also happens if, instead of having a torus action, we have a large finite abelian action.
In this setting, the dimension of the acting torus is replaced by the rank of the finite abelian group. 

\begin{introthm}\label{introthm:rank-vs-reg-local}
Let $n$ and $r$ be two positive integers. 
There exists a constant $c(n)$, only depending on $n$, satisfying the following. 
Let $(X;x)$ be a $n$-dimensional klt singularity.
Assume that 
$\zz_m^r \leqslant {\rm Aut}(X;x)$ for some $m \geq c(n)$.
Then, there exists a $\zz_m^r$-invariant boundary $B$ so that
$(X,B;x)$ is log canonical and has regularity at least $r-1$, i.e., 
the dual complex $\mathcal{D}(X,\Delta+B;x)$ is a CW complex 
of dimension at least $r-1$.
\end{introthm}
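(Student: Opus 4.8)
The plan is to derive the statement from the toroidalization principle of Theorem~\ref{introthm:group-toroidal}. Let $c_0(n)$ be the constant furnished by that theorem and set $c(n):=c_0(n)+1$. Applying Theorem~\ref{introthm:group-toroidal} to the klt singularity $(X;x)$ and to the finite group $G:=\zz_m^r$, we obtain a normal abelian subgroup $A\triangleleft\zz_m^r$ of index at most $c_0(n)$ and some rank $s$ (note $s\leq r$, as $A\leq\zz_m^r$), a $\zz_m^r$-equivariant boundary $B$ with $(X,B;x)$ log canonical, and an $A$-equivariant projective birational morphism $\pi\colon Y\to X$ extracting $s$ prime divisors $E_1,\dots,E_s$, each an $A$-invariant log canonical place of $(X,B)$, together with a component $Z$ of $E_1\cap\dots\cap E_s$ on which $A$ acts as the identity and such that $(Y,E_1+\dots+E_s)$ is formally toric at $\eta_Z$.

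The first step is to check that $s=r$. Since $A$ is an abelian subgroup of $\zz_m^r$ of rank $s$, its exponent divides $m$, hence $|A|\leq m^{s}$; on the other hand $[\zz_m^r:A]\leq c_0(n)$ forces $|A|\geq m^{r}/c_0(n)$. Combining, $m^{r-s}\leq c_0(n)<m$ by our choice of $c(n)$, which is impossible unless $s=r$. So $A$ has rank exactly $r$, $\pi$ extracts $r$ prime divisors $E_1,\dots,E_r$, and $Z$ is a component of $E_1\cap\dots\cap E_r$. (In particular this already forces $r\leq n$.)

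The second step is to realise $Z$ as a log canonical centre of a crepant model of $(X,B)$ lying over $x$, and then read off the dimension of the dual complex. Because the construction of Theorem~\ref{introthm:group-toroidal} takes place over the germ $(X;x)$, the morphism $\pi$ is an isomorphism over $X\setminus\{x\}$, so $Z\subseteq\pi^{-1}(x)$. Write $\pi^{*}(K_X+B)=K_Y+E_1+\dots+E_r+B_Y$ with $B_Y\geq 0$; then $(Y,E_1+\dots+E_r+B_Y)$ is a crepant model of $(X,B)$. Since $(Y,E_1+\dots+E_r)$ is formally toric --- in particular simple normal crossing --- at $\eta_Z$ and $Z$ is a component of $\bigcap_{i=1}^{r}E_i$, the subvariety $Z$ has codimension $r$ in $Y$ and is a log canonical centre of $(Y,E_1+\dots+E_r)$ (blowing up $Z$ produces a divisor of discrepancy $-1$); enlarging the boundary cannot remove a log canonical centre, so $Z$ is also a log canonical centre of $(Y,E_1+\dots+E_r+B_Y)$. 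Finally, by the theory of dual complexes of log canonical pairs and their invariance under crepant birational maps~\cite{dFKX17,KX16}, the dual complex $\mathcal{D}(X,B;x)$ can be computed on any $\qq$-factorial dlt modification of $(X,B)$, and a log canonical centre of codimension $r$ of a crepant model lying over $x$ contributes a cell of dimension $r-1$ to $\mathcal{D}(X,B;x)$. Hence $\dim\mathcal{D}(X,B;x)\geq r-1$, so $(X,B;x)$ has regularity at least $r-1$; since $B$ is $\zz_m^r$-invariant, this is the desired boundary. The variant allowing an ambient boundary $\Delta$ on $X$ is identical, using the pair version of Theorem~\ref{introthm:group-toroidal}.

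I expect the last step to be the main obstacle. The model $Y$ produced by Theorem~\ref{introthm:group-toroidal} is not asserted to be dlt, so one has to pass carefully to the dlt modification entering the definition of $\mathcal{D}(X,B;x)$ and verify that the codimension-$r$ stratum $Z$ genuinely survives as an $(r-1)$-cell: one must control the image of $Z$ in $X$, handle possible components of $B_Y$ passing through $\eta_Z$, and use that crepant birational maps between log canonical pairs do not lower the dimension of the dual complex. A clean way to organise this is to begin by taking an $A$-equivariant $\qq$-factorial dlt modification of $(Y,E_1+\dots+E_r+B_Y)$, so that the relevant strata in the fibre over $x$ are visible directly and the cell of dimension $r-1$ in $\mathcal{D}(X,B;x)$ can be exhibited explicitly.
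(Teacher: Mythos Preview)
Your approach is correct and takes a genuinely different route from the paper's. The paper argues by first producing a $\zz_m^r$-equivariant $N$-complement $(X,B;x)$ via Lemma~\ref{lem:G-equiv-N-comp}, taking a $G$-equivariant dlt modification $(Y,B_Y)$, using Theorem~\ref{introthm:almost-fixed-local} to pass to a subgroup fixing all lc centers, and then performing adjunction to a divisorial lc center $E$: the resulting $(n-1)$-dimensional pair $(E,B_E)$ carries a $\zz_m^{r-1}$-action, and the global Theorem~\ref{introthm:rank-vs-reg-global} gives $\dim\mathcal{D}(E,B_E)\geq r-2$, hence $\dim\mathcal{D}(Y,B_Y)\geq r-1$. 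You instead invoke Theorem~\ref{introthm:group-toroidal} as a black box; your counting argument $m^{r-s}\leq c_0(n)<m\Rightarrow s=r$ is a clean way to pin down the rank, and you then read off the $(r-1)$-cell directly from the extracted divisors. Since the proof of Theorem~\ref{introthm:group-toroidal} already passes through the global rank-versus-regularity statement, the two arguments have the same logical depth; yours is shorter once Theorem~\ref{introthm:group-toroidal} is in hand, while the paper's makes the reduction to the global case explicit.

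Two remarks on the part you flag as the obstacle. First, Theorem~\ref{introthm:group-toroidal} does not assert that $\pi$ is an isomorphism over $X\setminus\{x\}$; the $E_i$ are lc places of $(X,B)$ and may have centers strictly larger than $\{x\}$. This is harmless: working on the germ, $Z$ is an lc center of the crepant model $(Y,B_Y)$, and that is all you use. Second, your worry about other components of $B_Y$ through $\eta_Z$ resolves itself: since $(Y,E_1+\dots+E_r)$ is formally toric at $\eta_Z$ with $Z$ the deepest stratum, that pair is already strictly lc at $\eta_Z$, so any additional effective boundary through $\eta_Z$ would make $(Y,B_Y)$ non-lc there. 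Hence $B_Y=E_1+\dots+E_r$ near $\eta_Z$, the pair $(Y,B_Y)$ is qdlt at $\eta_Z$, and any dlt modification over this locus is a toroidal refinement, which subdivides but cannot lower the dimension of the dual complex. This gives the $(r-1)$-simplex in $\mathcal{D}(X,B;x)$ directly.
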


The above theorem answers a conjecture of Shokurov regarding the rank and regularity of klt singularities.
The regularity of a klt singularity $(X;x)$ is defined as the maximum among the dimension of the dual complexes of log canonical pairs $(X,B;x)$.
Hence, the previous result can be stated as follows: if $(X;x)$ admits the action of $\zz_m^r$ for $m$ large enough, compared with a constant in terms of the dimension, then the regularity of $(X;x)$ is at least $r-1$.
In view of Theorem~\ref{introthn:almost-fixed-point-morphism}, the log canonical places provided by $\mathcal{D}(X,\Delta+B;x)$ in Theorem~\ref{introthm:rank-vs-reg-local}, will give us toroidal coordinates for the action of $\zz_m^r$ on the dlt modification.
The previous theorem follows from a projective statement for Fano type varieties.

\begin{introthm}\label{introthm:rank-vs-reg-global}
Let $n,N$ and $r$ be three positive integers. 
There exists a constant $c(n,N)$, only depending on $n$ and $N$, satisfying the following. 
Let $(X,B)$ be a $n$-dimensional log canonical log Calabi-Yau pair.
Assume that $X$ is of Fano type.
Assume that 
$\zz_m^r \leqslant {\rm Aut}(X;B)$ for some $m \geq c(n,N)$ and $N(K_X+B)\sim 0$.
Then, the dual complex of $(X,B)$ has dimension at least $r-1$.
\end{introthm}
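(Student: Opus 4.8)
The plan is to prove the statement by induction on the dimension $n$, simultaneously for all ranks $r$. For $r=0$ there is nothing to prove, and for $n=1$ one argues directly as indicated in the introduction: a faithful cyclic subgroup of ${\rm Aut}(\pp^1,B)\leqslant{\rm PGL}_2$ acts freely away from its two fixed points, so once $m>2N$ the support of the degree-two divisor $B$ is contained in those two points, forcing both coefficients of $B$ to equal $1$ and $\mathcal{D}(\pp^1,B)$ to consist of two vertices. Two elementary group-theoretic facts will be used throughout: a subgroup of $\zz_m^r$ of index at most $\kappa$ contains $(\kappa!\,\zz_m)^r\cong\zz_{m'}^r$ with $m'\geq m/\kappa!$, so passing to a subgroup of index bounded in terms of $n$ preserves both the rank $r$ and the largeness of $m$ once $c(n,N)$ is chosen large enough; and, after a unimodular change of coordinates, a cyclic subgroup of $\zz_e^r$ lies on a single coordinate axis, so the quotient of $\zz_e^r$ by any cyclic subgroup contains a copy of $\zz_e^{r-1}$.

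For the inductive step, let $(X,B)$ be an $n$-dimensional log canonical log Calabi--Yau pair with $X$ of Fano type, $N(K_X+B)\sim 0$, and $\zz_m^r\leqslant{\rm Aut}(X;B)$ with $r\geq 1$ and $m\geq c(n,N)$. I would first pass to a $\zz_m^r$-equivariant $\qq$-factorial dlt modification $\mu\colon(Y,B_Y)\to(X,B)$, which exists by the equivariant minimal model program. As $\mu$ is crepant, $N(K_Y+B_Y)\sim 0$ and $\mathcal{D}(Y,B_Y)=\mathcal{D}(X,B)$, so it is enough to bound $\dim\mathcal{D}(Y,B_Y)$ from below. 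The heart of the matter is to produce, after replacing $\zz_m^r$ by a subgroup $A$ of index bounded in terms of $n$, an $A$-invariant prime divisor $S$ that is a log canonical place of $(X,B)$; after a further $A$-equivariant dlt modification one may assume that $S$ is an $A$-invariant component of $\lfloor B_Y\rfloor$. Granting this, adjunction along $S$ yields a dlt log Calabi--Yau pair $(S,B_S)$ with $K_S+B_S=(K_Y+B_Y)|_S$, hence $N(K_S+B_S)\sim 0$; moreover $S$ is of Fano type (a log canonical center of a log Calabi--Yau pair of Fano type is again of Fano type, by adjunction and the canonical bundle formula), and $\mathcal{D}(S,B_S)$ is the link of the vertex $[S]$ in $\mathcal{D}(Y,B_Y)$ by the behavior of dual complexes under adjunction \cite{dFKX17}, so that
\[
\dim\mathcal{D}(X,B)=\dim\mathcal{D}(Y,B_Y)\;\geq\;1+\dim\mathcal{D}(S,B_S).
\]
The kernel $K$ of the induced homomorphism $A\to{\rm Aut}(S)$ consists of automorphisms fixing $S$ pointwise, hence embeds into the group of automorphisms of the line bundle $N_{S/Y}$ over the identity of $S$, which is $\mathbb{G}_m$; thus $K$ is cyclic. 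By the second elementary fact, $A/K$ contains $\zz_{m''}^{r-1}$ with $m''$ still large, and it acts faithfully on $(S,B_S)$. Choosing $c(n,N)$ large enough in terms of $c(n-1,N)$ and the index bounds, the inductive hypothesis in dimension $n-1$ gives $\dim\mathcal{D}(S,B_S)\geq(r-1)-1$, and hence $\dim\mathcal{D}(X,B)\geq r-1$, as desired. (For $r=1$ the divisor $S$ is already a vertex of $\mathcal{D}(X,B)$, so no descent is needed.)

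The main obstacle --- and essentially the only substantial input --- is to produce the almost-invariant divisorial log canonical place of $(X,B)$ under the hypothesis that $m$ is large. The delicate point is that a large abelian action of rank $r<n$ need not force $X$ to be toric, so the classification of finite abelian actions on toric pairs does not apply directly; one must instead exhibit an invariant log canonical \emph{place}. If $(X,B)$ is not klt this is soft: $\lfloor B_Y\rfloor\neq 0$, and after a bounded-index reduction $\zz_m^r$ fixes one of its components. The real content is that $(X,B)$ \emph{cannot} be klt once $m\geq c(n,N)$. I would prove this by combining the equivariant minimal model program on the Fano type variety $X$ --- producing a $\zz_m^r$-equivariant Mori fibre space onto a base of smaller dimension --- with Theorem~\ref{introthn:almost-fixed-point-morphism}, which supplies an almost fixed point in a fibre over a fixed point of the base, and with the inductive hypothesis applied to the general fibre (a lower-dimensional Fano type variety carrying the large action and a numerically trivial restricted boundary); this forces a coefficient-one, hence non-klt, component transverse to the fibration, which is then transported back to $(X,B)$. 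The boundedness of complements \cite{Bir19}, global ACC-type statements, and the structure results for large finite abelian actions of \cite{Mor20c} are what make the comparison ``$m$ large versus $NB$ integral'' effective and serve to preclude an unbounded family of klt counterexamples; transporting the non-klt locus back through the non-crepant steps of the minimal model program --- or circumventing this by working directly with invariant valuations over $X$ --- and extracting the constant $c(n,N)$ from these inputs is where the technical weight of the argument lies.
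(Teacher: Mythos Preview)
Your overall strategy --- reduce to an invariant divisorial log canonical place $S$, perform adjunction, note that the kernel of $A\to{\rm Aut}(S)$ is cyclic so the rank drops by one, and induct on dimension --- is correct and is in fact the argument the paper uses to deduce the \emph{local} version (Theorem~\ref{introthm:rank-vs-reg-local}) from the global one. However, both of the steps you label as routine are exactly where the content lies, and neither is handled by what you wrote.

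\textbf{The non-klt case is not soft.} You claim that once $\lfloor B_Y\rfloor\neq 0$, a bounded-index subgroup of $\zz_m^r$ fixes one of its components. This is false without further input: the number of prime components of $\lfloor B_Y\rfloor$ on a dlt modification of an $n$-dimensional Fano-type log Calabi--Yau pair is not bounded in terms of $n$ and $N$ (think of projective toric varieties of high Picard rank with their reduced toric boundary and $N=1$). Passing to the stabiliser of a component costs index equal to the orbit size, which can be as large as $m$. What you actually need is Theorem~\ref{introthm:almost-fixed-global}: a normal subgroup of index at most $c(n)$ acts trivially on the entire dual complex, hence fixes every component. This is one of the main theorems of the paper, proved by its own inductive argument through Mori fibre spaces; it is not a bookkeeping step.

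\textbf{The klt case does not close as sketched.} The assertion that $(X,B)$ cannot be klt once $m\geq c(n,N)$ is precisely Theorem~\ref{introthm:RC-action} of the paper. Your sketch --- run an equivariant $K_X$-MMP to a Mori fibre space $X'\to Z$ and apply the inductive hypothesis to the general fibre --- does not establish this. The fibrewise subgroup $A_F$ may well be trivial, with all of $\zz_m^r$ acting faithfully on the base $Z$; the canonical bundle formula then produces a \emph{generalized} klt pair $(Z,B_Z+M_Z)$ on $Z$, and your inductive hypothesis, stated only for pairs, does not apply. (There are no ``non-crepant steps'' to worry about: since $K_X+B\sim_{\qq}0$, every step of the $K_X$-MMP is $(K_X+B)$-crepant and $(X',B')$ remains klt.) The paper's proof of Theorem~\ref{introthm:RC-action} handles this by simultaneously inducting over generalized pairs and, when $Z$ is a point, invoking boundedness of canonical Fanos together with the structural results of \cite{Mor20b} to force a torus action and hence a coefficient-one component --- ingredients you allude to but do not assemble.

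In short, the paper proves the stated theorem as a one-line corollary of the much stronger Theorem~\ref{thm:rank-vs-reg}, whose proof uses both Theorem~\ref{introthm:almost-fixed-global} and Theorem~\ref{introthm:RC-action} as black boxes before running its own MMP argument. Your adjunction-and-induct scheme would give a shorter proof of just the regularity bound, but only after you import those two theorems; as written, you have reproduced the skeleton while omitting the two load-bearing results.
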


Note that the condition on the rational connectedness of the variety is indeed necessary.
Otherwise, we can consider an elliptic curve, whose regularity is $-\infty$, and admits finite actions of arbitrarily large size.
Analogously in the case of singularities, the cone over an elliptic curve admits arbitrarily large cyclic actions, although its regularity is zero (see Example~\ref{ex:elliptic} for more details).
In Example~\ref{ex:no-control-B}, 
we show that the control of the index of $(X,B)$ is necessary in the previous theorem.

\subsection{Fundamental groups of klt singularities} 
\label{subsec:intro-p1}

It is known that the regional fundamental group of a klt singularity is finite (see, e.g.,~\cite{Xu14,Bra20}).
Hence, we can pass to the universal cover and work with finite actions on klt singularities.
Thus, as a corollary of our main theorem, 
we obtain a toroidalization for klt singularities that almost preserves the regional fundamental group.
This means that the Jordan property for the regional fundamental group~\cite[Theorem 1.2]{BFMS20}
can be achieved by a toroidalization. 

\begin{introcor}\label{introcor:p1-toroidal}
Let $n$ be a positive integer. 
There exists a constant $c(n)$, 
only depending on $n$,
satisfying the following.
Let $(X,\Delta;x)$ be a $n$-dimensional klt singularity.
Then, there exists:
\begin{enumerate}
    \item[(i)] A projective birational morphism $\phi \colon Y\rightarrow X$, 
    \item[(ii)] an effective divisor $B_Y$ on $Y$, and
    \item[(iii)] a klt toric singularity $(Y,B_Y;y)$,
\end{enumerate}
so that the induced homomorphism
\[
\phi_* \colon
\pi_1^{\rm reg}(Y,B_Y;y) \rightarrow 
\pi_1^{\rm reg}(X,\Delta;x)
\]
has cokernel of order at most $c(n)$.
In particular, $\pi_1^{\rm reg}(X,\Delta;x)$ admits a normal abelian subgroup of rank at most ${\rm codim}(Y,y)$ and of index at most $c(n)$.
\end{introcor}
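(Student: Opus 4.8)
The plan is to reduce to a finite group action by passing to the universal cover of the regional fundamental group, apply Theorem~\ref{introthm:group-toroidal} upstairs, and then descend the resulting toric germ. By~\cite{Xu14,Bra20} the regional fundamental group $\Gamma:=\pi_1^{\rm reg}(X,\Delta;x)$ is finite, so it admits a universal quasi-\'etale cover $p\colon (X',\Delta';x')\to(X,\Delta;x)$ which is Galois with group $\Gamma$, with $(X',\Delta')$ klt, $X'$ klt, $x'$ the unique point over $x$ (fixed by $\Gamma$), and $\pi_1^{\rm reg}(X',\Delta';x')$ trivial. Thus $\Gamma\leqslant{\rm Aut}(X',x')$ is a finite group. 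Applying Theorem~\ref{introthm:group-toroidal} to the $n$-dimensional klt singularity $(X',x')$ with $G:=\Gamma$, I obtain a normal abelian subgroup $A\triangleleft\Gamma$ of rank $r$ and index at most $c(n)$, a $\Gamma$-equivariant boundary $B$ with $(X',B;x')$ log canonical, and an $A$-equivariant projective birational morphism $\pi\colon Y'\to X'$ extracting $A$-invariant prime divisors $E_1,\dots,E_r$, together with a component $Z$ of $E_1\cap\dots\cap E_r$ contained in the fiber $\pi^{-1}(x')$ on which $A$ acts as the identity, such that $(Y',E_1+\dots+E_r)$ is formally toric at $\eta_Z$ and $A<\mathbb{G}_m^r\leqslant{\rm Aut}(Y',E_1+\dots+E_r,\eta_Z)$; in particular $Z$ has codimension $r$ in $Y'$.

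Next I would descend. Since $\pi$ is $A$-equivariant, $A$ acts on $Y'$; set $Y:=Y'/A$ and $\bar X:=X'/A$, so $\pi$ descends to a projective birational morphism $\psi\colon Y\to\bar X$, while $\bar X\to X$ is the intermediate quasi-\'etale cover of $(X,\Delta;x)$ attached to $A\leqslant\Gamma$, of degree $[\Gamma:A]\le c(n)$; write $\bar\Delta$ for the induced boundary and $\bar x$ for the point over $x$. Write $\overline{E_i},\bar Z$ for the images of $E_i,Z$, and choose a general closed point $y\in\bar Z$ (off the strict transform of $\Delta'$), with preimage $y'\in Z$; note $\psi(y)=\bar x$ and $\phi(y)=x$ where $\phi\colon Y\xrightarrow{\psi}\bar X\to X$. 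Because $A$ lies in the torus $\mathbb{G}_m^r$ of the formally toric germ at $\eta_Z$ and fixes $Z$ (hence $y'$) pointwise, the quotient germ $(Y,\overline{E_1}+\dots+\overline{E_r};y)$ is again formally toric with $\bar Z$ of codimension $r$, so ${\rm codim}(Y,y)=r$. Fix positive integers $m_i$ each divisible by $|A|$ and put $B_Y:=\sum_{i=1}^r(1-\tfrac1{m_i})\overline{E_i}$ near $y$; then $(Y,B_Y;y)$ is a klt toric singularity.

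For the map on fundamental groups: since $\pi_1^{\rm reg}(X',\Delta';x')$ is trivial, $X'\to\bar X$ is the universal quasi-\'etale cover of $(\bar X,\bar\Delta;\bar x)$, so $\pi_1^{\rm reg}(\bar X,\bar\Delta;\bar x)\cong A$ and every quasi-\'etale cover of $(\bar X,\bar\Delta;\bar x)$ has degree dividing $|A|$. As $\pi$ is an isomorphism away from $\bigcup E_i={\rm Exc}(\pi)$ and the ramification of $Y'\to Y$ along each $\overline{E_i}$ has order dividing $|A|$, the choice $|A|\mid m_i$ makes the pullback along $\psi$ of every quasi-\'etale cover of $(\bar X,\bar\Delta;\bar x)$ a quasi-\'etale cover of the orbifold $(Y,B_Y;y)$, yielding $\psi_*\colon\pi_1^{\rm reg}(Y,B_Y;y)\to\pi_1^{\rm reg}(\bar X,\bar\Delta;\bar x)\cong A$; this is surjective because $\psi^*X'=Y'$ (the pullback of the universal cover) is connected near $y'$, as $y'$ is the only point of $Y'$ over $y$. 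Since $\bar X\to X$ induces the inclusion $A=\pi_1^{\rm reg}(\bar X,\bar\Delta;\bar x)\hookrightarrow\pi_1^{\rm reg}(X,\Delta;x)=\Gamma$ with cokernel of order $[\Gamma:A]\le c(n)$, composing shows $\phi_*\colon\pi_1^{\rm reg}(Y,B_Y;y)\to\pi_1^{\rm reg}(X,\Delta;x)$ has image $A$, hence cokernel of order at most $c(n)$. Finally $A\triangleleft\Gamma=\pi_1^{\rm reg}(X,\Delta;x)$ is normal, abelian, of rank $r={\rm codim}(Y,y)$ and index $[\Gamma:A]\le c(n)$, which is the last assertion.

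The main obstacle is the comparison of regional fundamental groups along $\psi$: one must check that contracting the $A$-invariant divisors $\overline{E_i}$ induces a well-defined and surjective homomorphism on $\pi_1^{\rm reg}$ once the $\overline{E_i}$ carry the orbifold coefficients $1-\tfrac1{m_i}$, and that these coefficients can be kept strictly below $1$ — so that $(Y,B_Y;y)$ stays klt and toric — while still absorbing the ramification of every quasi-\'etale cover of $(\bar X,\bar\Delta;\bar x)$; this is exactly where finiteness of $\pi_1^{\rm reg}$, forcing all relevant orders to divide $|A|$, is used. A secondary, cosmetic point is that $\phi$ is not literally birational but factors through the bounded cover $\bar X\to X$; if a genuinely birational $\phi\colon Y\to X$ is desired one first replaces $Y'$ by a $\Gamma$-equivariant model dominating it and its $\Gamma$-translates and then quotients by $\Gamma$, at the cost of controlling the residual action of $\Gamma_Z/A$ near $\bar Z$ (a group of order at most $c(n)$).
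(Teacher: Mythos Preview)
Your approach is essentially the paper's: pass to the universal cover, apply the toroidalization theorem upstairs, and quotient by the abelian subgroup $A$ to produce the toric germ $(Y,B_Y;y)$. The paper inserts an extra step---a $G$-equivariant small $G\mathbb{Q}$-factorialization $X''\to X'$ followed by Theorem~\ref{thm:almost-fixed-points} to locate an almost-fixed point in the fiber over $x'$---before invoking Theorem~\ref{thm:gen-group-toroidal}, whereas you apply Theorem~\ref{introthm:group-toroidal} directly at the already $G$-fixed point $x'$; your observation that the resulting $\phi\colon Y\to X$ is generically finite of degree $[\Gamma:A]$ rather than literally birational is correct and applies verbatim to the paper's construction (there $Y=X'''/A_1$ while $X=X'/G$).
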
 

In~\cite[Theorem 2]{BFMS20}, we prove that the rank of the abelian part of the regional fundamental group of a $n$-dimensional klt singularity is bounded above by $n$.
In Theorem~\ref{introthm:group-toroidal}, the rank of the abelian part is bounded by the regularity of the singularity. 
Then, we obtain the following enhancement of the main theorem of~\cite{BFMS20}.

\begin{introthm}
\label{introthm:Jordan-reg}
Let $n$ and $r$ be a positive integer
with $r\leq n-1$.
Then, there exists a constant $c=c(n)$, only depending on $n$,
which satisfies the following property. Let $x\in (X,\Delta)$ be a $n$-dimensional klt singularity
of regularity $r$.
Then, there is an exact sequence
\begin{equation}
\nonumber 
\xymatrix{
1\ar[r] &
A\ar[r] &
\pi_1^{\rm reg}(X,\Delta,x) \ar[r]&
N\ar[r] &
1,
}
\end{equation}
where $A$ is a finite abelian group of rank at most $r+1$ and index at most $c(n)$.
Furthermore, since the regional fundamental group surjects onto 
the local fundamental group of the singularity, 
we obtain an exact sequence 
\begin{equation}
\nonumber
\xymatrix{
1\ar[r]&
A'\ar[r]&
\pi_1^{\rm loc}(X,x) \ar[r]&
N'\ar[r]&
1,
}
\end{equation}
where $A'$ is a finite abelian group of rank at most $r+1$ and index at most $c(n)$.
\end{introthm}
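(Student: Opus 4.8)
The strategy is to reduce to Corollary~\ref{introcor:p1-toroidal} (equivalently, to Theorem~\ref{introthm:group-toroidal}), and then to combine it with the rank-versus-regularity bound. Since $\pi_1^{\rm reg}(X,\Delta;x)$ is finite (by~\cite{Bra20}), we may pass to its universal cover: there is a quasi-étale cover $(\tilde X,\tilde\Delta;\tilde x)\to (X,\Delta;x)$ of klt singularities with Galois group $G:=\pi_1^{\rm reg}(X,\Delta;x)$, and $\tilde X$ is itself klt with trivial regional fundamental group. Now apply Theorem~\ref{introthm:group-toroidal} to the $G$-action on $(\tilde X;\tilde x)$: it produces a normal abelian subgroup $A\triangleleft G$ of index at most $c(n)$, a $G$-equivariant boundary $B$, and an $A$-equivariant projective birational morphism $\pi\colon Y\to\tilde X$ extracting $r'$ divisors $E_1,\dots,E_{r'}$ that are log canonical places of $(\tilde X,B)$, with $A<\mathbb G_m^{r'}$ acting toroidally at the generic point of a component $Z$ of $E_1\cap\dots\cap E_{r'}$. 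Setting $N:=G/A$, the first exact sequence has the stated shape provided we can show $r'\le r+1$.

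The bound $r'\le r+1$ is precisely where the regularity hypothesis enters. The divisors $E_1,\dots,E_{r'}$ are log canonical places of $(\tilde X,B;\tilde x)$ with non-empty common intersection, so on a dlt modification of $(\tilde X,B;\tilde x)$ they span a simplex of dimension $r'-1$ in the dual complex $\mathcal D(\tilde X,B;\tilde x)$. Hence the regularity of the klt germ $(\tilde X;\tilde x)$ is at least $r'-1$. It remains to compare the regularity of $\tilde X$ with that of $X$. Since $(\tilde X,\tilde x)\to (X,x)$ is a finite quasi-étale morphism of klt germs, a log canonical pair structure $(X,B';x)$ of regularity $s$ pulls back to a log canonical pair $(\tilde X,\tilde B';\tilde x)$, and the induced map on dual complexes is a finite (branched) cover, which does not lower the dimension; thus $\operatorname{reg}(\tilde X;\tilde x)\le \operatorname{reg}(X;x)=r$. (One also uses the converse direction — that $G$ acts on $\mathcal D(\tilde X,\tilde B';\tilde x)$ so that, after shrinking $A$ by a bounded index as in Theorem~\ref{introthm:almost-fixed-local}, the relevant faces are $A$-fixed — but for the inequality $r'\le r+1$ only the dimension comparison is needed.) Combining, $r'-1\le r$, i.e.\ $r'\le r+1$, and $A$ is a finite abelian group of rank $r'\le r+1$ and index at most $c(n)$.

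For the second exact sequence, recall that there is a natural surjection $\pi_1^{\rm reg}(X,\Delta;x)\twoheadrightarrow \pi_1^{\rm loc}(X,x)$ (the regional fundamental group sees the boundary and hence surjects onto the purely local one). Push the subgroup $A$ forward along this surjection: let $A'$ be its image. Then $A'$ is abelian, its rank is at most $\operatorname{rank}(A)\le r+1$, and its index in $\pi_1^{\rm loc}(X,x)$ is at most the index of $A$ in $\pi_1^{\rm reg}(X,\Delta;x)$, hence at most $c(n)$; setting $N':=\pi_1^{\rm loc}(X,x)/A'$ gives the desired sequence.

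The main obstacle is the regularity comparison under the quasi-étale cover, i.e.\ controlling how the dual complex and the regularity behave when passing to $(\tilde X;\tilde x)$, and — if one wants the rank bound phrased intrinsically in terms of $\operatorname{reg}(X;x)$ rather than $\operatorname{reg}(\tilde X;\tilde x)$ — checking that the boundary $B$ produced by Theorem~\ref{introthm:group-toroidal} descends (or can be chosen to descend, after enlarging the constant) to a log canonical structure on $X$ realizing the regularity. Everything else is bookkeeping with the already-established Theorems~\ref{introthm:group-toroidal}, \ref{introthm:almost-fixed-local}, and~\ref{introthm:rank-vs-reg-local} together with the finiteness of $\pi_1^{\rm reg}$.
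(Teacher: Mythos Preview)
Your overall strategy is correct and matches the paper's: pass to the universal cover, apply Theorem~\ref{introthm:group-toroidal} (the paper uses its generalized version, Theorem~\ref{thm:gen-group-toroidal}) to the Galois $G$-action, and bound the rank $r'$ of the resulting abelian subgroup $A$ by the regularity. The deduction for $\pi_1^{\rm loc}$ via the surjection is also fine.

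There is, however, a genuine error in your second paragraph. You argue that pulling back an lc structure $(X,B';x)$ to $(\tilde X,\tilde B';\tilde x)$ and observing that the map on dual complexes does not lower dimension yields $\operatorname{reg}(\tilde X;\tilde x)\le \operatorname{reg}(X;x)$. This is backwards: the argument you wrote shows that every lc complement on $X$ produces one on $\tilde X$ of at least the same regularity, hence $\operatorname{reg}(\tilde X;\tilde x)\ge \operatorname{reg}(X;x)$. That inequality is useless for bounding $r'$. In fact one does not expect $\operatorname{reg}(\tilde X;\tilde x)\le \operatorname{reg}(X;x)$ to hold for arbitrary lc structures on the cover, since a non-$G$-invariant complement on $\tilde X$ need not descend.

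The correct argument, which the paper uses implicitly and which you allude to only as an ``obstacle'' in your last paragraph, is that the boundary $B$ produced in item~(ii) of Theorem~\ref{introthm:group-toroidal} is \emph{$G$-equivariant}. By the paper's definition of a $G$-equivariant complement this means precisely that $(\tilde X,B)$ is the log pullback of a complement $(X,B_0)$ on the quotient. Lemma~\ref{lem:reg-under-quot} then gives $\operatorname{reg}(\tilde X,B;\tilde x)=\operatorname{reg}(X,B_0;x)$, and since $B_0$ is an lc complement of $(X,\Delta;x)$ one has $\operatorname{reg}(X,B_0;x)\le \operatorname{reg}(X,\Delta;x)=r$ by the definition of regularity as a maximum. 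Combined with $\operatorname{reg}(\tilde X,B;\tilde x)\ge r'-1$ (from the $E_i$), this yields $r'\le r+1$. So the descent of $B$ is not a residual obstacle to be overcome --- it is already guaranteed by Theorem~\ref{introthm:group-toroidal}(ii) and is the entire content of the regularity comparison.
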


\subsection*{Acknowledgements} 
The author would like to thanks Stefano Filipazzi and Mirko Mauri for many useful comments. 

\section{Preliminaries} 

In this section, we recall the preliminaries 
that will be used in this article:
the singularities of the minimal model program,
Fano type varieties, $G$-equivariant complements,
the canonical bundle formula,
and dual complexes. 
Throughout this article, we work over an algebraically closed field $\kk$ of characteristic zero.
$G$ will denote a finite group unless otherwise stated.
The rank of a finite group is the least number of generators.

\subsection{Singularities} 

In this subsection, we recall the various classes of singularities that will be considered in this article.
We will prove some preliminary lemmas regarding special kinds of extractions and complements for these singularities.

\begin{definition}
{\em
A { \em contraction}
$f\colon X\rightarrow Z$ is a
projective morphism of
normal algebraic varieties so that $f_*\mathcal{O}_X=\mathcal{O}_Z$.
A {\em fibration} is a contraction
with positive dimensional general fiber.
}
\end{definition}

\begin{definition}
{\em 
Let $X$ be a normal quasi-projective variety with
the action of a finite group $G$.
A {\em $G$-prime divisor} is the $G$-orbit of a prime divisor on $X$.
We say that $X$ is {\em $G\qq$-factorial} if every $G$-invariant Weil divisor is $\qq$-Cartier. 
}
\end{definition}

\begin{definition}
{\em
Let $G$ be a finite group acting on a normal quasi-projective variety $X$.
We consider the set of all 
$G$-equivariant projective
birational morphisms 
$\pi\colon X_\pi\rightarrow X$, where $X_\pi$ is a normal variety.
This is a partially ordered set with
$\pi'\geq\pi$ when $\pi'$ factors through $\pi$.
A {\em $G$-invariant Weil b-divisor} on $X$ is an element of the inverse limit 
\[
\text{Div}_{G,b}(X):=
\varprojlim_\pi \mathrm{Div}_G(X_\pi).
\]
A {\em $\qq$-Weil b-divisor} is an element of $\text{Div}_{G,b}(X)_\qq$.
From now on, whenever we say $G$-invariant b-divisor, we mean a $G$-invariant $\qq$-Weil b-divisor.
Analogously, a $G$-equivariant b-divisor $M$ can be written as 
$\sum_{i\in I} b_iV_i$, where $I$ is a countable set and the $V_i$'s are $G$-invariant divisorial valautions of $\kk(X)$ satisfying the following condition. For each $G$-equivariant normal variety $X'$  (birational to $X$) only finitely many of the $V_i$'s are realized by divisors on $X'$.

Let $M$ be a $G$-equivariant b-divisor on $X$.
Let $Y$ be a $G$-equivariant normal variety birational to $X$.
The {\em trace} of $M$ on $Y$ is defined as
\[
M_Y:= \sum_{i\in I} b_iD_i,
\]
where the sum runs over all valuations $V_i$ so that 
$c_{Y}(V_i)=D_i$ with
$\codim_Y(D_i)=1$.
Then, the trace $M_Y$ is a $G$-invariant divisor on $Y$.

We say that a $G$-invariant b-divisor $M$ is {\em b-$\qq$-Cartier} if there exists a $G$-equivariant birational model $X'$ of $X$ and a $G$-equivariant $\qq$-Cartier divisor $D_{X'}$ on $X'$ satisfying the following condition.
For every $G$-equivariant projective morphism $\pi \colon X'' \rightarrow X'$, we have that
\[
M_{X''} = \pi^* D_{X'}.
\]
If $D_{X'}$ is nef, then we will say that $M$ is {\em b-nef}
or a {\em nef b-divisor}.
}
\end{definition}

\begin{proposition}\label{prop:G-inv-b-nef}
Let $G$ be a finite group.
Let $X$ be a $G$-invariant normal quasi-projective variety.
Let $M$ be a $G$-invariant b-nef divisor on $X$.
Let $\phi\colon X\rightarrow Y$ be the quotient by $G$.
Then, there exists a b-nef divisor $M_0$ on $Y$ satisfying the following. Let $X'$ be a $G$-equivariant birational model of $X$, then
\begin{equation}
\label{eq:pullback-b-nef}
M_{X'} = {\phi'}^*(M_{0,Y'}),
\end{equation}
where $\phi'\colon X'\rightarrow Y'$ is the quotient by $G$.
\end{proposition}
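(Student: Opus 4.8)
The plan is to descend the birational model data and the $\qq$-Cartier divisor through the finite quotient $\phi\colon X\to Y$. First I would fix a $G$-equivariant birational model $X'$ of $X$ together with a $G$-invariant $\qq$-Cartier divisor $D_{X'}$ on $X'$ witnessing that $M$ is b-$\qq$-Cartier, so that $M_{X''}=\pi^*D_{X'}$ for every $G$-equivariant $\pi\colon X''\to X'$, and moreover $D_{X'}$ is nef. Replacing $X$ by $X'$ (which is harmless since the conclusion is stated for all birational models), we may assume $M=\overline{D}$ for a $G$-invariant nef $\qq$-Cartier divisor $D$ on $X$ itself.

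Next I would construct $M_0$. Let $\phi\colon X\to Y$ be the quotient. Since $D$ is $G$-invariant and $\qq$-Cartier, some multiple $mD$ is a $G$-invariant Cartier divisor, and by descent for the finite group action there is a Weil divisor $D_0$ on $Y$ with $\phi^*D_0=D$ after clearing denominators; more canonically, one sets $D_0:=\frac{1}{|G|}\phi_*D$ as a $\qq$-divisor, which is $\qq$-Cartier because $\phi$ is finite (a $\qq$-divisor on the normal variety $Y$ is $\qq$-Cartier iff its pullback to the finite cover $X$ is). Then $\phi^*D_0=D$. One checks $D_0$ is nef: for any curve $C\subset Y$, pick a curve $C'\subset X$ dominating it, and $D_0\cdot C=\frac{1}{\deg(C'/C)}\,\phi^*D_0\cdot C'=\frac{1}{\deg(C'/C)}\,D\cdot C'\geq 0$. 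Set $M_0:=\overline{D_0}$, the b-divisor on $Y$ with this trace on $Y$; it is b-nef by definition.

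The remaining point is the compatibility equation~\eqref{eq:pullback-b-nef} on an arbitrary $G$-equivariant birational model $X'$ of $X$, with $\phi'\colon X'\to Y'$ its quotient. Here I would argue as follows. The quotient map $\phi'$ factors: there is a $G$-equivariant morphism $X'\to X$, inducing $Y'\to Y$, and by the universal property of quotients and the valuative criterion the square with $\phi,\phi'$ and these two horizontal maps commutes (after possibly replacing $X'$ by a higher $G$-equivariant model dominating both $X$ and the normalization of $X\times_Y Y'$, which only enlarges the set of realized valuations and does not affect b-divisor identities). On such a model, $M_{X'}=\pi^*D$ where $\pi\colon X'\to X$, and likewise the trace $M_{0,Y'}$ is the pullback of $D_0$ along $Y'\to Y$ once $Y'$ is high enough (or simply by definition of the b-divisor $\overline{D_0}$, since b-$\qq$-Cartierness of $\overline{D_0}$ on $Y$ is what we must also verify — but it follows from that of $\overline{D}$ on $X$ by the same descent argument applied to $Y'\to Y$ and its pullback $X'\to X$). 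Then $\phi'^*(M_{0,Y'})=\phi'^*(\,\text{pullback of }D_0\,) = \text{pullback of }\phi^*D_0 = \text{pullback of }D = M_{X'}$, where the middle equality is commutativity of the square.

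I expect the main obstacle to be purely bookkeeping: making precise the claim that the quotient construction is compatible with arbitrary $G$-equivariant birational models, i.e.\ that $Y':=X'/G$ together with $Y'\to Y$ fits into a commuting square with $\phi$ and $\phi'$, and that one may always pass to a model where $M_{X'}$ and $M_{0,Y'}$ are genuine pullbacks. This is where $G$-equivariant resolution (to dominate $X\times_Y Y'$), normality, and the finiteness/flatness-in-codimension-one of $\phi'$ get used; none of it is deep, but it must be assembled carefully so that~\eqref{eq:pullback-b-nef} is checked at the level of divisorial valuations rather than just on the two fixed models $X$ and $Y$.
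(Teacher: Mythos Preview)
Your proposal is correct and follows essentially the same approach as the paper: descend the nef $\qq$-Cartier divisor realizing $M$ to the quotient via $D_0=\tfrac{1}{|G|}\phi_*D$, check nefness by pulling back curves, and verify~\eqref{eq:pullback-b-nef} via the commutative square of quotients and birational maps. The paper's write-up is slightly more streamlined in that it fixes the model of descent $X''$ to dominate the given $X'$ from the outset and then reads off~\eqref{eq:pullback-b-nef} from the single identity ${\pi_X}_*({\phi''}^*D_Y)={\phi'}^*({\pi_Y}_*D_Y)$, whereas you first reduce $X$ itself to the model of descent and then have to pass back up to a common roof; but the content is the same.
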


\begin{proof}
Let $X''$ be the model where the $G$-equivariant b-nef divisor $M$ descends.
Let $\phi''\colon X''\rightarrow Y''$ be the quotient by $G$.
Without loss of generality, we may assume that $\pi_X\colon X''\rightarrow X'$ is a $G$-equivariant projective birational morphism.
Then, we have an induced
projective birational morphism 
$\pi_Y \colon Y \rightarrow Y'$,
making the following diagram commutative: 
\[
\xymatrix{
X'' \ar[r]^-{\phi''}\ar[d]_-{\pi_X} & Y'' \ar[d]^-{\pi_Y} \\
X'\ar[r]^-{\phi'} & Y'.
}
\]
Let $D_X$ be a $\qq$-Cartier $G$-invariant nef divisor on $X''$ defining $M$.
There exists a $\qq$-Cartier $G$-invariant nef divisor $D_Y$ on $Y''$ so that ${\phi''}^*D_Y=D_X$.
We define $M_0$ to be the b-nef divisor on $Y$ defined by $D_Y$.
Then, we have that 
\[
M_{X'} =
{\pi_X}_*(D)=
{\pi_X}_*({\phi''}^*D_Y)=
{\phi'}^*({\pi_Y}_*D_Y)=
{\phi'}^*M_{0,Y'}.
\]
This concludes the statement.
\end{proof}

\begin{remark}
{\em 
With the notation of the proof of Proposition~\ref{prop:G-inv-b-nef}.
We refer to $X''$ as the model where the $G$-invariant b-nef divisor descends.
The model $Y''$ will be called the {\em quotient where the b-divisor descends}.
The previous proposition suggests that the theory of b-divisors should be developed for generically finite morphisms instead of projective birational maps.
In Example~\ref{ex:no-control-M}, we show the importance of controlling the Cartier index of the nef part in the quotient where it descends. 
}
\end{remark}

\begin{definition}
{\em 
Let $G$ be a finite group.
A $G$-equivariant generalized pair $(X,B+M)$ consists of the data:
\begin{enumerate}
    \item A normal quasi-projective variety $X$ with $G\leqslant {\rm Aut}(X)$,
    \item an effective $G$-invariant divisor $B$, and 
    \item $M$ is a nef $G$-invariant b-divisor,
\end{enumerate}
so that $K_X+B+M$ is $\qq$-Cartier.
In particular, the divisor
$K_X+B+M$ is $G$-invariant and $\qq$-Cartier.
By Proposition~\ref{prop:G-inv-b-nef}, any nef $G$-invariant b-divisor is the pullback from a nef b-divisor in the quotient of $X$.
We call $B$ the {\em boundary part} of the generalized pair and
$M$ the {\em nef part} of the generalized pair.

If we drop condition $(2)$, then we say that $(X,B+M)$ 
is a $G$-equivariant {\em generalized sub-pair}.
}
\end{definition}

\begin{definition}
{\em 
Let $(X,B+M)$ be a $G$-equivariant generalized pair. 
Let $\pi\colon Y\rightarrow X$ be a $G$-equivariant projective birational morphism.
As usual, we fix $K_Y$ so that 
$K_X=\pi_*K_Y$.
Then, there exists a uniquely determined $G$-invariant effective divisor $B_Y$ so that
\[
K_Y+B_Y+M_Y =\pi^*(K_X+B+M),
\]
where $M_Y$ is the trace of $M$ on $Y$.
Let $E$ be a $G$-prime divisor on $Y$.
We define the {\em generalized log discrepancy} of $(X,B+M)$ at $E$, to be
\[
a_E(X,B+M)= 1 - \text{coeff}_E(B_Y).
\]
We say that $(X,B+M)$ has:
\begin{enumerate} 
\item {\em Generalized log canonical singularities} if $a_E(X,B+M)\geq 0$ for every $G$-prime divisor $E$ over $X$.
We write generalized log canonical or glc for short.
\item {\em Generalized Kawamata log terminal singularities} if
$a_E(X,B+M)>0$ for every $G$-prime divisor $E$ over $X$.
We write generalized klt or gklt for short.
\item {\em Generalized canonical singularities} if $a_E(X,B+M)\geq 1$ for every $G$-prime divisor $E$ over $X$.
\end{enumerate} 
In the case that the b-nef divisor is trivial, i.e., $M_X=\mathcal{O}_X$ for every $X$, then the above concepts are just the singularities of $G$-pairs (see, e.g.,~\cite{Mor20a}).
In this case, we drop the word generalized from the definitions.
Furthermore, if $G$ is the trivial group, the above concepts are just the singularities of pairs (see, e.g.,~\cite{KM98}).
As usual, the previous definitions can be checked on a $G$-equivariant log resolution of the pair where $M$ descends. 
}
\end{definition}

\begin{definition}
{\em 
We say that a germ $(X;x)$ is of
{\em generalized klt type} (resp. {\em generalized lc type})
if there exists a boundary $B$ and a b-nef divisor $M$ on $X$ so
that $(X,B+M;x)$ has generalized klt (resp. generalized lc)
singularities.
In the case that $M=0$, then we say that
$X$ is of {\em klt type} (resp. {\em lc type}).
}
\end{definition}

\begin{notation}
{\em 
By abuse of notation, we may call the induced divisor $K_X+B+M$ a generalized pair (referring to $(X,B+M)$), when the boundary and nef parts are clear from the context.
Given a $G$-equivariant generalized pair $(X,B+M)$ and a $G$-equivariant projective birational morphism 
$\pi\colon Y \rightarrow Y$, we can write $\pi^*(K_X+B+M)=K_Y+B_Y+M_Y$.
We will call $(Y,B_Y+M_Y)$ the {\em log pullback} of $(X,B+M)$ to $Y$.
We may also say that $K_Y+B_Y+M_Y$ is the log pullback of $K_X+B+M$ to $Y$,
to stress the generalized pair structure of the pullback divisor.
}
\end{notation}

\begin{definition}{\em 
Let $(X,B+M)$ be a $G$-equivariant generalized pair.
The {\em generalized non-klt locus}
of the pair is the closed propert subet:
\[
\bigcup_E c_X(E) \subset X,
\]
that runs over all the $G$-prime divisors $E$ over $X$ for which 
$a_E(X,B+M)\leq 0$.
A {\em generalized log canonical place} of $(X,B+M)$ is a $G$-prime divisor $E$ for which $a_E(X,B+M)=0$.
A {\em generalized log canonical center} of $(X,B+M)$ is the center on $X$ of a generalized log canonical place.
For a $G$-equivariant generalized log canonical pair $(X,B+M)$,
the generalized non-klt locus equals the union of all the generalized log canonical centers. This set is denoted by ${\rm glcc}(X,B+M)$.
A generalized log canonical center is said to be {\em minimal}
if it is minimal with respect to the inclusion of glc centers.
A generalized {\em divisorial log canonical center} of $(X,B+M)$ is a generalized log canonical center which is a divisor on $X$, i.e., a prime component of $\lfloor B\rfloor$.

Observe that $G$ acts on the set of generalized log canonical centers of $(X,B+M)$.
We say that a generalized log canonical center $W$ of $(X,B+M)$ is {\em $G$-invariant} if $gW=W$ for every $g\in G$.
We say that a generalized log canonical place of $(X,B+M)$ is {\em $G$-invariant}
if the corresponding $G$-prime divisor is a prime divisor.
}
\end{definition}

\begin{proposition}\label{prop:quot-gen-pair}
Let $(X,B+M)$ be a $G$-equivariant generalized pair.
Let $\phi\colon X\rightarrow Y$ be the quotient by $G$.
Then, there exists a generalized pair $(Y,B_Y+M_Y)$ so that 
\[
K_X+B+M = \phi^*(K_Y+B_Y+M_Y).
\]
Furthermore, for every $G$-prime divisor $E$ over $X$, we have that
\[
a_E(X,B+M) = r a_{E/G}(Y,B_Y+M_Y),
\]
where $r$ is the ramification index of the field extension 
$\kk(X)\supseteq \kk(Y)$ at the $G$-prime divisor $E$.
In particular, $(X,B+M)$ is gklt (resp. glc) if and only if $(Y,B_Y+M_Y)$
is gklt (resp glc).
Furthermore, in the glc case, we have a natural bijection between $G$-equivariant log canonical places (resp. log canonical centers) of $(X,B+M)$ 
and the log canonical places
(resp. log canonical centers) of $(Y,B_Y+M_Y)$.
\end{proposition}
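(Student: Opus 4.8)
The plan is to build $(Y,B_Y+M_Y)$ explicitly, using Riemann--Hurwitz for $\phi$ together with the descent of the nef part furnished by Proposition~\ref{prop:G-inv-b-nef}, and then to transport all numerical information through a compatible tower of $G$-equivariant models and their quotients.

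First I would fix a canonical divisor $K_Y$ on $Y$ and the compatible $K_X$, so that $K_X=\phi^*K_Y+R$, where $R=\sum_E(r_E-1)E$ is the ramification divisor and $r_E$ is the ramification index of the prime divisor $E\subset X$ over its image in $Y$. Since the characteristic is zero the divisor $R$ is effective, and since $G$ acts over $Y$ and $r_{gE}=r_E$ for every $g\in G$, it is $G$-invariant. By Proposition~\ref{prop:G-inv-b-nef}, the nef $G$-invariant b-divisor $M$ is the pullback of a b-nef divisor on $Y$, which I take to be the nef part $M_Y$ of the pair to be constructed; thus on every $G$-equivariant model $X'\to X$ with quotient $\phi'\colon X'\to Y'$, the trace $M_{X'}$ equals the pullback under $\phi'$ of the trace on $Y'$ of the b-nef divisor $M_Y$. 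Next I would define the boundary $B_Y$ by the requirement $\phi^*B_Y=B+R$: writing $B=\sum_E b_E E$, the coefficient $b_E+r_E-1$ is constant along each $G$-orbit of prime divisors of $X$ lying over a fixed prime divisor $D\subset Y$ (because $B$ is $G$-invariant and $r_E$ is orbit-constant), so $B_Y:=\sum_D\tfrac{b_E+r_E-1}{r_E}\,D$ is a well-defined $\qq$-divisor, and it is effective since $b_E\geq 0$ and $r_E\geq 1$. Finally, $K_Y+B_Y+M_Y$ is $\qq$-Cartier: its pullback under $\phi$ equals $(K_X-R)+(B+R)+M=K_X+B+M$, which is $\qq$-Cartier, and $\qq$-Cartierness descends along the finite surjective morphism $\phi$. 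As $\phi^*$ is injective on $\qq$-divisors, the same computation gives $K_X+B+M=\phi^*(K_Y+B_Y+M_Y)$, and $(Y,B_Y+M_Y)$ is a generalized pair as required.

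For the discrepancy identity I would work on models. Given a $G$-prime divisor $E$ over $X$, choose a $G$-equivariant projective birational morphism $\pi\colon X'\to X$ on which $M$ descends and $E$ is realized by divisors, with quotient $\phi'\colon X'\to Y'$ and induced $\pi'\colon Y'\to Y$ satisfying $\phi\circ\pi=\pi'\circ\phi'$. Pulling back $K_X+B+M=\phi^*(K_Y+B_Y+M_Y)$ to $X'$ and using the square yields $K_{X'}+B_{X'}+M_{X'}=(\phi')^*(K_{Y'}+B_{Y'}+M_{Y'})$, where $B_{X'}$ and $B_{Y'}$ are the respective log pullbacks. Subtracting $K_{X'}=(\phi')^*K_{Y'}+R'$ (with $R'$ the ramification divisor of $\phi'$) and $M_{X'}=(\phi')^*M_{Y'}$ leaves $B_{X'}+R'=(\phi')^*B_{Y'}$. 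Comparing coefficients along $E$ and along $E/G$ gives $\coeff_E(B_{X'})+(r-1)=r\cdot\coeff_{E/G}(B_{Y'})$ with $r=r_E$, hence
\[
a_E(X,B+M)=1-\coeff_E(B_{X'})=r\bigl(1-\coeff_{E/G}(B_{Y'})\bigr)=r\,a_{E/G}(Y,B_Y+M_Y).
\]
The remaining statements are then formal. Every divisorial valuation of $\kk(Y)$ is of the form $E/G$ for some $G$-prime divisor $E$ over $X$ (extend it through the finite extension $\kk(X)\supseteq\kk(Y)$ and realize its $G$-orbit by a $G$-prime divisor on a suitable model), and $r=r_E\geq 1>0$, so $a_E(X,B+M)\geq 0$ (resp.\ $>0$) if and only if $a_{E/G}(Y,B_Y+M_Y)\geq 0$ (resp.\ $>0$); this gives the equivalence of glc (resp.\ gklt). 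In the glc case the formula shows $a_E=0$ if and only if $a_{E/G}=0$, so $E\mapsto E/G$ is a bijection between $G$-equivariant generalized log canonical places of $(X,B+M)$ and generalized log canonical places of $(Y,B_Y+M_Y)$; taking centers, $\phi(c_X(E))$ is the generalized log canonical center of $(Y,B_Y+M_Y)$ corresponding to the $G$-invariant generalized log canonical center $c_X(E)$ of $(X,B+M)$, which yields the asserted bijection of centers.

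The one genuinely delicate point is functoriality: one has to know that the single pair $(Y,B_Y+M_Y)$ constructed on $Y$ is simultaneously the correct descent on every higher model, so that the discrepancy identity holds uniformly for all $E$. This is precisely what the compatible-diagram bookkeeping of the third paragraph ensures, and it rests on the uniqueness of the log pullback and the injectivity of $\phi^*$. By comparison, the Riemann--Hurwitz input and the effectivity and well-definedness of $B_Y$ are routine.
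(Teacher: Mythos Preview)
Your proof is correct and follows essentially the same approach as the paper: build the boundary $B_Y$ via Riemann--Hurwitz, descend the nef part via Proposition~\ref{prop:G-inv-b-nef}, and compare discrepancies on a compatible pair of higher models using the Hurwitz formula on that level. The paper outsources the construction of $B_Y$ to an external reference while you write it out explicitly, but the architecture and the discrepancy computation are the same.
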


\begin{proof}
The existence of a log pair $(Y,B_Y)$ so that
$K_X+B=\phi^*(K_Y+B_Y)$ is proved in~\cite[Proposition 2.18]{Mor20a}.
On the other hand, by Proposition~\ref{prop:G-inv-b-nef}, we have that 
$M$ induces a b-nef divisor on $Y$.
By abuse of notation, we denote such b-divisor by $M$ as well.
By the equality~\eqref{eq:pullback-b-nef}, we conclude that 
\[
\phi^*(M_Y)=M_X.
\]
Hence, $(Y,B_Y+M_Y)$ is a generalized pair so that its log pullback to
$X$ equals $(X,B+M)$, i.e., 
\[
K_X+B+M =\phi^*(K_Y+B_Y+M_Y).
\]
Hence, it suffices to prove the statement about the generalized log discrepancies.
Let $X'\rightarrow X$ be the $G$-equivariant model where $M$ descends.
We may assume that $\pi\colon X'\rightarrow X$ is a log resolution of the generalized
pair $(X,B+M)$.
Let $\phi'\colon X'\rightarrow Y'$ be the corresponding quotient by $G$.
Thus, we have an induced birational contraction $\pi_Y \colon Y'\rightarrow Y$.
Then, we have a commutative diagram as follows:
\begin{equation}
\label{eq:comm-quot}
\xymatrix{
X' \ar[r]^-{\phi'}\ar[d]_-{\pi_X} & Y' \ar[d]^-{\pi_Y} \\
X\ar[r]^-{\phi} & Y.
}
\end{equation}
Let $E$ be a $G$-prime divisor on $X'$ and $E/G$ its quotient on $Y'$, which is a prime divisor of $Y'$.
We consider the divisors 
\[
K_{Y'}+B_{Y'}+M_{Y'} = {\pi_Y}^*(K_Y+B_Y+M_Y),
\]
and
\[
K_{X'}+B_{X'}+M_{X'} = \pi_X^*(K_X+B+M).
\]
Since ${\phi^*}(M_{Y'})=M_{X'}$, we conclude that 
\[
K_{X'}+B_{X'} = {\phi'}^*(K_{Y'}+B_{Y'}).
\]
By the Hurwitz formula, we deduce that
\[
1-{\rm coeff}_{E}(B_{X'})=r(1-{\rm coeff}_{E/G}(B_{Y'})).
\]
Thus, we have that $a_E(X,B+M) = r a_{E/G}(Y,B_Y+M_Y)$ as claimed.
The last statement of the theorem follows from the previous equality and the commutative diagram~\eqref{eq:comm-quot}.
\end{proof}

\begin{definition}
{\em 
Let $G$ be a finite group.
Let $(X,B+M)$ be a $G$-equivariant generalized pair.
The generalized pair $(Y,B_Y+M_Y)$ introduced in Proposition~\ref{prop:quot-gen-pair}
will be referred as the {\em quotient generalized pair} 
or the generalized pair induced on the quotient.
}
\end{definition}

\begin{definition}
{\em 
Let $G$ be a finite group.
Let $X\rightarrow Z$ be a $G$-equivariant contraction
and $G_Z$ be the quotient group acting on the base.
Let $z\in Z$ be a $G_Z$-fixed point.
Let $(X,B+M)$ be a $G$-equivariant generalized pair.
Let $X'\rightarrow Z'$ be the contraction induced by the quotient
and $(X',B'+M')$ be the quotient generalized pair.
A {\em $G$-equivariant $N$-complement} for $(X,B+M)$ over $Z$ is
the log pullback $(X,\Gamma+M)$ to $X$ of a generalized pair
$(X',\Gamma'+M')$ that satisfies:
\begin{enumerate}
    \item $\Gamma'\geq B'\geq 0$,
    \item $(X',\Gamma'+M')$ is generalized log canonical, and
    \item $N(K_{X'}+\Gamma'+M')\sim_{Z'} 0$, and 
    \item $NM'$ is Cartier in the birational model where it descends.
\end{enumerate}
By Proposition~\ref{prop:quot-gen-pair}, we conclude that:
\begin{enumerate}
    \item[(1')] $\Gamma \geq B \geq 0$;
    \item[(2')] $(X,\Gamma+M)$ is generalized log canonical, and
    \item[(3')] $N(K_X+\Gamma+M)\sim_Z 0$.
\end{enumerate}
If instead of condition $(3)$, we have that
\begin{enumerate} 
\item[(3'')] $K_{X'}+\Gamma'+M' \sim_{Z',\qq} 0,$
\end{enumerate}
then we say that $(X,\Gamma+M)$ is a {\em $G$-equivariant $\qq$-complement} of $(X,B+M)$ over $Z$.
Note that condition (4) is equivalent to say that $NM$ is Cartier in the quotient where it descends. 
If $G$ is the trivial group, then we say that $(X,B+M)$ is a $N$-complement or
$\qq$-complement, respectively.
With this notation, a $G$-equivariant $N$-complement is the pullback
of a $N$-complement of the quotient generalized pair.

In the case that $Z={\rm Spec}(\kk)$, then we say that
$(X,\Gamma+M)$ is a $G$-equivariant $N$-complement of $(X,B+M)$.
}
\end{definition}

The following lemma states the existence of bounded
$G$-equivariant generalized $N$-complements for klt singularities.

\begin{lemma}\label{lem:G-equiv-N-comp}
Let $n,p$ be positive integers and $\Lambda$ be a set of rational numbers with rational accumulation points.
There exists a constant $N:=N(n,p,\Lambda)$, only depending on $n,p$ and $\Lambda$, which satisfies the following.
Let $G$ be a finite group. Let $(X,B+M;x)$ be a $G$-equivariant klt singularities so that:
\begin{enumerate}
    \item The coefficients of $B$ belong to $\Lambda$, and 
    \item $pM$ is Cartier in the quotient where it descends.
\end{enumerate}
Then, there exists a $G$-equivariant $N$-complement for $(X,B+M;x)$.
\end{lemma}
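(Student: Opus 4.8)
The plan is to reduce the statement to the non-equivariant theory of bounded complements, which is available by the work of Birkar and its generalized-pair extensions. The key tool is Proposition~\ref{prop:quot-gen-pair}: passing to the quotient $\phi\colon X\to Y$ converts the $G$-equivariant generalized pair $(X,B+M;x)$ into an ordinary generalized pair $(Y,B_Y+M_Y;y)$ on the quotient germ, where $y=\phi(x)$, and it preserves the klt/lc dichotomy as well as the correspondence of log canonical places. A $G$-equivariant $N$-complement of $(X,B+M;x)$ is by definition the log pullback of an $N$-complement of $(Y,B_Y+M_Y;y)$; so it suffices to produce a bounded $N$-complement of the quotient generalized pair, with $N$ independent of $(X,B+M;x)$ and of $G$.

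**Main steps.** First I would form the quotient germ $(Y,B_Y+M_Y;y)$ via Proposition~\ref{prop:quot-gen-pair}, noting that $Y$ is again klt (the klt property of $X$ descends), and that $y$ is a closed point. Second, I must verify that the numerical data controlling the quotient pair are bounded in terms of $n,p,\Lambda$ only. The dimension of $Y$ is $n$. The Cartier index of the nef part: by hypothesis $pM$ is Cartier in the model where it descends \emph{on the quotient side}, i.e.\ $pM_Y$ is Cartier in the model where $M_Y$ descends, so the nef part of $(Y,B_Y+M_Y)$ has bounded Cartier index $p$. For the coefficients of $B_Y$: by the Hurwitz computation in the proof of Proposition~\ref{prop:quot-gen-pair}, $1-\mathrm{coeff}_{E/G}(B_Y)=r^{-1}(1-\mathrm{coeff}_E(B))$ where $r$ is a ramification index, so the coefficients of $B_Y$ lie in the set $\Lambda'$ of numbers of the form $1-\frac{1-\lambda}{r}$ with $\lambda\in\Lambda$ and $r\in\mathbb{Z}_{>0}$; since $\Lambda$ has rational accumulation points and is contained in $[0,1]$, the set $\Lambda'$ still has rational accumulation points (its accumulation points come either from those of $\Lambda$ or from $r\to\infty$, which gives the single rational point $1$). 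Third, with $(n,p,\Lambda')$ in hand, I apply the existence of bounded complements for klt-type generalized pair germs (Birkar's complement theorem in the generalized setting, as used throughout the klt-singularities literature) to the germ $(Y,B_Y+M_Y;y)$: it yields $N:=N(n,p,\Lambda')=N(n,p,\Lambda)$ and a generalized pair $(Y,\Gamma_Y+M_Y)$ with $\Gamma_Y\ge B_Y$, $(Y,\Gamma_Y+M_Y)$ generalized log canonical near $y$, $N(K_Y+\Gamma_Y+M_Y)\sim 0$ near $y$, and $NM_Y$ Cartier where it descends. Fourth, I pull this complement back along $\phi$: by Proposition~\ref{prop:quot-gen-pair} the log pullback $(X,\Gamma+M)$ is glc, satisfies $\Gamma\ge B\ge 0$, and $N(K_X+\Gamma+M)\sim 0$ near $x$ because linear equivalence pulls back; together with condition (4) on the quotient side (which is exactly the $NM_Y$-Cartier statement we just arranged), this is by definition a $G$-equivariant $N$-complement of $(X,B+M;x)$.

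**Main obstacle.** The routine parts are the descent of the singularity data and the bookkeeping of the coefficient set; the genuinely substantive input is the non-equivariant existence of bounded complements for generalized klt-type germs with controlled coefficient set and controlled Cartier index of the nef part. This is not proved in the excerpt, so the step I expect to cite (rather than prove) is precisely that theorem of Birkar, suitably stated for generalized pairs over a point; all the $G$-equivariance is then absorbed into the quotient construction. A secondary point requiring a little care is to confirm that the "rational accumulation points" hypothesis is genuinely preserved under the ramification transformation $\lambda\mapsto 1-\frac{1-\lambda}{r}$, uniformly over all $r$; I would spell this out by observing that any sequence in $\Lambda'$ either has bounded denominators $r$ (so reduces to accumulation in finitely many affine images of $\Lambda$) or has $r\to\infty$ (forcing the limit to be $1\in\mathbb{Q}$), so $\Lambda'$ indeed has only rational accumulation points, which is what the cited complement theorem demands.
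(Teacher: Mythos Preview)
Your proposal is correct and follows essentially the same route as the paper: pass to the quotient via Proposition~\ref{prop:quot-gen-pair}, observe that the quotient generalized pair is gklt with boundary coefficients in the hyperstandard set $\Lambda'=\{1-\tfrac{1-\lambda}{r}\mid \lambda\in\Lambda,\ r\in\mathbb{Z}_{>0}\}$ and nef part of Cartier index dividing $p$, invoke the non-equivariant bounded complement theorem (the paper cites \cite[Theorem~1.2]{FM20}), and pull back. Your identification of the substantive input and your verification that $\Lambda'$ retains rational accumulation points are exactly what the paper does (more tersely).
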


\begin{proof}
Let $\rho_X\colon X\rightarrow X'$ be the quotient of $X$ by $G$.
By Proposition~\ref{prop:quot-gen-pair}, we can write
\[
\rho_X^*(K_{X'}+B'+M')=K_X+B+M,
\]
where the coefficients of $B'$ belongs to a set $\Lambda'\subset \qq$ so that 
$\overline{\Lambda}\subset \qq$ and $\Lambda'$ only depends on $\Lambda$.
Indeed, $\Lambda'$ is the set of hyperstandard coefficients over $\Lambda$,
i.e., 
\[
\Lambda' :=  \left\{
1-\frac{1}{m}+\frac{b}{m} \mid 
b\in \Lambda \text{ and } m\in \zz_{>0}
\right\}. 
\] 
By definition, we have that the b-divisor inducing the trace $M'$ has Cartier index at most $p$ where it descends.
Let $x'$ be the image of $x$ in $X'$.
By Proposition~\ref{prop:quot-gen-pair}, we know that the generalized pair $(X',B'+M';x'$) is generalized klt.
By~\cite[Theorem 1.2]{FM20}, there exists a $N$-complement for the generalized pair
$(X',B'+M')$ around $x'\in X'$.
Let $\Gamma' \geq B'\geq 0$ be the effective divisor that gives such complement.
Then, we have that
$(X',\Gamma' + M')$ is generalized log canonical at $x'$
and 
\[
N(K_{X'}+\Gamma'+M')\sim 0.
\]
Observe that $\Gamma'\geq B'$, hence the log pullback of $K_{X'}+\Gamma'+M'$ to $X$ is of the form $K_X+\Gamma+M$ where $\Gamma \geq B$.
Note that $(X,\Gamma+M)$ is generalized log canonical at $x$, is a $G$-equivariant generalized pair, and the linear equivalence 
\[
N(K_X+\Gamma+M)\sim 0,
\]
holds around $x\in X$.
We conclude that $(X,\Gamma+M)$ is a generalized $G$-equivariant $N$-complement. 
Note that $N$ only depends on $\Lambda'$ and $p$, so it only depends on $\Lambda$ and $p$.
\end{proof}

\begin{definition}
{\em
Let $(X,B+M)$ be a $G$-equivariant generalized pair.
We say that $(X,B+M)$ is {\em generalized divisorially log terminal} (generalized dlt or gdlt for short) if there exists a $G$-equivariant open set $U\subset X$ which satisfies the following conditions: 
\begin{enumerate}     
    \item the coefficients of $B$ are less than or equal to one,
    \item $U$ is smooth and $B|_U$ has simple normal crossing support, and
    \item all the generalized non-klt centers of $(X,B+M)$ map into $U$ and consists of strata of $\lfloor B \rfloor$.
\end{enumerate}
If we run a $G$-equivariant MMP for a generalized dlt pair, then it remains generalized dlt (see, e.g.,~\cite[Definition 1.23]{HM20}).
Note that the condition $(2)$ can be restated a saying that $(U,B|_U)$ is a log smooth pair.
We say $(X,B+M)$ is {\em generalized quotient divisorially log terminal} (generalized qdlt or gqdlt for short) if instead of $(2)$, we have that 
\begin{enumerate}
 \item[(2')] $(U,B|_U)$ has formally toric $\qq$-factorial singularities
so that $\lfloor B|_U\rfloor$ corresponds to the toric boundary. 
\end{enumerate} 
The above condition is equivalent to $(U,B|_U)$ being locally the quotient of a log smooth pair by a finite abelian group fixing the hyperplane coordinates.

Let $(X,B+M)$ be a $G$-equivariant generalized dlt pair.
If $(X,B+M)$ admits a unique divisorial generalized log canonical center,
then we say that $(X,B+M)$ is {\em generalized purely log terminal} (or generalized plt for short).
Observe that in a $G$-equivariant generalized plt pair, the divisorial glc center 
must be fixed by the group action.
}
\end{definition}

The following proposition states that a 
$G$-equivariant generalized log canonical pair
can be turned into a $G$-equivariant generalized dlt pair
by extracting certain $G$-invariant log canonical places.
The proof is similar to that of~\cite[Lemma 2.6]{LMT20}
by running a $G$-equivariant minimal model program.

\begin{proposition}\label{prop:dlt-mod}
Let $G$ be a finite group.
Let $(X,B+M)$ be a $G$-equivariant generalized log canonical pair.
Then, there exists a $G$-equivariant projective birational morphism
$\pi\colon Y\rightarrow X$ so that the following conditions are satisfied:
\begin{enumerate}
    \item The divisors extracted by $\pi$ are generalized log canonical places of $(X,B+M)$, and 
    \item the log pullback $(Y,B_Y+M_Y)$ of $(X,B+M)$ to $Y$ has gdlt singularities.
\end{enumerate}
\end{proposition}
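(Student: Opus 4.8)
The plan is to reduce to the non-equivariant dlt modification theorem (or rather its generalized-pair version) by averaging over the group, following the strategy of \cite[Lemma 2.6]{LMT20}. First I would pass to the quotient $\phi\colon X\rightarrow \bar X$ by $G$, and use Proposition~\ref{prop:quot-gen-pair} to produce the quotient generalized pair $(\bar X,\bar B+\bar M)$, which is again generalized log canonical. The point of working downstairs is that the ordinary generalized dlt modification (see, e.g., \cite{HM20}) applies: there is a projective birational morphism $\bar\pi\colon \bar Y\rightarrow \bar X$ extracting exactly the generalized log canonical places of $(\bar X,\bar B+\bar M)$ with nonnegative log discrepancy needed to make the log pullback gdlt. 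Then I would take $Y$ to be the normalization of the main component of the fiber product $\bar Y\times_{\bar X} X$; since the set of log canonical places extracted by $\bar\pi$ is $\mathrm{Aut}$-canonical (it is determined intrinsically), its preimage on $X$ is a $G$-invariant collection of log canonical places of $(X,B+M)$, so the induced morphism $\pi\colon Y\rightarrow X$ is $G$-equivariant and extracts only generalized log canonical places.

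The remaining issue is that taking the quotient and then base-changing back does not automatically preserve the dlt property, because $Y\rightarrow \bar Y$ is a finite (ramified) cover and dlt-ness is not stable under such covers in general. The standard fix, and the one I would use, is to \emph{not} try to pull back the dlt modification directly but instead to run a $G$-equivariant minimal model program. Concretely: start from \emph{any} $G$-equivariant log resolution $Z\rightarrow X$ on which $M$ descends, write the log pullback $(Z,B_Z+M_Z)$ (a generalized sub-pair), let $E$ be the sum of the $G$-invariant reduced divisors that are log canonical places of $(X,B+M)$ together with the $G$-strict transform of $\lfloor B\rfloor$, and consider $(Z,E+M_Z)$. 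This is a $G$-equivariant generalized dlt pair, and $K_Z+E+M_Z$ is the log pullback of $K_X+B+M$ plus an effective $\pi$-exceptional divisor supported on the non-lc-place exceptional divisors. Now I would run the $G$-equivariant $(K_Z+E+M_Z)$-MMP over $X$; by \cite[Definition 1.23]{HM20} (quoted in the excerpt) dlt-ness is preserved under such an MMP, and the MMP contracts precisely the exceptional divisors that are \emph{not} log canonical places, terminating with a model $Y$ on which $K_Y+B_Y+M_Y=\pi^*(K_X+B+M)$ and $(Y,B_Y+M_Y)$ is gdlt. Existence and termination of this MMP is where one invokes the generalized-pair MMP machinery; since everything is done $G$-equivariantly — equivalently, one runs the ordinary MMP on the quotient, which is legitimate because the quotient generalized pair is dlt — termination follows from the known cases (e.g. special termination / the MMP for pairs with boundary, as the relevant divisors are all components of the boundary).

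The main obstacle, then, is organizing the $G$-equivariant MMP correctly: one must check that the divisors to be contracted are exactly those with negative discrepancy, that the MMP can be run $G$-equivariantly (which reduces to running it on the quotient and pulling back, using Proposition~\ref{prop:quot-gen-pair} to transfer the glc structure), and that it terminates. None of these is new — this is the content of the proof of \cite[Lemma 2.6]{LMT20} adapted to the generalized setting — so the proof is essentially a citation plus the bookkeeping of the quotient correspondence. I would therefore write the proof as: (1) descend to the quotient via Proposition~\ref{prop:quot-gen-pair}; (2) apply the ordinary generalized dlt modification downstairs; (3) pull back and take normalization of the main component, observing $G$-equivariance because the extracted places form an intrinsic set; (4) note that the log pullback is gdlt because gdlt-ness of the quotient pair is equivalent, via the correspondence of log canonical places in Proposition~\ref{prop:quot-gen-pair}, to the required structure upstairs — with condition $(2')$ of gqdlt replaced by genuine $(2)$ exactly when the cover is unramified along the boundary, which one arranges by the MMP choosing a model where this holds, or else one only gets gqdlt and that is what the statement should (and in the paper does, via later refinements) assert.
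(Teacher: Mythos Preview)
Your middle paragraph is exactly the paper's approach: take a $G$-equivariant log resolution $Z\to X$ upstairs, form the $G$-equivariant generalized dlt pair $(Z,E+M_Z)$ with $E$ the reduced exceptional plus the strict transform of $\lfloor B\rfloor$, and run a $G$-equivariant $(K_Z+E+M_Z)$-MMP over $X$ to contract the exceptional divisors that are not log canonical places. This is precisely the argument of \cite[Lemma 2.6]{LMT20} that the paper cites, and it yields genuine gdlt because the starting model is honestly log smooth.

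The problem is your final paragraph, where you abandon this and instead propose to ``(1) descend to the quotient, (2) apply the ordinary generalized dlt modification downstairs, (3) pull back and normalize, (4) conclude gdlt''. This does \emph{not} give gdlt: the pullback of a snc stratum along a finite Galois cover ramified along the boundary is only a toric quotient singularity, so the resulting pair upstairs is gqdlt, not gdlt. You yourself flag this (``or else one only gets gqdlt''), but then incorrectly claim that gqdlt is what the paper actually proves. It is not: Proposition~\ref{prop:dlt-mod} asserts gdlt, and the paper needs this (e.g.\ in the adjunction arguments and in Lemma~\ref{lem:fix-max-dim-cell}, which uses that $(X',B';\eta_W)$ is genuinely log smooth). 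The distinction between the two constructions is exactly the one you identified and then discarded: to get gdlt one must start from a $G$-equivariant log resolution \emph{upstairs} and run the equivariant MMP from there; the equivariance of the MMP can be implemented via the quotient, but the log-smooth starting data must live on $X$, not on $\bar X$. Drop your steps (1)--(4) and keep the argument of your second paragraph.
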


In what follows, we proceed to recall some special blow-ups for generalized klt singularities.

\begin{lemma}\label{lem:plt-blow-up-gen}
Let $(X,B+M;x)$ be a generalized klt singularity.
Then, there exists a projective birational morphism
$\pi\colon Y\rightarrow X$ satisfying the following properties:
\begin{enumerate}
    \item $\pi$ extracts a unique prime divisor $E$ that maps to $x$ and $-E$ is anti-ample over $X$, and
    \item the generalized log pair $(Y,B_Y+M_Y+E)$ has generalized plt singularities, where $B_{Y}$ is the strict transform of $B$ and $M_Y$ is the trace of the corresponding b-divisor.
\end{enumerate}
\end{lemma}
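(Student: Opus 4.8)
\emph{Proof proposal.} The plan is to run the classical Koll\'ar component construction of \cite[Lemma~1]{Xu14}, adapted to generalized pairs, in three steps: reduce to a generalized log canonical, non-klt boundary with a single log canonical place over $x$; extract that place by a relative minimal model program over $X$; and then check that the resulting model is generalized plt and that the auxiliary boundary may be discarded. Since $(X;x)$ is a germ, I would work on a suitable affine neighbourhood of $x$, so that $\lfloor B\rfloor=0$ on all of $X$.

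\emph{Step 1 (reduction to a non-klt pair with a unique log canonical place over $x$).} Let $H$ be a general effective $\qq$-Cartier divisor through $x$ and let $t$ be the generalized log canonical threshold of $H$ with respect to $(X,B+M)$ at $x$; this is a rational number and $(X,B+tH+M;x)$ is generalized log canonical but not generalized klt. Running the tie-breaking technique of \cite[\S10]{Kol13} --- which carries over to generalized pairs, as it perturbs only the boundary divisor and uses only linear equivalences on $X$ --- I would obtain an effective $\qq$-Cartier divisor $D\geq 0$ with $\lfloor B+D\rfloor=0$ so that, setting $\Theta:=B+D+M$, the pair $(X,\Theta;x)$ is generalized log canonical, is not generalized klt, has $\{x\}$ as its only generalized log canonical center, and admits a unique generalized log canonical place $E$ over $x$.

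\emph{Step 2 (extraction of $E$).} Apply Proposition~\ref{prop:dlt-mod} to $(X,\Theta)$ to obtain a generalized dlt modification $g\colon X'\to X$; by its construction (an MMP from a log resolution) $X'$ is $\qq$-factorial. The divisors extracted by $g$ are generalized log canonical places of $(X,\Theta)$, hence by Step~1 each is centered on $\{x\}$ and so equals $E$; thus $\mathrm{Ex}(g)=E$. Since $K_{X'}+(B+D)_{X'}+M_{X'}=g^{*}(K_X+B+D+M)$ is numerically trivial over $X$, running a $\bigl(K_{X'}+(B+D)_{X'}+M_{X'}-E\bigr)$-MMP over $X$ is the same as running a $(-E)$-MMP over $X$. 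By existence and termination of the minimal model program for generalized dlt pairs in characteristic zero (see, e.g., \cite{HM20}) this MMP terminates, and it does not contract $E$, since curves in $E$ contracted by $g$ meet $E$ negatively (negativity lemma) and hence are $(-E)$-positive. Passing to the relative ample model for $-E$ over $X$ I obtain a projective birational morphism $\pi\colon Y\to X$ with $Y$ $\qq$-factorial, $\mathrm{Ex}(\pi)=E$, $\pi(E)=x$, and $-E$ ample over $X$; this is item~(1). (Alternatively, one could invoke directly the existence of Koll\'ar components for generalized klt singularities; cf.\ \cite{HLS19}.)

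\emph{Step 3 (the generalized plt property).} The log pullback of $(X,\Theta)$ to $Y$ has the form $(Y,B_Y+D_Y+M_Y+E)$, where $B_Y$ is the strict transform of $B$ and $D_Y$ that of $D$; it is crepant over $X$ and is generalized dlt, because generalized dlt is preserved by Proposition~\ref{prop:dlt-mod} and by the minimal model program. Since $\lfloor B+D\rfloor=0$ on $X$ and $E$ is the only $\pi$-exceptional divisor, $\lfloor B_Y+D_Y+E\rfloor=E$ is irreducible, so $(Y,B_Y+D_Y+M_Y+E)$ has a unique divisorial generalized log canonical center and is therefore generalized plt. Finally, replacing the boundary by the smaller effective divisor $B_Y+E$ (that is, discarding $D_Y$, which has coefficients $<1$) leaves the floor equal to $E$ and only increases generalized log discrepancies, so $(Y,B_Y+M_Y+E)$ is still generalized plt; and $K_Y+B_Y+M_Y+E$ is $\qq$-Cartier since $Y$ is $\qq$-factorial. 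This proves item~(2).

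\emph{Main obstacle.} The hard part will be Step~1 together with the minimal model program of Step~2: arranging by tie-breaking that $(X,\Theta;x)$ has a \emph{unique} log canonical place over $x$, and then verifying that the relative $(-E)$-MMP over $X$ terminates, does not contract $E$, and produces $-E$ ample with $E$ the unique exceptional divisor. This is precisely the generalized-pair analogue of \cite[Lemma~1]{Xu14}, and it rests on the minimal model program for generalized dlt pairs over a base in characteristic zero.
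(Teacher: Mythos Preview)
Your proposal is correct and follows essentially the same route as the paper: add an effective $\qq$-Cartier divisor through $x$ to create a generalized lc center there, tie-break so that $x$ is the unique lc center with a unique lc place $E$, and then take a generalized dlt modification (Proposition~\ref{prop:dlt-mod}), which now can only extract $E$. The paper stops at this point, citing \cite[Proposition~6.6]{Fil18} for tie-breaking in the generalized setting, whereas you go on to make explicit the $(-E)$-MMP and the passage to the ample model needed to arrange that $-E$ is ample over $X$, and you spell out why discarding the auxiliary divisor $D_Y$ preserves generalized plt; these are details the paper leaves implicit. One small notational wrinkle: in your Step~2 the identity $K_{X'}+(B+D)_{X'}+M_{X'}=g^{*}(K_X+B+D+M)$ only holds if $(B+D)_{X'}$ denotes the full log pullback boundary (hence already contains $E$ with coefficient one); with that reading your computation that the MMP is a $(-E)$-MMP is correct.
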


\begin{proof}
Let $\Gamma\geq0$ be an effective $\qq$-Cartier divisor throught $x$ so that
$(X,B+\Gamma+M;x)$ is generalized log canonical.
We may assume that $x$ is a generalized log canonical center of $(X,B+\Gamma+M;x)$.
By tie-breaking, we may assume that $x$ is a unique generalized log canonical center of $(X,B+\Gamma+M;x)$ (see, e.g.,~\cite[Proposition 6.6]{Fil18}). Moreover, we may assume that $(X,B+\Gamma+M;x)$ has a unique generalized log canonical place over $x$.
Then, it suffices to take a generalized dlt modification of $(X,B+\Gamma+M;x)$
(see, e.g., Proposition~\ref{prop:dlt-mod}).
\end{proof}

\begin{lemma}\label{lem:gen-inv-plt}
Let $G$ be a finite group.
Let $(X,B+M;x)$ be a $G$-equivariant generalized klt singularity.
Then, there exists a projective $G$-equivariant birational morphism $\pi\colon Y\rightarrow X$ satisfying the following properties: 
\begin{enumerate}
    \item $\pi$ extracts a unique $G$-invariant prime divisor $E$ which maps to $x$, 
    \item  $-E$ is ample over $X$, and 
    \item $(Y,E+B_Y+M_Y)$ has generalized plt singularities, where $B_Y$ is the strict transform of $B$ to $Y$ and $M_Y$ is the trace of the b-divisor.
\end{enumerate}
\end{lemma}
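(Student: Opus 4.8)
The plan is to deduce the statement from the non-equivariant Lemma~\ref{lem:plt-blow-up-gen} by passing to the quotient by $G$ and then transporting the resulting blow-up back to $X$. First, I would form the quotient $\phi\colon X\to X':=X/G$; by Proposition~\ref{prop:quot-gen-pair} (using Proposition~\ref{prop:G-inv-b-nef} for the nef part) there is a generalized pair $(X',B'+M')$ with $K_X+B+M=\phi^*(K_{X'}+B'+M')$, and by the same proposition $(X',B'+M';x')$ is generalized klt, where $x':=\phi(x)$. Applying Lemma~\ref{lem:plt-blow-up-gen} to this quotient germ produces a projective birational morphism $\pi'\colon Y'\to X'$ which extracts a unique prime divisor $E'$ over $x'$, with $-E'$ ample over $X'$ and $(Y',E'+B_{Y'}+M_{Y'})$ generalized plt.

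Next, I would let $Y$ be the normalization of the unique component of $X\times_{X'}Y'$ that dominates $X$ (it is unique because $\pi'$ is an isomorphism over a dense open subset of $X'$). Then $\pi\colon Y\to X$ is projective birational; since $\phi$ is $G$-invariant, the $G$-action on $X$ lifts to $Y$, the induced finite morphism $\phi_Y\colon Y\to Y'$ is $G$-invariant of degree $|G|$, and, $Y/G$ and $Y'$ both being normal, $\phi_Y$ realizes $Y'=Y/G$. I would set $E:=\phi_Y^{-1}(E')_{\mathrm{red}}$, the $G$-orbit of the prime divisors lying over $E'$; it is $G$-invariant and it is the only $\pi$-exceptional divisor over $x$. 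Since $E'$ occurs with coefficient $1$ in $E'+B_{Y'}$, the Hurwitz formula gives the crepant identity $K_Y+E+B_Y+M_Y=\phi_Y^*(K_{Y'}+E'+B_{Y'}+M_{Y'})$, where $B_Y$ is the strict transform of $B$ and $M_Y$ is the trace of the b-divisor (Proposition~\ref{prop:G-inv-b-nef} identifies $M_Y$ with $\phi_Y^*M_{Y'}$, and the usual ramification computation identifies the non-$E$ part of the crepant boundary with the strict transform $B_Y$).

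With this in hand, property $(1)$ is immediate, and for $(2)$ note that $-E$ is a positive rational multiple of $\phi_Y^*(-E')$; since $-E'$ is $\pi'$-ample and $\phi,\phi_Y$ are finite, $-E$ is $\pi$-ample. For $(3)$ I would apply Proposition~\ref{prop:quot-gen-pair} to the $G$-equivariant generalized pair $(Y,E+B_Y+M_Y)$, whose quotient generalized pair is precisely $(Y',E'+B_{Y'}+M_{Y'})$: the correspondence there matches $G$-equivariant divisorial generalized log canonical centers of $(Y,E+B_Y+M_Y)$ with divisorial generalized log canonical centers of $(Y',E'+B_{Y'}+M_{Y'})$, and since the latter pair is generalized plt its only such center is $E'$; hence $(Y,E+B_Y+M_Y)$ has $E$ as its unique $G$-invariant divisorial generalized log canonical center. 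Combined with the fact that $(Y,B_Y+M_Y)$ is generalized klt (Proposition~\ref{prop:quot-gen-pair} again), this yields that $(Y,E+B_Y+M_Y)$ is generalized plt.

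The step I expect to be the main obstacle is exactly this transport back to $X$: one must check that the fibre-product model is $G$-equivariant with $\phi_Y$ the quotient map and with the exceptional locus over $x$ a single $G$-orbit, so that ``$\pi$ extracts a unique $G$-invariant prime divisor'' is the correct formulation (note that $E$ need not be an irreducible divisor unless $\phi_Y$ is totally ramified along $E'$; ``$G$-invariant'' here means $gE=E$ for all $g\in G$), and that generalized plt-ness is inherited from the quotient — all of which rests on Propositions~\ref{prop:quot-gen-pair} and~\ref{prop:G-inv-b-nef}. The remainder is the same discrepancy bookkeeping via the Hurwitz formula as in the non-equivariant Lemma~\ref{lem:plt-blow-up-gen}.
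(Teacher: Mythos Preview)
Your overall strategy coincides with the paper's: pass to the quotient $X'=X/G$, apply the non-equivariant Lemma~\ref{lem:plt-blow-up-gen} there, and take the normalized base change back to $X$. The crepant pullback identity and the anti-ampleness of $-E$ are handled correctly.

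The gap is in property~(1): you do not prove that $E$ is a \emph{prime} (i.e., irreducible) divisor. You explicitly write that ``$E$ need not be an irreducible divisor'' and reinterpret the conclusion as $gE=E$; but the lemma asserts that $E$ is prime, and this is genuinely needed later (e.g., in Theorem~\ref{thm:almost-fixed-D-local}, where one performs adjunction to $E$). Your appeal to Proposition~\ref{prop:quot-gen-pair} only gives a bijection between \emph{$G$-equivariant} log canonical centers upstairs and log canonical centers downstairs, so it shows that $E$ is a single $G$-orbit of divisorial glc centers, not that it is a single prime divisor. With the paper's definition of generalized plt (gdlt plus a \emph{unique} divisorial glc center), you have therefore not established~(3) either when $E$ happens to be reducible.

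The paper closes this gap as follows. From the crepant identity one sees that $K_Y+E+B_Y+M_Y$ is anti-ample over $X$ (since $(X,B+M;x)$ is gklt, the coefficient of $E$ in the log pullback is $<1$, so $K_Y+E+B_Y+M_Y\equiv_X (1-a)E$ with $1-a>0$ and $-E$ ample over $X$). Then the connectedness theorem for generalized log canonical centers (\cite[Lemma 2.14]{Bir19}) forces the non-klt locus, which is exactly $E$, to be connected in the fibre over $x$; combined with the discrepancy computation (all exceptional log discrepancies are positive, so distinct components of $E$ cannot meet), this yields that $E$ is irreducible. You should insert this connectedness step in place of your reinterpretation of ``$G$-invariant prime divisor''.
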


\begin{proof}
Let $X'$ be the quotient of $X$ by $G$.
By Proposition~\ref{prop:quot-gen-pair},
we have a generalized klt pair $(X',B'+M';x')$ which pullbacks to $(X,B+M;x)$.
Let $\pi'\colon Y'\rightarrow X'$ be a generalized plt blow-up of $(X',B'+M';x')$ at $x$ (see Lemma~\ref{lem:plt-blow-up-gen}). 
Let $Y$ be the normal closure of $Y'$ in the function field of $X$.
Let
$\rho_X\colon X\rightarrow X'$ and
$\rho_Y \colon Y\rightarrow Y'$ be the corresponding quotient morphisms.
Then, we have a $G$-equivariant projective birational morphism
$\pi\colon Y \rightarrow X$.
By construction, we have the following properties:
\begin{enumerate}
\item $\pi'$ extracts a unique prime divisor $E'$ that maps to $x'$ and is anti-ample over $X'$, and 
\item 
the generalized log pair $(Y',B_{Y'}+M_{Y'}+E')$ has generalized plt singularities,
where $B_{Y'}$ is the strict transform of $B'$ and $M_{Y'}$ is the trace of the corresponding b-divisor.
\end{enumerate}
Observe that we have the following $\qq$-linear equivalence
\[
\rho_Y^*(K_{Y'}+B_{Y'}+M_{Y'}+E')\sim_\qq
K_Y+B_Y+M_Y+E,
\]
where $E=\rho_Y^*(E')$.
Hence, we conclude that
$(Y,E+B_Y+M_Y)$ is plt and anti-ample over $X$.
By the connectedness of log canonical centers for generalized pairs (see, e.g.~\cite[Lemma 2.14]{Bir19}), we conclude that $E$ is a prime divisor.
We deduce that 
$E$ is a $G$-invariant prime divisor, which maps to $x$ and is anti-ample over $X$.
This finishes the proof.
\end{proof}

\begin{lemma}\label{lem:existence-g-inv-lcp}
Let $(X,\Delta)$ be a $G$-invariant log canonical pair.
Assume that $(X,(1-\epsilon)\Delta)$ is klt.
Let $W\subset X$ be a $G$-invariant log canonical center of $(X,\Delta)$.
Then, $(X,\Delta)$ admits a $G$-invariant log canonical place over $W$.
\end{lemma}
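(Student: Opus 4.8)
The plan is to produce a $G$-invariant divisorial log canonical place with center $W$ by passing to the quotient, extracting a single divisor there, and pulling it back; the one delicate point is that the pullback must remain irreducible. First, if $W$ is a component of $\lfloor\Delta\rfloor$, then it is itself a $G$-invariant divisorial log canonical center and $\ord_W$ is a $G$-invariant log canonical place over $W$, so we may assume $\codim_X W\geq 2$. Let $\phi\colon X\to X':=X/G$ be the quotient and let $(X',\Delta')$ be the log pair with $\phi^{*}(K_{X'}+\Delta')=K_X+\Delta$ furnished by Proposition~\ref{prop:quot-gen-pair} (with $M=0$). By that proposition, $(X',\Delta')$ is log canonical, $W':=\phi(W)$ is the log canonical center of $(X',\Delta')$ corresponding to $W$, and — applying Proposition~\ref{prop:quot-gen-pair} to $(X,(1-\epsilon)\Delta)$ — the pair $(X',(1-\epsilon)\Delta')$ is klt, hence $X'$ is klt; moreover $W=\phi^{-1}(W')$ since $W$ is $G$-invariant. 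By the bijection of Proposition~\ref{prop:quot-gen-pair} between $G$-equivariant log canonical places over $X$ and log canonical places over $X'$, it suffices to find a divisorial valuation $E'$ of $\kk(X')$ with $c_{X'}(E')=W'$ and $a_{E'}(X',\Delta')=0$ whose associated $G$-prime divisor over $X$ is a single — hence $G$-invariant — prime divisor.

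To build $E'$, note that since $W'$ is a log canonical center of the log canonical pair $(X',\Delta')$ there is a divisorial valuation $E'$ with $c_{X'}(E')=W'$ and $a_{E'}(X',\Delta')=0$ (for instance, take a dlt modification of $(X',\Delta')$ and blow up a stratum of its reduced boundary mapping onto $W'$). Running a relative minimal model program over $X'$ to contract every other exceptional divisor, we obtain, in the spirit of the proof of Lemma~\ref{lem:plt-blow-up-gen}, a projective birational morphism $g'\colon Z'\to X'$ with $\operatorname{Ex}(g')=E'$, with $-E'$ ample over $X'$, and with $(Z',(g')^{-1}_{*}\Delta'+E')$ dlt. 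Let $Z$ be the normalization of the main component of $Z'\times_{X'}X$ with its induced $G$-action; this yields a $G$-equivariant birational morphism $g\colon Z\to X$ and the quotient $\rho_Z\colon Z\to Z'$, and we set $E:=(\rho_Z^{-1}E')_{\mathrm{red}}$. Then $E$ is $G$-invariant, $-E$ is $g$-ample, and $g$ is an isomorphism over $X\setminus W$ with $\operatorname{Ex}(g)=E$, so $c_X(E)=g(E)=W$; by Proposition~\ref{prop:quot-gen-pair} the $G$-prime divisor $E$ is a log canonical place of $(X,\Delta)$ with $a_E(X,\Delta)=r\cdot a_{E'}(X',\Delta')=0$.

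The remaining point — that $E$ is a single prime divisor — is the main obstacle, and I would handle it as in the proof of Lemma~\ref{lem:gen-inv-plt}. Write $E=E_1+\dots+E_k$; since $E'$ is prime, $G$ permutes the $E_i$ transitively and each $E_i$ dominates $W$. If $k\geq 2$, then, $-E$ being $g$-ample, for a general point $x\in W$ the fibre $g^{-1}(x)$ is connected, contained in $E$, and meets each $E_i$, so some $E_a\cap E_b$ with $a\neq b$ dominates $W$; combining the connectedness of log canonical centers for generalized pairs~\cite[Lemma~2.14]{Bir19} (applicable since $X$ is klt) with adjunction to a $G$-invariant boundary component on a further $G$-equivariant dlt modification (Proposition~\ref{prop:dlt-mod}) and an induction on $\dim X$, this forces $k=1$. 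Thus $E$ is a $G$-invariant prime divisor and is the required $G$-invariant log canonical place of $(X,\Delta)$ over $W$. The whole difficulty lies in this last step — selecting a $G$-invariant log canonical place out of a $G$-orbit of such places — which is precisely where connectedness of log canonical centers is needed.
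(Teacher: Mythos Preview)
Your overall strategy --- pass to the quotient $X'=X/G$, extract a single divisor $E'$ over $W'$, pull back, and argue the preimage $E$ is a prime divisor --- is exactly the paper's approach. The gap is in your last paragraph: the sentence ``combining the connectedness of log canonical centers \ldots\ with adjunction \ldots\ and an induction on $\dim X$, this forces $k=1$'' is not an argument, and no induction on the dimension is needed or helpful.

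What is missing is that one must arrange $(Z',(g')^{-1}_*\Delta'+E')$ to be \emph{plt}, not merely dlt. This is precisely why the paper invokes \cite[Lemma~1]{Xu14}: since $(X',(1-\epsilon')\Delta')$ is klt, a tie-breaking argument lets one choose $E'$ so that the extraction is a plt blow-up with $E'$ the \emph{unique} log canonical center. Once you have plt on the quotient, irreducibility of $E$ follows cleanly from Proposition~\ref{prop:quot-gen-pair}: the bijection between $G$-equivariant log canonical centers upstairs and log canonical centers downstairs forces every log canonical center of $(Z,\Delta_Z+E)$ to be a component of $E$. If two components $E_a,E_b$ met, any irreducible component of $E_a\cap E_b$ would be a log canonical center of codimension $\geq 2$ in $Z$, whose image would be a log canonical center of $(Z',\Delta_{Z'}+E')$ strictly contained in $E'$ --- contradicting plt. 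Hence the components of $E$ are pairwise disjoint; since $-E$ is $g$-ample, $E$ is the full exceptional locus over $W$ and is connected (Zariski's main theorem, or the connectedness principle \cite[Lemma~2.14]{Bir19} as the paper cites), so $E$ is prime. Your construction, which picks an arbitrary log canonical place $E'$ and only asserts dlt, does not exclude deeper log canonical centers on $Z'$ arising from components of $\lfloor\Delta'\rfloor$ meeting $E'$, and then the contradiction above is unavailable.
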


\begin{proof}
Let $(X',\Delta')$ be the log pair obtained by quotienting by $G$.
Then, we have that $(X',(1-\epsilon')\Delta')$ is a klt pair for $\epsilon'$ small enough.
Let $W'\subset X'$ be the image of $W$ on $X'$.
Proceeding as in the proof of~\cite[Lemma 1]{Xu14}, we can find a projective birational morphism
$Y'\rightarrow X'$ which extracts a unique prime divisor $E$ that satisfies:
\begin{enumerate}
    \item $E'$ maps to $W'$,
    \item $-E'$ is ample over $X'$, and 
    \item $(Y',\Delta_{Y'}+E')$ has plt singularities.
\end{enumerate} 
Here, $\Delta_{Y'}$ is the strict transform of $\Delta'$ in $Y'$.
Then the rest of the proof proceeds as the one of Lemma~\ref{lem:gen-inv-plt}.
By taking the normal closure of $Y'$ on the function field of $X$, we obtain a $G$-equivariant projective birational morphism $Y\rightarrow X$.
Let $E$ be the divisorial exceptional locus of this projective birational map.
Then, $E$ maps onto $W$.
By construction,
the pair $(Y,E+\Delta_Y)$ is $G$-equivariant, plt and,
the divisor 
$K_Y+E+\Delta_Y$ is anti-ample over $X$.
By the connectedness for generalized log canonical centers~\cite[Lemma 2.14]{Bir19}, we conclude that $E$ is prime.
Hence, $E$ is a $G$-invariant log canonical place over $W$.
\end{proof}

\subsection{Fano type varieties}

In this subsection, we will recall the main class of projective varieties that we consider in this article.
We recall the theory of complements on Fano varieties and prove some preliminary lemmas.

\begin{definition}
{\em 
A {\em Fano type morphism}
is a projective morphism $X\rightarrow Z$ so that
there exists a boundary $\Delta$ in $X$ satisfying:
\begin{enumerate}
    \item $(X,\Delta)$ has klt singularities, and 
    \item $-(K_X+\Delta)$ is big and nef over $Z$.
\end{enumerate}
If $Z={\rm Spec}(\kk)$, then we say that $X$ is a {\em Fano type variety}
or a variety of Fano type.
It is well-known that Fano type morphisms are relative Mori dream spaces~\cite[Corollary 1.3.1]{BCHM10}.
In particular, any minimal model program for a divisor on $X$ over $Z$
must terminate with a good minimal model or a Mori fiber space over $Z$.
}
\end{definition}

\begin{definition}
{\em 
Let $X\rightarrow Z$ be a contraction.
Let $(X,B+M)$ be a generalized pair structure on $X$.
We say that $(X,B+M)$ is generalized log Calabi-Yau over $Z$ if the following conditions are satisfied:
\begin{enumerate}
    \item $(X,B+M)$ has generalized log canonical singularities, and 
    \item $K_X+B+M\sim_{Z,\qq} 0$.
\end{enumerate}
In the case that $Z={\rm Spec}(\kk)$, then we say that $(X,B+M)$ is a generalized log Calabi-Yau pair.
}
\end{definition}

\begin{notation}
{\em 
Let $(X,B+M)$ be a generalized log Calabi-Yau pair.
Let $\phi\colon X\rightarrow Y$ be a projective birational contraction.
We can induce a generalized log Calabi-Yau pair on $Y$ as follows:
let $B_Y$ be the push-forward of $B_X$ on $Y$ and $M_Y$ be the trace
of the b-divisor $M$ on $Y$.
Then, the generalized pair $(Y,B_Y+M_Y)$ is generalized log Calabi-Yau.
More generally, given the following data:
\begin{enumerate}
    \item a birational map $X\dashrightarrow Y$, and 
    \item a generalized log Calabi-Yau pair $(X,B+M)$,
\end{enumerate}
we can pullback $(X,B+M)$ to a birational
model which admits a projective contraction to $Y$ and then push-forward to $Y$.
We call the obtained generalized sub-pair $(Y,B_Y+M_Y)$ the push-forward of $(X,B+M)$ to $Y$.
We say that $(X,B+M)$ and $(Y,B_Y+M_Y)$ are {\em log crepant} generalized sub-pairs.
Observe that in general $B_Y$ may have negative coefficients, but whenever $X\dashrightarrow Y$ is a birational contraction, thee divisor $B_Y$ 
will be effective.
}
\end{notation}

The following lemma will often be used throughout the article.
It allows us to prove that certain extractions over Fano type
varieties are still Fano type varieties.

\begin{lemma}\label{lem:extraction-FT}
Let $X$ be a Fano type variety.
Let $(X,B+M)$ be a generalized log Calabi-Yau pair.
Let $\pi\colon Y\rightarrow X$ be a projective birational morphism
which only extracts divisors with generalized log discrepancy in $[0,1)$ with respect to $(X,B+M)$. Then $Y$ is a Fano type variety.
\end{lemma}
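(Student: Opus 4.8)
The plan is to reduce the statement to the standard fact that if $X$ is of Fano type and $(X,\Gamma)$ is klt with $-(K_X+\Gamma)$ big and nef (indeed $\sim_{\qq}0$ after a perturbation), then any birational model extracting only divisors of positive discrepancy is again of Fano type. First I would use that $X$ is of Fano type to fix a boundary $\Delta$ with $(X,\Delta)$ klt and $-(K_X+\Delta)$ big and nef; running an MMP we may even assume $-(K_X+\Delta)$ is ample, or more conveniently pass to a boundary with $K_X+\Delta\sim_{\qq}-A$ for $A$ ample. The key point is then to combine the Calabi--Yau generalized pair $(X,B+M)$ with a small multiple of $\Delta$ to produce a genuine klt pair on $X$ whose log pullback to $Y$ remains klt, and then to absorb the nef part $M$ and the positivity of $-A$ to witness that $Y$ is of Fano type.

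The key steps, in order, are as follows. \textbf{Step 1.} Since $-(K_X+B+M)\sim_{\qq}0$ and $X$ is of Fano type, choose $\Delta$ as above so that for small $\epsilon>0$ the generalized pair $(X,(1-\epsilon)B+\epsilon\Delta+(1-\epsilon)M)$ is generalized klt and its generalized log discrepancies differ from those of $(X,B+M)$ by a controlled amount; in particular a divisor $E$ over $X$ with $a_E(X,B+M)\in[0,1)$ satisfies $a_E\big(X,(1-\epsilon)B+\epsilon\Delta+(1-\epsilon)M\big)>0$ for all $\epsilon$ and lies strictly below $1$ for $\epsilon$ small. \textbf{Step 2.} Compute the log pullback to $Y$: write $K_Y+B_Y+M_Y=\pi^*(K_X+B+M)$, and likewise for the perturbed pair; since $\pi$ extracts only divisors with generalized log discrepancy in $[0,1)$ with respect to $(X,B+M)$, the perturbed pair pulls back to a generalized pair $(Y,\Gamma_Y+M_Y)$ with $\Gamma_Y$ effective and with all coefficients in $[0,1)$ after possibly shrinking $\epsilon$, hence $(Y,\Gamma_Y+M_Y)$ is generalized klt. \textbf{Step 3.} Observe that $-(K_Y+\Gamma_Y+M_Y)=\pi^*\big(\epsilon A\big)$ up to $\qq$-linear equivalence (where $-(K_X+(1-\epsilon)B+\epsilon\Delta+(1-\epsilon)M)\sim_{\qq}\epsilon A$), which is nef and big over $\Spec(\kk)$ since $\pi$ is birational and $A$ is big and nef. \textbf{Step 4.} Finally, use Proposition~\ref{prop:G-inv-b-nef} (or just the descent of the b-nef divisor $M$) to replace $M_Y$ by an honest effective divisor: on a high enough model $M$ descends to a nef divisor $D$, so $M_Y$ is a limit of movable classes; adding a small general effective $\qq$-divisor $M_Y'\sim_{\qq}M_Y$ keeps the pair klt and turns $(Y,\Gamma_Y+M_Y')$ into an honest klt pair with $-(K_Y+\Gamma_Y+M_Y')$ big and nef, so $Y$ is of Fano type.

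The main obstacle I anticipate is \textbf{Step 4}: converting the nef \emph{b}-divisor part $M$ into an honest effective boundary on $Y$ without destroying klt-ness, since $M_Y$ need not be semiample on $Y$ and a general member of $|M_Y|_{\qq}$ may not exist with the required singularities. The clean way around this is to note that it suffices to exhibit \emph{some} boundary $\Delta_Y$ with $(Y,\Delta_Y)$ klt and $-(K_Y+\Delta_Y)$ big and nef; and since $-(K_Y+\Gamma_Y+M_Y)$ is big and nef and $M_Y$ is a trace of a nef b-divisor, one can choose $0\le M_Y'\sim_{\qq}(1-\delta)M_Y + (\text{small ample})$ in general position so that $(Y,\Gamma_Y+M_Y')$ is klt while $-(K_Y+\Gamma_Y+M_Y')$ stays big and nef. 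Alternatively, one can first replace the generalized Calabi--Yau pair $(X,B+M)$ by an honest log Calabi--Yau pair $(X,B+M')$ with $M'\sim_\qq M$ effective and $(X,B+M')$ still klt-type, reducing immediately to the classical (non-generalized) version of the lemma, which is well known; I would present the argument in this reduced form to keep the proof short.
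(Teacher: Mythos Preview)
Your outline is correct and follows essentially the same route as the paper: take a klt Fano-type boundary $\Gamma$ on $X$, form the convex combination $\Omega_\epsilon=(1-\epsilon)\Gamma+\epsilon(B+\Delta)$ with an honest log Calabi--Yau replacement $(X,B+\Delta)$ of $(X,B+M)$, check that for small $\epsilon$ the extracted divisors have log discrepancy in $(0,1)$ with respect to $(X,\Omega_\epsilon)$, and conclude that the log pullback to $Y$ is klt with big and nef anticanonical.

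The one place you are vague is exactly the place you flag, and it is where the paper does the real work. Your ``alternative'' reduction --- replace $M$ by an effective $M'\sim_\qq M$ with $(X,B+M')$ still log canonical --- is not automatic: $M_X$ need not be semiample on $X$, so a general member of $|M_X|_\qq$ with the right singularities need not exist. The paper resolves this cleanly using that $X$ is a Mori dream space (being of Fano type): after a small $\qq$-factorialization one runs an $M$-MMP, which is a sequence of flips (the diminished base locus of the pushforward of a nef divisor has codimension $\ge 2$) terminating with a model $X_0$ on which $M_0$ is semiample; there one chooses a general $\Delta_0\sim_\qq M_0$ with $(X_0,B_0+\Delta_0)$ log canonical, and then pushes forward along the small map $X_0\dashrightarrow X$ to obtain the honest log Calabi--Yau pair $(X,B+\Delta)$. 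Once you insert this step, your argument is the paper's.
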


\begin{proof}
Since $X$ is of Fano type,
then it is a Mori dream space~\cite[Corollary 1.3.1]{BCHM10}.
Passing to a small $\qq$-factorialization, we may assume that $X$ has $\qq$-factorial singularities.
The diminished base locus of $M$ has codimension at least two,
being it the push-forward of a nef divisor on a higher model.
We run a $M$-MMP which terminates with a good minimal model.
Let $X\dashrightarrow X_0$ be such a minimal model program.
Since the diminished base locus of $M$ has codimension at least two, then 
the birational map $X\dashrightarrow X_0$ is small.
Let $M_0$ be the trace of $M$ on $X_0$.
We have that $M_0$ is a semiample divisor.
Let $B_0$ be the push-forward of $B$ to $X_0$.
Since $(X_0,B_0+M_0)$ is generalized log canonical, then $(X_0,B_0)$ is log canonical.
We can find $M_0\sim_\qq \Delta_0 \geq 0$ so that $(X_0,B_0+\Delta_0)$ is log canonical.
Observe that $(X_0,B_0+\Delta_0)$ is log Calabi-Yau.
Let $\Delta$ be the push-forward of $\Delta_0$ to $X_0$.
Then, the log pair $(X,B+\Delta)$ is log Calabi-Yau.

Since $X$ is of Fano type,
we can find a boundary $\Gamma\geq 0$ so that
$(X,\Gamma)$ has klt singularities
and $-(K_X+\Gamma)$ is big and nef.
For $\epsilon \in (0,1)$, we define the boundary
\[
\Omega_\epsilon = (1-\epsilon)\Gamma + \epsilon(B+\Delta).
\]
For $\epsilon>0$ small enough, 
all the divisors extracted on $Y$ 
have log discrepancy 
in $(0,1)$ with respect to $(X,\Omega_\epsilon)$.
Observe that $(X,\Omega_\epsilon)$ has klt singularities.
Furthermore, we have that
\[
-(K_X+\Omega_\epsilon) =
-(1-\epsilon)(K_X+\Gamma) -
\epsilon(K_X+B+\Delta) \sim_\qq 
-(1-\epsilon)(K_X+\Gamma)
\]
is big and nef.
Let $(Y,\Omega_{Y,\epsilon})$ be the log pull-back of $(X,\Omega_\epsilon)$ to $Y$.
For $\epsilon>0$ small enough the following conditions are satisfied:
\begin{enumerate}
    \item $(Y,\Omega_{Y,\epsilon})$ is a log pair,
    \item $(Y,\Omega_{Y,\epsilon})$ has klt singularities, and 
    \item $-(K_Y+\Omega_{Y,\epsilon})$ is big and nef.
\end{enumerate}
Hence, $Y$ is a Fano type variety.
\end{proof}

Now, we turn to prove that $G$-equivariant Fano type varieties which admit a $G$-equivariant $\qq$-complements also admits $G$-equivariant $N$-complements.
Here, $N$ only depends on the dimension and numerical invariants of the boundary and nef parts.

\begin{lemma}\label{lem:Q-com-N-com}
Let $n$ and $p$ be positive integers and let
$\Lambda$ be a set of rational numbers with rational accumulation points.
There exists a constant $N:=N(n,p,\Lambda)$, only depending on $n,p$ and $\Lambda$, satisfying the following.
Let $G$ be a finite group.
Let $X$ be a $n$-dimensional Fano type variety.
Let $(X,B+M)$ be a $G$-equivariant generalized log canonical pair satisfying the following conditions:
\begin{enumerate}
    \item The coefficients of $B$ belong to $\Lambda$, and 
    \item $pM$ is Cartier in the quotient where it descends.
\end{enumerate}
Assume that $(X,B+M)$ admits a $\qq$-complement.
Then, $(X,B+M)$ admits a $G$-equivariant $N$-complement.
\end{lemma}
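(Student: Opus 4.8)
The plan is to pass to the quotient, reduce the statement to an absolute statement about generalized log canonical Fano type varieties, and then invoke the known boundedness of complements for generalized pairs together with a standard MMP argument to promote a $\qq$-complement to an $N$-complement.

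\emph{Step 1: Reduction to the quotient.} Let $\rho_X\colon X\to X'$ be the quotient by $G$. By Proposition~\ref{prop:quot-gen-pair}, we obtain a generalized pair $(X',B'+M')$ with $K_X+B+M=\rho_X^*(K_{X'}+B'+M')$, and $(X',B'+M')$ is generalized log canonical since $(X,B+M)$ is. As in the proof of Lemma~\ref{lem:G-equiv-N-comp}, the coefficients of $B'$ lie in the hyperstandard set $\Lambda'$, which depends only on $\Lambda$, and $pM'$ is Cartier in the birational model where it descends. Moreover, since $X$ is of Fano type, a $\qq$-complement of $(X,B+M)$ descends (again by Proposition~\ref{prop:quot-gen-pair}, applied to the complementary divisor) to a $\qq$-complement of $(X',B'+M')$. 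It therefore suffices to produce an $N$-complement for $(X',B'+M')$ with $N=N(n,p,\Lambda)$, because pulling it back to $X$ yields a $G$-equivariant $N$-complement by the very definition of $G$-equivariant $N$-complements. The only subtlety is that $X'$ need not be of Fano type; however, we only need $X'$ to admit a $\qq$-complement, and the relevant boundedness statement for generalized pairs is phrased for pairs admitting a $\qq$-complement, not for Fano type varieties directly.

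\emph{Step 2: From a $\qq$-complement to an $N$-complement.} Once we are on $X'$, the statement is: a generalized log canonical pair $(X',B'+M')$ of dimension $n$ with coefficients of $B'$ in $\Lambda'$, with $pM'$ Cartier where it descends, and admitting a $\qq$-complement, admits an $N$-complement with $N$ depending only on $n$, $p$ and $\Lambda'$. Having a $\qq$-complement means there is $\Gamma'\ge B'\ge 0$ with $(X',\Gamma'+M')$ generalized log canonical and $K_{X'}+\Gamma'+M'\sim_\qq 0$. Run a $(K_{X'}+B'+M')$-MMP, or better, use the $\qq$-complement to see that $(X',B'+M')$ has a crepant model that is generalized log Calabi-Yau and log canonical; then apply the boundedness of $N$-complements for generalized log Calabi-Yau pairs of bounded dimension with hyperstandard boundary coefficients and bounded Cartier index of the nef part. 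This is exactly the content of the theorem of Birkar (for pairs) and its generalized-pair enhancement in the literature; in the excerpt, the role model is \cite[Theorem 1.2]{FM20} as used in Lemma~\ref{lem:G-equiv-N-comp}. The output is an $\tilde\Gamma'\ge B'\ge 0$ with $(X',\tilde\Gamma'+M')$ generalized log canonical and $N(K_{X'}+\tilde\Gamma'+M')\sim 0$, with $N$ of the prescribed shape.

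\emph{Step 3: Pull back.} Let $\tilde\Gamma'$ be the complementary divisor produced in Step~2, and let $(X,\tilde\Gamma+M)$ be the log pullback of $(X',\tilde\Gamma'+M')$ to $X$. By Proposition~\ref{prop:quot-gen-pair}, $\tilde\Gamma\ge B\ge 0$, the pair $(X,\tilde\Gamma+M)$ is generalized log canonical, and $N(K_X+\tilde\Gamma+M)\sim 0$. This is precisely a $G$-equivariant $N$-complement, and $N=N(n,p,\Lambda)$ depends only on $n$, $p$ and $\Lambda$ since $\Lambda'$ depends only on $\Lambda$.

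\emph{Main obstacle.} The crux is Step~2: extracting the \emph{effective} boundedness of $N$ from the mere existence of a $\qq$-complement for a generalized log canonical (not necessarily Fano type, not necessarily klt) pair. One must be careful that the hypotheses of the cited boundedness result are met — in particular one may first need to pass to a $\qq$-factorial generalized dlt model (via Proposition~\ref{prop:dlt-mod}) or to perturb $M'$ within its $\qq$-linear equivalence class to reduce to the honest-pair case, and to track that all numerical data ($n$, $p$, $\Lambda'$) stay bounded through these reductions. The Fano type hypothesis on $X$ is used only to guarantee that a $\qq$-complement descends and that MMP-type operations on $X$ (or $X'$) terminate; it is not needed for the boundedness input itself, which is why the lemma is stated with "admits a $\qq$-complement" as a separate hypothesis.
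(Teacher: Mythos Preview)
Your overall architecture matches the paper: pass to the quotient $(X',B'+M')$, produce an $N$-complement there, and pull back. The coefficient and Cartier-index bookkeeping in Step~1 and the pullback in Step~3 are exactly as in the paper.

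The gap is in Step~2, and it stems from a mistaken premise in Step~1. You write that ``$X'$ need not be of Fano type'' and then try to invoke a boundedness-of-complements statement for arbitrary generalized log canonical pairs that merely admit a $\qq$-complement. No such statement is being cited here, and this is precisely the obstacle you flag in your final paragraph without resolving. In fact $X'$ \emph{is} of Fano type: the quotient of a Fano type variety by a finite group is Fano type (this is \cite[Proposition~2.17]{Mor20a}, which the paper invokes explicitly). This is not a cosmetic point; it is what makes Step~2 work. Because $X'$ is Fano type it is a Mori dream space, so one can run a $-(K_{X'}+B'+M')$-MMP (note the sign: you suggested the opposite direction), which terminates with a good minimal model $X'\dashrightarrow X''$ on which $-(K_{X''}+B''+M'')$ is nef. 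On $X''$ one is squarely in the hypotheses of \cite[Theorem~1.2]{FM20}, which produces an $N$-complement with $N=N(n,p,\Lambda')$. The paper then uses \cite[Proposition~6.1.(3)]{Bir19} to lift this $N$-complement back from $X''$ to $X'$.

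So your Step~2, as written, does not close: the ``crepant model'' and ``perturb $M'$'' suggestions do not substitute for the Fano-type-driven MMP reduction. Once you correct the claim about $X'$ and run the anti-log-canonical MMP, your proof becomes the paper's.
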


\begin{proof}
Let $(X',B'+M')$ be the quotient generalized pair.
Then, as in the proof of Lemma~\ref{lem:G-equiv-N-comp}, the coefficients of $B'$ belong to a set $\Lambda'\subset \qq$ with $\overline{\Lambda'}\subset \qq$, which only depends on $\Lambda$.
Furthermore, the b-divisor $M'$ has Cartier index a divisor of $p$ in the model where it descends.
We claim that the generalized pair $(X',B'+M')$ is $\qq$-complemented by some boundary $\Gamma' \geq 0$.
Inded, by the proof of~\cite[Proposition 2.17]{Mor20a}, the $G$-equivariant generalized pair $(X,B+M)$ admits a $G$-equivariant $\qq$-complement, so the claim follows as in~\cite[Proposition 2.18]{Mor20a}.
Hence, we have that the generalized pair
$(X',B'+\Gamma'+M')$ has generalized log canonical singularities.
By~\cite[Proposition 2.17]{Mor20a}, we know that $X'$ is a Fano type variety, in particular, it is a Mori dream space.
We run a $-(K_{X'}+B'+M')$-minimal model program.
Since $-(K_{X'}+B'+M')$ is pseudo-effective, this minimal model program $X'\dashrightarrow X''$ terminates with a good minimal model
for $-(K_{X'}+B'+M')$.
By~\cite[Proposition 6.1.(3)]{Bir19}, in order to find a $N$-complement for $(X',B'+M')$ it suffices to produce a $N$-complement for $(X'',B''+M'')$.
The existence of a $N$-complement for $(X'',B''+M'')$ is proved in~\cite[Theorem 1.2]{FM20}.
Hence, we conclude that $(X',B'+M')$ admits a $N$-complement.
By pulling-back to $X$, we obtain a generalized $G$-equivariant $N$-complement for $(X,B+M)$.
\end{proof}

The following lemma will be used to find 
horizontal log canonical centers in Fano type fibrations, which are fixed for a finite action.

\begin{lemma}\label{lem:fix-lcc}
Let $n$ be a positive integer.
There exists a constant $c(n)$, only depending on $n$, satisfying the following.
Let $F$ be a $n$-dimensional canonical Fano variety.
Let $(F,B_F)$ be a log Calabi-Yau pair.
Then, for each $i\in \{0,\dots, \dim F-1\}$ the number of $i$-dimensional log canonical places of $(F,B_F)$ is bounded above by $c(n)$.
\end{lemma}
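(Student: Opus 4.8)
The plan is to bound the number of log canonical places of $(F,B_F)$ by bounding the relevant complements and the number of divisors that can be extracted by a bounded extraction. First I would observe that since $F$ is a canonical Fano variety of dimension $n$, it belongs to a bounded family by Birkar's theorem on boundedness of Fano varieties~\cite{Bir21}; more precisely, canonical Fano varieties of fixed dimension form a bounded family, and the Cartier index of $K_F$ is bounded by a constant $m(n)$. Since $(F,B_F)$ is log Calabi-Yau, we have $K_F+B_F\sim_\qq 0$, and by the boundedness of complements~\cite{Bir19} — or directly by the canonical bundle formula applied to $F$ — we may assume $B_F$ has coefficients in a fixed finite set and $K_F+B_F\sim_N 0$ for $N=N(n)$. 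The key point is that once $F$ ranges in a bounded family and $B_F\in |-NK_F|/N$ ranges in a bounded family of divisors, the pair $(F,B_F)$ itself ranges in a bounded family of log pairs.

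Next I would pass to a dlt modification $(F',B_{F'})$ of $(F,B_F)$ via Proposition~\ref{prop:dlt-mod}. The log canonical places of $(F,B_F)$ are exactly the strata of $\lfloor B_{F'}\rfloor$, and an $i$-dimensional log canonical place corresponds to an $i$-dimensional stratum. Since $(F,B_F)$ ranges in a bounded family, its dlt modifications do as well (this uses boundedness of the relevant extractions — one can, for instance, appeal to the fact that in a bounded family of log canonical pairs the log canonical places of bounded log discrepancy zero are extracted by a bounded sequence of blow-ups, or use the ACC for log canonical thresholds together with global boundedness to control the dual complex combinatorially). The number of strata of $\lfloor B_{F'}\rfloor$ of each dimension is then a bounded invariant, giving the desired constant $c(n)$. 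Alternatively, and perhaps more cleanly, one argues that the dual complex $\mathcal{D}(F,B_F)$ is a CW complex whose PL-homeomorphism type ranges in a finite set (for fixed $n$), by~\cite{dFKX17} together with the boundedness of $(F,B_F)$, and hence has a bounded number of cells of each dimension; an $i$-dimensional log canonical place corresponds to an $(i{-}?)$-cell of the appropriate dimension in $\mathcal{D}(F,B_F)$, up to a shift by the dimension of the center.

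The main obstacle I anticipate is making precise the passage from "$F$ is a canonical Fano in a bounded family" to "$(F,B_F)$ is in a bounded family of log pairs with boundedly many log canonical places" — one must ensure that the boundary $B_F$, which a priori is only $\qq$-linearly trivial against $K_F$, can be taken in a bounded linear system, and that taking dlt modifications is uniform in the family. This is where I would invoke the effective version of the canonical bundle formula and boundedness of complements~\cite{Bir19,FM20}: these give that $B_F$ has coefficients in a DCC (in fact finite, after using ACC for thresholds) set and that $N(K_F+B_F)\sim 0$ for a uniform $N$, so $B_F\in\frac1N|-NK_F|$ lies in a bounded family. Once boundedness of the pair is secured, the finiteness of the number of log canonical places of each dimension is a standard consequence of boundedness of log pairs (the number of log canonical centers, and in particular log canonical places, of a log pair in a bounded family is bounded).
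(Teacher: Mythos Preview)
There is a genuine gap in your argument, precisely at the point you yourself flag as the main obstacle. Boundedness of complements does \emph{not} tell you that the given boundary $B_F$ has coefficients in a fixed finite set or that $N(K_F+B_F)\sim 0$ for a uniform $N$; it only guarantees the \emph{existence} of some $N$-complement. The hypothesis is merely that $(F,B_F)$ is log Calabi--Yau, so $B_F\sim_\qq -K_F$ with completely uncontrolled coefficients in $[0,1]$. Consequently you cannot conclude that $(F,B_F)$ lies in a log bounded family, and the rest of the argument (bounding dlt modifications in families, or bounding cells of the dual complex) does not get off the ground. Your alternative via the PL-homeomorphism type of $\mathcal{D}(F,B_F)$ also fails: even granting finiteness of homotopy types, that does not bound the number of cells of a representative.

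The paper's fix is to \emph{replace} $B_F$ by a new boundary $\Gamma$ with controlled coefficients that still sees every log canonical center of $(F,B_F)$. Concretely: take a dlt modification $(F',B')$, set $E':=\lfloor B'\rfloor$, and observe that $(F',E')$ is log canonical and $\qq$-complemented on a Fano type variety; now bounded complements gives an $N$-complement $(F',E'+\Gamma')$ with $N=N(n)$. Pushing $E'+\Gamma'$ forward to $F$ yields $(F,\Gamma)$ with $N(K_F+\Gamma)\sim 0$ and coefficients in $\{0,\tfrac1N,\dots,1\}$. Since $F$ is canonical Fano (hence bounded by~\cite{Bir21}) and $\Gamma$ has coefficients in this finite set, $(F,\Gamma)$ is log bounded, so the number of its log canonical centers in each dimension is bounded by some $c(n)$. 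Finally, every log canonical center of $(F,B_F)$ is the image of a stratum of $E'$, hence is a log canonical center of $(F,\Gamma)$, and the bound transfers. The step you were missing is this passage through $E'=\lfloor B'\rfloor$ to manufacture a boundary with bounded coefficients without losing any log canonical centers.
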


\begin{proof}
Let $(F',B')$ be a dlt modification of $(F,B_F)$.
The variety $F'$ is a $n$-dimensional Fano type variety.
Let $E':=\lfloor B'\rfloor$.
Then, the pair $(F',E')$ is log canonical and $\qq$-complemented.
By Lemma~\ref{lem:G-equiv-N-comp}, we conclude that $(F',E')$ is $N$-complemented, where $N$ only depends on the dimension $n$.
Let $\Gamma'$ be such $N$-complement.
Then, we have that $N(K_{F'}+E'+\Gamma')\sim 0$
and $(F',E'+\Gamma')$ has log canonical singularities.
Let $(F,\Gamma)$ be the pair obtained by pushing-forward $(F,E'+\Gamma')$ to $F$.
Then, since the coefficients of $\Gamma$ belong to a finite set which only depends on $n$, we conclude that the log pairs $(F,\Gamma)$ belong to a log bounded family (see, e.g.,~\cite[Theorem 3.3]{FM20}).
Thus, there exists a constant $c(n)$, only depending on $n$, so that for each $i\in \{0,\dots,\dim F-1\}$, the number of $i$-dimensional log canonical centers of $(F,\Gamma)$ are bounded above by $c(n)$.
Since every log canonical center of $(F,B_F)$ is a log canonical center of $(F,\Gamma)$, the result follows.
\end{proof}

\subsection{Dual complexes} 
In this subsection, we recall some basics about dual complex associated to 
gdlt modifications of
generalized log canonical pairs.
In~\cite{FS20,Bir20}, the authors prove that such dual complexes are connected provided
that they are positive dimensional and the underlying glc pair is log Calabi-Yau.
We refer to~\cite{KX16,dFKX17} for the classics about dual complexes of singularities
and log Calabi-Yau pairs.

\begin{definition}
\label{def:dual-complex}
{\em 
Let $E$ be a projective scheme
which is pure dimensional with irreducible components $E_1,\dots, E_r$.
Assume that each $E_i$ is a normal variety and 
for every $J\subset \{1,\dots, r\}$,
if the intersection $\cap_{j\in J} E_j$ is non-empty, 
then every connected component of this intersection is irreducible
of codimension $|J|-1$ in $E$.
We define the {\em dual complex} $\mathcal{D}(E)$ of $E$ as follows.
The vertices $v_1,\dots, v_r$ of $\mathcal{D}(E)$ are in bijection with the irreducible components
$E_1,\dots, E_r$.
To each irreducible component $W$ of $\cap_{j\in J}E_j$,
we associate a cell $v_W$ of dimension $|J|-1$.
Observe that for each $i\in J$, the variety $W$ is contained in a unique
irreducible component of $\cap_{j\in J\setminus \{i\}}E_i$;
this determines the gluing of the cell $v_W$.
$\mathcal{D}(E)$ is a CW complex.
However, in general, this CW complex may be neither regular nor simplicial (see, e.g.,~\cite[\S 2]{dFKX17}).

Let $G$ be a finite group
and $(X,B+M)$ be a $G$-equivariant generalized log canonical pair.
Let $(Y,B_Y+M_Y)$ be a $G$-equviariant generalized dlt modification of $(X,B+M)$.
We define the {\em dual complex} of $(X,B+M)$ to be
\[
\mathcal{D}(X,B+M) := 
\mathcal{D}(\lfloor B_Y\rfloor),
\]
where the dual complex of $\lfloor B_Y\rfloor$ is defined as in the previous paragraph.
By~\cite[Lemma 2.32]{FS20}, we know that up to simple homotopy equivalence
$\mathcal{D}(X,B+M)$ is independent of the generalized dlt modification.
Observe that $G$ acts on $\mathcal{D}(X,B+M)$.
Furthermore, if we choose a simplicial representative $\mathcal{D}(Y,B_Y+M_Y)$
of the homotopy class of $\mathcal{D}(X,B+M)$,
then $G$ acts on $\mathcal{D}(Y,B_Y+M_Y)$ preserving the simplicial complex structure.
In Lemma~\ref{lem:simp-complex}, we show that we can always find such a simplicial representative.
Furthermore, we can assume that all maximal dimensional simplices of
$\mathcal{D}(Y,B_Y+M_Y)$ have the same dimension $r$ (see Lemma~\ref{lem:simp-complex}).
This non-negative integer $r$ is defined to be the regularity of $\mathcal{D}(X,B+M)$,
denoted by ${\rm reg}(X,B+M)$.
In the case that $\mathcal{D}(X,B+M)$ is empty, we set ${\rm reg}(X,B+M):=-\infty$.
From the construction, it follows that $r\in \{-\infty,0,\dots, \dim X -1\}$.

Given a generalized klt singularity $(X,B+M;x)$, 
we define its regularity as the maximum among the regularities
${\rm reg}(X,B+\Gamma+M;x)$ so that $(X,B+\Gamma+M;x)$ has
generalized log canonical singularities at $x\in X$, i.e., 
\[
{\rm reg}(X,B+M;x) := \max
\left\{ 
{\rm reg}(X,B+\Gamma+M;x) \mid 
(X,B+\Gamma+M;x) \text{ is glc }
\right\}. 
\]
Note that the in the case of singularities, we have that 
${\rm reg}(X,B+M;x)\in \{0,\dots,\dim X-1\}$.
Indeed, we can always produce a log canonical center through the point $x$ by adding a multiple of an effective divisor through $x$.

If $(X,B+M;x)$ is a $n$-dimensional generalized klt singularity of regularity $r$, we may say that
$(X,B+M;x)$ is a {\em $n$-dimensional $r$-regular gklt singularity}.
Note that we always have the inequality $r\leq n-1$
and the equality holds, for instance, when the germ is toric.
}
\end{definition}

The following lemma is similar to~\cite[Theorem 3.3]{Lo19}.
The main difference is that we are working in the relative case.

\begin{lemma}\label{lem:dlt-antiample pair}
Let $(X,B)$ be a $n$-dimensional dlt pair and $X\rightarrow Z$ be a projective contraction so that $-(K_X+B)$ is anti-ample over $Z$.
Assume that the prime components $E_1,\dots, E_r \subset \lfloor B\rfloor$ maps onto a subvariety $W\subset Z$.
Then, we have that $r\leq n$.
Furthermore, $\mathcal{D}(X,E_1+\dots+E_r)$ is a $r$-dimensional simplex and $E_1\cap \dots \cap E_r$ dominates $W$.
\end{lemma}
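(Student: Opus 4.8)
The plan is to reduce everything to a local computation near the generic point of $W$ and then to the classical statement \cite[Theorem 3.3]{Lo19}, which is the absolute version of exactly this claim. First I would localize the base at the generic point of $W$: replacing $Z$ by $\operatorname{Spec}$ of the local ring $\mathcal{O}_{Z,\eta_W}$ (or, more geometrically, shrinking $Z$ to a neighbourhood of $\eta_W$ and then restricting to a general complete intersection curve through $\eta_W$ if one prefers to work with an actual variety), we may assume $W$ is the closed point $z\in Z$ and that all of $E_1,\dots,E_r$ lie over $z$. The contraction $X\to Z$ is still projective with $-(K_X+B)$ anti-ample over $Z$, and $(X,B)$ is still dlt, so the hypotheses are preserved. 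The numbers $E_1\cap\dots\cap E_r$ and $\mathcal{D}(X,E_1+\dots+E_r)$ are unaffected by this localization since all the $E_i$ sit in the fiber over $z$.

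Next I would run a relative minimal model program. Passing to a $\mathbb{Q}$-factorialization we may assume $X$ is $\mathbb{Q}$-factorial dlt. Since $-(K_X+B)$ is anti-ample over $Z$ and $X\to Z$ is a Fano type morphism, $X$ is a relative Mori dream space \cite[Corollary 1.3.1]{BCHM10}. Write $\Theta:=\{B\}$ for the non-integral part and run the $(K_X+\Theta)$-MMP over $Z$; each step is $(K_X+B)$-positive along the exceptional locus, so the divisors $E_i=\lfloor B\rfloor_i$ are not contracted (they are log canonical places of the dlt pair, which are preserved), and after finitely many steps we reach a model $X'\to Z$ on which $K_{X'}+\Theta'$ is nef over $Z$. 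Since $-(K_{X'}+B')=-(K_{X'}+\Theta')-\lfloor B'\rfloor$ is still anti-ample over $Z$, the contraction structure forces the intersection $E_1'\cap\dots\cap E_r'$ to be non-empty and dominate $z$ — otherwise one could find a curve in a fiber avoiding some $E_i'$ on which $-(K_{X'}+B')$ would have the wrong sign. At this point the situation over the local base $z$ is exactly the setup of \cite[Theorem 3.3]{Lo19}: a dlt pair whose reduced boundary components all pass through a common point and whose adjoint anti-canonical is relatively anti-ample.

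Now I would invoke the absolute statement. Applying adjunction to $E_1'\cap\dots\cap E_r'$ and \cite[Theorem 3.3]{Lo19} to the fiber, one gets $r\le n$ and that $\mathcal{D}(X',E_1'+\dots+E_r')$ is an $r$-simplex. Finally I would transport this back to $X$: since $X\dashrightarrow X'$ is a sequence of $(K_X+B)$-flips and divisorial contractions of components not among the $E_i$, it induces an isomorphism of dual complexes of the reduced-boundary configurations $\mathcal{D}(X,E_1+\dots+E_r)\simeq \mathcal{D}(X',E_1'+\dots+E_r')$ (the dual complex of the reduced boundary of a dlt pair is invariant under steps of a $B$-MMP, by the argument of \cite[\S19]{dFKX17} or \cite[Lemma 2.32]{FS20} applied relatively), and the property that $E_1\cap\dots\cap E_r$ dominates $W$ descends because a $\mathbb{Q}$-factorial dlt flip/contraction over $Z$ does not change the fiber over the generic point of $W$ in codimension one on the relevant strata.

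The main obstacle I anticipate is making the reduction ``to the fiber over $\eta_W$'' rigorous in a way that legitimately uses the absolute result \cite{Lo19}: one must be careful that localizing the base and then possibly cutting by general hyperplanes keeps the pair dlt with the same reduced boundary strata and keeps $-(K_X+B)$ anti-ample, and that the relative MMP can be chosen not to contract any $E_i$ nor to create new log canonical places in the fiber. A clean way around this, if direct cutting-down is delicate, is to argue purely relatively: run the MMP over $Z$ to make $K_{X'}+\Theta'$ nef$/Z$, then note $E_1'+\dots+E_r'=\lfloor B'\rfloor$ is semiample over $Z$ up to the contribution of $-(K_{X'}+\Theta')$, use the relative base-point-free theorem to contract it over $Z$, and run the combinatorial/adjunction induction of \cite{Lo19} verbatim in the relative setting — the proof there is local over the base and goes through word for word once ``point'' is replaced by ``generic point of $W$''. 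Either route gives $r\le n$, the $r$-simplex description of the dual complex, and the domination of $W$ by $E_1\cap\dots\cap E_r$.
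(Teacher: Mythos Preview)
Your route through an auxiliary $(K_X+\{B\})$-MMP has real problems. Running this MMP does not preserve the dlt condition of $(X,B)$ (it is not a $(K_X+B)$-MMP), does not preserve anti-ampleness of $-(K_X+B)$ over $Z$, and your claim that the $E_i$ are never contracted because ``they are log canonical places of the dlt pair'' is not a valid reason: log canonical places of $(X,B)$ are certainly contractible by an MMP for a different pair. Likewise, the sentence ``otherwise one could find a curve in a fiber avoiding some $E_i'$ on which $-(K_{X'}+B')$ would have the wrong sign'' is not how one deduces nonemptiness of $E_1\cap\dots\cap E_r$; anti-ampleness says nothing about curves that avoid the $E_i$. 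Finally, localizing at $\eta_W$ leaves you over a local ring, not a field, so the absolute statement from \cite{Lo19} is not literally applicable; you would still have to redo its proof relatively.

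In fact the fallback you describe in your last paragraph --- ``run the combinatorial/adjunction induction of \cite{Lo19} verbatim in the relative setting'' --- is exactly what the paper does, and it is much shorter than the MMP detour. The paper argues directly: for $\epsilon>0$ small the pair $(X, E_1+E_2+(1-\epsilon)(\sum_{i\ge 3}E_i+B'))$ is dlt and anti-ample over $Z$, so relative Koll\'ar--Shokurov connectedness \cite[Theorem 6.50]{KSC04} gives $E_1\cap E_2\neq\emptyset$ dominating $W$; adjunction to $E_1$ shows this intersection is connected, hence irreducible by the dlt condition. One then inducts on $m$, replacing $X$ by $W_{m-1}=E_1\cap\dots\cap E_{m-1}$ and repeating the same two-step argument. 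No MMP, no localization, no transport of dual complexes is needed. I would drop the MMP entirely and write out this induction.
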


\begin{proof}
We write $B=E_1+\dots+E_r+B'$.
Given $m +1 \leq r+1$ vertices in $\mathcal{D}(X,E_1+\dots+E_r)$, we prove that there exists a unique $m$-simplex with these vertices.
Without loss of generality, we may assume that these vertices correspond to $E_1,\dots,E_{m+1}$.
First, we check that the statement holds for $m=1$.
Indeed, let $E_1$ and $E_2$ be two components of $\lfloor B\rfloor$.
For $\epsilon>0$ small enough, the log pair
\[
\left( 
X, E_1+E_2 + (1-\epsilon)\left(
\sum_{i=3}^r E_i + B'
\right)
\right) 
\]
is dlt and anti-ample over the base.
By~\cite[Theorem 6.50]{KSC04}, we conclude that $E_1\cap E_2\neq \emptyset$
and it dominates $W$.
We claim that $E_1\cap E_2$ is irreducible.
Indeed, for $\epsilon>0$ small enough, the log pair
\[
\left( 
E_1, E_2|_{E_1} + 
(1-\epsilon)\left(
\sum_{i=3}^r E_i + B'
\right)|_{E_1}
\right) 
\]
is dlt and anti-ample over the base.
We conclude that $E_1\cap E_2$ is connected.
The irreducibility of $E_1\cap E_2$ follows from the dlt condition of
$(X,B)$.

Now, we can proceed by induction on $m$.
For any $j\in \{1,\dots, m+1\}$, we define
$W_j:=E_1\cap\dots\cap E_j$.
We also define $W_0:=X$.
By induction on $m$, we have that $W_{m-1}\cap E_m=W_m$ and $W_{m-1}\cap E_{m+1}$ are non-empty
and dominate $W$.
Note that, for $\epsilon>0$ small enough, the log pair
\[
\left( W_{m-1}, E_m|_{W_m-1}+E_{m+1}|_{W_{m-1}} +
(1-\epsilon)\left( \sum_{i=1}^{m-1}E_i + B'\right)|_{W_{m-1}}
\right)
\]
is dlt and anti-ample over the base.
We conclude that $W_{m+1}$ is non-empty and dominates $W$.
Moreover, for $\epsilon>0$ small enough,
the log pair
\[
\left(
W_m , E_{m+1}|_{W_m} + (1-\epsilon)
\left(
\sum_{i=1}^m E_i + B'
\right)|_{W_m}
\right) 
\] 
is anti-ample and dlt over the base.
We conclude that $W_{m+1}$ is connected.
Hence, it is irreducible by the dlt condition of $(X,B)$.
Thus, $W_{m+1}$ is the irreducible intersection of $E_1,\dots,E_{m+1}$ and it dominates the base.
By the dlt condition, we conclude that $m+1\leq n+1$.
\end{proof}

\begin{lemma}\label{lem:triv-act-in-out}
Let $G$ be a finite group.
Let $(X,B+M)$ be a $G$-equivariant generalized dlt pair.
Let $E\subset X$ be a divisorial generalized log canonical center of $(X,B+M)$ which is fixed by $G$.
Let $G_E$ be the quotient group of $G$ which acts on $E$.
Let $(E,B_E+M_E)$ be the $G_E$-equivariant generalized dlt pair induced by equivariant adjunction (see Lemma~\ref{lem:G-equiv-adj}).
Assume that $G_E$ fixes all the generalized log canonical centers of $(E,B_E+M_E)$.
Then $G$ fixes all the generalized log canonical centers
of $(X,B+M)$ intersecting $E$.

Furthermore, if $G_E$ acts as the identity on a minimal generalized log canonical center of $(E,B_E+M_E)$, then $G$ acts as the identity on a minimal generalized log canonical center of $(X,B+M)$.
\end{lemma}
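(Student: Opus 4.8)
The plan is to run an induction on the dimension of $X$, using equivariant adjunction to the $G$-fixed divisorial generalized log canonical center $E$ as the inductive device. First I would recall the dictionary between generalized log canonical centers of $(X,B+M)$ that are contained in $E$ and generalized log canonical centers of the adjunction pair $(E,B_E+M_E)$: a glc center $W$ of $(X,B+M)$ with $W\subset E$ corresponds to a glc center $W$ of $(E,B_E+M_E)$, and conversely every glc center of the adjunction pair arises this way (this is the standard behavior of dlt adjunction, and in the equivariant setting it is Lemma~\ref{lem:G-equiv-adj}). Since $E$ is $G$-invariant, $G$ permutes the glc centers of $(X,B+M)$ meeting $E$, and via the correspondence this permutation action factors through $G_E$ acting on the glc centers of $(E,B_E+M_E)$; by hypothesis $G_E$ fixes each of the latter, so $G$ fixes each glc center of $(X,B+M)$ that is contained in $E$. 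The remaining point for the first assertion is to upgrade ``contained in $E$'' to ``intersecting $E$'': if $W$ is a glc center of $(X,B+M)$ with $W\cap E\neq\emptyset$, then $W\cap E$ is a (nonempty) union of glc centers of $(X,B+M)$ contained in $E$, because the intersection of two glc centers of a (g)dlt pair is again a union of glc centers. Pick a minimal glc center $W_0$ of $(X,B+M)$ with $W_0\subseteq W\cap E$; then $gW_0\subseteq gW\cap E$ for every $g\in G$, and since $W_0$ is fixed we get $W_0\subseteq gW$, so $gW$ is a glc center containing $W_0$. Now I would invoke the fact that in a dlt pair the glc centers through a fixed point are totally ordered, or more precisely that the minimal glc center containing a given glc center is unique; combined with $W$ and $gW$ both having $W_0$ as a sub-glc-center and both being irreducible of the same dimension, this forces $gW=W$. (If one prefers, one can argue directly: $W$ is the unique glc center of the right dimension containing $W_0$ and contained in the union of divisorial glc centers whose trace contains $W_0$, a set that is $G$-stable.)

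For the ``furthermore'' statement, suppose $G_E$ acts as the identity on a minimal glc center $S$ of $(E,B_E+M_E)$. Under the correspondence above, $S$ is also a glc center of $(X,B+M)$, and it is minimal there too: any glc center of $(X,B+M)$ strictly contained in $S$ would be contained in $E$ and hence would give a glc center of $(E,B_E+M_E)$ strictly inside $S$, contradicting minimality. So $S$ is a $G$-invariant minimal glc center of $(X,B+M)$, and it remains to see that $G$ (not merely $G_E$) acts trivially on $S$. But the action of $G$ on a glc center contained in $E$ factors through $G_E$ by construction of the adjunction pair (the kernel of $G\twoheadrightarrow G_E$ fixes $E$ pointwise, hence fixes $S\subseteq E$ pointwise), and $G_E$ acts trivially on $S$ by hypothesis; therefore $G$ acts as the identity on $S$, which is the desired minimal glc center of $(X,B+M)$.

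The main obstacle I anticipate is the ``intersecting'' versus ``contained in'' issue in the first assertion: one must be careful that $W\cap E$ need not itself be a single glc center, only a union of glc centers, and that an a priori different conjugate $gW$ is pinned down by sharing a common minimal sub-center with $W$. This uses in an essential way the structure of glc centers of generalized dlt pairs — that they form a poset in which intersections are again (unions of) centers and that each center has a well-defined ``link'' of centers containing it — so I would make sure to cite the relevant properties of (generalized) dlt pairs (e.g.\ the results of~\cite{dFKX17,FS20,Bir20} on glc centers, together with equivariant adjunction Lemma~\ref{lem:G-equiv-adj}) rather than reprove them. The rest is bookkeeping with the surjection $G\twoheadrightarrow G_E$ and the adjunction correspondence.
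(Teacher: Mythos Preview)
Your overall architecture matches the paper's: handle glc centers contained in $E$ via the adjunction dictionary, then treat a center $W$ meeting $E$ but not contained in $E$ by looking at $W\cap E$. The ``furthermore'' paragraph is correct as written. The gap is in the pinning-down step for the first assertion. Your proposed justifications --- that glc centers through a fixed center are totally ordered, or that there is a unique minimal center containing a given one --- are not what you need (the first is false: through a codimension-$s$ stratum there are $\binom{s}{r}$ codimension-$r$ strata; the second is trivially $W_0$ itself). Choosing $W_0$ to be a \emph{minimal} glc center inside $W\cap E$ makes this worse, since $W_0$ may have large codimension and then many centers of dimension $\dim W$ contain it, so ``same dimension and both contain $W_0$'' does not force $gW=W$.

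The paper's fix is a counting argument, and it works precisely because one takes $W_E$ to be (a component of) $W\cap E$ itself rather than a further minimal center. If $W$ has codimension $r$ in $X$ and $W\not\subset E$, then $W_E$ has codimension $r+1$ in $X$; by the gdlt (local snc) structure there are exactly $r+1$ codimension-$r$ glc centers of $(X,B+M)$ containing $W_E$, corresponding to dropping one of the $r+1$ divisorial centers through $W_E$. Dropping any of the $r$ divisors other than $E$ gives a center contained in $E$, hence $G$-fixed by the part you already proved. Thus $W$ is the unique codimension-$r$ center through $W_E$ not lying in $E$; if some $g\in G$ sent $W$ to a different center, that center would also be a codimension-$r$ center through $W_E$ (since $W_E$ is $G$-fixed), giving at least $r+2$ such centers, a contradiction. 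Replace your poset argument with this count and the proof goes through.
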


\begin{proof}
Let $W$ be a generalized log canonical center of $(X,B+M)$ of codimension $r$ which intersects $E$ non-trivially.
If $W\subset E$, then we are done.
Hence, we may assume that $W_E:=W\cap E$ is a generalized log canonical center of $(E,B+M)$ of codimension $r$ in $E$.
Note that $W_E$ is a generalized log canonical center of $(X,B+M)$ of codimension $r+1$.
Hence, $W_E$ is contained in exactly $r+1$ generalized log canonical centers of $(X,B+M)$ of codimension $r$.
$W_E$ is a generalized log canonical center of $(E,B_E+M_E)$ of codimension $r$ in $E$. Then $W_E$ is contained in exactly $r$ generalized log canonical centers of $(E,B_E+M_E)$ of codimension $r-1$ in $E$. We denote these centers by
$W'_1,\dots, W'_{r}\subset E$.
Note that $W'_1,\dots, W'_{r}$ are $G$-invariant generalized log canonical centers of $(X,B+M)$ containing $W_E$ which have codimension $r$ in $X$.
We conclude that $W$ must be $G$-fixed, otherwise, we would have at least $r+2$ generalized log canonical centers of $(X,B+M)$ of codimension $r$ containing $W_E$. The last assertion of the lemma is immediate. 
\end{proof}

\begin{lemma}\label{lem:reg-under-quot}
Let $G$ be a finite group.
Let $(X,B+M)$ be a $G$-equivariant generalized log canonical pair.
Let $(X',B'+M')$ be the generalized log canonical pair induced in the quotient $\rho\colon X \rightarrow X'$.
Then, we have an equality
\[
{\rm reg}(X,B+M)={\rm reg}(X',B'+M').
\]
\end{lemma}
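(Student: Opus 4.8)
The plan is to compute both sides with (quotient) dlt modifications and then transport the combinatorial data through the bijection of generalized log canonical centres in Proposition~\ref{prop:quot-gen-pair}. First I would fix, using Proposition~\ref{prop:dlt-mod}, a $G$-equivariant generalized dlt modification $\pi\colon Y\to X$ of $(X,B+M)$, so that by Definition~\ref{def:dual-complex} we have $\mathcal{D}(X,B+M)=\mathcal{D}(\lfloor B_Y\rfloor)$ as a CW complex carrying a $G$-action. Let $\rho_Y\colon Y\to Y'$ be the quotient by $G$ and $(Y',B_{Y'}+M_{Y'})$ the quotient generalized pair of Proposition~\ref{prop:quot-gen-pair}; it is generalized log canonical. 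Since the $\pi$-exceptional divisors are generalized log canonical places of $(X,B+M)$, their images under $\rho_Y$ are generalized log canonical places of $(X',B'+M')$ by the same proposition, so the induced birational morphism $Y'\to X'$ extracts only such places.

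Next I would identify $\mathcal{D}(X',B'+M')$ with $\mathcal{D}(\lfloor B_{Y'}\rfloor)$. The reduced boundary $\lfloor B_{Y'}\rfloor$ is exactly the union of the images $\rho_Y(E_i)$ of the prime components $E_i$ of $\lfloor B_Y\rfloor$: by the Hurwitz-type formula underlying Proposition~\ref{prop:quot-gen-pair}, a component of $B_{Y'}$ coming from the ramification of $\rho_Y$ has coefficient $1-\tfrac1e\in(0,1)$ and therefore does not enter $\lfloor B_{Y'}\rfloor$, while each $\rho_Y(E_i)$ keeps coefficient $1$. On the $G$-invariant open locus of $Y$ witnessing the gdlt property, $(Y',B_{Y'}+M_{Y'})$ is a finite quotient of a log smooth pair, so $\lfloor B_{Y'}\rfloor$ satisfies the hypotheses of Definition~\ref{def:dual-complex} and $\mathcal{D}(\lfloor B_{Y'}\rfloor)$ is defined; moreover the only generalized log canonical centres of $(Y',B_{Y'}+M_{Y'})$ are the strata of $\lfloor B_{Y'}\rfloor$, so a genuine $G$-trivial generalized dlt modification of $(Y',B_{Y'}+M_{Y'})$ (Proposition~\ref{prop:dlt-mod}) only subdivides $\mathcal{D}(\lfloor B_{Y'}\rfloor)$. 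Hence $\dim\mathcal{D}(X',B'+M')=\dim\mathcal{D}(\lfloor B_{Y'}\rfloor)$.

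Then I would invoke Proposition~\ref{prop:quot-gen-pair}, which gives a bijection between the $G$-orbits of generalized log canonical centres of $(Y,B_Y+M_Y)$ — that is, the $G$-orbits of strata of $\lfloor B_Y\rfloor$ — and the generalized log canonical centres of $(Y',B_{Y'}+M_{Y'})$; this bijection preserves codimension because $\rho_Y$ is finite, hence preserves the dimension of the corresponding cells in Definition~\ref{def:dual-complex}. Reading off the cell structure, this exhibits $\mathcal{D}(\lfloor B_{Y'}\rfloor)$ as the quotient $\mathcal{D}(\lfloor B_Y\rfloor)/G$. Since a finite-group quotient of a CW complex has the same dimension, $\dim\mathcal{D}(\lfloor B_{Y'}\rfloor)=\dim\mathcal{D}(\lfloor B_Y\rfloor)=\dim\mathcal{D}(X,B+M)$, and combining with the previous paragraph gives $\mathrm{reg}(X,B+M)=\mathrm{reg}(X',B'+M')$.

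The step I expect to be the main obstacle is the bookkeeping around the quotient failing to be generalized dlt and being only generalized qdlt (or carrying extra ramification components): one must verify carefully that $\lfloor B_{Y'}\rfloor$ is precisely the union of the images of the reduced boundary of $(Y,B_Y+M_Y)$, that its intersection strata remain irreducible of the expected codimension after quotienting, and that passing to an honest generalized dlt modification of $(Y',B_{Y'}+M_{Y'})$ does not alter the dimension of the dual complex. Each of these is exactly where the interplay between the $G$-action (which may permute boundary components) and the ramification divisor could in principle break the naive identity $\mathcal{D}(X',B'+M')=\mathcal{D}(X,B+M)/G$.
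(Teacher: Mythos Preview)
Your approach mirrors the paper's but runs in the opposite direction: you take a $G$-equivariant gdlt modification upstairs and quotient, whereas the paper takes a gdlt modification of the quotient $(X',B'+M')$ and pulls back by normal closure in $\kk(X)$. This reversal matters, and the obstacle you flag in your last paragraph is a genuine gap, not just bookkeeping. When $G$ permutes components of $\lfloor B_Y\rfloor$, the quotient pair $(Y',B_{Y'}+M_{Y'})$ is \emph{not} gqdlt and your intermediate identity $\dim\mathcal{D}(X',B'+M')=\dim\mathcal{D}(\lfloor B_{Y'}\rfloor)$ fails. Concretely, take $(Y,B_Y)=(\mathbb{A}^2,\{x=0\}+\{y=0\})$ with $G=\zz/2$ swapping the coordinates. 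The quotient is $\mathbb{A}^2_{s,p}$ with $B_{Y'}=\{p=0\}+\tfrac12\{s^2-4p=0\}$, so $\lfloor B_{Y'}\rfloor=\{p=0\}$ is irreducible and $\mathcal{D}(\lfloor B_{Y'}\rfloor)$ is a point. But the origin of $Y'$ is an lc center (the image of the origin of $Y$) that is \emph{not} a stratum of $\lfloor B_{Y'}\rfloor$; hence $(Y',B_{Y'})$ is neither gdlt nor gqdlt, your assertion that ``the only generalized log canonical centres of $(Y',B_{Y'}+M_{Y'})$ are the strata of $\lfloor B_{Y'}\rfloor$'' is false, and a genuine dlt modification extracts a divisor over the origin, giving $\mathrm{reg}(X',B')=1>0=\dim\mathcal{D}(\lfloor B_{Y'}\rfloor)$.

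The paper's bottom-up direction sidesteps exactly this asymmetry. Starting from a gdlt modification $Y'\to X'$ and taking the normal closure $Y$ in $\kk(X)$, the map $\rho_Y\colon Y\to Y'$ is a finite \emph{cover} of a pair that is log smooth at the generic point of every glc center, ramified only along $\lfloor B_{Y'}\rfloor$; such a cover is formally toric (Abhyankar), so the pullback $(Y,B_Y+M_Y)$ is gqdlt and the dual complexes of $(Y,B_Y+M_Y)$ and $(Y',B_{Y'}+M_{Y'})$ have the same dimension. The underlying reason is that finite covers of an snc pair branched along the boundary are automatically toric, whereas finite quotients of an snc pair are toric only when the stabilizer is abelian and fixes each hyperplane --- precisely what the coordinate-swap example violates. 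Your argument can be repaired by reversing the direction in this way; as written, the gap is real.
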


\begin{proof}
Let $(Y',B_{Y'}+M_{Y'})$ be a generalized dlt modification of the pair $(X',B'+M')$.
Let $Y$ be the normal closure of $Y'$ in the function field of $X$.
Then, we have a commutative diagram as follows
\[
\xymatrix{
G\acts (X',B'+M')\ar[r]^-{\rho_Y}\ar[d]_-{\phi} & (Y',B_{Y'}+M_{Y'}) \ar[d]^-{\phi'}  \\
G\acts (X,B+M) \ar[r]^-{\rho_X} & (X',B'+M'),
}
\]
where the horizontal maps are Galois and the vertical maps are birational projective morphisms.
We claim that $(X',B'+M')$ is a generalized pair with generalized qdlt singularities.
Indeed, at the generic point of each generalized log canonical center of $(Y',B_{Y'}+M_{Y'})$, the quotient $\rho_Y$ only ramifies along $\lfloor B_{Y'}\rfloor$ which is log smooth.
Thus, we conclude that at the generic point of each generalized log canonical center of $(Y',B_{Y'}+M_{Y'})$, the quotient $\rho_Y$ is a toric quotient.
Hence, the pair $(X',B'+M')$ is generalized qdlt
and the dual complexes $\mathcal{D}(X',B'+M')$ and
$\mathcal{D}(Y',B_{Y'}+M_{Y'})$ have the same dimension. 
\end{proof}

\subsection{Adjunction and canonical bundle formula} 

In this subsection, we recall the generalized pairs obtained by adjunction to divisors
and the canonical bundle formula.
We prove some properties on the boundary and nef part when the canonical bundle formula is performed for Fano type fibrations.

\begin{lemma}\label{lem:G-equiv-adj}
Let $G$ be a finite group.
Let $(X,B+M)$ be a $G$-equivariant generalized log canonical pair.
Let $E\subset \lfloor B\rfloor$ be a prime component which is normal as an algebraic variety.
Assume that $E$ is fixed by the group $G$.
Let $G_E$ be the quotient group of $G$ acting on $E$.
Then, we can write
\[
K_E+B_E+M_E \sim_\qq (K_X+B+M)|_E,
\]
where $(E,B_E+M_E)$ is a $G_E$-equivariant generalized log canonical pair.
Furthermore, if $(X,B+M)$ is generalized dlt, then $(E,B_E+M_E)$ is generalized dlt.
\end{lemma}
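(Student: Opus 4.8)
The plan is to reduce the $G$-equivariant statement to the non-equivariant generalized adjunction formula, which is standard (see, e.g., the generalized pair analogue of divisorial adjunction in~\cite{Bir19}), and then track the group action. First I would invoke the usual generalized adjunction: since $E \subset \lfloor B\rfloor$ is a prime divisor that is normal as a variety, and $(X, B+M)$ is generalized log canonical, one can write
\[
K_E + B_E + M_E \sim_\qq (K_X + B + M)|_E,
\]
where $B_E \geq 0$ is the different of $B-E$ along $E$ together with the contribution of the nef part, and $M_E$ is the trace on $E$ of a nef b-divisor on $E$; moreover $(E, B_E+M_E)$ is generalized log canonical. This uses that $K_X+B+M$ is $\qq$-Cartier so the restriction to $E$ makes sense, and one may check everything on a $G$-equivariant log resolution where $M$ descends, exactly as in the foundational references; no genuinely new input is needed for this part.

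Next I would verify $G_E$-equivariance. Since $E$ is $G$-fixed, the group $G$ acts on $E$, and the kernel of $G \to \operatorname{Aut}(E)$ is by definition the subgroup giving the quotient $G_E \leqslant \operatorname{Aut}(E)$. The divisor $B$ is $G$-invariant, hence the different $B_E$, being canonically determined by the data $(X, B, E)$ intrinsically (it is computed via residues / adjunction on any model), is preserved by every automorphism of the triple, in particular by $G_E$. Likewise the b-divisor $M$ is $G$-invariant, so by Proposition~\ref{prop:G-inv-b-nef} (applied on $E$, or by restricting the descent model equivariantly) its trace $M_E$ on $E$ is $G_E$-invariant, and $K_E$ can be chosen $G_E$-invariantly. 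Therefore $(E, B_E+M_E)$ is a $G_E$-equivariant generalized log canonical pair. The one point requiring a little care is that the $\qq$-linear equivalence, not merely the $\qq$-Cartier divisor classes, can be taken compatibly with the action; this follows because the adjunction isomorphism is canonical, so it intertwines the $G$-action on $(K_X+B+M)|_E$ with the $G_E$-action on $K_E+B_E+M_E$.

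Finally, for the gdlt statement: suppose $(X, B+M)$ is generalized dlt, with the gdlt open set $U \subset X$. Then $E \cap U$ is a dense open subset of $E$ on which $(U, B|_U)$ is log smooth; restricting the log smooth structure to $E \cap U$ shows $(E \cap U, (B_E)|_{E\cap U})$ is log smooth, since adjunction to a component of an snc boundary again gives an snc boundary (the different has no extra contribution in the log smooth locus). Moreover every generalized non-klt center of $(E, B_E+M_E)$ is a generalized non-klt center of $(X, B+M)$ contained in $E$, hence maps into $U$ and is a stratum of $\lfloor B\rfloor$, so it is a stratum of $\lfloor B_E\rfloor$. Thus $(E, B_E+M_E)$ is generalized dlt with gdlt open set $E \cap U$. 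The main obstacle I anticipate is purely bookkeeping: making precise that the descent model for $M$ can be chosen $G$-equivariantly and restricts to a descent model for $M_E$ over $E$, so that all the discrepancy computations on $E$ agree with those inherited from $X$; this is where one actually uses Proposition~\ref{prop:G-inv-b-nef} together with equivariant resolution.
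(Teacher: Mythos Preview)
Your argument is correct, but it takes a different route from the paper. The paper first passes to the quotient $X \to X'$ by $G$, notes that the image $E'$ of $E$ is normal, performs the ordinary (non-equivariant) generalized adjunction on $X'$ to obtain $(E', B_{E'} + M_{E'})$, and then \emph{defines} $(E, B_E + M_E)$ as the pullback of this pair along $E \to E'$; effectivity of $B_E$ follows from Hurwitz, and generalized log canonicity follows from Proposition~\ref{prop:quot-gen-pair}. You instead apply generalized adjunction directly on $X$ and then check $G_E$-invariance of $B_E$ and $M_E$ a posteriori from the canonical nature of the different and of the restricted b-divisor. Your approach is more direct and avoids the quotient entirely; the paper's approach has the advantage that $G_E$-equivariance is automatic (the pair on $E$ is \emph{by construction} a pullback from $E' = E/G_E$), which dovetails with how the paper defines equivariant complements and applies Proposition~\ref{prop:quot-gen-pair} throughout. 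For the gdlt assertion your argument via the open set $U$ is more detailed than the paper's one-line observation that snc of $\lfloor B\rfloor$ forces snc of $\lfloor B_E\rfloor$; both are fine.
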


\begin{proof}
Let $X\rightarrow X'$ be the quotient by $G$.
Observe that the image $E'$ of $E$ on $X'$ is a normal variety.
Hence, we can define $(E',B_{E'}+M_{E'})$ by generalized adjunction to $E'$ (see, e.g.~\cite[\S 4]{BZ16} or~\cite[Lemma 1.32]{HM20}).
Let $(E,B_E+M_E)$ be the generalized pair obtained by pullback of $(E',B_{E'}+M_{E'})$ to $E$.
By Hurwitz formula, we see that $B_E$ is indeed an effective divisor.
Then, we conclude that $(E,B_E+M_E)$ is a generalized log canonical pair (see Proposition~\ref{prop:quot-gen-pair}).
If $(X,B+M)$ is generalized dlt, then the prime components of $\lfloor B\rfloor$ have simple normal crossing, so the same holds for $\lfloor B_E\rfloor$. This implies that $(E,B_E+M_E)$ is generalized dlt as well.
\end{proof}

\begin{definition}
\label{def:cbf}
{\em  
Let $G$ be a finite group.
Let $(X,B+M)$ be a $G$-equivariant generalized log canonical pair.
Let $f\colon X\rightarrow Z$ be a $G$-equivariant contraction so that
$(X,B+M)$ is generalized log Calabi-Yau over $Z$.
Let $G_Z$ be the quotient group acting on the base.
We let $\pi_X\colon X\rightarrow X'$ be the quotient by $G$ and
$\pi_Z\colon Z\rightarrow Z'$ be the quotient by $G_Z$.
We have an induced contraction $f'\colon X'\rightarrow Z'$ on the quotient
so that there is a commutative diagram:
\begin{equation}\label{eq:egcbf}
\xymatrix{
G\acts X\ar[r]^-{\pi_X}\ar[d]_-{f} & X' \ar[d]^-{f'}  \\
G_Z \acts Z \ar[r]^-{\pi_Z} & Z'.
}
\end{equation} 
By Proposition~\ref{prop:quot-gen-pair}, we have an induced quotient generalized pair
$(X',B'+M')$
on $X'$. 
Note that $(X',B'+M')$ is generalized log canonical and log Calabi-Yau over $Z'$.
Applying the generalized canonical bundle formula~\cite[Theorem 1.4]{Fil18}
for $(X',B'+M')$ over $Z'$, we obtain a glc pair $(Z',B_{Z'}+M_{Z'})$ on $Z'$
so that
\[
K_{X'}+B'+M'\sim_\qq {f'}^*(K_{Z'}+B_{Z'}+M_{Z'}).
\]
We define $(Z,B_Z+M_Z)$ to be the log pull-back of $(Z',B_{Z'}+M_{Z'})$.
By Proposition~\ref{prop:quot-gen-pair}, we conclude that $(Z,B_Z+M_Z)$ is a
$G_Z$-invariant generalized log canonical pair.
By the commutativity of diagram~\eqref{eq:egcbf}, we have that
\[
K_X+B+M\sim_\qq 
f^*(K_Z+B_Z+M_Z).
\] 
The pair $(Z,B_Z+M_Z)$ will be called the generalized pair induced by the 
{\em $G$-equivariant generalized canonical bundle formula}.
If the group is clear from the context, we may simply refer to it as the generalized canonical bundle formula.
}
\end{definition}

In what follows, we will prove that under the assumption that
$X\rightarrow Z$ is a Fano type morphism, we can control 
numerical invariants of the boundary and nef part of the generalized pair
induced by the equivariant canonical bundle formula.

\begin{lemma}\label{lem:G-equiv-cbf}
Let $n$ and $N$ be positive integers.
There exists a constant $M:=M(n,N)$, only depending on $n$ and $N$,
satisfying the following.
Let $G$ be a finite group.
Let $f\colon X\rightarrow Z$ be a $G$-equivariant Fano type morphism where $X$ is a $n$-dimensional variety.
Let $(X,B)$ be a $G$-equivariant log canonical $N$-complement.
Let $G_Z$ be the quotient group acting on the base.
Then, we can write
\[
K_X+B \sim_\qq f^*(K_Z+B_Z+M_Z),
\]
where $(Z,B_Z+M_Z)$ is a $G_Z$-equivariant generalized log canonical $M$-complement.
Here, $(Z,B_Z+M_Z)$ is defined by the canonical bundle formula (see Definition~\ref{def:cbf}).
\end{lemma}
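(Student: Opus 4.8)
The plan is to pass to the quotients, run the canonical bundle formula there, control the discriminant and moduli parts by means of boundedness of the fibers, and finally recognize the resulting $\qq$-trivial generalized pair on the base as a bounded complement via Lemma~\ref{lem:Q-com-N-com}. First I would reduce to the quotient. Let $\pi_X\colon X\to X'$ and $\pi_Z\colon Z\to Z'$ be the quotients by $G$ and $G_Z$, and $f'\colon X'\to Z'$ the induced contraction. By Proposition~\ref{prop:quot-gen-pair} the log pullback $(X',B')$ of $(X,B)$ is log canonical, its coefficients lie in the fixed set $\Lambda'$ of hyperstandard coefficients over $\tfrac{1}{N}\zz$, and $N(K_X+B)\sim 0$ descends to $K_{X'}+B'\sim_\qq 0$; moreover $f'$ is again a Fano type morphism, since the quotient of a relatively big and nef klt boundary on $X$ is such a boundary on $X'$, and $X'$ is of Fano type (the quotient of a Fano type variety, see \cite[Proposition 2.17]{Mor20a}). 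As a $G_Z$-equivariant complement on $Z$ is by definition the pullback of a complement on $Z'$, it suffices to produce a generalized $M$-complement for the generalized pair induced on $Z'$ by the canonical bundle formula.

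Next I would run the generalized canonical bundle formula of Definition~\ref{def:cbf} (via \cite{Fil18}) for $(X',B')$ over $Z'$, obtaining a generalized log canonical pair $(Z',B_{Z'}+M_{Z'})$ with $K_{X'}+B'\sim_\qq {f'}^*(K_{Z'}+B_{Z'}+M_{Z'})$, hence $K_{Z'}+B_{Z'}+M_{Z'}\sim_\qq 0$, and $\dim Z'\leq n$. The two quantities to bound are the discriminant $B_{Z'}$ and the moduli $\qq$-b-divisor $M_{Z'}$. The key observation is that the fibers of $f'$ are of Fano type and, with the restriction of $B'$, are log Calabi--Yau with coefficients in $\Lambda'$; by the existence of bounded complements for such pairs (Lemma~\ref{lem:G-equiv-N-comp}, ultimately \cite{FM20}) they admit $N_0$-complements with $N_0=N_0(n,N)$, and therefore form a log bounded family. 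Consequently the log canonical thresholds defining $B_{Z'}$ over the codimension one points of $Z'$ lie in a finite set $\Lambda_Z=\Lambda_Z(n,N)$, so the coefficients of $B_{Z'}$ lie in $\Lambda_Z$; and the moduli part $M_{Z'}$, being governed by the variation of a bounded family of log Calabi--Yau fibers, descends to a fixed birational model on which $pM_{Z'}$ is b-Cartier for some $p=p(n,N)$. This last assertion — the effective canonical bundle formula for bounded fibers — is the place where I expect the real work to be, as it requires both that Fano type fibers with bounded complements are log bounded and that the moduli b-divisor of a bounded family of log Calabi--Yau pairs has Cartier index bounded in terms of $n$ and $N$.

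Finally, $(Z',B_{Z'}+M_{Z'})$ is a generalized log canonical pair on the Fano type variety $Z'$ (the image of $X'$ under the contraction $f'$), with coefficients of $B_{Z'}$ in the finite set $\Lambda_Z$, with $pM_{Z'}$ Cartier where it descends, and with $K_{Z'}+B_{Z'}+M_{Z'}\sim_\qq 0$ — in particular it is its own $\qq$-complement. Applying Lemma~\ref{lem:Q-com-N-com} with the data $(\dim Z',p,\Lambda_Z)$ produces a generalized $M$-complement $(Z',\Gamma_{Z'}+M_{Z'})$ with $\Gamma_{Z'}\geq B_{Z'}$ and $M=M(n,N)$; but $K_{Z'}+\Gamma_{Z'}+M_{Z'}\sim_\qq 0\sim_\qq K_{Z'}+B_{Z'}+M_{Z'}$ forces the effective divisor $\Gamma_{Z'}-B_{Z'}$ to be zero, so $(Z',B_{Z'}+M_{Z'})$ itself satisfies $M(K_{Z'}+B_{Z'}+M_{Z'})\sim 0$ with $MM_{Z'}$ Cartier where it descends. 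Pulling back through $\pi_Z$ and invoking Proposition~\ref{prop:quot-gen-pair} and Definition~\ref{def:cbf} yields the asserted $G_Z$-equivariant generalized log canonical $M$-complement $(Z,B_Z+M_Z)$ together with $K_X+B\sim_\qq f^*(K_Z+B_Z+M_Z)$. Besides the effective canonical bundle formula noted above, the one point needing care is the Fano type hypothesis on the base $Z$, which I would extract from $X$ being of Fano type in the setting where the lemma is applied; if instead one only knows $f$ is relatively Fano type, the conclusion should still follow by combining $N(K_X+B)\sim 0$ with the injectivity of $f^*$ on $\qq$-Cartier divisor classes, once the Cartier index of $M_Z$ is bounded as in the second step.
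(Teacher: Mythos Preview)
Your overall strategy (pass to quotients, apply an effective canonical bundle formula, bound the discriminant and moduli parts, then pull back) matches the paper's, but your execution takes an unnecessary detour that introduces a genuine gap.

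The gap is the appeal to Lemma~\ref{lem:Q-com-N-com}. That lemma requires the ambient variety to be of Fano type, and you assert that $Z'$ is Fano type as ``the image of $X'$ under the contraction $f'$''. But the hypothesis of the present lemma is only that $f\colon X\to Z$ is a \emph{Fano type morphism}, i.e.\ $X$ is of Fano type \emph{over} $Z$; nothing says $X$ (or $X'$) is globally of Fano type, so you cannot conclude that $Z'$ is. You notice this yourself at the end, but your proposed fix (``extract it from $X$ being of Fano type in the setting where the lemma is applied'') changes the hypotheses of the lemma rather than proving it as stated.

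The paper avoids this entirely by not going through Lemma~\ref{lem:Q-com-N-com} at all. Instead, it invokes \cite[Proposition~6.3]{Bir19} directly for the contraction $f'\colon X'\to Z'$: this single citation delivers the generalized pair $(Z',B_{Z'}+M_{Z'})$ together with (i) coefficients of $B_{Z'}$ in a DCC set depending only on $n,N$, (ii) a bound $q=q(n,N)$ on the Cartier index of $M_{Z'}$ where it descends, and crucially (iii) the \emph{integral} relation $q\,{f'}^*(K_{Z'}+B_{Z'}+M_{Z'})\sim K_{X'}+B'$. Combining (iii) with $N(K_{X'}+B')\sim 0$ and \cite[Proposition~5.3]{FM20} (descent of linear equivalence along a contraction) gives $Nq(K_{Z'}+B_{Z'}+M_{Z'})\sim 0$ outright, so one sets $M:=Nq$ and is done. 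This is exactly your ``fallback'' suggestion in the last sentence; the paper makes it the main and only argument. Your second paragraph, where you sketch why the effective canonical bundle formula should hold, is essentially the heuristic content of \cite[Proposition~6.3]{Bir19}, so rather than rederiving it you should simply cite it and skip the detour.
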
 

\begin{proof}
Let $\pi_X\colon X\rightarrow X'$ be the quotient by $G$
and let $\pi_Z\colon Z\rightarrow Z'$ be the quotient by $G_Z$.
Then, we have an induced projective contraction $f\colon X'\rightarrow Z'$ 
which is a Fano type morphism.
Let $(X',B')$ be the generalized $N$-complement 
that pullbacks to $(X,B)$.
Note that $K_{X'}+B'$ is $\qq$-trivial over $Z'$.
Thus, we have a commutative diagram as follows: 
\[
\xymatrix{
X\ar[r]^-{\pi_X} \ar[d]_-{f} & X'\ar[d]^-{f'} \\
Z\ar[r]^-{\pi_Z} & Z'. 
}
\]
By~\cite[Proposition 6.3]{Bir19}, there exists a generalized pair
$(Z',B_{Z'}+M_{Z'})$ so that
\[
K_{X'}+B'\sim_\qq {f'}^*(K_{Z'}+B_{Z'}+M_{Z'}).
\]
Moreover, the following conditions are satisfied:
\begin{enumerate} 
    \item The coefficients of $B_{Z'}$ belongs to a set, that satisfies the descending chain condition, and which only depends on $n$ and $N$, 
    \item $qM_{Z'_0}$ is Cartier, where $Z_0'\rightarrow Z'$ is the model where the b-divisor $M_{Z'}$ descends, and $q$ only depends on $n$ and $N$, and
    \item the linear equivalence $q{f'}^*(K_{Z'}+B_{Z'}+M_{Z'})\sim K_{X'}+B'$ holds.
\end{enumerate}
By~\cite[Proposition 5.3]{FM20}, condition (3),
and the fact that $N(K_{X'}+B)\sim 0$, we conclude that 
\[
Nq(K_{Z'}+B_{Z'}+M_{Z'})\sim 0.
\]
Let $M:=Nq$.
Consider the generalized pair 
\[
K_Z+B_Z+M_Z = \pi_Z^*(K_{Z'}+B_{Z'}+M_{Z'}).
\]
By Proposition~\ref{prop:quot-gen-pair}, the generalized pair $(Z,B_Z+M_Z)$
is a $G_Z$-equivariant $M$-complement.
Here, $M$ only depends on $n$ and $N$.
\end{proof}

\begin{lemma}\label{lem:G-equiv-gklt-rc-cbf}
Let $n$ and $N$ be positive integers.
There exists a constant $M:=M(n,N)$, only depending on $n$ and $N$, satisfying the following.
Let $G$ be a finite group.
Let $f\colon X\rightarrow Z$ be a $G$-equivariant Fano type morphism where $X$ is a $n$-dimensional variety.
Let $(X,B+M)$ be a $G$-equivariant generalized klt $N$-complement.
Let $G_Z$ be the quotient group acting on the base.
Then, we can write
\[
K_X+B+M\sim_\qq f^*(K_Z+B_Z+M_Z),
\]
where $(Z,B_Z+M_Z)$ is a $G_Z$-equivariant generalized klt $M$-complement.
Here, $(Z,B_Z+M_Z)$ is defined by the canonical bundle formula (see Definition~\ref{def:cbf}).
\end{lemma}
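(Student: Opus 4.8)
The plan is to transcribe the proof of Lemma~\ref{lem:G-equiv-cbf} almost verbatim: pass to the quotient by $G$, run the generalized canonical bundle formula on the quotient, bound the numerical invariants of the resulting pair on the base, and then pull back by $\pi_Z$. The only genuinely new point is to verify that the pair produced on the base is generalized klt, and not merely generalized log canonical.

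First I would pass to quotients. Let $\pi_X\colon X\to X'$ be the quotient by $G$ and $\pi_Z\colon Z\to Z'$ the quotient by $G_Z$, so that there is an induced contraction $f'\colon X'\to Z'$; as in the proof of Lemma~\ref{lem:Q-com-N-com} (using~\cite[Proposition 2.17]{Mor20a}) the morphism $f'$ is again of Fano type. By the definition of a $G$-equivariant $N$-complement and Proposition~\ref{prop:quot-gen-pair}, the pair $(X,B+M)$ is the log pullback of a quotient generalized pair $(X',B'+M')$ which is generalized klt, whose boundary coefficients are controlled by $N$, with $N(K_{X'}+B'+M')\sim 0$ and with $NM'$ Cartier on the model where it descends. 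In particular $K_{X'}+B'+M'\sim_{Z',\qq} 0$, so $(X',B'+M')$ is generalized log Calabi--Yau over $Z'$, and we may run the generalized canonical bundle formula of Definition~\ref{def:cbf}.

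Next, exactly as in the proof of Lemma~\ref{lem:G-equiv-cbf}, I would invoke~\cite[Proposition 6.3]{Bir19} in the generalized setting to obtain a generalized pair $(Z',B_{Z'}+M_{Z'})$ with
\[
K_{X'}+B'+M'\sim_\qq {f'}^*(K_{Z'}+B_{Z'}+M_{Z'}),
\]
such that the coefficients of $B_{Z'}$ lie in a DCC set depending only on $n$ and $N$, the b-divisor $M_{Z'}$ has Cartier index bounded by some $q=q(n,N)$ on the model where it descends, and $q\,{f'}^*(K_{Z'}+B_{Z'}+M_{Z'})\sim K_{X'}+B'+M'$. Combining this last linear equivalence with $N(K_{X'}+B'+M')\sim 0$ and~\cite[Proposition 5.3]{FM20} gives $Nq(K_{Z'}+B_{Z'}+M_{Z'})\sim 0$; I set $M:=Nq$, which depends only on $n$ and $N$. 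Putting $K_Z+B_Z+M_Z:=\pi_Z^*(K_{Z'}+B_{Z'}+M_{Z'})$, Proposition~\ref{prop:quot-gen-pair} shows that $(Z,B_Z+M_Z)$ is a $G_Z$-equivariant $M$-complement and, by commutativity of the quotient diagram, $K_X+B+M\sim_\qq f^*(K_Z+B_Z+M_Z)$.

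It remains to check that $(Z',B_{Z'}+M_{Z'})$, and hence by Proposition~\ref{prop:quot-gen-pair} also $(Z,B_Z+M_Z)$, is generalized klt; this is where the klt hypothesis on the complement is used, and it is the main obstacle. Since $(X',B'+M')$ is generalized klt, generalized log canonicity is an open condition in the boundary coefficients, so for every prime divisor $D$ on an arbitrary birational model $Z''$ of $Z'$ there is $t_D>0$ for which the generalized sub-pair obtained from $(X',B'+M')$ by base change along $Z''\to Z'$, perturbed by $t_D$ times the pullback of $D$, is generalized sub-log canonical over the generic point of $D$. Consequently the discriminant coefficient at $D$, which is the coefficient along $D$ of the trace on $Z''$ of the boundary b-divisor attached by the canonical bundle formula, equals $1-t_D<1$. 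As this holds on every birational model $Z''$, all generalized log discrepancies of $(Z',B_{Z'}+M_{Z'})$ are positive, and the pair is generalized klt. The delicate point is precisely this compatibility of the discriminant with birational modifications of the base, which forces the openness argument to be run after an arbitrary base change; the rest of the argument is identical to that of Lemma~\ref{lem:G-equiv-cbf}.
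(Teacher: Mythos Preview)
Your argument is essentially the same as the paper's: pass to the quotient, apply the canonical bundle formula there with controlled invariants, and pull back. Two small points of comparison. First, the paper cites \cite[Theorem~1.5]{FM20} rather than \cite[Proposition~6.3]{Bir19} ``in the generalized setting''; the reason is that the quotient pair $(X',B'+M')$ already carries a nontrivial nef part $M'$, so one needs a canonical bundle formula for generalized pairs on the nose, which is precisely what \cite{FM20} supplies. Your phrase ``in the generalized setting'' is doing real work and should be replaced by that reference. Second, the paper does not spell out the gklt verification on the base, apparently leaving it to the cited result or to the standard fact that the discriminant b-divisor of a gklt pair has all traces with coefficients strictly below one; your explicit argument via openness of the log canonical threshold after arbitrary base change is correct and makes the proof more self-contained.
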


\begin{proof}
Let $\pi_X\colon X\rightarrow X'$
be the quotient by $G$
and let $\pi_Z\colon Z\rightarrow Z'$ 
be the quotient by $G_Z$.
Then, we have an induced Fano type morphism
$f\colon X'\rightarrow Z'$.
Let $(X',B'+M')$ be the generalized $N$-complement 
that pullbacks to $(X,B+M)$.
Note that $K_{X'}+B'+M'\sim_\qq 0$.
In particular, it is $\qq$-trivial over $Z'$.
We have a commutative diagram as follows:
\[
\xymatrix{
X\ar[r]^-{\pi_X} \ar[d]_-{f} & X'\ar[d]^-{f'} \\
Z\ar[r]^-{\pi_Z} & Z'. 
}
\]
By~\cite[Theorem 1.5]{FM20}, there exists a generalized pair
$(Z',B_{Z'}+M_{Z'})$ so that
\[
K_{X'}+B'+M' \sim_\qq {f'}^*(K_{Z'}+B_{Z'}+M_{Z'}).
\]
Moreover, the following conditions are satisfied:
\begin{enumerate} 
    \item The coefficients of $B_{Z'}$ belongs to a set, that satisfies the descending chain condition, and which only depends on $n$ and $N$, 
    \item $qM_{Z'}$ is Cartier in the model where it descends, where $q$ only depends on $n$ and $N$, and 
    \item the linear equivalence $q{f'}^*(K_{Z'}+B_{Z'}+M_{Z'})\sim K_{X'}+B'$ holds.
\end{enumerate}
By~\cite[Theorem 5.3]{FM20}, condition (3), and the fact that $N(K_{X'}+B)\sim 0$,
we conclude that
\[
Nq(K_{Z'}+B_{Z'}+M_{Z'})\sim 0.
\]
Let $M:=Nq$.
Consider the generalized pair 
\[
K_Z+B_Z+M_Z = \pi_Z^*(K_{Z'}+B_{Z'}+M_{Z'}).
\]
By Proposition~\ref{prop:quot-gen-pair}, the generalized pair $(Z,B_Z+M_Z)$
is a $G_Z$-equivariant $M$-complement.
Here, $M$ only depends on $n$ and $N$.
Observe that we have the induced $\qq$-linear equivalence 
\[
K_X+B+M\sim_\qq f^*(K_Z+B_Z+M_Z).
\]
\end{proof}

\begin{lemma}\label{lem:reg-fib-or-base-non-trivial}
Let $X\rightarrow Z$ be a contraction.
Let $(X,B+M)$ be a generalized pair
which is log Calabi-Yau over $Z$.
Let $(Z,B_Z+M_Z)$ be the generalized log canonical pair induced
by the canonical bundle formula.
Let $(F,B_F+M_F)$ be the generalized log canonical pair
obtained by adjunction to a general fiber.
Assume that $\mathcal{D}(X,B+M)$ is non-empty.
Then either $\mathcal{D}(F,B_F+M_F)$ or $\mathcal{D}(Z,B_Z+M_Z)$ are non-empty.
\end{lemma}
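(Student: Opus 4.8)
The plan is to recast the statement in terms of whether the three generalized pairs are generalized klt. By the construction recalled in Definition~\ref{def:dual-complex}, a generalized dlt modification extracts all generalized log canonical places, so for a generalized log canonical pair $P$ one has $\mathcal{D}(P)\neq\emptyset$ if and only if $P$ is not generalized klt; this applies to $(X,B+M)$ (which is glc, being log Calabi--Yau over $Z$) as well as to $(F,B_F+M_F)$ and $(Z,B_Z+M_Z)$, which are glc by Lemma~\ref{lem:G-equiv-adj} and Definition~\ref{def:cbf}. Hence it suffices to prove: if $(X,B+M)$ is not generalized klt, then $(F,B_F+M_F)$ or $(Z,B_Z+M_Z)$ is not generalized klt. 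Fix a generalized log canonical place $E$ of $(X,B+M)$ and set $W:=c_X(E)$; I distinguish the cases $f(W)=Z$ and $f(W)\subsetneq Z$.

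If $f(W)=Z$, take a generalized dlt modification $\pi\colon Y\to X$ of $(X,B+M)$ with $E\subseteq\lfloor B_Y\rfloor$ (Proposition~\ref{prop:dlt-mod}), let $F$ be a general fiber of $X\to Z$ and $Y_F$ the corresponding fiber of $Y\to Z$. For $F$ general the morphism $Y_F\to F$ is a crepant birational model of $(F,B_F+M_F)$ and $\lfloor B_{Y_F}\rfloor=\lfloor B_Y\rfloor\cap Y_F$; since $E$ dominates $Z$, the divisor $E\cap Y_F$ is non-empty and contained in $\lfloor B_{Y_F}\rfloor$, so $(F,B_F+M_F)$ has a generalized log canonical place and is not generalized klt. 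This settles the first case.

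If $f(W)\subsetneq Z$, i.e.\ $E$ is vertical over $Z$, I claim $f(W)\subseteq {\rm glcc}(Z,B_Z+M_Z)$, which gives that $(Z,B_Z+M_Z)$ is not generalized klt. When $f(W)$ is a prime divisor $D\subseteq Z$ this is immediate from the construction of the discriminant part in the canonical bundle formula: the coefficient of $B_Z$ along $D$ is $1$ minus the generalized log canonical threshold of $f^{*}D$ with respect to $(X,B+M)$ over $\eta_D$, and since $E$ is a generalized log canonical place with $c_X(E)$ dominating $D$ and $\mathrm{ord}_E(f^{*}D)>0$, this threshold is $\le 0$; as $(Z,B_Z+M_Z)$ is generalized log canonical, the coefficient equals $1$, so $D\subseteq\lfloor B_Z\rfloor$. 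In general one reduces to this case by a birational base change $Z'\to Z$: the canonical bundle formula is compatible with birational modifications of the base (the discriminant and moduli parts transform crepantly, so $\mathcal{D}(Z,B_Z+M_Z)$ is unaffected), and after a suitable $Z'\to Z$ a vertical generalized log canonical place of the log pullback of $(X,B+M)$ acquires a codimension-one center on the base. Either way $(Z,B_Z+M_Z)$ is not generalized klt, and the lemma follows.

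I expect the last step — producing, after a birational modification of $Z$, a vertical generalized log canonical place with codimension-one center on the base — to be the main obstacle. I would handle it either by invoking the standard description of the non-klt locus of the canonical bundle formula base as the image of the vertical part of the non-klt locus upstairs (as in the theory of lc-trivial fibrations), or by a toroidalization-type reduction of $f$ over $Z$ arranging that the components of $\lfloor B_Y\rfloor$ map onto strata of the base; the remaining steps use only adjunction, the canonical bundle formula, and the description of dual complexes recalled above.
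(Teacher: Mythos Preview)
Your approach is correct and matches the paper's proof exactly: split into horizontal and vertical generalized log canonical centers, with the horizontal case handled by restriction to a general fiber and the vertical case by the canonical bundle formula. The paper's proof is in fact just these two sentences, treating both implications as standard.

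Your expressed concern about the vertical case is unwarranted: the fact that the image of any vertical generalized log canonical center of $(X,B+M)$ is a generalized log canonical center of $(Z,B_Z+M_Z)$ is a standard property of the canonical bundle formula (it follows directly from the b-divisorial definition of the discriminant part, exactly as you outline---blow up $Z$ so that the image becomes a divisor and compute the lct there), and the paper invokes it without further comment. No toroidalization or delicate reduction is needed; the argument you sketch with the birational base change and the lct computation already suffices.
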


\begin{proof}
The dual complex $\mathcal{D}(X,B+M)$ is non-empty if and only if 
$(X,B+M)$ has a generalized log canonical center.
If $(X,B+M)$ admits a generalized log canonical center which is horizontal over $Z$,
then $(F,B_F+M_F)$ admits a generalized log canonical center.
Hence, we have that $\mathcal{D}(F,B_F+M_F)$ is non-trivial.
If $(X,B+M)$ admits a generalized log canonical center which is vertical over $Z$,
then $(Z,B_Z+M_Z)$ admits a generalized log canonical center.
Hence, we have that $\mathcal{D}(Z,B_Z+M_Z)$ is non-trivial.
\end{proof}

\subsection{Fibrations} 
In this subsection, we will study Fano type fibrations.
We will prove the existence of certain special modifications
of Fano type fibrations that will be used in Section~\ref{sec:rank-reg}.

\begin{lemma}\label{lem:contr-minimal-rel-dim}
Let $G$ be a finite group.
Let $(X,B+M)$ be $G$-equivariant generalized log Calabi-Yau pair.
Let $X\rightarrow Z$ be a $G$-equivariant Fano type fibration.
Consider the set $\mathcal{C}$ of diagrams of the form: 
\[
\xymatrix{
X  \ar[d] & X' \ar@{-->}[l]^-{\pi} \ar[d]^-{\phi} \\
Z & Z' \ar[l],
}
\]
where $\pi$ is a $G$-equivariant birational map that only extracts
non-canonical places of $(X,B+M)$ and $\phi$ is a $G$-equivariant Fano type fibration.
Then, in the set $\mathcal{C}$ there is a diagram so that the
relative dimension of $\phi$ is minimal.
Furthermore, we can choose a diagram with minimal relative dimension of $\phi\colon X'\rightarrow Z'$ for which $X'$ has canonical singularities, $-K_{X'}$ is ample over $Z'$, and $\rho^G(X'/Z')=1$.
\end{lemma}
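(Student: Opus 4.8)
The first assertion is immediate: the set $\mathcal{C}$ is non-empty (it contains $(\id_X,\ X\to Z)$), and the relative dimension of any $\phi$ occurring in a diagram of $\mathcal{C}$ is a positive integer bounded above by $\dim X$, so it attains a minimum, say $d$. Fix a diagram $(\pi_0\colon X_0\dashrightarrow X,\ \phi_0\colon X_0\to Z_0)$ in $\mathcal{C}$ realizing this minimum and let $(X_0,B_0+M_0)$ be the log pullback of $(X,B+M)$; it is again a $G$-equivariant generalized log Calabi--Yau pair, and its boundary part $B_0$ is effective, since every prime divisor extracted by $\pi_0$ is a non-canonical place of $(X,B+M)$ and hence occurs in $B_0$ with positive coefficient, while the strict transform of $B$ keeps its (non-negative) coefficients.

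The plan is to improve $(X_0\to Z_0)$ by two minimal model programs, staying inside $\mathcal{C}$ and keeping the relative dimension equal to $d$. First I would make the total space canonical: since $X_0$ is of Fano type over $Z_0$ it is klt, so by~\cite{BCHM10} it admits a $G$-equivariant canonical modification $c\colon X_0^c\to X_0$, where $X_0^c$ has canonical singularities, $K_{X_0^c}$ is ample over $X_0$, and $c$ extracts exactly the divisors $E$ over $X_0$ with $a_E(X_0)<1$. Because the boundary part of $(X_0,B_0+M_0)$ is effective, each such $E$ satisfies $a_E(X,B+M)=a_E(X_0,B_0+M_0)\le a_E(X_0)<1$ and also $a_E(X_0,B_0+M_0)\ge 0$ by generalized log canonicity, so $c$ only extracts divisors of generalized log discrepancy in $[0,1)$ for $(X_0,B_0+M_0)$; then, arguing as in Lemma~\ref{lem:extraction-FT} over $Z_0$, $X_0^c$ is again of Fano type over $Z_0$. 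Replacing $X_0^c$ by a small $G$-equivariant $\qq$-factorialization, which is an isomorphism in codimension one and hence preserves canonicity and the Fano type condition over $Z_0$ while extracting no new divisor, I may assume $X_0^c$ is $G\qq$-factorial. As $c$ leaves $Z_0$ unchanged and extracts only non-canonical places of $(X,B+M)$, the new diagram $(X_0^c\dashrightarrow X,\ X_0^c\to Z_0)$ still lies in $\mathcal{C}$ and has relative dimension $d$.

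Next I would run a $G$-equivariant $K_{X_0^c}$-minimal model program over $Z_0$. Since $X_0^c$ is of Fano type over $Z_0$ it is a relative Mori dream space, so the program terminates; and since $X_0^c$ is of Fano type over $Z_0$ the divisor $-K_{X_0^c}$ is big over $Z_0$, so $K_{X_0^c}$ is not pseudo-effective over $Z_0$ and the program must end with a $G$-equivariant Mori fibre space $\psi\colon X_1\to Z_1$ over $Z_0$ (that is, $\psi$ is a $G$-equivariant contraction of fibre type with $\rho^G(X_1/Z_1)=1$ and $-K_{X_1}$ ample over $Z_1$). Each step is a flip or a divisorial contraction for $(X_0^c,0)$, so the program extracts no divisor and preserves canonicity; hence $X_1$ is canonical and the divisors extracted by $X_1\dashrightarrow X$ are exactly those extracted by $X_0^c\dashrightarrow X$, all of them non-canonical places of $(X,B+M)$. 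Moreover $X_1$ is klt with $-K_{X_1}$ big and nef over $Z_1$, hence of Fano type over $Z_1$, so $(X_1\dashrightarrow X,\ X_1\to Z_1)\in\mathcal{C}$. Finally $Z_1\to Z_0$ is surjective, so $\dim Z_1\ge\dim Z_0$ and the relative dimension of $\psi$ is $\dim X_1-\dim Z_1\le\dim X_0-\dim Z_0=d$; combined with $(X_1\to Z_1)\in\mathcal{C}$ this forces equality, and $(X_1\to Z_1)$ is the desired diagram.

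The hard part will be ensuring that the second program genuinely terminates on an honest Mori fibre space rather than on a crepant (log Calabi--Yau) fibration: this is precisely where the Fano type hypothesis is essential, guaranteeing that $-K$ stays big over the base at every stage so that the general fibre remains Fano; a Calabi--Yau general fibre would be perfectly compatible with minimal relative dimension but incompatible with having both $\rho^G=1$ and $-K$ relatively ample. A second point needing care is the discrepancy bookkeeping showing that neither the canonical modification nor the minimal model program leaves $\mathcal{C}$, which relies on the effectivity of the boundary part of the log pullback $(X_0,B_0+M_0)$.
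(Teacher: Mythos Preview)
Your proof is correct and follows essentially the same strategy as the paper's: pick a diagram of minimal relative dimension, pass to a $G$-equivariant canonicalization, and then run a $G$-equivariant $K$-MMP over the base to reach a Mori fibre space, using minimality to force the relative dimension to stay equal to $d$. You supply more detail than the paper does (the discrepancy comparison $a_E(X_0,B_0+M_0)\le a_E(X_0)$, the relative version of Lemma~\ref{lem:extraction-FT}, and the non-pseudo-effectivity of $K$ over the base), but the architecture is the same.
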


\begin{proof}
The existence of $\phi\colon X'\rightarrow Z'$ with minimal relative dimension is clear. Let $\phi$ be one of those minimal fibrations.
Let $X''$ be a $G$-equivariant canonicalization of $X'$.
Note that $X''\rightarrow X'$ only extract non-canonical places of $(X',B'+M')$.
Hence, the composition $X''\dashrightarrow X$ only extract non-canonical places of $(X',B'+M')$.
We run a $G$-equivariant MMP for $K_{X''}$ over $Z'$.
This equivariant minimal model program must terminate with a $G$-equivariant Mori fiber space.
Let $X''\dashrightarrow  X_1$ be the sequence of flips and divisorial contractions
and $\phi_1 \colon X_1\rightarrow Z_1$ be the $G$-equivariant Mori fiber space.
If the relative dimension of $\phi_1$ is less than the one of $\phi$, then we contradict the minimality of the relative dimension of $\phi$.
We conclude that the morphism $Z_1 \rightarrow Z'$ must be birational
and $\phi_1$ has minimal dimension as well.
\end{proof}

\begin{lemma}\label{lem:compatible-dlt-mod}
Let $G$ be a finite group.
Let $X\rightarrow Z$ be a $G$-equivariant Fano type fibration.
Let $G_Z$ be the quotient group acting on $Z$.
Let $(X,B+M)$ be a $G$-equivariant log Calabi-Yau pair.
Let $(Z,B_Z+M_Z)$ be the $G_Z$-equivariant log Calabi-Yau pair
obtained by the generalized canonical bundle formula.
Let $Z'\rightarrow Z$ be a generalized dlt modification of $(Z,B_Z+M_Z)$.
Then, we can find a birational dlt model $(X',B'+M')$ of $(X,B+M)$ which admits
a $G$-equivariant Fano type fibration $X'\rightarrow Z'$ making the following diagram commutative:
\begin{equation}\label{eq:com-diag-dlt}
\xymatrix{
(X,B+M) \ar[d] & (X',B'+M') \ar@{-->}[l]\ar[d] \\
(Z,B_Z+M_Z) & (Z',B_{Z'}+M_{Z'}) \ar[l].
}
\end{equation}
In the diagram, the generalized pairs in the gdlt modifications are induced by log pullback.
Furthermore, we can assume that the preimage of the locus of generalized log canonical centers of $(Z',B_{Z'}+M_{Z'})$ is contained in the support of $\lfloor B'\rfloor$.
\end{lemma}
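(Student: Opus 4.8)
The plan is to reduce the given fibration $X\to Z$ to a Fano type fibration over the dlt model $Z'$ by a $G$-equivariant base change, to verify that the generalized canonical bundle formula is compatible with this base change so that the induced generalized pair is log crepant to $(X,B+M)$, and then to construct the desired dlt model $(X',B'+M')$ over $Z'$ by running a $G$-equivariant relative minimal model program. The last requirement---that the preimage of ${\rm glcc}(Z',B_{Z'}+M_{Z'})$ lie in $\Supp\lfloor B'\rfloor$---is where the real work is, and I expect it to be the main obstacle.

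\emph{Base change.} I would take $\bar X$ to be the normalization of the main component of $X\times_Z Z'$. Since $G$ acts on $X\times_Z Z'$ through its action on $X$ and the action of $G_Z$ on $Z'$, the variety $\bar X$ carries a compatible $G$-action, and one obtains a $G$-equivariant fibration $\bar f\colon\bar X\to Z'$ together with a $G$-equivariant birational map $\bar X\dashrightarrow X$ over $Z$, which is birational because $Z'\to Z$ is. As $X\to Z$ is a Fano type morphism, so is $\bar X\to Z'$: Fano type morphisms are stable under birational modifications of the base, a stability already implicit in Lemma~\ref{lem:contr-minimal-rel-dim}. Writing $(\bar X,\bar B+\bar M)$ for the log pullback of $(X,B+M)$, the canonical bundle formula $K_X+B+M\sim_\qq f^*(K_Z+B_Z+M_Z)$ of Definition~\ref{def:cbf}, together with $K_{Z'}+B_{Z'}+M_{Z'}=g^*(K_Z+B_Z+M_Z)$ where $g\colon Z'\to Z$, yields
\[
K_{\bar X}+\bar B+\bar M\sim_\qq\bar f^*(K_{Z'}+B_{Z'}+M_{Z'}).
\]
Hence $(\bar X,\bar B+\bar M)$ is log crepant to $(X,B+M)$ and generalized log Calabi-Yau over $Z'$.

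\emph{The compatible dlt model.} Next I would fix a $G$-equivariant log resolution $\bar f_W\colon W\to Z'$ factoring through $\bar X$, on which the b-divisor $M$ descends and on which the union of the strict transform of $\Supp\bar B$, the exceptional locus, and $(\bar f_W)^{-1}(\lfloor B_{Z'}\rfloor)$ has simple normal crossing support. Writing $(W,B_W+M_W)$ for the log pullback of $(X,B+M)$, the variety $W$ is again of Fano type over $Z'$ (cf.\ Lemma~\ref{lem:extraction-FT}), hence a relative Mori dream space over $Z'$. I would then run a $G$-equivariant minimal model program over $Z'$ contracting the exceptional divisors of positive generalized log discrepancy, organized so that the resulting model $(X',B'+M')$ is $G\qq$-factorial and gdlt, exactly as in the usual construction of a $\qq$-factorial dlt modification carried out relatively over $Z'$ (using Proposition~\ref{prop:dlt-mod} and the relative MMP on a Mori dream space). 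Throughout, the pair remains the log pullback of $(X,B+M)$, the morphism $X'\to Z'$ stays a $G$-equivariant Fano type fibration, and the diagram~\eqref{eq:com-diag-dlt} is obtained; the generalized pairs appearing in it, $(X',B'+M')$ and $(Z',B_{Z'}+M_{Z'})$, are by construction the log pullbacks of $(X,B+M)$ and $(Z,B_Z+M_Z)$.

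\emph{The preimage condition, and the main obstacle.} Since ${\rm glcc}(Z',B_{Z'}+M_{Z'})=\Supp\lfloor B_{Z'}\rfloor$, it remains to arrange that each prime divisor of $X'$ dominating a component $D$ of $\lfloor B_{Z'}\rfloor$ has coefficient $1$ in $B'$. The key input is that $D$ is a generalized log canonical place of $(Z,B_Z+M_Z)$, so by the definition of the boundary part of the canonical bundle formula there is a generalized log canonical place of $(X,B+M)$ dominating $D$; combining this with the crepancy relation above and the generalized log Calabi-Yau condition, a local analysis of the lc-trivial fibration over the generic point $\eta_D$ shows that, after contracting the positive-discrepancy exceptional divisors over $D$, every prime divisor of the resulting model dominating $D$ has coefficient $1$. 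Thus the minimal model program of the previous step can be chosen so as to, in addition, contract the low-coefficient prime divisors dominating components of $\lfloor B_{Z'}\rfloor$, after which $(\bar f')^{-1}(\lfloor B_{Z'}\rfloor)\subseteq\Supp\lfloor B'\rfloor$. I expect the main obstacle to be precisely this bookkeeping: running a single $G$-equivariant relative MMP over $Z'$ that at once removes every superfluous positive-discrepancy exceptional divisor, retains the log canonical places lying over $\lfloor B_{Z'}\rfloor$, and eliminates the divisors of coefficient smaller than $1$ dominating $\lfloor B_{Z'}\rfloor$---all while preserving $G\qq$-factoriality, the gdlt condition, and the Fano type fibration structure over $Z'$. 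The stability of Fano type under these operations and the compatibility of the canonical bundle formula with base change are the technical facts that make this work.
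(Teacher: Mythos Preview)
Your construction of the compatible gdlt model over $Z'$ follows the paper's strategy: pass to a $G$-equivariant log resolution mapping to $Z'$ and run an MMP over $Z'$ to reach a gdlt model crepant to $(X,B+M)$. The paper makes the termination explicit by writing $K_{X_0}+B_0'+M_0\sim_{\qq,X}F$ with $F\geq 0$ exceptional over $X$, splitting $F=F_{\rm hor}+F_{\rm vert}$, and showing that $F_{\rm hor}$ lies in the relative diminished base locus (negativity on a general fiber) while $F_{\rm vert}$ is degenerate over $Z'$ in the sense of Lai; this is what forces the MMP over $Z'$ to contract exactly the positive-discrepancy exceptionals. Your sketch is correct in outline but elides this point, and your citations of Lemma~\ref{lem:contr-minimal-rel-dim} and Lemma~\ref{lem:extraction-FT} do not actually justify the claimed Fano type stability (the former concerns minimal relative dimension, the latter concerns extractions of small-discrepancy places on Fano type \emph{varieties}).

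The genuine gap is in the preimage condition. Your assertion that ``after contracting the positive-discrepancy exceptional divisors over $D$, every prime divisor of the resulting model dominating $D$ has coefficient $1$'' is false. The condition $\coeff_D(B_{Z'})=1$ only says that \emph{some} generalized log canonical place of $(X,B+M)$ dominates $D$; the other vertical components over $D$ may lie on $X$ itself---hence are not exceptional and are untouched by the dlt-modification MMP---and may carry any coefficient in $[0,1)$. For instance, take $X=\mathbb{A}^2_{x,t}\to Z=\mathbb{A}^1_t$ with $B=\{x=0\}+\{x=t\}$: the pair is log smooth and log Calabi--Yau over $Z$, the origin is an lc center so $\coeff_{\{0\}}(B_Z)=1$, yet the fiber $\{t=0\}$ has coefficient $0$ in $B$. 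The paper treats this with a separate step: having arranged that some component $P\subset\lfloor B'\rfloor$ maps onto each prime $P_{Z'}\subset\lfloor B_{Z'}\rfloor$, it runs a $(-P)$-MMP over $Z'$, which contracts every \emph{other} vertical divisor dominating $P_{Z'}$ so that $P$ becomes the reduced preimage of $P_{Z'}$; a final gdlt modification restores dlt-ness while preserving the preimage property. Your suggestion to fold this into ``the minimal model program of the previous step'' does not work, because the offending divisors are not exceptional over $X$ and are $(K_{X'}+B'+M')$-trivial, so the dlt-modification MMP has no mechanism to contract them.
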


\begin{proof}
We can find a $G$-equivariant log resolution of singularities $X_0\rightarrow X$
of the generalized pair $(X,B+M)$.
We may assume that there is a $G$-equivariant morphism $X_0\rightarrow Z'$.
Let $(X_0,B_0+M_0)$ be the generalized sub-pair obtained by log pullback of $(X,B+M)$ to $X_0$.
We may assume that for every prime component of $\lfloor B_{Z'}\rfloor$, there is a prime component of $\lfloor B_0\rfloor$ mapping to it.
Let $B'_0$ be the strict transform of $B$ on $X_0$ plus the reduced exceptional divisor of $X_0\rightarrow X$.
Then, we can write 
\[
K_{X_0}+B'_0 + M_0 \sim_{\qq,X} F,
\]
where $F$ is an effective divisor which is exceptional over $X$.
We can write $F=F_{\rm vert}+F_{\rm hor}$ for the vertical and horizontal components over $Z'$.
Note that $F$ is a $G$-invariant divisor of $X_0$.
Applying the negativity lemma to the general fiber of $X_0\rightarrow Z'$, we conclude that there is a containment
\begin{equation}\label{eq:cont} 
{\rm Bs}_{-}(K_{X_0}+B_0'+M_0/Z')\supset F_{\rm hor}.
\end{equation} 
We claim that the divisor $F_{\rm vert}$ is degenerate over $Z'$ (see~\cite[Definition 2.8]{Lai11}).
Indeed, every $G$-prime component of 
$F$ satisfies:
\begin{enumerate}
    \item Its center on $Z'$ has codimension at least two, or
    \item there is a $G$-prime component of $\lfloor B_0\rfloor$ with the same image on $Z'$.
\end{enumerate}
In the first case, the component is degenerate in the usual sense~\cite[Definition 2.8.1]{Lai11}
while in the second case it is of insufficient fiber type~\cite[Definition 2.8.2]{Lai11}.
We run a $G$-equivariant MMP for $(K_{X_0}+B_0'+M_0)$ over $Z'$ with scaling of a $G$-equivariant ample divisor over $Z'$.
By the contaiment~\eqref{eq:cont}, we conclude that after finitely many steps, all the components of $F_{\rm hor}$ are contracted.
Since $F_{\rm vert}$ is degenerate over $Z'$, we conclude that after finitely many steps, 
all the components of $F_{\rm vert}$ are contracted (see, e.g.,~\cite[Proposition 5.3]{Mor20b}).
Let $X\dashrightarrow X'$ be such a minimal model program.
Let $B'$ be the push-forward of $B_0$ on $X'$ and $M'$ be the trace of $M_0$ on $X'$.
Then, the generalized pair $(X',B'+M')$ is generalized dlt and log Calabi-Yau.
Note that $(X',B'+M')$ is $G$-equivariantly log crepant to $(X,B+M)$.
Thus, $(X',B'+M')$ is a birational generalized dlt model of $(X,B+M)$ which admits a Fano type fibration to $Z'$.
Indeed, $X'$ is of Fano type due to Lemma~\ref{lem:extraction-FT}.

Now, we turn to prove the last statement.
For each prime component $P_{Z'}$ of $\lfloor B_{Z'}\rfloor$, 
we can find a prime component $P_{X'}$ of $\lfloor B'\rfloor$ which maps to $P_{Z'}$.
Let $P$ be the $G$-closure of $P_{X'}$ on $X'$.
We run a $-P$ over $Z'$.
This MMP must terminate by the Fano type condition over $Z'$.
This MMP will contract all the effective $G$-invariant divisors (different from $P$)
which maps onto $P_Z$.
Hence, after finitely many steps, we obtain a model in which $P$
equals the reduced pullback of $P_{Z'}$.
We proceed analogously with all the prime components of $\lfloor B_{Z'}\rfloor$.
After finitely many steps, we obtain a model $(X'',B''+M'')$ which admits a $G$-equivariant Fano type morphism to $f''\colon X''\rightarrow Z'$ satisfying:
\begin{enumerate}
    \item The generalized pair $(X'',B''+M'')$ has $G$-equivariant generalized lc singularities, and
    \item the preimage of the locus of generalized log canonical centers of $(Z',B_{Z'}+M_{Z'})$ is contained in the support of $\lfloor B''\rfloor$.
\end{enumerate}
The statement $(1)$ is clear, so we argue $(2)$.
Since $(Z',B_{Z'}+M_{Z'})$ is generalized dlt, we have 
\begin{equation}\label{eq:union-P}
{\rm glcc}(Z',B_{Z'}+M_{Z'}) = 
\bigcup_{\substack{P \subset \lfloor B_{Z'}\rfloor \\ P \text{ prime }}} \supp(P).
\end{equation} 
By construction, for each $P$ as in equality~\eqref{eq:union-P}, we have 
$P_{X''}={f''}^*(P)_{\rm red}$ for some $G$-prime component $P_{X''}$ of $\lfloor B''\rfloor$.
Hence, we have that 
\[
{f''}^{-1}
\left(
{\rm glcc}(Z',B_{Z'}+M_{Z'}) 
\right)  
\subseteq
\supp \lfloor B''\rfloor.
\]
To conclude, it suffices to take a generalized dlt modification of $(X'',B''+M'')$. The dlt modification only extract divisors with generalized log discrepancy zero with respect to $(X'',B''+M'')$. Thus, the property $(2)$ is preserved when passing to a gdlt modification.
\end{proof}

\begin{lemma}\label{lem:special-model} 
Let $G$ be a finite group.
Let $(X,B+M)$ be a $G$-equivariant generalized log Calabi-Yau pair.
Let $\phi\colon X\rightarrow Z$ be a $G$-equivariant Fano type fibration satisfying the following conditions:
\begin{enumerate}
    \item[(a)] $\rho^G(X/Z)=1$, and 
    \item[(b)] $\phi$ has minimal relative dimension in the sense of Lemma~\ref{lem:contr-minimal-rel-dim}.
\end{enumerate}
Let $G_Z$ be the quotient group of $G$ acting on the base.
Let $(Z,B_Z+M_Z)$ be the $G_Z$-equivariant generalized lc pair induced on the base
by the canonical bundle formula.
Let $Z'\rightarrow Z$ be a $G_Z$-equivariant generalized dlt modification of $(Z,B_Z+M_Z)$.
Then, we can find a $G$-equivariant birational generalized lc model $(X',B'+M')$ of $(X,B+M)$
which satisfies the following conditions: 
\begin{enumerate}
\item The variety $X'$ admits a $G$-equivariant Fano type fibration $X'\rightarrow Z$,
\item the generalized pair $(X',B'+M')$ has generalized lc singularities,
\item the preimage of the locus of generalized log canonical centers of $(Z,B_Z+M_Z)$ is contained in the union of the vertical components of $\lfloor B'\rfloor$,
\item the generalized pair
$(X', \lfloor B'\rfloor_{\rm vert})$ is dlt, and
\item the contraction $X\rightarrow Z$ has relative $G$-Picard rank one on the complement of ${\rm glcc}(Z,B_Z+M_Z)$.
\end{enumerate}
\end{lemma}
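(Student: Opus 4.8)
The plan is to bootstrap from Lemma~\ref{lem:compatible-dlt-mod} and then use hypotheses~(a) and~(b) to cut the relative Picard rank down over the complement of the base log canonical locus.

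First, apply the generalized canonical bundle formula to get the $G_Z$-equivariant generalized log Calabi--Yau pair $(Z,B_Z+M_Z)$, and fix the given $G_Z$-equivariant generalized dlt modification $\mu\colon Z'\to Z$. Since $(Z',B_{Z'}+M_{Z'})$ is generalized dlt, ${\rm glcc}(Z',B_{Z'}+M_{Z'})=\supp\lfloor B_{Z'}\rfloor$; and since $(Z,B_Z+M_Z)$ is generalized klt away from its log canonical locus, $\mu$ is an isomorphism over $U:=Z\setminus {\rm glcc}(Z,B_Z+M_Z)$ and $\mu^{-1}({\rm glcc}(Z,B_Z+M_Z))=\supp\lfloor B_{Z'}\rfloor$. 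Apply Lemma~\ref{lem:compatible-dlt-mod} to $(X,B+M)\to Z$ with this $Z'$: we obtain a $G$-equivariant generalized dlt model $(X_1,B_1+M_1)$ of $(X,B+M)$, log crepant to it, with a $G$-equivariant Fano type fibration $h\colon X_1\to Z'$ such that $h^{-1}(\supp\lfloor B_{Z'}\rfloor)\subseteq\supp\lfloor B_1\rfloor$. As in the proof of that lemma, $X_1\dashrightarrow X$ extracts only divisors of generalized log discrepancy zero with respect to $(X,B+M)$; hence, by the relative form of Lemma~\ref{lem:extraction-FT} (the same proof works with divisors big and nef over $Z$), the composition $g:=\mu\circ h\colon X_1\to Z$ is again a $G$-equivariant Fano type fibration, which is condition~(1) for $X_1$. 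Condition~(2) holds since $(X_1,B_1+M_1)$ is generalized dlt. For condition~(3), the set $g^{-1}({\rm glcc}(Z,B_Z+M_Z))=h^{-1}(\supp\lfloor B_{Z'}\rfloor)$ is a divisor, vertical over $Z$, contained in $\supp\lfloor B_1\rfloor$, and therefore a union of vertical components of $\lfloor B_1\rfloor$. Condition~(4) holds because $\lfloor B_1\rfloor_{\rm vert}$ is a reduced sub-boundary of the generalized dlt pair $(X_1,B_1+M_1)$ whose strata are strata of $\lfloor B_1\rfloor$, so $(X_1,\lfloor B_1\rfloor_{\rm vert})$ is dlt.

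It remains to install condition~(5), and for this we use~(a) and~(b): by Lemma~\ref{lem:contr-minimal-rel-dim} we may assume $X$ has canonical singularities, $-K_X$ is ample over $Z$, and $\rho^G(X/Z)=1$. Because $(Z,B_Z+M_Z)$ is generalized klt over $U$, no generalized log canonical center of $(X,B+M)$ vertical over $Z$ meets $\phi^{-1}(U)$; hence every prime divisor extracted by $X_1\dashrightarrow X$ with center meeting $\phi^{-1}(U)$ is horizontal over $Z$, and $\lfloor B_1\rfloor_{\rm vert}$ has no component over $U$. Let $H$ be the $G$-invariant reduced sum of the components of $\lfloor B_1\rfloor$ that are horizontal over $Z$ and exceptional over $X$, and run a $G$-equivariant minimal model program over $Z$ that contracts $H$ --- for instance the $K_{X_1}$-minimal model program over $Z$, which coincides with a $-(B_1+M_1)$-minimal model program since $K_{X_1}+B_1+M_1\sim_{\qq,Z}0$, and which terminates with a Mori fiber space because $K_{X_1}$ is not pseudo-effective over $Z$ (its push-forward $K_X$ is anti-ample over $Z$). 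The program preserves Fano type over $Z$, so the output $X'\to Z$ is still a $G$-equivariant Fano type fibration; over $U$ it contracts exactly the horizontal log canonical places extracted over $\phi^{-1}(U)$ and otherwise only flips, so $X'$ and $X$ agree in codimension one over $U$ and $\rho^G(X'/Z)=1$ over $U$ --- condition~(5). Taking $(X',B'+M')$ to be the push-forward of $(X_1,B_1+M_1)$, conditions~(1)--(3) persist because the program is an isomorphism in codimension one over $U$ and contracts no component of $\lfloor B_1\rfloor_{\rm vert}$; replacing $X'$ by a $G$-equivariant generalized dlt modification of $(X',\lfloor B'\rfloor_{\rm vert})$, a modification over ${\rm glcc}(Z,B_Z+M_Z)$ that leaves $U$ untouched, restores condition~(4).

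The heart of the argument --- and the step I expect to be the main obstacle --- is the control of this last minimal model program over $Z$: one must prove that it contracts the superfluous horizontal extractions $H$ without contracting the vertical components of $\lfloor B_1\rfloor$ lying over ${\rm glcc}(Z,B_Z+M_Z)$ (which must survive by condition~(3)) and without destroying the generalized log canonical property of the strict transform of $(X_1,B_1+M_1)$. This is precisely where hypotheses~(a) and~(b) enter: they should force that over $U$ the model $X_1$ is already small over $X$ --- every exceptional generalized log canonical place of $(X,B+M)$ being vertical over $Z$, by the relative-dimension minimality argument behind Lemma~\ref{lem:contr-minimal-rel-dim} --- so that the only nontrivial part of the program takes place over ${\rm glcc}(Z,B_Z+M_Z)$, where the reduced pull-backs of $\lfloor B_{Z'}\rfloor$ are unaffected.
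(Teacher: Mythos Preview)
Your overall architecture matches the paper's: start from the model furnished by Lemma~\ref{lem:compatible-dlt-mod}, modify it to enforce condition~(5), and finish with a dlt modification of $(X',\lfloor B'\rfloor_{\rm vert})$ to restore~(4). The gap is entirely in the middle step, and you have correctly identified it as the crux.

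The problem is your choice of MMP. Running a $K_{X_1}$-MMP over $Z$ terminates with some Mori fiber space $X'\to W$ over $Z$, but there is no mechanism forcing $X'|_U$ to agree with $X|_U$, or even forcing $W|_U\simeq U$; hypothesis~(b) only bounds the relative dimension of such a Mori fiber space, not its isomorphism type. Likewise there is no reason this MMP spares the vertical components of $\lfloor B_1\rfloor$: a $K_{X_1}$-negative extremal ray can certainly lie in a vertical divisor over ${\rm glcc}(Z,B_Z+M_Z)$. Your fallback hope --- that (a) and (b) force every exceptional glc place to be vertical, so that $X_1\dashrightarrow X$ is already small over $U$ --- is also false in general: $(X,B+M)$ may well have horizontal generalized log canonical centers (nothing in the hypotheses excludes components of $\lfloor B\rfloor$ dominating $Z$), and the compatible dlt modification may extract horizontal glc places.

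The paper's device avoids all of this. Instead of a $K_{X_1}$-MMP, one pulls back a $G$-equivariant ample divisor $A$ from $X$ and runs the $A_{X_1}$-MMP over $Z'$. Since $Z'\to Z$ is an isomorphism over $U$, the relative ample model of $A_{X_1}$ over $U$ is exactly $X|_U$; hence the output $X_2$ satisfies $X_2|_U\simeq X|_U$ on the nose, and $\rho^G(X_2/Z')=1$ over $U$ follows immediately from hypothesis~(a). This MMP may destroy dlt-ness and may alter the divisors over ${\rm glcc}$, but $(X_2,B_2+M_2)$ remains generalized log canonical (being log crepant to $(X,B+M)$), and the preimage of the glcc is still contained in $\lfloor B_2\rfloor$ because over $U$ nothing changed. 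Only then does one take the dlt modification of $(X_2,\lfloor B_2\rfloor_{\rm vert})$, which is an isomorphism over $U$ and so preserves~(5). The point is that the ample-divisor MMP has a prescribed target over $U$, whereas a $K$-MMP does not.
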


\begin{proof}
Let $X_1$ be the model provided by Lemma~\ref{lem:compatible-dlt-mod}.
Let $(X_1,B_1+M_1)$ be the $G$-equivariant generalized dlt pair induced on $X_1$.
We may assume that the preimage of the locus of generalized log canonical centers of $(Z',B_{Z'}+M_{Z'})$ is contained in the support of $\lfloor B'\rfloor$.
As in~\eqref{eq:com-diag-dlt}, we have a commutative diagram:
\[
\xymatrix{
(X,B+M) \ar[d] & (X_1,B_1+M_1) \ar@{-->}[l]\ar[d] \\
(Z,B_Z+M_Z) & (Z',B_{Z'}+M_{Z'}). \ar[l]
}
\]
Let $U$ be the complement of the set of generalized log canonical centers of $(Z,B_Z+M_Z)$.
Then, $U$ is a Zariski open dense subset of $Z$.
Furthermore, $Z'\rightarrow Z$ is an isomorphism over $U$
and the induced map $X_1\dashrightarrow U$ is a morphism.
Let $U_X$ be the preimage of $U$ on $X$.
Let $U_{X_1}$ be the preimage of $U$ on $X_1$.
Then, we have an induced $G$-equivariant birational contraction $U_{X_1}\dashrightarrow U_X$.
Let $A$ be a $G$-equivariant ample divisor on $X$.
Let $A_{X_1}$ be its pullback to $X_1$.
We run a $G$-equivariant $A_{X_1}$-MMP over $Z'$.
Let $X_1\dashrightarrow X_2$ be this minimal model program.
Since $U_X\rightarrow U$ is the $G$-equivariant ample model for $A$ relatively to $U$,
we conclude that the preimage $U_{X_2}$ of $U$ on $X_2$ is $G$-equivariantly isomorphic to $U_X$.
In particular, the $G$-equivariant Fano type fibration $X_2\rightarrow Z'$
has relative $G$-Picard rank one on the complement of ${\rm glcc}(Z',B_{Z'}+M_{Z'})$.
Note that the induced $G$-equivariant generalized pair $(X_2,B_2+M_2)$ is generalized log canonical. Moreover, the preimage of the locus of generalized log canonical centers of $(Z',B_{Z'}+M_{Z'})$ is contained in $\lfloor B_2\rfloor$.
By Lemma~\ref{lem:extraction-FT}, we conclude that $X_2$ is of Fano type over $Z'$.

Note that the generalized pair $(X_2,B_2+M_2)$ satisfies $(1)$-$(3)$ and $(5)$.
Thus, it suffices to perform a modification so that it further satisfies $(4)$.
We observe that $X_2$ is klt over $U$.
Hence, the log pair $(X_2,\lfloor B_2\rfloor_{\rm vert})$ is klt over $U$.
Furthermore, every generalized log canonical center of $(X_2,\lfloor B_2\rfloor_{\rm vert})$ maps to the complement of $U$.
We define $(X',B'+M')$ to be the log pull-back of $(X_2,B_2+M_2)$ to a $G$-equivariant generalized dlt modification of $(X_2,\lfloor B_2\rfloor_{\rm vert})$.
We denote by $\psi\colon X'\rightarrow X_2$ the projective birational morphism.
The condition $(2)$ is clear.
This modification is an isomorphism over the preimage of $U$ so $(5)$ is preserved.
Every generalized log canonical place of $(X_2,\lfloor B_2\rfloor_{\rm vert})$ is a generalized log canonical place of $(X_2,B_2+M_2)$.
We conclude that 
\[
K_{X'}+\lfloor B'\rfloor_{\rm vert} = 
\psi^*(K_{X_2}+\lfloor B_2\rfloor_{\rm vert}).
\] 
Hence, the pair $(X_2,\lfloor B_2\rfloor_{\rm vert})$ is dlt and
$\lfloor B_2 \rfloor_{\rm vert}$ consists of all the prime divisors of $X'$ which maps to the complement of $U$. This implies $(4)$ and $(3)$. 
The variety $X'$ is of Fano type of type by Lemma~\ref{lem:extraction-FT}.
This implies condition $(1)$.
\end{proof}

\section{Rationally connected varieties}

In this section, we study finite actions
on rationally connected varieties.
The main result is that finite actions on rationally
connected varieties admitting log Calabi-Yau klt structures must have order bounded by a constant on the dimension of the variety.
We refer the reader to~\cite{KMM92,KSC04} for the literature on rationally connected varieties.

\begin{theorem}\label{introthm:RC-action}
Let $n$ and $N$ be positive integers.
There exists a constant $c(n,N)$, only depending on $n$ and $N$, satisfying the following.
Let $G$ be a finite group.
Let $X$ be a $n$-dimesional rationally connected variety.
Let $(X,B+M)$ be a $G$-equivariant $N$-complement.
Assume that $(X,B+M)$ has generalized klt singularities.
Then the finite group 
$G$ has order at most $c(n,N)$.
\end{theorem}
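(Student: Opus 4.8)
\emph{Overview and reductions.} The plan is to prove, by induction on $n$ and for all $N$ simultaneously, the assertion: if $(X,B+M)$ is a $G$-equivariant generalized klt $N$-complement with $X$ rationally connected of dimension $n$, then $|G|\le c(n,N)$. For $n\le1$ this is elementary --- $X$ is a point or $X\simeq\pp^1$, and in the latter case $G$ preserves the $\qq$-divisor $B+M_X$ and the b-divisor $M$; since the pair is generalized klt and $N(K_X+B+M)\sim0$, the coefficients of $B+M_X$ lie in $(0,1)\cap\tfrac1N\zz$, so $\Supp(B+M_X)$ has at least three and at most $2N$ points and $G$ embeds into the finite stabiliser in $\mathrm{PGL}_2$ of a set of at most $2N$ marked points. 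For $n\ge2$: as $(X,B+M)$ is generalized klt, $X$ is of klt type, so after a $G$-equivariant small $\qq$-factorialization (crepant, and preserving rational connectedness and the complement) we may assume $X$ is $\qq$-factorial klt. Being rationally connected, $X$ is uniruled, so $K_X$ is not pseudoeffective and a $G$-equivariant $K_X$-MMP terminates with a $G$-equivariant Mori fibre space $g\colon Y\to T$, with $-K_Y$ ample over $T$ and $\rho^G(Y/T)=1$. Each step contracts a $(K_X+B+M)$-trivial ray, hence is crepant for the generalized pair; thus $(Y,B_Y+M_Y)$ is again a $G$-equivariant generalized klt $N$-complement and $Y$ is rationally connected.

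\emph{The case $\dim T=0$ (the main point).} Here $Y$ is a $G\qq$-factorial klt Fano variety with $\rho^G(Y)=1$, carrying the generalized klt log Calabi--Yau $N$-complement $(Y,B_Y+M_Y)$. Its generalized log discrepancies lie in $\tfrac1N\zz_{>0}$, so $Y$ is $\tfrac1N$-lc; by the boundedness of Fano varieties \cite{Bir21} and of complements \cite{Bir19} (in their generalized form, using that $NM$ is Cartier where it descends), the resulting pairs $(Y,B_Y+M_Y)$ lie in a bounded family depending only on $n$ and $N$. Since $Y$ is rationally connected, ${\rm Aut}^0(Y)$ is a linear algebraic group, and since $(Y,B_Y+M_Y)$ is klt and log Calabi--Yau, ${\rm Aut}^0(Y,B_Y+M_Y)$ is trivial: a nontrivial one-parameter subgroup would, on restriction to the closure of a general orbit, produce a non-klt point. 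Hence ${\rm Aut}(Y,B_Y+M_Y)$ is finite, and over a bounded family of such pairs its order is uniformly bounded, which bounds $|G|$.

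\emph{The case $\dim T\ge1$.} Now $g$ is a $G$-equivariant Fano type morphism and $T$ is rationally connected of dimension $<n$. By the equivariant canonical bundle formula (Lemma~\ref{lem:G-equiv-gklt-rc-cbf}), $T$ carries a $G_T$-equivariant generalized klt complement $(T,B_T+M_T)$ of index $N'=N'(n,N)$, so the inductive hypothesis gives $|G_T|\le c(\dim T,N')$. The general fibre $F$ of $g$ is a klt Fano variety --- in particular rationally connected --- of dimension $n-\dim T<n$, and generalized adjunction to $F$ yields a $G_F$-equivariant generalized klt $N$-complement $(F,B_F+M_F)$, where $G_F:=\ker(G\to G_T)$ acts faithfully on $F$ (an element acting trivially on the general fibre acts trivially on $Y$); the inductive hypothesis gives $|G_F|\le c(\dim F,N)$. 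From $1\to G_F\to G\to G_T$ we get $|G|\le|G_F|\cdot|G_T|$, and defining $c(n,N)$ to dominate this product together with the bound from the previous case completes the induction.

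\emph{Main obstacle.} The only substantive case is $\dim T=0$, where the $K_X$-MMP terminates with a Fano variety of $G$-Picard rank one and no dimension drop occurs: there one must invoke the boundedness of klt Fano varieties and of their complements together with the finiteness of the automorphism group of a klt log Calabi--Yau pair of Fano type. The fibration case is, by contrast, a routine induction resting on Lemma~\ref{lem:G-equiv-gklt-rc-cbf} and generalized adjunction.
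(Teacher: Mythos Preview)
Your overall architecture---run a $G$-equivariant $K_X$-MMP to a Mori fibre space, handle the positive-dimensional base by induction via the equivariant canonical bundle formula (Lemma~\ref{lem:G-equiv-gklt-rc-cbf}) and adjunction to the general fibre, and treat the $G$-Picard-rank-one Fano base case via boundedness---is exactly the paper's, and your fibration case is correct.

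The genuine gap is in the $\dim T=0$ case. Your assertion that $\mathrm{Aut}^0(Y,B_Y+M_Y)$ is trivial is \emph{false} for generalized klt log Calabi--Yau pairs. Take $Y=\pp^1$, $B_Y=0$, and let $M$ be the b-nef divisor descending on $\pp^1$ as $p_1+p_2$ for two points. Then $K_Y+B_Y+M_Y\sim0$, every generalized log discrepancy equals $1$ (so the pair is even generalized canonical), yet the torus $\mathbb{G}_m$ fixing $p_1,p_2$ acts on the pair. Your one-line justification---``restriction to the closure of a general orbit produces a non-klt point''---fails precisely because the nef part $M$, unlike an honest boundary, contributes nothing to the coefficient of $B_Y$ at the torus-fixed points. (Example~\ref{ex:no-control-M} in the paper exhibits the same phenomenon with $B_Y=M_Y=\tfrac12(p_1+p_2)$.) Consequently your scheme ``bounded family $+$ finite automorphism group $\Rightarrow$ uniformly bounded order'' collapses at the finiteness step.

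What the paper does instead is first \emph{replace $M$ by an honest effective divisor}. One passes to the quotient $Y_1:=X_1/G$, where the $N$-complement descends to a generalized $\tfrac1N$-lc pair $(Y_1,\Gamma_{Y_1}+M_{Y_1})$; since $X_1$ is canonical Fano (the paper canonicalizes before the MMP, rather than using $\tfrac1N$-lc as you do---either gives boundedness), $Y_1$ lies in a bounded family. Running an $M_{Y_1}$-MMP reaches $Y_2$ with $M_{Y_2}$ semiample, and boundedness controls its Cartier index, so a general section yields $\Omega_{Y_2}\sim_\qq M_{Y_2}$ with $(Y_2,\Gamma_{Y_2}+\Omega_{Y_2})$ a \emph{genuine} klt $N'$-complement for some $N'=N'(n,N)$. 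Pulling back to $X_1$ produces a $G$-equivariant honest klt pair $(X_1,\Omega_1)$ with $N'(K_{X_1}+\Omega_1)\sim0$. Only now does the torus argument go through: invoking \cite[Theorem~4.1]{Mor20b}, a large $G$ would contain, up to bounded index, a subgroup $A<\mathbb{G}_m^r\le\mathrm{Aut}(X_1,\Omega_1)$ with $r\ge1$; passing to a torus-equivariant model with a good quotient, the restriction of the log Calabi--Yau sub-pair to a general toric fibre is forced to be the reduced toric boundary, contradicting the klt hypothesis on the honest pair $(X_1,\Omega_1)$. This is exactly the step your sketch tries to compress, and it genuinely requires eliminating the nef part first.
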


\begin{proof}
By passing to a $G$-equivariant small $G\qq$-factorialization of $X$, we may assume that 
$X$ is $G\qq$-factorial.
Let $X'\rightarrow X$ be a
$G$-equivariant canonicalization of 
$X$.
Let $(X',B'+M')$ be the generalized pair
obtained by log pullback of
$(X,B+M)$ to $X'$.
Then, $(X',B'+M')$ is a $G$-equivariant generalized klt $N$-complement
and $K_{X'}$ has canonical singularities.
Observe that $X'$ is rationally connected. Hence, $K_{X'}$ is not a pseudo-effective divisor.
We run a $G$-equivariant $K_{X'}$-MMP which terminates with a $G$-equivariant Mori fiber space $f\colon X_1\rightarrow Z_1$.
We denote by $(X_1,B_1+M_1)$ the generalized klt pair induced on $X_1$.
We have an exact sequence
\[
1\rightarrow G_F \rightarrow G\rightarrow G_Z\rightarrow 1, 
\]
where $G_F$ is the largest normal subgroup acting fiberwise.
First, we assume that $\dim Z_1>0$.
By Lemma~\ref{lem:G-equiv-gklt-rc-cbf}, we have an induced 
$G_Z$-equivariant generalized klt 
$M$-complement 
\[
(Z_1, B_{Z_1}+M_{Z_1}), 
\]
so that 
\[
K_{X_1}+B_1+M_1 \sim_\qq 
f^*(K_{Z_1}+B_{Z_1}+M_{Z_1}).
\]
Here, $M$ only depends on $N$ and $n$.
By induction on the dimension, there is a constant $c_0(z,M)$, which only depends on $z=\dim Z_1$ and $M$, so that
$|G_b|< c_0(z,M)$.
On the other hand, 
let $(F,B_F+M_F)$ be the generalized pair obtained by adjunction to the general fiber $F$ of $X_1\rightarrow Z_1$.
Then, the generalized pair
$(F,B_F+M_F)$ is a $G_F$-equivariant 
generalized klt $N$-complement.
We have that $\dim F <\dim X$.
Hence, by induction on the dimension,
there exists a constant $c_0(f,N)$, 
which only depends on $f=\dim F$ and $N$ so that $|G_f|<c_0(f,N)$.
Thus, we conclude that
\begin{equation}
\label{eq:ineq}
|G| < c_0(z,M) + c_0(f,N).
\end{equation} 
Note that the constant on the right-hand side only depend on
$z,f,M$ and $N$.
Furthermore, $z$ and $f$ are integers strictly bounded above by $n$, and $M$ only depends on $N$ and $n$.
We conclude that the upper bound in 
inequality~\eqref{eq:ineq} only depends on $n$ and $N$.

Now, we assume that $Z_1$ is zero dimensional.
This means that $X_1$ is a canonical variety with $\rho^G(X_1)$ equal to one.
Since $K_{X_1}$ is $G$-invariant, we conclude that $K_{X_1}$ is anti-ample,
as it is not pseudo-effective.
Hence, $X_1$ is a $G$-equivariant canonical Fano variety.
By~\cite[Theorem 1.1]{Bir21}, we conclude that $X_1$ belongs to a bounded family.
Let $\pi\colon X_1\rightarrow Y_1$
be the quotient of $X_1$ by $G$.
We can write 
\[
K_{X_1} = \pi^*(K_{Y_1}+B_{Y_1}),
\]
for some boundary $B_{Y_1}$ on $Y_1$ with standard coefficients.
The $N$-complement 
$(X_1,B_1+M_1)$ is the generalized pullback 
of a generalized $N$-complement of $(Y_1,B_{Y_1})$.
Thus, we can find a generalized $N$-complement
$(Y_1,\Gamma_{Y_1}+M_{Y_1})$ so that
\[
K_{X_1}+B_1+M_1 =
\pi^*(K_{Y_1}+\Gamma_{Y_1}+M_{Y_1}).
\]
Since $(X_1,B_1+M_1)$ is generalized klt, 
we conclude that $(Y_1,\Gamma_{Y_1}+M_{Y_1})$ is generalized klt as well.
Since $(Y_1,\Gamma_{Y_1}+M_{Y_1})$ is generalized klt and 
\[
N(K_{Y_1}+\Gamma_{Y_1}+M_{Y_1})\sim 0,
\]
we conclude that 
$a_F(Y_1,\Gamma_{Y_1}+M_{Y_1})\geq \frac{1}{N}$ for every prime divisor $F$ over $Y_1$.
Hence, $(Y_1,\Gamma_{Y_1}+M_{Y_1})$ is generalized $\frac{1}{N}$-log canonical.
As a consequence, the pair
$(Y_1,B_{Y_1})$ is
$\frac{1}{N}$-log canonical 
and $K_{Y_1}+B_{Y_1}$ is anti-ample.
We conclude that $Y_1$ belongs to a bounded family.
We run a $M_{Y_1}$-MMP which terminates in a good minimal model $Y_1\dashrightarrow Y_2$.
Indeed, the divisor $M_{Y_1}$ is pseudo-effective and its diminished base locus is small. So $Y_1\dashrightarrow Y_2$ is a sequence of $M_{Y_1}$-flips.
Let $M_{Y_2}$ be the push-forward of $M_{Y_1}$ to $Y_2$.
Analogously, let $\Gamma_{Y_2}$ be the push-forward of $\Gamma_{Y_1}$ to $Y_2$.
Note that $NM_{Y_2}$ is a Weil divisor.
Since $Y_2$ belongs to a bounded family, 
we can find a constant $N'$, which only depends on $\dim X$, so that
$N'NM_{Y_2}$ is a semiample Cartier divisor.
Let $\Omega_{Y_2}$ be the quotient by $N'N$ of a general section of $N'NM_{Y_2}$. Hence, the log pair
\[
\left( 
Y_2,
\Gamma_{Y_2} 
+ 
\Omega_{Y_2} \right)
\]
is generalized klt and 
\[
K_{Y_2}+\Gamma_{Y_2}+\Omega_{Y_2}\sim_\qq 0.
\]
Replacing $N$ with a multiple which only depends on $n$, we may assume that 
\[
N(K_{Y_2}+\Gamma_{Y_2}+\Omega_{Y_2})\sim 0.
\]
Let $(Y_1,\Gamma_{Y_1}+\Omega_{Y_1})$ be the log pair obtained by pushing-forward
$\Omega_{Y_2}$ to $Y_1$.
Note that 
$(Y_1,\Gamma_{Y_1}+\Omega_{Y_1})$ is a klt pair and
\[
N(K_{Y_1}+\Gamma_{Y_1}+\Omega_{Y_1})\sim 0.
\]
Note that $\Gamma_{Y_1}\geq B_{Y_1}$.
Hence, the pullback of 
$(Y_1,\Gamma_{Y_1}+\Omega_{Y_1})$ to $X_1$
equals $(X_1,\Omega_1)$, where
$\Omega_1 \geq 0$.
The pair $(X_1,\Omega_1)$ is klt,
$G$-equivariant, 
and $N(K_{X_1} +\Omega_1)\sim 0$.

By the proof of~\cite[Theorem 4.1]{Mor20b}, there exists a constant $k(n)$, only depending on $n$, which satisfies the following.
If $|G|>k(n)$, then 
$A<\mathbb{G}_m^r \leqslant {\rm Aut}(X_1,\Omega_1)$ for a positive integer $r$ and a normal subgroup
$A\triangleleft G$ of index at most $k(n)$.
Assume that $|G|>k(n)$.
Then the log pair $(X_1,\Omega_1)$ is $\mathbb{G}_m^r$-invariant.
Let $\widetilde{X}_1\rightarrow X_1$ be a torus equivariant birational morphism 
so that $\widetilde{X}_1$ admits a good quotient for the torus action.
Denote such good quotient by 
$\widetilde{X}_1\rightarrow Z$.
Hence, the general fiber of the morphism
$\widetilde{X}_1\rightarrow Z$ is a projective toric variety $F$.
Let $(\widetilde{X}_1,\widetilde{\Omega}_1)$ be the log sub-pair obtained by log pullback of
$(X_1,\Omega_1)$.
Observe that $(\widetilde{X}_1,\widetilde{\Omega}_1)$ is a log Calabi-Yau sub-pair
which is $\mathbb{G}_m^r$-invariant.
Let $(F,\Omega_F)$ be the restriction of
$(\widetilde{X}_1,\widetilde{\Omega}_1)$ to a general fiber of
$\widetilde{X}_1\rightarrow Z$.
Then, $(F,\Omega_F)$ is a log Calabi-Yau, torus invariant, sub-pair.
This implies that $\Omega_F$ is the reduced toric boundary.
In particular, $\widetilde{\Omega}_1$ must have a prime component of coefficient one, 
which implies that
$(X_1,\Omega_1)$ is strictly log canonical.
This gives a contradiction.
We conclude that $|G|<k(n)$, finishing the proof of the theorem.
\end{proof}

Our next aim is to prove that given a generalized klt log Calabi-Yau pair,
up to a birational contraction,
we can find a projective birational morphism with rationally connected fibers and $K$-trivial base.
The ideas of the following proposition are mostly those in~\cite[Theorem 3.2]{DCS16}. 
We give a proof for the sake of completeness, 
as we are dealing with equivariant generalized pairs.

\begin{proposition}\label{prop:MRC-gklt}
Let $G$ be a finite group.
Let $(X,B+M)$ be a $G$-equivariant generalized klt
log Calabi-Yau pair.
Then, there exists a $G$-equivariant birational contraction 
$X\dashrightarrow X'$ satisfying the following.
Either $X'$ is rationally connected or there exists a $G$-equivariant projective fibration 
$X'\rightarrow Z$ so that
the following conditions are satisfied:
\begin{enumerate}
    \item $Z$ is positive dimensional and has klt singularities, 
    \item the fibers of $X'\rightarrow Z$ are rationally connected, and 
    \item $K_Z\sim_\qq 0$.
\end{enumerate}
\end{proposition}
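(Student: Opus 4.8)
The plan is to reduce the problem to the maximal rationally connected fibration, transport the Calabi-Yau structure to the base via the canonical bundle formula, and then run a minimal model program on the base; this follows the strategy of~\cite[Theorem~3.2]{DCS16}, adapted to the equivariant generalized setting. First I would pass to a $G$-equivariant small $G\qq$-factorialization, so that $X$ is $G\qq$-factorial, and consider the maximal rationally connected fibration $\psi\colon X\dashrightarrow Z_0$. Since the MRC fibration is canonical it is automatically $G$-equivariant. If $\dim Z_0=0$ then $X$ is rationally connected and we take $X'=X$, so assume $\dim Z_0\geq 1$; by Graber--Harris--Starr the general fibre of $\psi$ is rationally connected and $Z_0$ is not uniruled.

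Next I would replace $X$ by a $G$-equivariant birational model $Y$, with $X\dashrightarrow Y$ a birational contraction, on which the MRC map becomes a morphism $f\colon Y\to Z_1$ with $Z_1$ normal, birational to $Z_0$ (hence not uniruled) and with rationally connected general fibre: resolve the indeterminacy $G$-equivariantly, then run a suitable $G$-equivariant minimal model program contracting the divisors exceptional over $X$ while keeping the morphism to a birational model of $Z_0$, in the spirit of the compatibility arguments used to prove Lemma~\ref{lem:compatible-dlt-mod}. Let $(Y,B_Y+M_Y)$ be the push-forward of $(X,B+M)$; as $X\dashrightarrow Y$ is a birational contraction this is again a $G$-equivariant generalized log canonical log Calabi-Yau pair. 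Applying the $G$-equivariant canonical bundle formula (Definition~\ref{def:cbf}) to $f$ gives a $G_{Z_1}$-equivariant generalized log canonical pair $(Z_1,B_{Z_1}+M_{Z_1})$ with $K_Y+B_Y+M_Y\sim_\qq f^*(K_{Z_1}+B_{Z_1}+M_{Z_1})$, so $K_{Z_1}+B_{Z_1}+M_{Z_1}\sim_\qq 0$ and $-K_{Z_1}\sim_\qq B_{Z_1}+M_{Z_1}$ is pseudoeffective.

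Since $Z_1$ is not uniruled, $K_{Z_1}$ is pseudoeffective, so I would run a $G_{Z_1}$-equivariant minimal model program for $K_{Z_1}$, which terminates with a minimal model $\mu\colon Z_1\dashrightarrow Z$ on which $K_Z$ is nef. As $\mu$ is a $K$-negative birational contraction, the push-forwards satisfy $B_Z\geq 0$ and $M_Z$ nef with $-K_Z\sim_\qq B_Z+M_Z$; together with $K_Z$ nef this forces $K_Z\equiv 0$, hence $K_Z\sim_\qq 0$ since numerically trivial canonical divisors on klt varieties are torsion, and then $B_Z=0$ and $M_Z\sim_\qq 0$. Finally the base modification $\mu$ lifts to a $G$-equivariant birational contraction $Y\dashrightarrow X'$ together with a $G$-equivariant fibration $X'\to Z$ (running the induced minimal model program over $Z_1$, again in the style of Lemma~\ref{lem:compatible-dlt-mod}); composing, $X\dashrightarrow X'$ is a birational contraction, $Z$ is positive-dimensional and klt with $K_Z\sim_\qq 0$, and the general fibre of $X'\to Z$, being birational to that of $f$, is rationally connected. (In the degenerate case where $X$ itself is not uniruled, the same argument with $Z_1$ a resolution of $X$ produces a birational contraction $X\dashrightarrow X'$ with $K_{X'}\sim_\qq 0$.)

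The main obstacle is the minimal model program for $K_{Z_1}$ on the non-uniruled variety $Z_1$ in the third step: one must know that it runs and terminates with a minimal model. The non-uniruledness of $Z_0$ is essential here, since it forces $K_{Z_1}$ to be pseudoeffective and thus precludes ending in a Mori fibre space; beyond this one invokes the known cases of the minimal model and abundance conjectures, notably abundance for numerically trivial log canonical divisors. A secondary technical point is the compatibility in the second and third steps — realizing the MRC map as a morphism via a birational contraction, and lifting the base minimal model program to the total space without destroying the fibration or the rational connectedness of the general fibre — which is handled by the relative minimal model program techniques developed in the preliminary section.
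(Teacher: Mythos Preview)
Your strategy via the MRC fibration diverges from the paper's, and the step you flag as the main obstacle is actually a non-issue, while the real gap lies earlier.

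On the base $Z_1$: you do not need to run any MMP. Since $Z_1$ is not uniruled, $K_{Z_1}$ is pseudoeffective; since $-K_{Z_1}\sim_\qq B_{Z_1}+M_{Z_1}$ (effective plus the trace of a b-nef divisor) is also pseudoeffective and the pseudoeffective cone contains no lines, one has $K_{Z_1}\equiv 0$ immediately after passing to a small $\qq$-factorialization. Abundance for numerically trivial klt canonical class, which \emph{is} a theorem, then gives $K_{Z_1}\sim_\qq 0$, and there is nothing to lift in your last step. So invoking the open minimal model conjecture is both unnecessary and, as your argument is written, not acceptable.

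The genuine gap is your second step: producing a $G$-equivariant birational \emph{contraction} $X\dashrightarrow Y$ together with a morphism $Y\to Z_1$ realizing the MRC map. Resolving indeterminacy extracts divisors, and you then need an MMP that recontracts exactly those while preserving a morphism to a birational model of $Z_0$. Your appeal to arguments ``in the spirit of Lemma~\ref{lem:compatible-dlt-mod}'' does not apply: that lemma is for Fano type fibrations, and the MRC fibration of a generalized klt Calabi--Yau pair need not be of Fano type, so there is no reason those relative MMPs terminate or contract what you want. The paper handles precisely this point by a different mechanism. Rather than reaching the MRC base in one shot, it iterates Mori fiber spaces: run a $K_X$-MMP to a Mori fiber space $X'\to Y_1$ (this terminates by BCHM because $K_X$ is not pseudoeffective unless $K_X\sim_\qq 0$), then run a $K_{Y_1}$-MMP on the base and lift each flip or divisorial contraction to the total space via the two-ray game (Lemmas~\ref{lem:2-ray-1} and~\ref{lem:2-ray-2}), using that $X'\to Y_1$ has relative $G$-Picard rank one and that $B'+M'$ is big over the relevant bases. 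Iterating, one reaches in finitely many steps either a point (so $X'$ is rationally connected) or a klt base with $K\sim_\qq 0$, and the composite $X\dashrightarrow X'$ is a birational contraction by construction.
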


Before proving the proposition, we will prove two lemmas
using the two ray game.

\begin{lemma}\label{lem:2-ray-1}
Let $G$ be a finite group.
Let $(X,B+M)$ be a $G$-equivariant generalized klt log Calabi-Yau pair.
Assume that $X$ has $G\qq$-factorial singularities.
Assume $X$ is not $\qq$-linearly trivial.
Let $p\colon X\rightarrow Y$ be a $G$-equivariant Mori fiber space 
and let $G_Z$ be the quotient group acting on the base.
Let $q\colon Y\rightarrow Y'$ be a $G_Z$-equivariant divisorial contraction.
Then, there exists:
\begin{enumerate}
\item a $G$-equivariant generalized klt log Calabi-Yau pair
$(X',B'+M')$ so that the canonical divisor $K_{X'}$ is not $\qq$-linearly trivial,
\item a $G$-equivariant Mori fiber space $p'\colon X'\rightarrow Y'$, and 
\item a $G$-equivariant contraction $f\colon X\dashrightarrow X'$ which is a composition of finitely many $G$-equivariant $(K_X+B+M)$-flops followed by a $G$-equivariant divisorial contraction, 
\end{enumerate} 
so that the following diagram is commutative
\[
\xymatrix{
G\acts X \ar@{-->}[r]^-{f} \ar[d]_-{p} & X'\ar[d]^-{p'} \\
G_Z\acts Y \ar[r]^-{q} & Y'.
}
\]
\end{lemma}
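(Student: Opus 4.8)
The plan is to play the $G$-equivariant two-ray game for $X$ over $Y'$. First I would record the numerology: $\rho^G(X/Y')=\rho^G(X/Y)+\rho^G(Y/Y')=1+1=2$, and $\dim Y=\dim Y'\geq 1$ (a divisorial contraction preserves dimension, and $\dim X>\dim Y$ since $p$ is a fibration). Since $p$ is a $G$-Mori fibre space and $X$ is $G\qq$-factorial of klt type, $X$ itself is klt and $-K_X$ is $p$-ample, so $X$ is of Fano type over $Y$; and $q$ being a divisorial contraction makes $Y$ of Fano type over $Y'$. Composing, $X$ is of Fano type over $Y'$, hence a relative $G$-Mori dream space over $Y'$, so every $G$-equivariant minimal model program over $Y'$ terminates.

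Now $\overline{NE}^G(X/Y')$ has exactly two extremal rays, one of them, $R_1$, contracted by $p$, and a second ray $R_2$. I would run the $G$-equivariant two-ray game over $Y'$, processing at each stage the extremal ray other than the $p$-type fibration ray. Because $K_X+B+M\sim_\qq 0$, every step is a $(K_X+B+M)$-flop; in particular the induced generalized pair stays $G$-equivariant, generalized klt, log Calabi-Yau and $G\qq$-factorial throughout, and at each intermediate model one extremal ray gives a $p$-type Mori fibration to a birational modification of $Y$, while the processed ray maps onto a curve contracted by the corresponding divisorial contraction of the base. The game terminates by the Fano type property over $Y'$.

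The key point is then to analyse the terminal step. It cannot be of fibre type: the general fibre of such a contraction would be swept out by curves in the processed ray, which map into the exceptional divisor of the divisorial contraction on the base, so that general fibre would be contained in a fixed prime divisor of the total space — impossible. Hence the game ends with a $G$-equivariant divisorial contraction $X_k\rightarrow X'$, which drops the relative $G$-Picard rank to $\rho^G(X'/Y')=1$; thus $f\colon X\dashrightarrow X'$ is a composition of finitely many $(K_X+B+M)$-flops followed by a divisorial contraction, and the induced generalized pair $(X',B'+M')$ is $G$-equivariant, generalized klt and log Calabi-Yau.

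It remains to identify $X'\to Y'$ as a $G$-Mori fibre space and to see that $K_{X'}$ is not $\qq$-linearly trivial. For this I would observe that the whole program is an isomorphism over the generic point of $Y'$: every flopping locus and the divisor contracted at the last step lie over the image of the $q$-exceptional divisor, a proper closed subset of $Y'$, hence away from the generic fibre. Therefore the general fibre of $X'\to Y'$ equals the general fibre of $p$, which is a Fano variety with $-K$ ample; since $\rho^G(X'/Y')=1$ this forces $-K_{X'}$ to be relatively ample over $Y'$, so $X'\to Y'$ is a $G$-Mori fibre space, and since $\dim Y'\geq 1$ it follows that $K_{X'}$ is not $\qq$-linearly trivial. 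Commutativity of the diagram is immediate from the construction. I expect the main obstacle to be precisely this last identification — ruling out that the two-ray game degenerates into a crepant fibration or one of lower fibre dimension over $Y'$ — and the two observations above (fibres of a fibre-type contraction would lie inside the base-exceptional divisor; nothing happens over the generic point of $Y'$) are what make it go through.
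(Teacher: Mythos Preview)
Your argument is correct and follows the classical two-ray game, but the paper takes a more direct route. Instead of playing the abstract game and then arguing a posteriori that the terminal step must be divisorial and that nothing happens over the generic point of $Y'$, the paper picks out the relevant divisor from the start: let $F\subset Y$ be the $q$-exceptional divisor and set $E=p^*(F)$, a $G$-invariant effective divisor on $X$ whose image in $Y'$ has codimension at least two. One then runs a $G$-equivariant $E$-MMP over $Y'$. Since $E$ is degenerate over $Y'$ in the sense of Lai, his lemma \cite[Lemma~2.10]{Lai11} gives immediately that this MMP is a finite sequence of $(K_X+B+M)$-flops followed by a single divisorial contraction killing $E$; no separate analysis of the terminal step is needed.

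For the non-triviality of $K_{X'}$, the paper also argues slightly differently: rather than tracking the generic fibre, it observes that $B+M$ is big over $Y'$ (this is what makes $X$ a Mori dream space over $Y'$ via BCHM), hence its pushforward $B'+M'$ is big over $Y'$, and since $\rho^G(X'/Y')=1$ the divisor $-K_{X'}\sim_\qq B'+M'$ is ample over $Y'$. Your generic-fibre argument reaches the same conclusion and is perfectly fine; the paper's version avoids having to check that the MMP is an isomorphism over the generic point.
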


\begin{proof}
First, observe that $\rho^G(X/Y')=2$.
The divisor $B+M$ is big over $Y'$ so $X$ is a Mori dream space over $Y'$ (see, e.g.,~\cite[Corollary 1.3.2]{BCHM10}).
Let $F\subset Y$ be the $G_Z$-invariant effective divisor contracted by $q$.
We define $E=p^*(F)$.
Note that $E$ is a $G$-invariant effective divisor whose image on $Y'$ has codimension at least two.
We run a $G$-equivariant MMP for $E$ over $Y'$.
By~\cite[Lemma 2.10]{Lai11}, we conclude that this MMP consists of finitely many
$G$-equivariant $(K_X+B+M)$-flops followed by a divisorial contraction.
Let $X'$ be the model obtained after the contraction.
We have that the morphism $p'\colon X'\rightarrow Y'$ has $G$-invariant Picard rank one.
Observe that $B+M$ is big over $Y'$. 
Then, $B'+M'=f_*(B+M)$ is big over $Y'$.
We conclude that 
$K_{X'}$ is not $\qq$-linearly trivial.
Indeed, $-K_{X'}$ is ample over $Y'$.
\end{proof}

\begin{lemma}\label{lem:2-ray-2}
Let $G$ be a finite group.
Let $(X,B+M)$ be a $G$-equivariant generalized klt 
log Calabi-Yau pair.
Assume that $X$ is $G\qq$-factorial.
Let $p\colon X\rightarrow Y$ be a $G$-equivariant Mori fiber space
Let $G_Z$ be the quotient group of $G$ acting on $Y$.
Let $q\colon Y\dashrightarrow Y'$ be a $G_Z$-equivariant $D$-flip
for some $G_Z$-equivariant divisor $D\subset Y$.
Then, there exists: 
\begin{enumerate}
    \item A $G$-equivariant generalized klt log Calabi-Yau pair
    $(X',B'+M')$, 
    \item a $G$-equivariant Mori fiber space $p'\colon X'\rightarrow Y'$, and 
    \item a $G$-equivariant birational map $f\colon X\dashrightarrow X'$ which is an isomorphism in codimension one,
\end{enumerate}
so that the following diagram is commutative
\[
\xymatrix{
G\acts X \ar@{-->}[r]^-{f} \ar[d]_-{p} & X'\ar[d]^-{p'} \\
G_Z\acts Y \ar@{-->}[r]^-{q} & Y'.
}
\]
\end{lemma}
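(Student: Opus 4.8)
The plan is to play the two-ray game for $X$ over the common base of the flip $q$, in the spirit of the proof of Lemma~\ref{lem:2-ray-1}. First I would factor the $G_Z$-equivariant $D$-flip $q$ as $Y\xrightarrow{c}W\xleftarrow{c'}Y'$, where $c$ is the associated $G_Z$-equivariant flipping contraction and $c'$ the flipped contraction; thus $c$ and $c'$ are small $G_Z$-equivariant contractions with $\rho^{G_Z}(Y/W)=\rho^{G_Z}(Y'/W)=1$, the divisor $-D$ is ample over $W$, and $D':=q_*D$ is ample over $W$. Set $\phi:=c\circ p\colon X\rightarrow W$. Standard additivity of relative Picard ranks for the tower $X\rightarrow Y\rightarrow W$ gives $\rho^G(X/W)=\rho^G(X/Y)+\rho^{G_Z}(Y/W)=2$. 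Since $p$ is a Mori fiber space and $K_X+B+M\sim_\qq 0$, we have $B+M\sim_{\qq,Y}-K_X$, which is ample over $Y$ and hence big over $W$ (the general fiber of $\phi$ coincides with that of $p$, as $c$ is birational), so $X$ is a Mori dream space over $W$ by \cite[Corollary 1.3.2]{BCHM10}.

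Next I would run the two-ray game for $\phi$. The cone $\overline{NE}^G(X/W)$ is two-dimensional; one of its two extremal rays, say $R_1$, is contracted by $p\colon X\rightarrow Y$, and on $R_1$ the divisor $p^*D$ is trivial. Choosing a curve $C\subset X$ dominating a $c$-contracted curve in $Y$ (such a curve exists since $p$ is a surjective contraction), one gets $p^*D\cdot C=D\cdot p_*C<0$; since $C$ is not $p$-vertical, this forces the second extremal ray $R_2$ to be $p^*D$-negative. I would then run a $G$-equivariant $p^*D$-MMP over $W$; it exists and terminates because $X$ is a relative Mori dream space over $W$, and since $K_X+B+M\sim_\qq 0$ every step is $(K_X+B+M)$-trivial, i.e.\ a crepant flop or a crepant divisorial contraction. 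The heart of the matter — and what I expect to be the main obstacle — is to show that \emph{no} divisorial contraction occurs, which is exactly where one uses that $q$ is a genuine flip, i.e.\ that both $c$ and $c'$ are small; this is the analogue of \cite[Lemma 2.10]{Lai11}. The heuristic is that a divisorial contraction over $W$ would drop $\rho^G$ to one and force the MMP to end with a $G$-equivariant fiber-type contraction onto $W$ of relative dimension $\dim X-\dim W>0$, contradicting the fact that the relative section ring of $p^*D$ over $W$ agrees with that of $D$ and hence has relative ample model the small modification $Y'$ of $W$. Granting this, $X\dashrightarrow X'$ is a composition of $(K_X+B+M)$-flops, in particular a $G$-equivariant isomorphism in codimension one, and $X'$ is again $G\qq$-factorial.

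Finally I would identify the output with the desired Mori fiber space. Let $(X',B'+M')$ be the $G$-equivariant generalized pair obtained by pushing $(X,B+M)$ forward along $X\dashrightarrow X'$; as the map is crepant, $(X',B'+M')$ is again generalized klt and log Calabi--Yau, which is item $(1)$. Because $p$ is a contraction, $\bigoplus_m\phi_*\mathcal{O}_X(\lfloor m\,p^*D\rfloor)=\bigoplus_m c_*\mathcal{O}_Y(\lfloor mD\rfloor)$, whose relative $\operatorname{Proj}$ over $W$ is $Y'$; hence the strict transform of $p^*D$ on $X'$ is semiample over $W$ and defines a $G$-equivariant morphism $p'\colon X'\rightarrow Y'$ fitting into the commutative square with $q$. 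As no divisor is contracted, $\rho^G(X'/Y')=\rho^G(X'/W)-\rho^{G_Z}(Y'/W)=2-1=1$; moreover $-K_{X'}\sim_\qq B'+M'$ is big over $Y'$ and must be positive on the single relative extremal ray (otherwise $K_{X'}$ would also be big over $Y'$, impossible since $p'$ has positive relative dimension), so $-K_{X'}$ is ample over $Y'$. Therefore $p'\colon X'\rightarrow Y'$ is a $G$-equivariant Mori fiber space, which is item $(2)$, and item $(3)$ follows from the second paragraph. The one delicate point, where the flip hypothesis on $q$ is essential, is the exclusion of divisorial contractions from the relative MMP over $W$; this contrasts with Lemma~\ref{lem:2-ray-1}, where the base operation is divisorial and the analogous two-ray game legitimately ends with a divisorial contraction.
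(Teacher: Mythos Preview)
Your approach is essentially the same as the paper's: factor the flip through the base $W$, observe that $X\to W$ is of Fano type (hence a Mori dream space), run a $G$-equivariant $p^*D$-MMP over $W$, and identify the ample model with $Y'$ via the section ring. The paper's proof is almost verbatim your first and last paragraphs.

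The one place where you diverge is precisely the step you flag as the ``main obstacle'': ruling out divisorial contractions. Your heuristic argument via Picard-rank drop and section rings is workable but unnecessarily elaborate. The paper dispatches this in one line: since $q$ is a flip, $D$ has small diminished base locus over $W$, hence so does $p^*D$; i.e.\ $p^*D$ is \emph{movable} over $W$. On a Mori dream space, an MMP for a movable divisor consists only of small modifications (the movable cone decomposes into nef chambers of small $\qq$-factorial modifications), so no divisorial contraction can occur and the output is automatically a good minimal model reached by flops. This replaces your entire second paragraph with a single observation, and it is exactly where the hypothesis that $q$ is a flip (rather than a divisorial contraction, as in Lemma~\ref{lem:2-ray-1}) enters.
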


\begin{proof}
Let $Y\rightarrow W$ be the $G_Z$-equivariant flipping contraction.
$X$ is a $G$-equivariant Mori dream space over $W$.
Indeed, $X\rightarrow W$ is a $G$-equivariant Fano type morphism.
The divisor $p^*D$ is $G$-invariant and movable over $W$.
Indeed, the diminished base locus of $D$ over $W$ is small,
so the diminished base locus of $p^*D$ over $W$ is small as well.
We run a $G$-equivariant $p^*D$-MMP over $W$, 
which terminates with a good minimal model $f\colon X\dashrightarrow X'$.
The $G$-equivariant map $f\colon X\dashrightarrow X'$ is a sequence of $G$-equivariant $(K_X+B+M)$-flops.
The ample model of $f_*p^*D$ over $X'$ is isomorphic to $Y'$.
Since $B+M$ is big over $W$, we conclude that 
$f_*(B+M)=B'+M'$ is big over $W$.
\end{proof}

\begin{proof}[Proof of Proposition~\ref{prop:MRC-gklt}]
Since $(X,B+M)$ is generalized klt, we may replace it with a $G$-equivariant small $G\qq$-factorial modification.
Hence, we may assume that $X$ itself is $G\qq$-factorial.
If $K_X$ is $\qq$-linearly trivial, then we can take $X'$ equal to $X$ and $X\rightarrow Z$ to be the identity morphism.
If $K_X$ is not $\qq$-linearly trivial, then $K_X$ is not pseudo-effective.
We run a $G$-equivariant $K_X$-MMP which terminates with a $G$-equivariant Mori fiber space $X'\dashrightarrow Y_1$.
Let $G_Z$ be the quotient group of $G$ acting on the base $Y_1$.
If $K_{Y_1} \sim_\qq 0$, then we are done as Mori fiber spaces have
rationally connected fibers.
Otherwise, let $(Y_1,B_{Y_1}+M_{Y_1})$ be the $G_Z$-equivariant generalized klt pair obtained by the generalized canonical bundle formula.
If $K_{Y_1}$ is not $\qq$-linearly trivial, then $B_{Y_1}+M_{Y_1}$ is a non-trivial element in the cone of pseudo-effective divisors. Thus, $K_{Y_1}$ is not pseudo-effective.
Then, we can run a $G_Z$-equivariant $K_{Y_1}$-MMP which terminates with a $G_Z$-equivariant Mori fiber space.
Let 
\[
\xymatrix{
Y_1 \ar@{-->}[r]^-{q_0} & 
Y'_1 \ar@{-->}[r]^-{q_1} &
Y'_2 \ar@{-->}[r]^-{q_2} & 
\dots \ar@{-->}[r]^-{q_{m-1}} & 
Y'_m \ar[d]^-{p_1}  \\
& & & & Y_2,
}
\]
be the steps of such $G_Z$-equivariant minimal model program.
This means that $p_1$ is a $G_Z$-equivariant Mori fiber space
and each $q_i$ is either 
\begin{enumerate}
    \item a $G_Z$-equivariant divisorial contraction, or 
    \item a $G_Z$-equivariant flip.
\end{enumerate}
By Lemma~\ref{lem:2-ray-1} and Lemma~\ref{lem:2-ray-2}, 
we can complete a diagram as follows
\[
\xymatrix{
X' \ar@{-->}[r]^-{s_0} \ar[d]^-{t_0} & 
X'_1 \ar@{-->}[r]^-{s_1}  \ar[d]^-{t_1} &
X'_2 \ar@{-->}[r]^-{s_2}  \ar[d]^-{t_2} & 
\dots \ar@{-->}[r]^-{s_{m-1}} & 
X'_m \ar[d]^-{p_1}  \\
Y_1 \ar@{-->}[r]^-{q_0} & 
Y'_1 \ar@{-->}[r]^-{q_1} &
Y'_2 \ar@{-->}[r]^-{q_2} & 
\dots \ar@{-->}[r]^-{q_{m-1}} & 
Y'_m \ar[d]^-{p_1}  \\
& & & & Y_2
}
\]
where each $s_i$ is a birational contraction.
Furthermore, for each $i$, the divisor $B'_i+M'_i$ is big over $Y_i'$, where
$B'_i+M'_i$ is the push-forward of $B'+M'$ to $X'_i$.
We can replace $t_0\colon X'\rightarrow Y_1$ with 
$p_1 \colon X'_m\rightarrow Y'_m$.
If $K_{Y_2}$ is $\qq$-linearly trivial, then we are done. Otherwise,
we can run a $K_{Y_2}$-MMP and repeat the procedure just explained.
Since each equivariant Mori fiber space has positive dimensional fibers, this process eventually stops.
When it stops, we obtain a $G$-equivariant birational contraction $X\dashrightarrow X'$ and a sequence of equivariant Mori fiber spaces
\[
\xymatrix{
X' \ar[r]^-{q_0} & 
Y_1 \ar[r]^-{q_1} &
Y_2 \ar[r]^-{q_2} & 
\dots \ar[r]^-{q_{k-1}} & 
Y_k 
}.
\]
By construction, either $Y_k$ is a point or $Y_k$ is positive dimensional, $Y_k$ has klt singularities, and $K_{Y_k}\sim_\qq 0$.
If $Y_k$ is a point, then $X'$ is rationally connected.
If $Y_k$ is positive dimensional, 
then the general fiber of $X'\rightarrow Y_k$ is rationally connected.
\end{proof}

\begin{lemma}\label{lem:dom-lcc}
Let $X\rightarrow Z$ be a projective contraction 
so that $Z$ has klt singularities and $K_Z\sim_\qq 0$.
Let $(X,\Gamma)$ be a log pair with log canonical singularities so that 
$K_X+\Gamma \sim_{\qq,Z} 0$.
Then, every log canonical center of $(X,\Gamma)$ dominates $W$.
In particular, any minimal log canonical center
of $(X,\Gamma)$ dominates $W$.
\end{lemma}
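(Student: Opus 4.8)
The plan is to push the log canonical structure of $(X,\Gamma)$ down to the base via the canonical bundle formula and then exploit that a klt, $\qq$-trivial base can carry only a generalized klt structure in this situation. Throughout I read the hypothesis as $K_X+\Gamma\sim_\qq 0$, which is the case relevant in the applications (e.g.\ when $(X,\Gamma)$ is a complement), and I will prove that every log canonical center of $(X,\Gamma)$ dominates $Z$.

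First I would argue by contradiction: suppose $(X,\Gamma)$ has a log canonical center $S$ with $f(S)=T\subsetneq Z$, i.e.\ a log canonical center that is vertical over $Z$. Applying the canonical bundle formula (Definition~\ref{def:cbf}) to the contraction $f\colon X\to Z$ and the pair $(X,\Gamma)$, with trivial nef part, produces a generalized log canonical pair $(Z,B_Z+M_Z)$ with $K_X+\Gamma\sim_\qq f^*(K_Z+B_Z+M_Z)$. Since $f$ is a contraction this forces $K_Z+B_Z+M_Z\sim_\qq 0$, and combined with $K_Z\sim_\qq 0$ we obtain $B_Z+M_Z\sim_\qq 0$.

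The next step, which I expect to be the crux, is to show that $(Z,B_Z+M_Z)$ is generalized klt, i.e.\ has no generalized log canonical center. After replacing $Z$ by a small $\qq$-factorialization, which keeps $K_Z\sim_\qq 0$, the divisor $B_Z$ is effective while $M_Z$ is the push-forward of a nef divisor, hence pseudo-effective; since $B_Z\sim_\qq -M_Z$ on the projective normal variety $Z$, intersecting with a power of an ample class forces $B_Z=0$ and $M_Z\sim_\qq 0$. One then has to control the moduli b-divisor itself: on the birational model $Z'\to Z$ where it descends, $M_{Z'}$ is nef, the difference $M_{Z'}-\pi^*M_Z$ is $\pi$-exceptional and $\pi$-nef, so by the negativity lemma it is anti-effective, and comparing once more with an ample class gives $M_{Z'}=\pi^*M_Z\sim_\qq 0$. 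Consequently the generalized log discrepancies of $(Z,B_Z+M_Z)$ agree with the log discrepancies of the klt pair $(Z,0)$, so $(Z,B_Z+M_Z)$ is generalized klt.

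Finally I would derive the contradiction: by the argument in the proof of Lemma~\ref{lem:reg-fib-or-base-non-trivial}, a log canonical center of $(X,\Gamma)$ that is vertical over $Z$ forces $(Z,B_Z+M_Z)$ to possess a generalized log canonical center, contradicting the previous step. Hence every log canonical center of $(X,\Gamma)$ dominates $Z$, and in particular so does every minimal log canonical center. The main obstacle is the middle step — verifying that the moduli b-divisor of the canonical bundle formula is $\qq$-trivial on the model where it descends, and not merely on $Z$ itself, for which the nefness of the moduli part together with the negativity lemma is precisely what is required.
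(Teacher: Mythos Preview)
Your proposal is correct and follows essentially the same route as the paper: apply the canonical bundle formula, use $K_Z\sim_\qq 0$ together with effectivity of $B_Z$ and pseudo-effectivity of $M_Z$ to force $B_Z=0$ and $M_Z\sim_\qq 0$, then invoke the negativity lemma on the model where the moduli b-divisor descends to conclude $(Z,B_Z+M_Z)$ is generalized klt, contradicting the existence of a vertical log canonical center. Your reading of the hypothesis as $K_X+\Gamma\sim_\qq 0$ globally is the one actually used (and needed) both here and in the paper's application, and your added care with the small $\qq$-factorialization and the intersection-with-ample argument makes explicit a step the paper leaves implicit.
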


\begin{proof}
Assume that $(X,\Gamma)$ admits a log canonical center which is vertical over $Z$.
Let $(Z,B_Z+M_Z)$ be the generalized log canonical pair obtained by the canonical bundle formula.
The vertical log canonical centers of $(X,
\Gamma)$ map to generalized log canonical centers of $(Z,B_Z+M_Z)$.
Since $K_Z\sim_\qq 0$, we have that $B_Z=0$ and $M_Z\sim_\qq 0$.
By the negativity lemma, we have that
$M_{Z'}+F = \phi^* M_Z \sim_\qq 0$,
where $\phi\colon Z'\rightarrow Z$ is the model where the b-divisor descends.
Since $M_{Z'}$ is nef and $F$ is effective, we conclude that $F=0$ and $M_{Z'}\sim_\qq 0$.
Hence, the generalized pair $(Z,M_Z)$ is generalized klt.
This contradicts the fact that $(X,\Gamma)$ has a vertical log canonical center.
\end{proof}

\section{\texorpdfstring{$\mathbb{P}^1$-links and minimal log canonical centers}{P1-links and minimal log canonical centers}}

In this section, we study finite actions on $\pp^1$-links and minimal log canonical centers
of log Calabi-Yau pairs. We will use the following definition for $\pp^1$-links. 
Let $(X,B)$ be a log Calabi-Yau dlt pair.
The minimal log canonical centers of $(X,B)$ correspond
to maximal dimensional cells of $\mathcal{D}(X,B)$.
Moreover, the $\pp^1$-links between minimal log canonical centers of $(X,B)$ correspond to facets which are intersections of two maximal dimensional cells.

\begin{definition}\label{def:p1-link}
{\em 
Let $(X,B+M)$ be a $G$-equivariant generalized pair admitting a 
$G$-equivariant fibration $X\rightarrow Z$.
We say that $X\rightarrow Z$ is {\em $\pp^1$-link} for $(X,B+M)$ if the following conditions are satisfied:
\begin{enumerate}
    \item The general fiber of $X\rightarrow Z$ is isomorphic to $\pp^1$, 
    \item the pair $(X,B+M)$ is generalized plt,
    \item $(X,B+M)$ is log Calabi-Yau over $Z$, i.e., 
    $K_X+B+M\sim_{\qq,Z} 0$, and
    \item the restriction of $(X,B+M)$ to a general fiber is isomorphic to
    $(\pp^1,\{0\}+\{\infty\}$). 
\end{enumerate}
Note that, in particular, this means that $\lfloor B\rfloor$ has two prime components $P_1$ and $P_2$ which maps $G$-equivariantly birational to the base $Z$.
}
\end{definition}

\begin{lemma}\label{lem:simp-complex}
Let $G$ be a finite group.
Let $(X,B+M)$ be a $G$-equivariant generalized log Calabi-Yau pair.
Let $(X',B'+M')$ be $G$-equivariant generalized dlt modification of $(X,B+M)$.
We can find a $G$-equivariant gdlt modification $(X'',B''+M'')$ of $(X,B+M)$, which dominates $(X',B'+M')$, so that
$\mathcal{D}(X'',B''+M'')$ is a simplicial complex all whose maximal dimensional simplices have the same dimension.
\end{lemma}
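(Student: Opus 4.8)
The plan is to replace $(X',B'+M')$ by a higher generalized dlt modification whose dual complex is the barycentric subdivision of $\mathcal{D}(X',B'+M')$: such a subdivision is automatically a genuine simplicial complex, and it will be pure because the dual complex of a generalized log Calabi--Yau pair already is. We may assume $\dim\mathcal{D}(X',B'+M')\ge 1$, the cases of dimension $\le 0$ (a finite set of points, or empty) being immediate.

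First I would recall (Definition~\ref{def:dual-complex}) that the cells of $\mathcal{D}(X',B'+M')$ are in bijection with the generalized log canonical centers of $(X',B'+M')$, the maximal cells corresponding to the minimal glc centers, and that $G$ acts on $\mathcal{D}(X',B'+M')$ by automorphisms of the underlying $\Delta$-complex. By \cite[\S 2]{dFKX17} the barycentric subdivision of such a $\Delta$-complex is an honest simplicial complex, and it is realized geometrically: starring the cell attached to a stratum $W\subset\lfloor B'\rfloor$ is induced by blowing up $W$ and passing to a generalized dlt modification, which extracts only glc places of $(X',B'+M')$ — hence of $(X,B+M)$ — and is log crepant, by Proposition~\ref{prop:dlt-mod}. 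I would perform these blow-ups $G$-equivariantly, processing the $G$-orbits of glc centers in order of decreasing codimension of the corresponding strata; since $G$ acts as a $\Delta$-complex automorphism, this produces a $G$-equivariant generalized dlt modification $(X'',B''+M'')$ of $(X,B+M)$ that dominates $(X',B'+M')$ and whose dual complex is the barycentric subdivision of $\mathcal{D}(X',B'+M')$, in particular a simplicial complex. (Alternatively, one can run the construction on the quotient, which is generalized qdlt with a dual complex of the same dimension by Lemma~\ref{lem:reg-under-quot}, and pull it back.)

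It then remains to check that all maximal simplices of $\mathcal{D}(X'',B''+M'')$ have a common dimension. Since the barycentric subdivision of a pure complex is pure, it suffices to prove that $\mathcal{D}(X',B'+M')$ is pure, equivalently that all minimal glc centers of $(X',B'+M')$ have the same dimension. The dual complex is connected by \cite{FS20,Bir20}. Two minimal glc centers whose cells share a common codimension-one face form a $\pp^1$-link in the sense of Definition~\ref{def:p1-link}: after generalized adjunction (Lemma~\ref{lem:G-equiv-adj}) to the glc center attached to that face, the two minimal centers become the two distinguished divisorial centers $P_1,P_2$ of a fibration whose general fiber is $\pp^1$, each mapping birationally onto the base; hence $P_1$ and $P_2$ are crepant birational, and in particular equidimensional. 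As in \cite{KX16}, it follows that all minimal glc centers of $(X',B'+M')$ are crepant birational, so they share a common dimension $\dim X-1-r$, whence every maximal cell of $\mathcal{D}(X',B'+M')$ — and of its subdivision $\mathcal{D}(X'',B''+M'')$ — has dimension exactly $r$.

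The hard part will be the purity step, and more precisely making rigorous, in the generalized and $G$-equivariant context, the passage from ``adjacent minimal glc centers are $\pp^1$-linked'' to ``all minimal glc centers are crepant birational''; the other point requiring care is the combinatorial verification that the equivariant blow-ups genuinely realize the $G$-equivariant barycentric subdivision, including the treatment of cells on which $G$ does not act freely. The birational inputs — that the relevant blow-ups extract only glc places and are log crepant, so that $(X'',B''+M'')$ really is a generalized dlt modification of $(X,B+M)$ — are routine given Proposition~\ref{prop:dlt-mod}.
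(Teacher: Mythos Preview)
Your approach is correct and matches the paper's: realize the barycentric subdivision $G$-equivariantly by blowing up ($G$-orbits of) strata. The paper is terser on both points you flag as difficult. For purity it simply cites \cite[Remark~10]{dFKX17}; equidimensionality of the dual complex of a generalized log Calabi--Yau pair is standard, ultimately via exactly the $\pp^1$-link/crepant-birational mechanism you sketch (cf.\ \cite{KX16,FS20}), so you need not reprove it. For the equivariance of blowing up a stratum with nontrivial stabilizer, the paper observes that the stabilizer acts through a finite subgroup of ${\rm GL}_n(\kk)$ on the tangent space (citing \cite[Lemma~2.7(b)]{FZ05}), hence fixes the maximal ideal of the $G$-closure of the stratum, so the blow-up of the orbit is automatically $G$-equivariant; this disposes of your concern about cells on which $G$ does not act freely.
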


\begin{proof}
The existence of an equidimensional simplicial complex structure on some gdlt modification $(X'',B''+M'')$ of $(X',B'+M')$ follows from the barycenter subdivision (see, e.g.,~\cite[Remark 10]{dFKX17}).
It suffices to show that this barycenter subdivision can be performed $G$-equivariantly.
Indeed, locally at each strata, the stabilizer of $G$ acts as a finite subgroup of ${\rm GL}_n(\kk)$ (see, e.g.,~\cite[Lemma 2.7.(b)]{FZ05}).
Hence, the maximal ideal of the $G$-closure of each strata is fixed by $G$. 
Thus, the blow-up of the orbit of the strata can be performed $G$-equivariantly.
\end{proof}

In the following lemma, we will use the fact that the dual complex is a pseudomanifold (see, e.g.,~\cite[Theorem 1.6]{FS20}). We give a short argument for this fact in the proof. We will use the connectedness of dual complexes~\cite{FS20,Bir20}.

\begin{lemma}\label{lem:fix-max-dim-cell}
Let $(X,B+M)$ be a $G$-invariant generalized log Calabi-Yau pair.
Let $(X',B'+M')$ be a $G$-equivariant dlt modification of $(X,B+M)$.
Assume that $G$ fixes a minimal generalized log canonical center $W$ of $(X',B'+M')$.
Furthermore, assume that $G$ fixes all the generalized log canonical centers of $(X',B'+M')$ containing $W$.
Then, $G$ fixes all the generalized log canonical centers of $(X',B'+M')$.
In particular, $G$ must fix all the generalized log canonical centers of $(X,B+M)$.
\end{lemma}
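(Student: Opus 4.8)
The plan is to move everything to a simplicial model of the dual complex and then spread the fixed locus from one maximal simplex to all of them by a connectedness argument. First I would use Lemma~\ref{lem:simp-complex} to replace $(X',B'+M')$ by a $G$-equivariant gdlt modification $(X'',B''+M'')$ dominating it for which $\mathcal{D}:=\mathcal{D}(X'',B''+M'')$ is a simplicial complex with all maximal simplices of one dimension $d$; if $\mathcal{D}$ is empty the statement is vacuous. The minimal generalized lc center $W$ corresponds to a maximal cell $\sigma_W$ of $\mathcal{D}(X',B'+M')$, and the generalized lc centers containing $W$ are exactly the faces of $\sigma_W$, all of which are fixed by $G$ by hypothesis. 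In the barycentric subdivision a $d$-simplex $\sigma_0\subset\sigma_W$ has vertices the barycenters of a chain of faces of $\sigma_W$; since each such face corresponds to a $G$-fixed generalized lc center and the barycenter is canonical, $G$ fixes every vertex of $\sigma_0$, and a simplicial self-map of a simplex fixing its vertices is the identity, so $G$ fixes $\sigma_0$ pointwise.

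Next I would record that $\mathcal{D}$ is a pseudomanifold, i.e.\ every codimension-one simplex (ridge) $\tau$ lies in at most two maximal simplices. This is~\cite[Theorem 1.6]{FS20}; a short argument: $\tau$ corresponds to a generalized lc center $V$, and $G$-equivariant adjunction (Lemma~\ref{lem:G-equiv-adj}) yields a gdlt generalized log Calabi-Yau pair $(V,B_V+M_V)$ whose minimal lc centers are the prime components of $\lfloor B_V\rfloor$ and are pairwise disjoint, so $\mathcal{D}(V,B_V+M_V)$ is zero-dimensional; by connectedness of dual complexes of log Calabi-Yau pairs~\cite{FS20,Bir20} it consists of at most two points, which are precisely the maximal simplices of $\mathcal{D}$ through $\tau$.

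The propagation step is then elementary: if $G$ fixes a maximal simplex $\sigma$ pointwise and $\sigma'$ is a maximal simplex sharing a ridge $\tau$ with $\sigma$, then $G$ fixes $\sigma'$ pointwise. Indeed $\tau$ is fixed pointwise as a face of $\sigma$; the set of maximal simplices containing $\tau$ is $G$-invariant of size at most two and contains the $G$-fixed $\sigma$, so $\sigma'$ is $G$-invariant; and since $G$ permutes the $d+1$ vertices of $\sigma'$ while fixing the $d$ vertices of $\tau$, it fixes the remaining one, hence fixes $\sigma'$ pointwise. Finally, a connected pseudomanifold is strongly connected, so every maximal simplex is joined to $\sigma_0$ by a chain of maximal simplices with consecutive ones sharing a ridge (for $d\geq 1$ connectedness is~\cite{FS20,Bir20}; for $d=0$ there are at most two maximal simplices and one of them is $\sigma_0$). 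Iterating the propagation step along such chains, $G$ fixes $\mathcal{D}$ pointwise, hence fixes every generalized lc center of $(X'',B''+M'')$; taking images first to $X'$ and then to $X$ gives the claim. I expect the only point needing care to be the deduction of strong connectivity from connectedness, which I would obtain by noting that the union of maximal simplices reachable from $\sigma_0$ through ridges and its complement would otherwise separate $\mathcal{D}$ after removing a subcomplex of codimension $\geq 2$, impossible in the pure-dimensional connected complex $\mathcal{D}$.
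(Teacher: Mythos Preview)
Your approach is essentially the paper's: pass to a simplicial model via Lemma~\ref{lem:simp-complex}, obtain one $G$-fixed maximal simplex, use the pseudomanifold property to propagate along ridges, and invoke strong connectedness. Your argument for preserving a fixed maximal simplex under barycentric subdivision (barycenters of $G$-fixed faces are $G$-fixed) is clean and matches what the paper does via the toric action at $\eta_W$. Your propagation step and your sketch that each ridge lies in at most two maximal simplices (via adjunction and the connectedness theorem of~\cite{FS20,Bir20}) are also the same as the paper's.

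The one genuine gap is strong connectedness. Your assertion that ``a connected pseudomanifold is strongly connected'' is false as a purely combinatorial statement: take two copies of $\partial\Delta^3$ glued at a single vertex---this is a connected pure $2$-complex in which every edge lies in exactly two triangles, yet it is not strongly connected. Your backup argument (removing a codimension~$\geq 2$ subcomplex cannot disconnect a pure connected complex) fails for the same reason. The paper closes this gap geometrically: if $C_1$ is the union of maximal simplices ridge-reachable from $\sigma_0$ and $C_2$ is the rest, take a maximal-dimensional face $F$ of $C_1\cap C_2$, perform generalized adjunction to the corresponding center, and apply the connectedness theorem \emph{again} to the resulting log Calabi--Yau pair on that center. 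This forces the link of $F$ to have at most two components, hence $F$ is already a ridge, contradicting the definition of the strong components. You should either reproduce this adjunction argument or, if \cite[Theorem~1.6]{FS20} states strong connectedness outright, cite it for that and not just for the ridge bound.
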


\begin{proof}
The statement is clear if the dual complex $\mathcal{D}(X,B+M)$ is zero or one-dimensional, so we may assume it has dimension at least two.

At the generic point of $W$ the variety $X'$ is log smooth and $(X',B';\eta_W)$ is a log smooth pair.
Since $G$ if fixing all the divisors of $B'$ containing $W$, then the normal subgroup of $G$ fixing $W$ pointwise must act torically on $(X',B';\eta_W)$.
By Lemma~\ref{lem:simp-complex}, we may replace $(X',B'+M')$ with a higher $G$-equivariant generalized dlt modification and assume that
$\mathcal{D}(X',B'+M')$ is a simplicial complex.
This replacement is attained by toroidal blow-ups and $G$ acts torically on $(X',B';\eta_W)$. Then, the existence of a fixed minimal center is preserved on $(X',B'+M')$.

Let $\mathcal{D}(X',B'+M')$ be the dual complex which we assume of dimension $r\geq 2$.
We introduce an equivalence relation between $r$-simplices in $\mathcal{D}(X',B'+M')$.
Two simplices $S_1$ and $S_k$ are said to be related, if we can find a sequence of $r$-simplices $S_2,\dots, S_{k-1}$ so that for each $i\in \{1,\dots, k-1\}$ the intersection $S_i\cap S_{i+1}$ is a $(r-1)$-dimensional face of both.
We claim that there is a unique equivalence class for this relation.
Assume there is more than one.
Let $C_1,\dots, C_k$ be the union of the $r$-simplices in such equivalence classes.
By the connectedness of log canonical centers~\cite[Theorem 1.6]{FS20},
the intersection $C_1\cap (C_2\cup\dots\cup C_k)$ must be non-empty.
Let $F$ be a maximal dimensional face of such set.
Then, $F$ is contained in some $r$-dimensional simplex $S$ of $C_1$.
In this simplex $S$, the face $F$ is intersection of the $(r-1)$-dimensional faces $\Gamma_1,\dots,\Gamma_s$.
We perform equivariant generalized adjunction to the center corresponding to $F$. 
We obtain two disconnected sets of generalized log canonical centers:
\begin{enumerate} 
\item the former corresponding to $\Gamma_1\cup\dots \cup\Gamma_s$, and 
\item the latter corresponding to the faces of $C_2\cup\dots\cup C_k$ containing $F$.
\end{enumerate}
This imply that $s=1$, which means that $F$ is a $(r-1)$-dimensional face, leading to a contradiction. 

Now, let $S$ be a $r$-dimensional simplex of $\mathcal{D}(X',B'+M')$ on which $G$ acts as the identity.
Let $S'$ be a $r$-dimensional simplex, in such dual complex, for which $S\cap S'$ is a $(r-1)$-fimensional face $F$ of both $S$ and $S'$.
Observe that $G$ acts on $F$ as the identity.
Furthermore, there exists a unique $r$-dimensional simplex in $\mathcal{D}(X',B'+M')$ which intersect $S$ along $F$.
Hence, $G$ must map $S'$ to itself.
In particular, $G$ must map vertices of $S'$ to vertices of $S'$.
Observe that there is a unique vertex of $S'$ which is not contained in $F$.
Thus, such vertex must be fixed by $G$.
Hence, $G$ must act as the identity on the set of vertices of $S'$, so it must act as the identity on $S'$.
By the previous paragraphs, we conclude that $G$ acts on $\mathcal{D}(X',B'+M')$ as the identity.
\end{proof}

\begin{lemma}\label{lem:equiv-P^1-link}
Let $G$ be a finite group.
Let $(X,B+M)$ be a $G$-equviariant generalized dlt log Calabi-Yau pair.
Assume that all the generalized log canonical centers of $(X,B+M)$ are invariant under the action of $G$.
If $G$ acts as the identity on one minimal generalized log canonical center, then it acts as the identity on all minimal generalized log canonical centers
of $(X,B+M)$.
\end{lemma}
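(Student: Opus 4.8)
The plan is to propagate triviality of the $G$-action from one minimal generalized log canonical center to all the others along the chain of $\pp^1$-links recorded by the dual complex. First I would reduce to a single link. By Lemma~\ref{lem:simp-complex} we may replace $(X,B+M)$ by a $G$-equivariant generalized dlt modification on which $\mathcal{D}(X,B+M)$ is an equidimensional simplicial complex; this replaces each minimal glc center by a $G$-equivariantly birational one and keeps all glc centers $G$-invariant, so it is harmless for the statement. Under the dictionary recalled before Definition~\ref{def:p1-link}, the maximal cells of $\mathcal{D}(X,B+M)$ correspond to the minimal glc centers, and every codimension-one face which is the intersection of two maximal cells corresponds to a $\pp^1$-link between the two associated minimal centers. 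Since $\mathcal{D}(X,B+M)$ is a connected pseudomanifold (the case $\dim\le 1$ being immediate and the case $\dim\ge 2$ being the content of the argument in the proof of Lemma~\ref{lem:fix-max-dim-cell}), the graph whose vertices are the maximal cells and whose edges are these facets is connected. Hence it suffices to prove: if $W_1$ and $W_2$ are minimal glc centers whose cells share such a facet and $G$ acts as the identity on $W_1$, then $G$ acts as the identity on $W_2$.

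For this single step, let $W$ be the glc center of $(X,B+M)$ attached to the shared facet (when the facet is empty, $W=X$); it is $G$-invariant by hypothesis. Writing $W$ as an intersection of components of $\lfloor B\rfloor$ and applying Lemma~\ref{lem:G-equiv-adj} repeatedly, one obtains a $G$-equivariant generalized dlt log Calabi-Yau pair $(W,B_W+M_W)$ whose dual complex is the link of the facet in $\mathcal{D}(X,B+M)$, namely $\mathbb{S}^{0}$; by the pseudomanifold property $\lfloor B_W\rfloor$ has exactly two prime components, which are $W_1$ and $W_2$, and they are disjoint. By the structure theory of $\pp^1$-links (see the discussion before Definition~\ref{def:p1-link} and~\cite{KX16,dFKX17}), after one further $G$-equivariant generalized dlt modification and a $G$-equivariant minimal model program, $(W,B_W+M_W)$ acquires a $\pp^1$-link structure $L\rightarrow V$ in the sense of Definition~\ref{def:p1-link}, whose two distinguished sections $P_1$ and $P_2$ are the strict transforms of $W_1$ and $W_2$, each mapping birationally to $V$. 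Because every glc center is $G$-invariant, $G$ does not interchange the two sections, so $P_1\rightarrow V\leftarrow P_2$ is a diagram of $G$-equivariant birational morphisms, yielding a $G$-equivariant birational map $W_1\dashrightarrow W_2$.

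It then remains to observe that a birational action of a finite group on a (normal, separated) variety is determined by its restriction to any dense open subset. Therefore, if $G$ acts as the identity on $W_1$ it acts as the identity on $P_1$, hence on $V$, hence on $P_2$, hence on $W_2$. Running this along the connected graph of maximal cells gives the identity action on every minimal glc center, which is the assertion. The part I expect to be delicate is the equivariance bookkeeping: one must carry out all the modifications $G$-equivariantly (via Lemma~\ref{lem:simp-complex} and the $G$-equivariant MMP) and use the hypothesis that every glc center is $G$-invariant in an essential way, precisely to rule out that some element of $G$ swaps the two sections of some $\pp^1$-link — which would destroy the chain of $G$-equivariant birational identifications $W_1\dashrightarrow W_2$ on which the whole argument rests.
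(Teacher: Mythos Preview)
Your proposal is correct and follows essentially the same route as the paper: reduce via the pseudomanifold connectivity of Lemma~\ref{lem:fix-max-dim-cell} to a single step between two minimal centers $W,W_1$ sharing a facet, pass by adjunction to the center $P$ corresponding to that facet (where $W,W_1$ are the only two components of $\lfloor B_P\rfloor$ and are disjoint), produce a $\pp^1$-link structure $P_1\to Z$ with $W,W_1$ as the two sections, and then transport the identity action $W\to Z\to W_1$.

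The one place where the paper is more explicit than you is the construction of the $\pp^1$-link. You invoke a ``structure theory'' from~\cite{KX16,dFKX17}, but those references do not directly furnish a $G$-equivariant $\pp^1$-link for generalized pairs; the paper instead runs the concrete $G_P$-equivariant MMP for $K_P+B_P-\epsilon W+M_P\sim_\qq -\epsilon W$, checks by hand that neither $W$ nor $W_1$ is contracted (using that contracted curves are $W$-positive while $W\cap W_1=\emptyset$), and then argues from ampleness of $W$ over the Mori fiber base $Z$ and disjointness of $W,W_1$ that the general fiber is $\pp^1$ with the two sections as claimed. Filling in that MMP step is the only substantive addition needed to turn your outline into the paper's proof.
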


\begin{proof}
Let $W$ be the minimal generalized log canonical center on which $G$ acts as the identity.
$W$ corresponds to a maximal dimensional simplex $S_W$ of the dual complex $\mathcal{D}(X,B+M)$ of dimension $r$.
Let $W_1$ be a minimal generalized log canonical center which corresponds to a $r$-dimensional simplex $S_{W_1}$ of $\mathcal{D}(X,B+M)$ so that
$S_W\cap S_{W_1}$ is $(r-1)$-dimensional.
We claim that $G$ acts as the identity on $W_1$.
Let $P$ be the generalized log canonical center of $(X,B+M)$ corresponding to the $(r-1)$-dimensional simplex $S_W\cap S_{W_1}$.
Then, $P$ contains both $W$ and $W_1$ as divisors.
Note that $P$ is $G$-invariant.
Let $(P,B_P+M_P)$ be the generalized dlt pair obtained by adjunction formula to $P$.
We denote by $G_P$ the quotient group of $G$ acting on $P$.
Observe that $W$ and $W_1$ are $G_P$-invariant divisorial generalized log canonical centers of $(P,B_P+M_P)$ and $(P,B_P+M_P)$ admits no other generalized log canonical center. Indeed, its regularity is zero.
We run a $G_P$-equivariant minimal model program for
\[
K_P+B_P -\epsilon W + M_P \sim_\qq -\epsilon W.
\]
This $G_P$-equivariant MMP terminates with a 
$G_P$-equivariant Mori fiber space $P_1\rightarrow Z$.
Note that $P\dashrightarrow P_1$ does not contract $W$ nor $W_1$.
Indeed, if $W$ was contracted, then in such model $W$ would be covered by contracted $W$-negative curves. However, all the curves contracted by this MMP are $W$-positive.
On the other hand, since all the curves contracted by this MMP are $W$-positive and $W_1\cap W$ is empty, we conclude that $W_1$ is not contracted.
By abuse of notation, we denote by $W$ and $W_1$ the push-forward of these divisors on $P_1$.
Note that $W$ is ample over $Z$.
Hence, $W_1$ is horizontal over $Z$.
Otherwise, the intersection $W\cap W_1$ on $P_1$ is non-trivial.
We claim that $Z$ has dimension $\dim W$ and both $W$ and $W_1$ dominate $Z$.
Indeed, assume that the dimension of $Z$ is less than the dimension of $W$.
Then, the general fiber $F$ of $P_1\rightarrow Z$ has dimension at least two. 
The restriction $W_F:=W|_F$ is an ample divisor. 
Since $W_1$ is horizontal over $Z$, then
$W_{F,1}:=W_1|_F$ would intersect $W_F$, leading to a contradiction.
We conclude that $\dim Z=\dim W=\dim W_1$ and the general fiber $F$ of $P_1\rightarrow Z$ is isomorphic to $\pp^1$.
Furthermore, we have that 
\[
(P_1, B_{P_1}+M_{P_1})|_F \simeq (\pp^1, \{0\} + \{\infty\}),
\]
where $B_{P_1}$ is the push-forward of $B_P$ to $P_1$.
This means that $P_1\rightarrow Z$ is an
equivariant $\pp^1$-link in the sense of Definition~\ref{def:p1-link}.
Since the action of $G_P$ on $W$ is trivial,
then the induced action on the base $Z$ is trivial as well.
Indeed, the induced morphism $W\rightarrow Z$ is $G_P$-equivariant.
On the other hand, the induced morphism
$W_1\rightarrow Z$ is $G_P$-equivariant. We conclude that the action of $G_P$ on $W_1$ is trivial. This finishes the proof of the claim.

As we have seen in the proof of Lemma~\ref{lem:fix-max-dim-cell}, given two minimal log canonical centers $W_1$ and $W_k$ corresponding to $r$-dimensional simplices $S_1$ and $S_k$ in $\mathcal{D}(X,B+M)$, we can find a sequence of $r$-simplices $S_2,\dots S_{k_1}$ so that $S_i\cap S_{i+1}$ is a face of dimension $r-1$.
Hence, by the previous paragraph, if $G$ acts as the identity on $W_1$, then it will act as the identity on $W_k$. 
Since there is a unique equivalence class for the relation defined in the previous paragraph,
we conclude that $G$ must act as the identity on
$\mathcal{D}(X,B+M)$.
\end{proof}

\begin{lemma}\label{lem:fixing-min-fix-right-cod}
Let $n$ be a positive integer.
There exists a constant $c(n)$, only depending on $n$, satisfying the following.
Let $G$ be a finite group.
Let $(X,B+M)$ be a $G$-equivariant generalized dlt pair of dimension $n$.
Assume that $G$ acts as the identity on a generalized dlt center $W$
of $(X,B+M)$.
Furthermore, assume that $G$ fixes all generalized log canonical centers passing through $W$.
Then, there exists:
\begin{enumerate} 
\item[(i)] a normal abelian subgroup $A\triangleleft G$ of rank $r$ and index $c(n)$,
\item[(ii)] an $A$-equivariant generalized dlt modification of $(X,B+M)$, and
\item[(iii)] a generalized log canonical center $W'$ of $(X',B'+M')$ of codimension $r$ on which $A$ acts as the identity.
\end{enumerate} 
\end{lemma}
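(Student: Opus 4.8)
The plan is to reduce everything to a toric computation at the generic point $\eta_W$ and then globalize. First I would set up the local model: writing $k:=\operatorname{codim}_X(W)$, the generalized dlt hypothesis means that near $\eta_W$ the variety $X$ is smooth, $\lfloor B\rfloor$ has simple normal crossing support, and $W$ is a component of $E_1\cap\cdots\cap E_k$, where $E_1,\dots,E_k$ are the $k$ components of $\lfloor B\rfloor$ through $W$. By hypothesis $G$ fixes each $E_i$ and acts as the identity on $W$; so, choosing an isomorphism $\widehat{\mathcal O}_{X,\eta_W}\cong\kappa(W)[[x_1,\dots,x_k]]$ with $E_i=\{x_i=0\}$, the group $G$ preserves each ideal $(x_i)$ and acts trivially on $\kappa(W)$, and passing to linear parts on $\mathfrak m/\mathfrak m^2=\bigoplus_i\kappa(W)\,\overline x_i$ yields a homomorphism $\rho\colon G\to\mathbb{G}_m^k$. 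Its kernel $K$ acts trivially on $\mathfrak m/\mathfrak m^2$ and on the residue field, hence --- by linearization of finite group actions in characteristic zero --- trivially on $\widehat{\mathcal O}_{X,\eta_W}$, hence on $\mathcal O_{X,\eta_W}$, hence on $\operatorname{Frac}\mathcal O_{X,\eta_W}=\kappa(X)$, hence on $X$; so $K=1$. In particular $G$ is abelian of some rank $r$ and embeds diagonally in the torus $\mathbb{G}_m^k$ acting on the transverse slice, so every subgroup of $G$ is normal and abelian.

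The combinatorial heart of the argument --- and the step I expect to be the main obstacle --- is to produce a toric modification of the transverse slice carrying a stratum on which $G$ still acts trivially but whose codimension has dropped to exactly $r$. Let $M=\mathbb Z^k$ be the character lattice of $\mathbb{G}_m^k$, $N$ its dual, $\sigma=\mathbb R_{\ge0}^k\subseteq N_{\mathbb R}$, and $\phi\colon M\twoheadrightarrow\widehat G$ the surjection induced by $\rho$, so the images $\chi_i:=\phi(e_i)$ generate $\widehat G$. Since $\operatorname{rank}\widehat G=r$, fix a surjection $q\colon\mathbb Z^r\twoheadrightarrow\widehat G$; as $M$ is free, $\phi$ factors as $q\circ\psi$ for some $\psi\colon M\to\mathbb Z^r$, whence $\ker\psi\subseteq\ker\phi=:P$. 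Each fibre $q^{-1}(\chi_i)$ is a coset of the full-rank lattice $\ker q$ and so meets the open positive orthant of $\mathbb R^r$; choosing the values $\psi(e_i)$ there, one arranges $\psi$ to have rank $r$ and its (rational, $r$-dimensional) row space inside $N_{\mathbb R}$ to meet the interior of $\sigma$. Letting $\tau\subseteq\sigma$ be a rational cone spanning that row space, one gets $\tau^{\perp}\cap M=\ker\psi\subseteq P$, i.e.\ every monomial vanishing on $\operatorname{orb}(\tau)$ is $G$-invariant; thus $G$ acts trivially on $\overline{\operatorname{orb}(\tau)}$, which has codimension $\dim\tau=r$. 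It is worth stressing why this is not routine: one cannot simply realize $G$ inside a coordinate subtorus by discarding $k-r$ of the $E_i$ (for $G=\mathbb Z/m$ with $\chi_1=\chi_2$ a generator and $\chi_3=0$ no bounded-index subgroup works), so a genuine toric blow-up is unavoidable, and the content is the lattice fact that $M\twoheadrightarrow\widehat G$ factors through a rank-$r$ quotient whose kernel spans a subspace meeting $\operatorname{int}\sigma$.

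Finally I would globalize. I extend $\sigma$ to a fan $\Sigma$ having $\tau$ as a cone (subdividing $\tau$ first if it is not simplicial); since $G$ acts trivially on $N$, the corresponding toric modification of the slice is automatically $G$-equivariant, and its exceptional divisors are crepant over $(\mathbb{A}^k,\sum_i\{x_i=0\})$, hence --- as $(X,B+M)$ is generalized log canonical and agrees with $(X,\sum_i E_i)$ near $\eta_W$ --- they define generalized log canonical places of $(X,B+M)$. These are $G$-invariant divisorial valuations, so they can be extracted by a $G$-equivariant projective birational morphism over $X$, and by Proposition~\ref{prop:dlt-mod} I may pass to a $G$-equivariant generalized dlt modification $\pi\colon(X',B'+M')\to(X,B+M)$ realizing precisely those divisors. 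Over $\pi^{-1}(\eta_W)$ this model is the toric variety $X_\Sigma$, so $W':=\overline{\operatorname{orb}(\tau)}\subseteq X'$ is a generalized log canonical center of $(X',B'+M')$ of codimension $r$, and by the previous paragraph $G$ acts as the identity on $\kappa(W')$, hence on $W'$. Taking $A:=G$, which is normal, abelian, of rank $r$, and of index at most $c(n)$ (indeed $1$; the index-$c(n)$ slack is only there in case one prefers to clean up $\widehat G$ by a bounded-index passage before the combinatorial step), then gives (i)--(iii).

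The places that need care, beyond the combinatorial step, are: the characteristic-zero linearization over the possibly non-closed residue field $\kappa(W)$ (one must produce a $K$-stable coefficient field), and the verification that the toric valuations centered over $\eta_W$ are log canonical places of the full generalized pair $(X,B+M)$, not merely of $(X,\textstyle\sum_i E_i)$.
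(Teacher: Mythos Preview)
Your proof is correct and takes a genuinely different route from the paper's. The paper first invokes the Jordan property to pass to a normal abelian subgroup $A\triangleleft G$ of rank $r\le n$ and index at most $c(n)$, then proceeds by induction on the rank: it singles out one cyclic factor $\mathbb Z_{d_1}<\mathbb T_0:=\mathbb G_m\le\mathbb G_m^r$, performs a $\mathbb T_0$-equivariant blow-up so that $\mathbb T_0$ (hence $\mathbb Z_{d_1}$) acts trivially on the exceptional divisor $E$, and applies the inductive hypothesis to the rank-$(r-1)$ quotient $A_E$ acting on the $(n-1)$-dimensional variety $E$. You instead observe that under the stated hypotheses $G$ already embeds in $\mathbb G_m^k$ via the cotangent action at $\eta_W$, hence is abelian from the start (so the Jordan step is vacuous and one may take $A=G$ with index $1$), and then produce the codimension-$r$ center in a single toric stroke via the lattice factorization $M\xrightarrow{\psi}\mathbb Z^r\xrightarrow{q}\widehat G$ with all $\psi(e_i)$ in the open positive orthant. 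What your approach buys is a sharper conclusion and a transparent combinatorial picture of exactly which cone $\tau$ to insert; what the paper's inductive argument buys is that it avoids the explicit lattice construction entirely, trading it for dimension bookkeeping.

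Two small points worth tightening in your write-up. First, the identity $\tau^\perp\cap M=\ker\psi$ uses that $\ker\psi$ is saturated in $M$; this holds because $M/\ker\psi\hookrightarrow\mathbb Z^r$ is torsion-free, but you should say so explicitly, and likewise spell out why one can perturb the $\psi(e_i)$ within their cosets to force $\operatorname{rank}\psi=r$ while staying in the positive orthant (this uses that $\ker q$ has full rank in $\mathbb Z^r$). Second, your conclusion $K=1$ assumes $G$ acts faithfully on $X$; if not, $K$ is exactly the kernel of the action and you should pass to $G/K$, though this is harmless for the statement.
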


\begin{proof}
By the Jordan property, we may replace $G$ with a normal abelian subgroup $A\triangleleft G$ of rank $r\leq n$ and index $c(n)$.
We may localize at the generic point of $W$ and assume we are working on a log smooth germ.
Furthermore, we can write
$A\simeq \zz_{d_1}\oplus \dots \oplus \zz_{d_r}$ with $d_i \mid d_{i+1}$ and $A<\mathbb{G}_m^r \leqslant {\rm Aut}(X',B';\eta_W)$.
We may assume that $\zz_{d_1}<\mathbb{T}_0:=\mathbb{G}_m\leqslant \mathbb{G}_m^r$.
We can perform a $\mathbb{T}_0$-equivariant blow-up of $Y\rightarrow (X',B';\eta_W)$ so that $\mathbb{T}_0$ acts as the identity on the exceptional $E$.
By performing further $\mathbb{T}_0$-equivariant blow-ups, we may assume that:
\begin{enumerate} 
\item $(Y,E)$ is $A$-invariant,
\item $E$ is a $\mathbb{T}_0$-fixed smooth toric variety, and
\item $\mathbb{T}_0$ acts as the identity on $E$.
\end{enumerate} 
Let $A_E$ be the quotient group of $A$ acting on $E$. Note that $A_E$ is an abelian group of rank $r-1$ and $E$ has dimension $n-1$. Then, the statement follows by induction on the dimension.
\end{proof}

\begin{lemma}\label{lem:fixing-one-dual-complex}
Let $G$ be a finite group.
Let $(X,B+M)$ be a $G$-equivariant generalized log canonical pair.
Let $(X',B'+M')$ be a $G$-equivariant generalized dlt modification of $(X,B+M)$.
Assume that $G$ acts as the identity on $\mathcal{D}(X',B'+M')$.
Let $(X'',B''+M'')$ be a $G$-equivariant generalized dlt modification of $(X,B+M)$.
Then, $G$ acts as the identity on
$\mathcal{D}(X'',B''+M'')$.
\end{lemma}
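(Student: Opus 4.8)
The plan is to compare the two generalized dlt modifications through a common refinement on which the triviality of the action can be read off directly, exploiting that the dual complex together with its $G$-action is an invariant of the crepant birational equivalence class, not of the chosen model.

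\emph{Step 1: a common $G$-equivariant generalized dlt modification.} First I would produce a $G$-equivariant generalized dlt modification $(X_0,B_0+M_0)$ of $(X,B+M)$ together with $G$-equivariant projective birational morphisms $p'\colon X_0\rightarrow X'$ and $p''\colon X_0\rightarrow X''$. Such a model is obtained by taking a $G$-equivariant log resolution of $(X,B+M)$ that simultaneously resolves the birational maps to $X'$ and $X''$, and then running a $G$-equivariant MMP contracting the divisors which are not generalized log canonical places of $(X,B+M)$, exactly as in the proof of Proposition~\ref{prop:dlt-mod}; the contracted divisors are $p'$- and $p''$-exceptional and are not divisors on $X'$ or $X''$ (these being generalized dlt modifications, they carry only generalized lc places of $(X,B+M)$), so $p'$ and $p''$ remain morphisms. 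By construction $p'$ and $p''$ are crepant:
\[
{p'}^*(K_{X'}+B'+M')=K_{X_0}+B_0+M_0={p''}^*(K_{X''}+B''+M'').
\]

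\emph{Step 2: the dual complex changes by a $G$-equivariant subdivision.} Since $p'$ is a crepant projective birational morphism between the generalized dlt pairs $(X_0,B_0+M_0)$ and $(X',B'+M')$, every $p'$-exceptional prime component of $\lfloor B_0\rfloor$ is a generalized log canonical place of $(X',B'+M')$, hence has center a stratum of $\lfloor B'\rfloor$. By the combinatorial description of dual complexes in~\cite{dFKX17}, this forces $\mathcal{D}(X_0,B_0+M_0)$ to be obtained from $\mathcal{D}(X',B'+M')$ by a finite sequence of stellar subdivisions at the cells corresponding to these strata; and, by the same local toric argument as in the proof of Lemma~\ref{lem:simp-complex}, these subdivisions can be carried out $G$-equivariantly, because the stabilizer of each stratum acts torically at its generic point, so the $G$-orbit of a stratum is $G$-invariant and the associated blow-up is $G$-equivariant. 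Therefore there is a $G$-equivariant PL homeomorphism $|\mathcal{D}(X_0,B_0+M_0)|\cong|\mathcal{D}(X',B'+M')|$, and likewise $|\mathcal{D}(X_0,B_0+M_0)|\cong|\mathcal{D}(X'',B''+M'')|$.

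\emph{Step 3: transporting triviality, and the main obstacle.} If $G$ acts as the identity on $\mathcal{D}(X',B'+M')$ then it fixes every point of $|\mathcal{D}(X',B'+M')|$, hence, via the $G$-equivariant homeomorphism of Step 2, every point of $|\mathcal{D}(X_0,B_0+M_0)|$; choosing a $G$-equivariant simplicial representative as in Lemma~\ref{lem:simp-complex}, a simplicial action fixing the geometric realization pointwise is the identity, so $G$ acts as the identity on $\mathcal{D}(X_0,B_0+M_0)$, and running the same argument through $|\mathcal{D}(X_0,B_0+M_0)|\cong|\mathcal{D}(X'',B''+M'')|$ gives the claim. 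The only substantive point is Step 2: upgrading the classical invariance of the dual complex under crepant morphisms to a $G$-equivariant statement, i.e., checking that the connecting subdivisions of~\cite{dFKX17} can be realized $G$-equivariantly. This is where one must combine the combinatorics of~\cite{dFKX17} with the toric local structure of $G$ near the strata already used in Lemma~\ref{lem:simp-complex}; a point to watch is the influence of the nef b-divisor on which extracted divisors acquire coefficient one, but since $M$ descends on a model below $X_0$ its contribution to generalized log discrepancies is simply pulled back and does not disturb the subdivision picture, so this causes no difficulty.
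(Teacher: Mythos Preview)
Your argument is essentially correct and rests on the same key observation as the paper's proof, but it is packaged quite differently. The paper proceeds by a direct contradiction: take a prime divisor $E$ on some (resolved) model of $X''$ that is \emph{not} $G$-invariant, look at its center $W$ on $X'$, observe that $W$ is a $G$-invariant stratum of $(X',B')$, and note that on the log smooth germ $(X',B';\eta_W)$ the group $G$ acts torically (it fixes each of the coordinate hyperplanes by hypothesis). Any log canonical place over $\eta_W$ is extracted by a sequence of monoidal transformations along $G$-invariant strata, and each intermediate exceptional is therefore $G$-invariant; hence $E$ itself is $G$-invariant, a contradiction. This avoids building a global common model or invoking PL homeomorphisms.

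Your route via a common gdlt refinement $X_0$ and $G$-equivariant PL identifications $|\mathcal{D}(X_0)|\cong|\mathcal{D}(X')|$, $|\mathcal{D}(X_0)|\cong|\mathcal{D}(X'')|$ is more conceptual, but it invokes more than is needed, and there is a small slip you should fix. Your justification in Step~2 for the $G$-equivariance of the subdivision uses that the stabilizer of each stratum acts torically at its generic point; this is fine for $p'$ because by hypothesis $G$ fixes every stratum of $(X',B'+M')$, but the word ``likewise'' for $p''$ is not justified by the same reasoning, since fixing the strata of $X''$ is precisely what you are trying to prove. The remedy is easy: once you know $G$ acts trivially on $\mathcal{D}(X_0,B_0+M_0)$ (from the $p'$-side), every prime component of $\lfloor B''\rfloor$ is the image under $p''$ of a $G$-invariant prime component of $\lfloor B_0\rfloor$, hence is itself $G$-invariant; no PL homeomorphism on the $p''$-side is required. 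With this correction your proof goes through, and at that point it is visibly the same argument as the paper's, just run in the ``upward'' direction through $X_0$ rather than the paper's ``downward'' direction through the center $W$ on $X'$.
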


\begin{proof}
Assume that $G$ does not act as the identity on the generalized dlt modification $(X'',B''+M'')$.
By further blowing-up, we may assume there is a prime divisorial generalized log canonical center $E$ which is not $G$-invariant.
We may replace $X''$ by a $G$-equivariant resolution and assume that $X''\rightarrow X'$ is a $G$-equivariant morphism.
Let $W$ be the image of $E$ on $X'$.
Observe that the $G$-orbit of $E$ is mapped to $W$,
and we are assuming that the $G$-orbit of $E$ contains more than one prime divisor.
Then, $W$ is a $G$-invariant generalized log canonical center of $(X',B'+M')$.
Note that $E$ defines a generalized log canonical center on the log smooth germ
$(X',B';\eta_W)$.
Hence, $E$ can be extracted from $(X',B'+M')$ by a sequence of monoidal transformations centered at $G$-invariant log canonical centers.
Each divisor extracted in this sequence of monoidal transformations is $G$-invariant.
Hence, we conclude that $E$ itself must be $G$-invariant. This leads to a contradiction.
Thus, we conclude that $G$ must act as the identity on $\mathcal{D}(X'',B''+M'')$.
\end{proof}

\begin{lemma}\label{lem:qld-mod-dlt}
Let $X'\rightarrow X$ be a projective birational morphism. 
Let $(X,B+M)$ be a generalized dlt log Calabi-Yau pair.
Let $(X',B'+M')$ be its log pullback to $X'$.
Assume that $(X',B'+M')$ has generalized qdlt singularities.
Then, for every generalized log canonical center $W$ of $(X,B+M)$, there exists a generalized log canonical center $W'$ of $(X',B'+M')$ so that the induced morphism $W'\rightarrow W$ is birational.
\end{lemma}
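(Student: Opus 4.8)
### Proof plan for Lemma~\ref{lem:qld-mod-dlt}

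The plan is to reduce the statement to a local analysis at the generic point $\eta_W$ of a given generalized log canonical center $W$ of $(X,B+M)$. Since $(X,B+M)$ is generalized dlt, there is an open neighborhood of $\eta_W$ on which $(X,\lfloor B\rfloor)$ is log smooth; the center $W$ is then cut out by a minimal collection of the prime components $P_1,\dots,P_k$ of $\lfloor B\rfloor$ passing through it, and the dual complex picture identifies $W$ with a cell of $\mathcal{D}(X,B+M)$. The strategy is: first restrict the birational morphism $\pi\colon X'\to X$ to this log smooth neighborhood, then use the hypothesis that $(X',B'+M')$ is generalized qdlt to conclude that over $\eta_W$ the map $\pi$ looks like a toroidal morphism, and finally read off from the toroidal/combinatorial structure that some log canonical center $W'$ of $(X',B'+M')$ maps birationally onto $W$.

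First I would set up the local model. Shrinking $X$ around $\eta_W$, we may assume $X$ is smooth and $\lfloor B\rfloor = P_1+\dots+P_m$ is a simple normal crossing divisor with $W = P_1\cap\dots\cap P_k$ (after reordering), so that étale-locally $(X,\lfloor B\rfloor;\eta_W)$ is a toric germ with $W$ a toric stratum. Since $(X',B'+M')$ is the log pullback of a log Calabi-Yau pair, $K_{X'}+B'+M' = \pi^*(K_X+B+M)$, so $\pi$ extracts only divisors of generalized log discrepancy $\le 1$; those of discrepancy exactly zero are the new components of $\lfloor B'\rfloor$, i.e.\ the new generalized log canonical places. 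The generalized qdlt hypothesis on $(X',B'+M')$ says precisely that near each generalized log canonical center of $X'$ the pair is formally the quotient of a log smooth pair by a finite abelian group fixing the coordinate hyperplanes — in other words $X'$ is formally toric there with $\lfloor B'\rfloor$ the toric boundary.

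Next I would exploit the fact that a projective birational morphism between (formally) toric varieties which is an isomorphism in codimension... no — rather, that \emph{any} log canonical place of the log smooth germ $(X,\lfloor B\rfloor;\eta_W)$ is toric, hence corresponds to a ray in the fan, and extracting it is a toric (toroidal) operation. Concretely: over the generic point $\eta_W$, the morphism $\pi$ together with the qdlt structure on $X'$ makes $\pi$ a toroidal morphism of toroidal embeddings in the sense of the preliminaries; the central fiber $\pi^{-1}(\eta_W)$ together with the stratification by $\lfloor B'\rfloor$ is then governed by a subdivision of the cone $\sigma_W$ corresponding to $W$. In any such subdivision of a cone, the original cone $\sigma_W$ itself (or a cone of the same dimension mapping onto it) appears — more to the point, there is a cone $\tau$ in the subdivided fan of the same dimension as $\sigma_W$ whose relative interior maps into the relative interior of $\sigma_W$, which corresponds exactly to a generalized log canonical center $W'$ of $(X',B'+M')$ with $\pi(W') = W$ and $\pi|_{W'}$ birational (it is generically finite of degree one because the toroidal map is birational and $W'$, $W$ have the same dimension with $W'$ dominating $W$).

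The main obstacle I anticipate is making the ``toroidal'' reduction rigorous when $X'$ is only \emph{generalized qdlt} rather than qdlt — one must check that the presence of the nef part $M$ does not interfere, which it does not because the center structure only sees $\lfloor B'\rfloor$ and the equality $K_{X'}+B'+M' = \pi^*(K_X+B+M)$ forces the discrepancy bookkeeping regardless of $M$; and that the quotient singularities allowed by qdlt still permit the combinatorial argument, which they do since a formally toric $\qq$-factorial singularity with its toric boundary has exactly the same stratification-by-cones structure as a smooth toric variety, just with a lattice replaced by a finite-index sublattice. The secondary technical point is confirming that $\pi|_{W'}$ is genuinely \emph{birational} and not merely generically finite; this follows because $W'$ arises from a cone $\tau\subseteq\sigma_W$ in a subdivision, and the induced map on the quotient lattices $N(\tau)\to N(\sigma_W)$ is an isomorphism when $\dim\tau = \dim\sigma_W$ and $\tau$ sits inside $\sigma_W$ compatibly. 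Once these are in place, the conclusion that $W'\to W$ is birational is immediate, and gluing the local statements over the finitely many centers $W$ gives the lemma.
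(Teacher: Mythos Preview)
Your proposal and the paper agree on the first move: localize at $\eta_W$ so that $(X,\lfloor B\rfloor)$ becomes a log smooth toric germ with $W$ the closed point, and the question becomes whether $(X',B'+M')$ has a zero--dimensional generalized log canonical center over that point. After this the two arguments diverge.

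The paper does \emph{not} try to show that $\pi$ is toroidal. Instead it uses the qdlt hypothesis only to perform monoidal transformations along the generalized log canonical centers of $(X',B'+M')$, thereby replacing it by an honestly generalized \emph{dlt} model. It then invokes \cite[Theorem~3]{dFKX17}: the dual complexes of two dlt models of the same generalized log Calabi--Yau pair are simple--homotopy equivalent, hence have the same dimension. Since $\mathcal{D}(X,B+M)$ is an $(r-1)$--simplex (where $r=\codim_X W$), the modified dual complex on $X'$ is $(r-1)$--dimensional as well, and a maximal simplex there corresponds to the desired closed point.

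Your route has a genuine gap at the step ``the morphism $\pi$ together with the qdlt structure on $X'$ makes $\pi$ a toroidal morphism.'' The qdlt hypothesis gives a formally toric structure \emph{at the generic points of the log canonical centers of $X'$}; it says nothing about $\pi$ globally over $\eta_W$. In particular, $\pi$ may extract divisors $E$ with $a_E(X,B+M)\in(0,1]$: these have coefficient strictly less than one in $B'$, do not lie in $\lfloor B'\rfloor$, and the qdlt condition imposes no toric structure near them. Such divisors would appear as rays in any na\"ive fan picture of $\pi$ but contribute nothing to the stratification by generalized log canonical centers, so the cones of your purported subdivision need not correspond to centers of $(X',B'+M')$. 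More basically, even when all extracted valuations happen to be toric, the \emph{morphism} realizing their extraction need not be toric, and your fan--subdivision argument needs the latter. The obstacle you flag (the nef part $M$) is harmless; the real obstacle is this non--toroidality of $\pi$, and the paper's appeal to \cite{dFKX17} is precisely what sidesteps it by comparing dual complexes directly rather than analyzing $\pi$.
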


\begin{proof}
By localizing at the generic point of $(W,B_W)$, we may assume that $W$ is a log smooth closed point.
Hence, we may assume that $\mathcal{D}(X,B+M)$ is a simplicial complex of dimension $r$.
Let $X'\rightarrow X$ be the aforementioned projective birational morphism.
By performing monoidal transformations along generalized log canonical centers of $(X',B'+M')$, we may assume that $(X',B'+M')$ is itself generalized dlt.
Then, by~\cite[Theorem 3]{dFKX17}, we conclude that $\mathcal{D}(X',B'+M')$ is simple-homotopy equivalent to $\mathcal{D}(X,B+M)$.
Hence, $\mathcal{D}(X',B'+M')$ is $r$-dimensional.
We conclude that there is a closed point in $X'$ which is a generalized log canonical center of $(X',B'+M')$ and maps onto $\eta_W$.
This finishes the proof.
\end{proof}

\section{Finite actions on dual complexes}

In this section, we study finite actions on dual complexes, which are induced by finite automorphisms of the log Calabi-Yau pair.
In this section, we will prove Theorem~\ref{introthm:almost-fixed-global} and
Theorem~\ref{introthm:almost-fixed-local}.
The following theorem is a generalization of the former.

\begin{theorem}\label{thm:trivial-action-on-D}
Let $n$ be a positive integer. 
There exists a constant $c(n)$, only depending on $n$, satisfying the following.
Let $(X,B+M)$ be a generalized log Calabi--Yau pair of dimension $n$.
Let $G<{\rm Aut}(X,B+M)$ be a finite subgroup.
Then, there exists a normal subgroup $A\triangleleft G$ of index at most $c(n)$ that acts trivially on $\mathcal{D}(X,B+M)$.
\end{theorem}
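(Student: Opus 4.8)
The strategy is an induction on $n = \dim X$, following the scheme sketched in the introduction. If $\mathcal{D}(X,B+M)$ is empty there is nothing to prove, so assume it is non-empty; by the connectedness results of~\cite{FS20,Bir20} it is then connected. First I would pass to a $G$-equivariant generalized dlt modification $(X',B'+M')$ of $(X,B+M)$, and by Lemma~\ref{lem:simp-complex} arrange that $\mathcal{D}(X',B'+M')$ is a simplicial complex all of whose maximal simplices have the same dimension $r \in \{0,\dots,n-1\}$. By Lemma~\ref{lem:fixing-one-dual-complex} it suffices to produce a subgroup $A \triangleleft G$ of controlled index acting trivially on $\mathcal{D}(X',B'+M')$ for this one chosen modification. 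The goal is to exhibit a single minimal generalized log canonical center $W$ of $(X',B'+M')$ such that, after passing to a bounded-index normal subgroup $A$, the group $A$ fixes $W$ and all generalized log canonical centers through it, and in fact acts \emph{as the identity} on $W$; then Lemma~\ref{lem:fix-max-dim-cell} together with Lemma~\ref{lem:equiv-P^1-link} (the $\pp^1$-link propagation) forces $A$ to act trivially on the whole dual complex.

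To produce such a $W$: by the Jordan property for finite subgroups of $\operatorname{Aut}$ acting on these centers (ultimately the Jordan property for $\operatorname{GL}_n$), replace $G$ by a normal abelian subgroup of index bounded by a constant in $n$; I will keep calling it $G$. Now pick a generalized log canonical place $E$ of $(X',B'+M')$ corresponding to a vertex of $\mathcal{D}$. The orbit of $E$ under $G$ has size bounded by the total number of vertices, and here is where one needs a bound on the number of log canonical centers: $X'$ (or its relevant strata) is of Fano type, the pair is log Calabi--Yau, so by the bounded-complements machinery (Lemma~\ref{lem:G-equiv-N-comp}, Lemma~\ref{lem:Q-com-N-com}, and the log boundedness argument as in Lemma~\ref{lem:fix-lcc}) the dual complex has a bounded number of cells in each dimension, hence a bounded number of vertices. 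Thus after passing to the kernel of the permutation action of $G$ on the (bounded) vertex set — a normal subgroup of index bounded in $n$ — we may assume $G$ fixes every generalized log canonical center of $(X',B'+M')$ as a set. In particular $G$ fixes every divisorial center $E_i = $ a component of $\lfloor B'\rfloor$.

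Having arranged that $G$ fixes all centers, restrict to a minimal center $W$ (a maximal-dimensional simplex $S_W$) and run the induction: apply equivariant adjunction (Lemma~\ref{lem:G-equiv-adj}) repeatedly along the chain of centers to get a $G_W$-equivariant generalized dlt log Calabi--Yau pair structure on $W$ of dimension $\dim W < n$; by induction a bounded-index normal subgroup of $G_W$ acts trivially on $\mathcal{D}(W,\cdot)$, and — descending the chain of adjunctions back up via Lemma~\ref{lem:triv-act-in-out} — this lifts to a bounded-index subgroup $A \triangleleft G$ acting as the identity on $W$ itself while still fixing all centers through $W$. Then Lemma~\ref{lem:fix-max-dim-cell} gives that $A$ fixes all centers (already known) and Lemma~\ref{lem:equiv-P^1-link} upgrades "$A$ acts as the identity on one minimal center" to "$A$ acts as the identity on all minimal centers", hence on all of $\mathcal{D}(X',B'+M')$, hence on $\mathcal{D}(X,B+M)$. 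Tracking indices: each of the finitely many passages (Jordan, kernel of the bounded permutation action, the inductive step) multiplies the index by a constant depending only on $n$, so the total index is bounded by some $c(n)$.

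\textbf{Main obstacle.} The delicate point is the induction base for the adjunction step combined with ensuring $A$ acts \emph{as the identity} (not merely fixing centers setwise) on the minimal center $W$: one must check that when $\dim W = 0$ the center is literally a point fixed pointwise, and that the inductive hypothesis genuinely yields a \emph{normal} subgroup of bounded index at each stage so that normality survives the composition. A secondary subtlety is that the boundedness of the number of vertices of $\mathcal{D}$ requires knowing the underlying varieties (and the strata arising in the induction) are of Fano type so that bounded complements apply — this is where the log Calabi--Yau hypothesis is essential, as the example $S_n \acts \pp^n$ and the elliptic examples show the statement fails without it.
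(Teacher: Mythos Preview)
The central gap is your claim that $\mathcal{D}(X',B'+M')$ has boundedly many cells. This is false: take $X'$ a smooth projective toric surface whose fan has $k$ rays and $B'$ the reduced toric boundary; then $(X',B')$ is dlt log Calabi--Yau with $k$ divisorial log canonical centers, and $k$ is unbounded. Lemma~\ref{lem:fix-lcc} requires the variety to be a \emph{canonical Fano}, not merely Fano type, and the theorem does not even assume $X$ is of Fano type. So the step ``pass to the kernel of the permutation action on the bounded vertex set'' does not give a subgroup of index bounded in $n$, and the argument collapses there.

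The paper's proof is built precisely to circumvent this. After passing to a canonicalized $G\qq$-factorial dlt model, it runs a $G$-equivariant $K_{X_0}$-MMP to a Mori fiber space $X_2\to Z_2$ with $X_2$ canonical and $-K_{X_2}$ relatively ample, chosen of minimal relative dimension (Lemma~\ref{lem:contr-minimal-rel-dim}). It then splits according to whether the general fiber pair $(F,B_F+M_F)$ or the base pair $(Z_2,B_{Z_2}+M_{Z_2})$ has nonempty dual complex (Lemma~\ref{lem:reg-fib-or-base-non-trivial}). In the fiber case $F$ is canonical Fano, hence bounded by Birkar, so Lemma~\ref{lem:fix-lcc} legitimately bounds the number of \emph{horizontal} centers; after passing to a bounded-index subgroup fixing those, one extracts a $G$-invariant log canonical place $E$ over a minimal horizontal center (Lemma~\ref{lem:existence-g-inv-lcp}) and applies the inductive hypothesis to the divisor $E$. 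In the base case one applies induction to $Z_2$ and lifts a fixed divisorial place via Lemma~\ref{lem:compatible-dlt-mod} and Lemma~\ref{lem:dlt-antiample pair}.

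There is a second confusion in your inductive step. You aim for $A$ to act \emph{as the identity} on a minimal center $W$ and then invoke Lemma~\ref{lem:equiv-P^1-link}. But the present theorem only asks that $A$ fix every center as a set, and Lemma~\ref{lem:fix-max-dim-cell} already delivers this from ``$A$ fixes one minimal center and all centers through it''. Your route cannot produce pointwise identity on $W$ anyway: if $W$ is minimal, the adjoined pair on $W$ is generalized klt with empty dual complex, so the inductive hypothesis is vacuous there. The right object on which to run induction is a divisorial center $E$ of dimension $n-1$; one then uses Lemma~\ref{lem:triv-act-in-out} to transport the conclusion from $\mathcal{D}(E,B_E+M_E)$ to the centers of the ambient pair meeting $E$, and finishes with Lemma~\ref{lem:fix-max-dim-cell} and Lemma~\ref{lem:fixing-one-dual-complex}.
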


\begin{proof}
We proceed by induction on $n$.
If the dimension equals one, then the dual complex $\mathcal{D}(X,B+M)$ is either empty, a point, or two points.
In any case, either $G$ or a normal subgroup of index $2$ of $G$ will act trivially on the dual complex $\mathcal{D}(X,B+M)$.
Thus, we may assume that $\mathcal{D}(X,B+M)$ has dimension at least two.

Let $(X_0,B_0+M_0)$ be a
$G$-equivariant canonicalization of a
$G$-equivariant $G\qq$-factorial dlt modification of $(X,B+M)$.
In particular, $X_0$ has
$G$-equivariant 
canonical
$G\qq$-factorial singularities.
We run a $G$-equivariant 
minimal model program for $K_{X_0}$.
If $K_{X_0}$ is pseudo-effective, then $B_0=M_0=0$.
In particular, we have that the generalized pair $(X,B+M)$ has empty dual complex
$\mathcal{D}(X,B+M)$.
Thus, we may assume that the 
minimal model program terminates with a $G$-equivariant Mori fiber space $\phi_1 \colon X_1\rightarrow Z_1$.
By Lemma~\ref{lem:contr-minimal-rel-dim},
we can find a birational model $(X_2,B_2+M_2)$ of $(X_1,B_1+M_1)$ that admits a $G$-equivariant contraction $\phi_2\colon X_2\rightarrow Z_2$, with minimal relative dimension, so that $X_2$ is canonical and $-K_{X_2}$ is ample over $Z_2$.
We have a commutative diagram as follows:
\[
\label{com-diag}
\xymatrix{
(X_0,B_0+M_0) \ar[d] \ar@{-->}[r] &
(X_1,B_1+M_1) \ar[d]^-{\phi_1} \ar@{-->}[r] & 
(X_2,B_2+M_2) \ar[d]^-{\phi_2}\\
(X,B+M) & 
(Z_1,B_{Z_1}+M_{Z_2}) &
(Z_2,B_{Z_2}+M_{Z_2})\ar[l]
}
\]
where $(Z_2,B_{Z_2}+M_{Z_2})$ is the generalized log canonical pair obtained by the equivariant generalized canonical bundle formula.
We have a short exact sequence
\[
1\rightarrow G_F\rightarrow G\rightarrow G_Z\rightarrow 1,
\]
where $G_F$ acts fiberwise 
and $G_Z$ acts on the base $Z_2$.
With this notation, the generalized log canonical pair
$(Z_2,B_{Z_2}+M_{Z_2})$ is $G_Z$-equivariant.
In what follows, we will say that a generalized log canonical center of $(X_2,B_2+M_2)$ is horizontal 
if it does dominate $Z_2$.
Otherwise, we will say that the generalized log canonical center is vertical.

First, we assume that $\mathcal{D}(F,B_F+M_F)$ is non-trivial, where $F$ is the general fiber of $\phi_2$.
Note that $F$ belongs to a bounded family. 
Indeed, since the relative $G$-Picard rank of $X_2$ over $Z_2$ is one, we conclude that $-K_{X_2}$ is ample over $Z_2$.
In particular, $-K_X$ is an ample divisor.
Therefore, it belongs to a bounded family by~\cite[Theorem 1.1]{Bir21}.
By Lemma~\ref{lem:fix-lcc}, there exists a constant $c(f)$, only depending on $f=\dim F$, satisfying the following.
For every $i \in \{0,\dots, f-1\}$, the generalized log canonical pair $(F,B_F+M_F)$ has at most $c(f)$ 
$i$-dimensional generalized log canonical centers.
Hence, we conclude that
for every $i\in \{0,\dots, f-1\}$, the generalized log canonical pair $(X_2,B_2+M_2)$ has at most 
$c(f)$ $i$-dimensional generalized log canonical centers that are horizontal over $Z_2$.
Note that every element of $G$ must map $i$-dimensional horizontal generalized log canonical centers of $(X_2,B_2+M_2)$ to $i$-dimensional horizontal generalized log canonical centers of $(X_2,B_2+M_2)$.
Hence, for each $i\in \{0,\dots, f-1\}$, we have a homomorphism
$G\rightarrow S_{j(i)}$ where $j(i)\leq c(f)$.
We conclude that there exists a constant $k(f)$, only depending on $f$, and a normal subgroup $H\triangleleft G$, so that $H$ fixes every horizontal generalized log canonical center of $(X_2,B_2+M_2)$. Replacing $G$ with $H$, we may assume that $G$ fixes every horizontal generalized log canonical center of $(X_2,B_2+M_2)$.
Let $W$ be such a minimal horizontal generalized log canonical center.
By Lemma~\ref{lem:existence-g-inv-lcp}, we know there exists a $G$-invariant log canonical place $E$ of $(X_2,B_2+M_2)$ which maps onto $W$.
Hence, we can replace $(X_2,B_2+M_2)$ with a higher $G\qq$-factorial $G$-equivariant dlt modification 
$(X'_2,B'_2+M'_2)$ so that the center
of $E$ on $X'_2$ is a divisor.
 $E$ is a $G$-invariant divisor which appears with coefficient one in $B'_2$. By Lemma~\ref{lem:G-equiv-adj}, we can perform adjunction to it
and obtain a generalized dlt pair
\[
(K_{X'_2}+B'_2+M'_2)|_E \sim_\qq 
K_E+B_E+M_E.
\]
The generalized log Calabi-Yau pair $(E,B_E+M_E)$ admits the action of a cyclic quotient of $G$ (see Lemma~\ref{lem:G-equiv-adj}).
We denote such a cyclic quotient by $G_E$.
By induction on the dimension, there exists a normal subgroup
$A_E\triangleleft G_E$ of index at most
$c(n-1)$ so that $A_E$ fixes
$\mathcal{D}(E,B_E+M_E)$ pointwise.
In particular, $A_E$ fixes all generalized log canonical centers
of $(E,B_E+M_E)$.
We denote by $A$ the preimage of $A_E$ in $G$.
Note that $A\triangleleft G$ is a normal subgroup of index at most $c(n-1)$.
By Lemma~\ref{lem:triv-act-in-out}, we conclude that $A$ fixes all the generalized log canonical centers of 
$(X_2',B'_2+M'_2)$ which intersect $E$.
In particular, $A$ must fix a minimal log canonical center $W$ of $(X_2',B'_2+M'_2)$ and all the generalized log canonical centers containing $W$.
By Lemma~\ref{lem:fix-max-dim-cell}, we conclude that $A$ fixes every generalized log canonical center of
$(X_2',B'_2+M'_2)$.
In particular, the action of $A$ on
$\mathcal{D}(X_2',B'_2+M'_2)$ is trivial.
By Lemma~\ref{lem:fixing-one-dual-complex}, we conclude that $A$ acts trivially on $\mathcal{D}(X_0,B_0+M_0)$.
This finishes the proof in the case that
$\mathcal{D}(F,B_F+M_F)$ is non-trivial.

Now, we assume that $\mathcal{D}(Z_2,B_{Z_2}+M_{Z_2})$ is non-trivial.
By induction, we may assume that a normal subgroup $A_Z\triangleleft G_Z$
acts on $\mathcal{D}(Z_2,M_{Z_2}+B_{Z_2})$ trivially.
The subgroup $A_Z\triangleleft G_Z$ can be chosen to have index bounded above by $c(z)$, where $z=\dim Z_2 $.
Replacing $G$ with the preimage of $A_Z$ on $G$, we may assume that such property is satisfied by $G_Z$ itself.
In particular, $G_Z$ must fix a divisorial log canonical place of $(Z_2,B_{Z_2}+M_{Z_2})$.
By Lemma~\ref{lem:compatible-dlt-mod}, we can find compatible $G$-equivariant dlt modifications
$(X_2',B_2'+M_2')\rightarrow (Z_2',B_{Z_2'}+M_{Z_2'})$.
Let $E_{Z'_2}$ be a prime divisor with coefficient one in $B_{Z'}$ 
that is fixed by $G_Z$.
Let $E'$ be the union of all the prime divisors with coefficient one in $B_2'$ that maps onto $E_{Z'}$.
By Lemma~\ref{lem:compatible-dlt-mod},
We can assume that every prime divisor of $X'_2$ that dominates $E_{Z'}$ appears in $B_2'$ with coefficient one. 
This means that $E'$ is the union of all the prime divisors on $X'$ which maps onto $E_{Z'}$.
Note that $E'$ is $G$-invariant.
In particular, the log pair
$(X_2',E')$ is a dlt $G$-equivariant pair.
We run a $G$-equivariant $(K_{X_2'}+E')$-MMP over $Z'_2$.
Note that $E'$ contains no horizontal divisors over $Z_2'$.
Hence, the divisor $K_{X_2'}+E'$ is not pseudo-effective over the base $Z'_2$.
Thus, the minimal model program terminates with a $G$-equivariant Mori fiber space.
By the minimality assumption of the fibration, this $G$-equivariant Mori fiber space must map to a higher birational model of $Z'_2$.
We denote by $X_2''\rightarrow Z_2''$ such $G$-equivariant Mori fiber space.
We denote by $E''$ the push-forward of $E'$ to $X_2''$.
We claim that $E''$ is a non-trivial effective divisor.
Indeed, since $E'$ is the reduced sum of all prime divisors on $X'_2$ mapping onto $E_{Z'}$, then at least one of the components of $E'$ is not contracted by the minimal model program $X_2'\dashrightarrow X_2''$.
Note that $E''$ is a $G$-invariant divisor
as is the push-forward of an $G$-invariant divisor via a $G$-invariant birational map.
In summary, the log pair
$(X_2'',E'')$ is dlt, 
$E''$ is a non-trivial effective divisor,
$E''$ is $G$-equivariant,
and $K_{X''}+E''$ is anti-ample over $Z_2''$.
Note that $Z_2''\rightarrow Z_2'$ is an isomorphism over the generic point of $E_{Z_2'}$.
By Lemma~\ref{lem:dlt-antiample pair}, we conclude that
$\mathcal{D}(X''_2,E'')$ is a simplex of dimension at most $n$.
Hence, we can replace $G$ with a subgroup of index at most $n!$ so that it fixes one of the prime divisors in the support of $E''$.
After such replacement, $G$ must fix a divisor on the support of $\lfloor B'_2 \rfloor$.
Denote by $E$ the $G$-fixed prime divisor. 
Let $G_E$ be the quotient group of $G$ acting on $E$.
Then, we have an action of $G_E$ on the generalized dlt pair
$(E,B_E+M_E)$ obtained by $G$-equivariant 
generalized adjunction (see Lemma~\ref{lem:G-equiv-adj}).
By induction on the dimension, there exists a constant $c(n-1)$, only depending on $n-1$, and a normal subgroup $A_E$ of $G_E$ of index at most $c(n-1)$ that acts as the identity on the dual complex
$\mathcal{D}(E,B_E+M_E)$.
Let $A$ be the preimage of $A_E$ in $G$.
By Lemma~\ref{lem:triv-act-in-out}, we conclude that $A$ fixes all the generalized log canonical centers of 
$(X_2',B'_2+M'_2)$ which intersect $E$.
By Lemma~\ref{lem:triv-act-in-out}, we conclude that $A$ fixes the CW complex  $\mathcal{D}(X'_2,B'_2+M'_2)$ pointwise.
By Lemma~\ref{lem:fixing-one-dual-complex}, we conclude that $A$ acts on $\mathcal{D}(X,B+M)$ as the identity.
This finishes the proof in the case that $\mathcal{D}(Z_2,B_{Z_2}+M_{Z_2})$ is non-trivial.

By Lemma~\ref{lem:reg-fib-or-base-non-trivial},
provided that
$\mathcal{D}(X_2,B_2+M_2)$ is non-trivial, 
then either
$\mathcal{D}(Z_2,B_{Z_2}+M_{Z_2})$
or 
$\mathcal{D}(F,B_F+M_F)$ are
non-trivial.
Hence, in any case, we can find 
a normal subgroup $A\triangleleft G$ of index at most $c(n)$, so that $A$ acts as the identity on 
$\mathcal{D}(X,B+M)$.
\end{proof}

Now, we proceed to prove the local version of the previous statement. 
In the local case, we need the log canonical singularity to support a klt singularity.
The following theorem is a generalization of Theorem~\ref{introthm:almost-fixed-local}
to the category of generalized pairs.

\begin{theorem}\label{thm:almost-fixed-D-local}
Let $n$ be a positive integer.
There exists a constant $c(n)$, only depending on $n$, satisfying the following.
Let $(X,B+M;x)$ be a generalized log canonical singularity of dimension $n$.
Assume that $(X,B_1+M;x)$
is gklt for some $B_1\leq B$.
Let $G\leqslant {\rm Aut}(X,B+M;x)$ be a finite subgroup.
Then, there exists a normal subgroup $A\triangleleft G$ of index at most $c(n)$ 
so that $A$ acts trivially on $\mathcal{D}(X,B+M;x)$.
\end{theorem}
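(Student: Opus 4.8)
The plan is to deduce the local statement from the global one, Theorem~\ref{thm:trivial-action-on-D}, in dimension $n-1$, by extracting a $G$-invariant generalized log canonical place lying over $x$ and running $G$-equivariant adjunction. I argue by induction on $n$. When $n=1$ the germ $(X;x)$ is smooth, $\mathcal{D}(X,B+M;x)$ is empty or a single point, and $A=G$ works; so assume $n\geq 2$.

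\emph{Reduction to an invariant place over $x$.} I claim that, after replacing $G$ by a normal subgroup of index bounded in terms of $n$ and $B$ by a larger $G$-invariant boundary, one may assume $(X,B+M;x)$ carries a $G$-invariant divisorial generalized log canonical place $E_0$ with center $c_X(E_0)=\{x\}$. Enlarging the boundary is harmless: if $\Gamma\geq B$ is $G$-invariant and $(X,\Gamma+M;x)$ is still generalized log canonical, then every generalized log canonical center of $(X,B+M)$ over $x$ is one of $(X,\Gamma+M)$, so $\mathcal{D}(X,B+M;x)$ is a $G$-stable subcomplex of $\mathcal{D}(X,\Gamma+M;x)$ and it suffices to treat the latter. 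Since $(X,B_1+M;x)$ is gklt with $B_1\leq B$, the germ $(X;x)$ is of klt type, so by Lemma~\ref{lem:gen-inv-plt} it admits a $G$-equivariant plt blow-up whose exceptional divisor is of Fano type; combining this with the theory of $G$-equivariant complements (cf.\ Lemma~\ref{lem:G-equiv-N-comp} and Lemma~\ref{lem:Q-com-N-com}) and tie-breaking produces a $\Gamma\geq B$ for which some valuation over $x$ becomes a log canonical place. The number of minimal generalized log canonical centers of $(X,\Gamma+M)$ over $x$ is then bounded in terms of $n$ by transporting the question to the Fano type exceptional divisor and invoking log boundedness of complemented pairs exactly as in the proof of Lemma~\ref{lem:fix-lcc}. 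Hence, after passing to a normal subgroup of bounded index, one such center is $G$-invariant, and Lemma~\ref{lem:existence-g-inv-lcp} extracts a $G$-invariant generalized log canonical place $E_0$ with $c_X(E_0)=\{x\}$. Relabeling, from now on $E_0$ is such a place. I expect this step — in particular the uniform bound on the number of minimal centers over $x$ — to be the crux of the argument.

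\emph{Extraction and adjunction.} Let $\rho\colon Y_0\to X$ be a $G$-equivariant projective birational morphism whose only exceptional divisor is $E_0$, with $-E_0$ ample over $X$; then $\rho^{-1}(x)=E_0$. Writing $(Y_0,B_{Y_0}+M_{Y_0})$ for the log pullback of $(X,B+M)$, the divisor $E_0$ occurs in $B_{Y_0}$ with coefficient one, and $G$-equivariant generalized adjunction (Lemma~\ref{lem:G-equiv-adj}) gives a $G_{E_0}$-equivariant generalized pair $(E_0,B_{E_0}+M_{E_0})$ with
\[
K_{E_0}+B_{E_0}+M_{E_0}\sim_{\qq}\bigl(K_{Y_0}+B_{Y_0}+M_{Y_0}\bigr)\big|_{E_0}=\rho^{*}(K_X+B+M)\big|_{E_0}\sim_{\qq}0,
\]
the last equivalence because $K_X+B+M$ is $\qq$-Cartier and $E_0$ is contracted to the point $x$. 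Thus $(E_0,B_{E_0}+M_{E_0})$ is a generalized log Calabi--Yau pair of dimension $n-1$ on which $G_{E_0}$ acts. Fixing a $G$-equivariant generalized dlt modification $(Y',B_{Y'}+M_{Y'})$ of $(X,B+M)$ dominating $Y_0$ and simplicial (Lemma~\ref{lem:simp-complex}), the strict transform $E_0'$ of $E_0$ is a component of $\lfloor B_{Y'}\rfloor$ over $x$, and $G$-equivariant adjunction identifies the strata of $\lfloor B_{Y'}\rfloor$ contained in $E_0'$ with a simplicial model of $\mathcal{D}(E_0,B_{E_0}+M_{E_0})$.

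\emph{Conclusion.} By Theorem~\ref{thm:trivial-action-on-D} applied to $(E_0,B_{E_0}+M_{E_0})$ there is a normal subgroup $A_{E_0}\triangleleft G_{E_0}$ of index at most $c(n-1)$ acting trivially on $\mathcal{D}(E_0,B_{E_0}+M_{E_0})$, hence fixing each of its generalized log canonical centers. Let $A\triangleleft G$ be the preimage of $A_{E_0}$; its index in $G$ is bounded by the product of $c(n-1)$ with the index lost in the reduction, hence by a constant $c(n)$ depending only on $n$. Since $E_0$ is $G$-invariant, Lemma~\ref{lem:triv-act-in-out} shows that $A$ fixes every generalized log canonical center of $(Y',B_{Y'}+M_{Y'})$ meeting $E_0'$; in particular $A$ acts as the identity on a maximal simplex of $\mathcal{D}(X,B+M;x)$ together with all of its faces (take a minimal generalized log canonical center over $x$ contained in $E_0'$). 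As $\mathcal{D}(X,B+M;x)$ is a connected pseudomanifold — this is where the klt-type hypothesis on $(X;x)$ enters, the place $E_0$ being of Fano type and the germ's dual complex therefore being governed by that of a genuine generalized log Calabi--Yau pair — the $\pp^{1}$-link argument of Lemma~\ref{lem:fix-max-dim-cell} propagates triviality from that maximal simplex to the whole complex. Finally Lemma~\ref{lem:fixing-one-dual-complex} shows this is independent of the chosen modification, which completes the induction.
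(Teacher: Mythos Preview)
Your overall strategy matches the paper's: enlarge the boundary so that $x$ becomes a generalized log canonical center, extract a $G$-invariant log canonical place over $x$, apply Theorem~\ref{thm:trivial-action-on-D} on the exceptional, and propagate via Lemma~\ref{lem:triv-act-in-out} and Lemma~\ref{lem:fix-max-dim-cell}. The conclusion part of your argument is essentially identical to the paper's.

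The problem is in your ``Reduction to an invariant place'' step, which is both unnecessarily convoluted and contains a gap. You propose to bound the number of minimal generalized log canonical centers over $x$ ``exactly as in the proof of Lemma~\ref{lem:fix-lcc}'' and then pass to a subgroup of bounded index fixing one of them. But Lemma~\ref{lem:fix-lcc} applies to a \emph{canonical Fano} variety, where boundedness (Birkar) is available; the exceptional of a plt blow-up is only of Fano type, and the argument does not transfer ``exactly''. More importantly, this whole detour is avoidable: you invoke Lemma~\ref{lem:gen-inv-plt} but only use that the exceptional is of Fano type, discarding the main content of that lemma, namely that the exceptional divisor is already a $G$-invariant \emph{prime} divisor.

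The paper proceeds much more directly. One first adds a $G$-invariant $\qq$-Cartier divisor $H$ through $x$ so that $(X,B+H+M;x)$ is glc with $x$ an lc center (you do this too). Then one applies Lemma~\ref{lem:gen-inv-plt} not to $(X,B_1+M)$ but to the interpolated pair $(X,(1-\epsilon)(B+H)+\epsilon B_1+M)$, which is gklt for $\epsilon>0$. The exceptional $E$ is then automatically $G$-invariant and prime by the lemma, and for $\epsilon$ small it is a generalized log canonical place of $(X,B+H+M)$. No counting of centers, no passage to a subgroup at this stage; the only index bound incurred is the $c(n-1)$ coming from Theorem~\ref{thm:trivial-action-on-D} on $E$.
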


\begin{proof}
We can find a $G$-invariant $\qq$-Cartier divisor $H$ that passes through $x$, so that
$(X,B+H+M;x)$ is generalized log canonical and $x$ is a generalized log canonical center.
It suffices to prove that there exists a normal subgroup $A\triangleleft G$ of index at most $c(n)$ so that $A$ acts trivially on 
$\mathcal{D}(X,B+H+M;x)$.
Indeed, the CW complex 
$\mathcal{D}(X,B+M;x)$ embeds in 
$\mathcal{D}(X,B+H+M;x)$
equivariantly.

By Lemma~\ref{lem:gen-inv-plt}, we can find a generalized $G$-equivariant plt blow-up $\phi\colon Y\rightarrow X$ for
\[
(X,(1-\epsilon)(B+H)+\epsilon B_1 +M).
\]
Furthermore, we may assume that $E$ is a generalized log canonical center of $(X,B+H+M;x)$ mapping onto $E$.
The divisor $E$ is the unique exceptional divisor of $\phi$ and is $G$-invariant.
Let $(Y,B_Y+H_Y+M_Y)$ be the generalized pair obtained by log pullback of $(X,B+H+M)$ to $Y$.
We denote by 
$(E,B_E+M_E)$ the log Calabi-Yau generalized log canonical pair obtained by generalized adjunction of $(Y,B_Y+H_Y+M_Y)$ to the exceptional divisor $E$. Let $G_E$ be the quotient group of $G$ acting on $E$.
By Theorem~\ref{thm:trivial-action-on-D}, there exists a normal subgroup $A_E\triangleleft G_E$ which acts trivially on $\mathcal{D}(E,B_E+M_E)$ and its index on $G_E$ is at most $c(n-1)$, where $c(n-1)$ is a constant only depending on $n-1$.
We denote by $A$ the preimage of $A_E$ on $G$.

We claim that $A$ is a normal subgroup of $G$ which acts trivially on $\mathcal{D}(X,B+H+M;x)$.
Let $(X',B'+H'+M')$ be a $G\qq$-factorial dlt modification of $(X,B+H+M)$ which dominates $Y$.
Let $E'$ be the strict transform of $E$ on $X'$.
Let $(E',B'+M')$ be the generalized log Calabi-Yau gdlt pair obtained by adjunction of $(X',B'+H'+M')$ to the exceptional divisor $E'$.
Then, the group $A$ fixes $E'$.
Moreover the quotient group $A_E$ acting on $E'$ fixes every log canonical center of $(E',B'+M')$.
By Lemma~\ref{lem:triv-act-in-out}, we conclude that $A$ fixes every log canonical center of $(X',B'+H'+M')$ which intersects $E'$ non-trivially.
In particular, the action of 
$A$ on $\mathcal{D}(X',B'+H'+M')$ must fix a maximal dimensional simplex of the CW complex.
By Lemma~\ref{lem:fix-max-dim-cell}, we conclude that $A$ acts as the identity on 
$\mathcal{D}(X,B+H+M;x)$ and hence on $\mathcal{D}(X,B+M;x)$.
\end{proof}

Now, we are ready to prove that in a fixed dimension, the possible fundamental groups of dual complexes of log Calabi-Yau pairs can only attain finitely many isomorphism classes.

\begin{proof}[Proof of Corollary~\ref{introcor-finiteness-pi1-dual}]
The statement is clear if the dual complex is one-dimensional, since in this case, it is either a segment of a line or a circle.
We assume that the dual complex has dimension at least two.

Due to the Lefschetz principle, we
may assume that we are working over the field of complex numbers
(see, e.g.~\cite[Proposition 2.2]{NXY19}).
Let $(X,B)$ be a complex log Calabai-Yau pair of dimension $n$.
By~\cite[Corollary 58]{KX16}, we can find a projective variety $Y$ with a boundary $B_Y$, satisfying the following conditions:
\begin{enumerate}
\item The variety $Y$ is rationally connected, 
\item the pair $(Y,B_Y)$ is dlt,
\item the boundary $B_Y$ fully supports a nef and big divisor, 
\item $K_Y+B_Y\sim_\qq 0$, and 
\item $\mathcal{D}(X,B)$ is PL-homeomorphic to $\mathcal{D}(Y,B_Y)$.
\end{enumerate}
Let $\Delta_Y$ be the big and nef divisor which is fully supported on $\supp(B_Y)$.
Then, the log pair
$(Y,B_Y-\epsilon \Delta_Y)$ has klt singularities.
Furthermore, the divisor
\[
-(K_Y+B_Y-\epsilon \Delta_Y) \sim_\qq \epsilon \Delta_Y
\]
is a big and nef divisor for $\epsilon>0$ small enough.
Hence, $Y$ is a complex Fano type variety.
By~\cite[Theorem 2.(3)]{KX16}, we have a surjective homomorphism
$\pi_1^{\rm reg}(Y)\rightarrow \pi_1(\mathcal{D}(Y,B_Y))$.
By~\cite[Theorem 2]{Bra20}, we know that $\pi_1^{\rm reg}(Y)$ is finite, hence
$\pi_1(\mathcal{D}(Y,B_Y))$ is finite as well.
In particular, $\pi_1(\mathcal{D}(X,B))$ is finite so is isomorphic to its profinite completion.
By~\cite[(5)]{KX16}, the universal cover
\begin{equation} 
\label{eq:uni-cover}
\widetilde{\mathcal{D}}(X,B)
\rightarrow 
\mathcal{D}(X,B)
\end{equation} 
of $\mathcal{D}(X,B)$ is induced by a finite quasi-\'etale cover $(\widetilde{X},\widetilde{B}) \rightarrow (X,B)$, which we may assume Galois.
Let $G$ be the group acting on $(\widetilde{X},\widetilde{B})$.
By Theorem~\ref{thm:trivial-action-on-D}, there is a normal
subgroup $A\triangleleft G$, so that $A$ acts as the identity on $\widetilde{\mathcal{D}}(X,B)$.
Moreover, the index of $A$ in $G$ is bounded above by $c(n)$. We conclude that 
$\mathcal{D}(X,B)$ is the quotient of $\widetilde{D}(X,B)$ by a group $G/A$ of order at most $c(n)$, i.e., the universal cover~\eqref{eq:uni-cover} has degree at most $c(n)$.
\end{proof}

To conclude this section,
we prove that in a finite group action on a log Calabi-Yau 
pair with arbitrary singularities,
almost every element has a fixed point 
on the dual complex.

\begin{proof}[Proof of Corollary~\ref{introcor:almost-fixed-point-not-lc}]
If the log pair $(X,B)$ is log canonical,
then the statement follows from Theorem~\ref{introthm:almost-fixed-global}.
Hence, we may assume that $(X,B)$ does not have log canonical singularities, i.e., there is at least one non-lc place.
By Theorem~\cite[Theorem 1.7]{FS20}, we conclude that 
$\mathcal{D}(X,B)$ is a collapsible space.
Then, the Lefschetz fixed-point theorem implies that 
every element of $G$ acts with a fixed point.
\end{proof}

\section{Almost fixed points in fibers}

In this section, we prove a theorem regarding the existence of almost fixed points in the fibers of $G$-equivariant morphisms.
The aim of this section is to prove Theorem~\ref{thm:almost-fixed-points}.
This theorem generalizes Theorem~\ref{introthn:almost-fixed-point-morphism} 
to compositions of birational and Fano type morphisms.

\begin{theorem}\label{thm:almost-fixed-points}
Let $n$ be a positive integer.
There exists a constant $c(n)$, only depending only on $n$,
satisfying the following.
Let $X$ be a $G$-equivariant $n$-dimensional variety.
Let $G$ be a finite group.
Let $\phi\colon X \rightarrow Z$ be a $G$-equivariant morphism which is composition of:
\begin{enumerate}
    \item Equivariant Fano type morphisms, and
    \item equivariant birational morphisms.
\end{enumerate}
Let $G_Z$ be the quotient group of $G$ acting on the base.
Let $z\in Z$ be a $G_Z$-fixed point.
Then, there exists a normal subgroup $A\triangleleft G$ so that the fiber over $z$ contains an $A$-fixed point.
\end{theorem}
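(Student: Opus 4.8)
The plan is to reduce the statement to a finite induction on the number of morphisms in the factorization of $\phi$, handling the two building blocks -- equivariant Fano type morphisms and equivariant birational morphisms -- separately, and then composing. First I would factor $\phi$ as $X = X_0 \to X_1 \to \dots \to X_k = Z$, where each $X_{i} \to X_{i+1}$ is either a $G$-equivariant Fano type morphism or a $G$-equivariant birational morphism, and each $X_i$ carries a compatible $G$-action with quotient group acting on $X_{i+1}$. The point $z \in Z$ is fixed by $G_Z$, and the induction hypothesis will give me, at each stage, a normal subgroup together with a fixed point in the appropriate fiber lying over $z$; I then pull this fixed point back one more step.

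\smallskip

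For the birational step, suppose $\psi \colon Y \to W$ is a $G$-equivariant birational morphism and $w \in W$ is fixed by the quotient group $G_W$. The fiber $\psi^{-1}(w)$ is a proper scheme on which (the preimage of) the stabilizer acts, and I would like an almost fixed point in it. Here I would invoke the rational chain connectedness of fibers of birational morphisms between, say, klt-type varieties -- but since $\psi^{-1}(w)$ may be reducible or non-reduced, the cleanest route is to reduce to the Fano type case: after replacing $Y$ by a $G$-equivariant small $\qq$-factorial modification and running a suitable MMP, a birational morphism becomes a composition of divisorial and flipping contractions whose exceptional fibers are rationally chain connected by \cite{HMc07}. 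Then I apply the projective almost-fixed-point result of Prokhorov--Shokurov type \cite{PS14,PS16} to an appropriate resolution of a component of the fiber: any finite group acting on a rationally connected variety of dimension $\le n$ has a normal abelian subgroup of index $\le c(n)$ with a fixed point. The subtlety is that a rationally chain connected fiber need not be a single rationally connected variety, so I would first pass to the normalization of a maximal component through which chains of rational curves propagate, or alternatively argue directly: the fiber, being rationally chain connected and proper, admits a $G$-equivariant finite stratification, and on a suitable stratum we can apply the RC almost-fixed-point theorem.

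\smallskip

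For the Fano type step, this is essentially Theorem~\ref{introthn:almost-fixed-point-morphism}, which the excerpt tells me to assume (it appears in the introduction as a stated result). Concretely: given a $G$-equivariant Fano type morphism $f \colon X' \to Z'$ and a $G_{Z'}$-fixed point $z' \in Z'$, the fiber $f^{-1}(z')$ is rationally chain connected by \cite{HMc07}, and \cite{PS14,PS16,Zha06} then furnish a normal abelian subgroup $A' \triangleleft G$ of index $\le c(\dim X')$ with an $A'$-fixed point in $f^{-1}(z')$. I would keep careful track that at each stage the dimension of the total space is $\le n$, so all the constants are bounded by a single $c(n)$, and that passing to normal subgroups repeatedly over $k \le \dim Z \le n$ stages only multiplies the index by a bounded amount -- so the final subgroup $A$ has index $\le c(n)^{n}$, which I absorb into a new $c(n)$. (Note the theorem as stated only claims existence of \emph{some} normal subgroup $A$, not a bound on its index, so even the crude composition suffices, but tracking the bound costs nothing.)

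\smallskip

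The main obstacle I anticipate is the birational step when the fiber $\psi^{-1}(w)$ is badly behaved -- reducible, non-reduced, or with components on which $G$ permutes pieces nontrivially. The group $G$ acts on the finite set of top-dimensional components of $\psi^{-1}(w)$, so after passing to a subgroup of bounded index (a quotient $G \to S_m$ with $m$ bounded by the number of components, itself bounded once we fix invariants, or just: $m \le$ some function of $n$ via boundedness of the relevant Fano-type fibers from Lemma~\ref{lem:fix-lcc}) we may assume a single component $V$ is invariant; then I resolve $V$ equivariantly and apply the rationally connected almost-fixed-point theorem to the resolution, whose dimension is $\le n-1$. Descending the fixed point from the resolution back to $V \subset \psi^{-1}(w) \subset Y$ is harmless since resolution is birational. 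The careful bookkeeping that the number of components is bounded purely in terms of $n$ -- so that the index stays bounded -- is where I would spend the most care, appealing to boundedness of fibers of Fano type morphisms and of canonical Fano varieties \cite{Bir21} as in the proof of Theorem~\ref{introthm:RC-action}.
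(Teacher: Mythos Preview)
Your overall induction on the factorization is fine, and matches the paper's setup. But both building-block arguments have real gaps.

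For the birational step, the claim that the number of top-dimensional components of $\psi^{-1}(w)$ is bounded in terms of $n$ alone is false: a composition of blow-ups over a point can produce fibers with arbitrarily many components, and neither Lemma~\ref{lem:fix-lcc} nor~\cite{Bir21} applies, since those concern canonical Fano fibers, not arbitrary birational fibers. Your fallback of ``passing to the normalization of a maximal component'' does not help either, because a single component of a rationally chain connected fiber need not be rationally connected. The paper sidesteps all of this: since the base $W$ is of klt type, it takes a $G$-equivariant plt blow-up $Y\to W$ at $w$ (Lemma~\ref{lem:gen-inv-plt}), whose unique exceptional divisor $E$ is a $G$-invariant Fano type variety, hence genuinely rationally connected. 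Pushing the strict transform of $E$ from a common roof down to $X$ produces a $G$-invariant rationally connected subvariety of $\phi^{-1}(z)$, and then~\cite{PS16} applies directly. No component-counting is needed.

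For the Fano type step, invoking Theorem~\ref{introthn:almost-fixed-point-morphism} is circular: that theorem is a special case of the present one and has no independent proof in the paper. Your direct sketch via~\cite{HMc07} and~\cite{PS16} hits the same obstacle as above: the fiber over $z$ is only rationally chain connected, and~\cite{PS16} needs rational connectedness. The paper's maneuver here is unexpected: it forms the relative cone $C_X={\rm Spec}_Z\bigoplus_{m\ge 0}\phi_*\mathcal{O}_X(-m(K_X+B))$ and its resolution $\widetilde{C}_X\to C_X$, checks that the vertex $v\in C_X$ over $z$ is of klt type, and observes that the fiber of $\widetilde{C}_X\to C_X$ over $v$ is exactly $\phi^{-1}(z)$. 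This reduces the Fano type case to the birational case just established.
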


\begin{proof}
The composition of equivariant birational maps is again an equivariant birational map.
Hence,
we may assume that $\phi\colon X\rightarrow Z$ decomposes as
\[
\label{decomp}
\xymatrix{
X=X_0 \ar[r]^-{\phi_1} & 
Y_0 \ar[r]^-{\psi_1} & 
X_1 \ar[r]^-{\phi_2} &
Y_1 \ar[r]^-{\psi_2} & 
\dots \ar[r]^-{\psi_{j-1}} &
X_j \ar[r]^-{\phi_j} &
Y_j \ar[r]^-{\psi_j} & 
Z,
}
\] 
with $j\leq n$,
where the following conditions are satisfied:
\begin{enumerate}
    \item each $\phi_i$ is an equivariant birational morphism, and
    \item each $\psi_i$ is an equivariant Fano type morphism with positive dimensional general fiber.
\end{enumerate}
Since the source and image of a Fano type morphism have klt type singularities, then it suffices to prove the statement for each $\phi_i$ and $\psi_i$, individually.
We reduce the statement to an equivariant Fano type morphism with positive dimensional fiber  
or an equivariant birational morphism so that the base has klt type singularities.

First, we prove the case of a birational morphism.
Let $\phi\colon X\rightarrow Z$ be a $G$-equivariant birational morphism.
Let $G_Z$ be the quotient group acting on the base.
Let $z\in Z$ be a closed $G_Z$-fixed point. 
There exists a $G_Z$-invariant boundary $B_Z$ with $(Z,B_Z)$ klt at $z$.
Indeed, if the germ $(Z;z)$ has klt type singularities, 
then it has $G$-equivariant klt type singularities.
By Lemma~\ref{lem:gen-inv-plt}, we can find a $G$-equivariant plt blow-up 
$Y\rightarrow Z$ for $(Z,B_Z;z)$ so that the exceptional $E$ is a $G$-invariant Fano type variety.
Since Fano type varieties are rationally connected, we have that $E$ is a $G$-invariant rationally connected divisor.
We can find a $G$-equivariant projective birational morphism $Y'\rightarrow Z$ so that $Y'$ dominates $X$ with a $G$-equivariant projective birational morphism.
Let $E'$ be the strict transform of $E$ on $Y'$.
Since the morphism is $G$-equivariant, then $E'$ is a $G$-invariant rationally connected divisor on $Y'$.
Let $W$ be the image of $E'$ on $X$.
Then, $W$ is a $G$-invariant subvariety of $X$ and $W$ is a rationally connected variety, being the image of the rationally connected variety $E'$.
Note that $W$ has dimension at most $n-1$.
Let $G_W$ the quotient group of $G$ acting on $W$, i.e., 
the kernel of the homomorphism
$G\rightarrow G_W$ is the largest normal subgroup acting as the identity on $W$.
By~\cite[Theorem 4.2]{PS16}, there exists a normal subgroup
$A_W \triangleleft G_W$ of index bounded above by $c(n-1)$, so that $A_W$ has a fixed point $w$ on $W$.
Hence, the normal subgroup $A\triangleleft G$, which is the preimage of $A_W$ on $G$, has a fixed point $w$ on $X$.
By construction, the image of $w$ on $Z$ is the closed point $z$,
so $w$ is a closed point in the fiber $\phi^{-1}(z)$ which is $A$-fixed. 

Now, we prove the case of a Fano type morphism.
Let $\phi\colon X\rightarrow Z$ be a $G$-equivariant Fano type morphism with positive dimensional general fiber.
Let $G_Z$ be the quotient group acting on the base $Z$.
Let $z\in Z$ be a closed $G_Z$-fixed point.
By the Fano type condition, we can find a boundary $B\geq 0$ on $X$ so that
$-(K_X+B)$ is big and nef over $Z$.
By taking the normalized $G$-closure of $B$ on $X$, we may assume that $(X,B)$ is a $G$-equivariant pair.
Recall that in a relative Mori fiber space, a relatively nef divisor is relatively semiample.
Let $\Gamma$ be a $G$-equivariant  klt $\qq$-complement of $(X,B)$ over $Z$, i.e., the log pair
$(X,B+\Gamma)$ is $G$-equivariant, has klt singularities, and 
$K_X+B+\Gamma$ is $\qq$-trivial over the base. 

We define 
\[
C_X := {\rm Spec}_Z 
\left( 
\bigoplus_{m\geq 0}
\phi_* \mathcal{O}_X(-m(K_X+B))
\right). 
\]
On the other hand,
we define $\widetilde{C}_X$ to be the relative spectrum of
\[
\bigoplus_{m\geq 0} \mathcal{O}_X(-m(K_X+B)) 
\]
over $X$.
Let $E$ be the unique prime divisor contracted by
$\widetilde{C}_X\rightarrow C_X$ and $V$ its image on $C_X$.
We have a $G$-equivariant commutative diagram:
\[
\begin{tikzcd}
X \ar[d, "\phi"] \ar[r,"\sim"] & E \ar[d,"\phi'"] \ar[r, hook] & \widetilde{C}_X \ar[d,"\phi_C"] \\
Z \ar[r,"\sim"] & V \ar[r,hook] & C_X.
\end{tikzcd}
\]
Let $v\in V$ be the image of $z$ via the composition $Z\hookrightarrow C_X$ of the bottom horizontal morphisms.
Note that $v$ is a $G$-fixed point on $C_X$.
It suffices to prove that the $G$-equivariant birational map $\phi_C\colon \widetilde{C}_X\rightarrow C_X$
contains an almost fixed point in the fiber 
$\phi_C^{-1}(v) \simeq \phi^{-1}(z)$.
In order to do so, we need to prove that $C_X$ supports a klt singularity around $v$.

We define the divisors $B_{C_X}$ and $\Gamma_{C_X}$ 
to be the cones over the divisors $B$ and $\Gamma$ on $C_X$.
Analogously, 
we define the divisors $B_{\widetilde{C}_X}$ and $\Gamma_{\widetilde{C}_X}$ to be the pullbacks of
$B$ and $\Gamma$ to $\widetilde{C}_X$.
Note that the action of $G$ lifts to $C_X$ and $\widetilde{C}_X$ as $(X,B)$ is a $G$-equivariant log pair.
We have a $G$-equivariant projective birational morphism $\widetilde{C}_X \rightarrow C_X$.
Let $E$ be the unique prime divisor contracted by $\widetilde{C}_X\rightarrow C_X$ and $V$ its image on $C_X$.
Then, $E$ is a generalized log canonical center of
$(C_X,B_{C_X}+\Gamma_{C_X})$.
Indeed, the log pair 
\[
(\widetilde{C}_X, 
B_{\widetilde{C}_X}+\Gamma_{\widetilde{C}_X}+E)
\]
is log canonical and log Calabi-Yau over $C_X$.
Thus, $(C_X;v)$ is a germ of lc type.
Analogously, we have that
\[
(\widetilde{C}_X,
B_{\widetilde{C}_X}+\alpha E)
\]
is sub-klt and log Calabi-Yau over $C_X$ for certain $\alpha<1$.
Thus, $(C_X;B_{C_X};v)$ is klt, 
hence $(C_X;v)$ is a germ of klt type.
Thus, the existence of this almost fixed point in
the fiber $\phi^{-1}(z)$ follows from the birational case previously proved.
This finishes the proof of the theorem.
\end{proof}

\section{Rank and regularity} 
\label{sec:rank-reg}

In this section, we study the relation between the rank and the regularity of Fano type varieties and klt singularities.
First, we prove our second main theorem for Fano type varieties.
We show that a Fano type variety with the action of a large finite group of rank $r$ must have regularity at least $r-1$.
Moreover, in a suitable modification, we can find a log canonical center of codimension $r$ on which such finite group acts as the identity.

\begin{theorem}\label{thm:rank-vs-reg}
Let $n$ and $N$ be two positive integers.
There exists a constant $c:=c(n,N)$, only depending on $n$ and $N$, satifying the following.
Let $X$ be a $n$-dimensional Fano type variety.
Let $G \leqslant {\rm Aut}(X)$ be a finite subgroup with $\zz_m^r \leqslant G$.
Let $(X,B+M)$ be a $G$-equivariant $N$-complement.
If $m\geq c(n,N)$, then there exists:
\begin{enumerate}
    \item[(i)] A normal abelian subgroup $A\triangleleft G$ of index at most $c$, and 
    \item[(ii)] an $A$-equivariant generalized dlt modification $(Y,B_Y+M_Y)$ of $(X,B+M)$.
\end{enumerate}
Furthermore,
the $A$-equivariant modification
satisfies the following properties:
\begin{enumerate} 
    \item There is a generalized log canonical center $W\subset Y$ of $(Y,B_Y+M_Y)$ of codimension $r$ which is fixed pointwise by $A$, 
    \item $A$ fixes all the divisorial log canonical centers $E_1,\dots,E_r$
    of $(Y,B_Y+M_Y)$ containing $W$, and 
    \item there is a monomorphism  $A<\mathbb{G}_m^r \leqslant {\rm Aut}(Y,E_1+\dots+E_r;\eta_W)$.
\end{enumerate}
In particular, $A$ acts as the identity on all the minimal log canonical centers of $(Y,B_Y+M_Y)$.
\end{theorem}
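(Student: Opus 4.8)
The plan is to derive the statement from the triviality of the action on the dual complex (Theorem~\ref{thm:trivial-action-on-D}) together with the linearization Lemma~\ref{lem:fixing-min-fix-right-cod}, using the boundedness of finite automorphism groups of rationally connected Calabi--Yau pairs (Theorem~\ref{introthm:RC-action}) as the bridge between a combinatorial fixed locus and an honest pointwise-fixed subvariety. First I choose $c(n,N)$ larger than all the finitely many constants appearing below, in particular larger than the constant of Theorem~\ref{introthm:RC-action}. Since $(X,B+M)$ is a $G$-equivariant $N$-complement it is a generalized log Calabi--Yau pair; if it were generalized klt, then $X$, being of Fano type and hence rationally connected, would force $m\le|\zz_m^r|\le|G|$ to be bounded by Theorem~\ref{introthm:RC-action}, contradicting $m\ge c(n,N)$. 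So $(X,B+M)$ has a generalized log canonical place and $\mathcal D(X,B+M)\ne\emptyset$. By Theorem~\ref{thm:trivial-action-on-D} there is a normal subgroup $A_0\triangleleft G$ of index at most $c(n)$ acting trivially on $\mathcal D(X,B+M)$. Fix a $G$-equivariant generalized dlt modification $(Y_0,B_0+M_0)$ of $(X,B+M)$ (Proposition~\ref{prop:dlt-mod}); by Lemma~\ref{lem:extraction-FT} the variety $Y_0$ is still of Fano type, the pair $(Y_0,B_0+M_0)$ is generalized dlt and log Calabi--Yau, and $A_0$ fixes every generalized log canonical center of $(Y_0,B_0+M_0)$ as a set.

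Next I upgrade this set-wise fixing to a genuine pointwise action on a minimal center. Let $W_0$ be a minimal generalized log canonical center of $(Y_0,B_0+M_0)$; it is automatically $A_0$-invariant. Performing generalized adjunction along the divisorial centers cutting out $W_0$ (Lemma~\ref{lem:G-equiv-adj}), the pair $(W_0,B_{W_0}+M_{W_0})$ is generalized dlt with $\lfloor B_{W_0}\rfloor=0$ --- hence generalized klt, as $W_0$ is minimal --- it is log Calabi--Yau, it is $N'$-complemented for some $N'=N'(n,N)$ by adjunction of complements, and it carries the action of the quotient $A_{0,W_0}$ of $A_0$ acting on $W_0$. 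Moreover $W_0$ is rationally connected, being a minimal log canonical center of a generalized log Calabi--Yau pair on the Fano type variety $Y_0$. Theorem~\ref{introthm:RC-action} therefore bounds $|A_{0,W_0}|$ in terms of $n$ and $N$, so after replacing $A_0$ by a normal subgroup of $G$ of index still bounded in terms of $n$ and $N$ (a normal core of the kernel of $A_0\to{\rm Aut}(W_0)$) we may assume $A_0$ acts as the identity on $W_0$ and still fixes all generalized log canonical centers. Finally, writing $e=[G:A_0]$, the subgroup $e!\cdot\zz_m^r\cong\zz_{m_1}^r$ of $\zz_m^r$ has rank $r$ (using $m\ge c(n,N)$, so $m\nmid e!$) and lies in $A_0$; set $\Gamma:=e!\cdot\zz_m^r$, an abelian group of rank $r\le n$ that acts as the identity on $W_0$ and fixes every generalized log canonical center through $W_0$.

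Now I apply Lemma~\ref{lem:fixing-min-fix-right-cod} to the $\Gamma$-equivariant generalized dlt pair $(Y_0,B_0+M_0)$ and the gdlt center $W_0$. Because $\Gamma$ is already abelian of rank $r$, the Lemma returns a rank-$r$ abelian subgroup $A$ (of bounded index in $\Gamma$, hence in $G$), an $A$-equivariant generalized dlt modification $(Y,B_Y+M_Y)$ of $(Y_0,B_0+M_0)$ --- and hence of $(X,B+M)$, since it only extracts generalized log canonical places --- a generalized log canonical center $W\subset Y$ of codimension $r$ fixed pointwise by $A$, and (from the proof of the Lemma) the monomorphism $A<\mathbb G_m^r\le{\rm Aut}(Y,E_1+\dots+E_r;\eta_W)$, where $E_1,\dots,E_r$ are the divisorial generalized log canonical centers of $(Y,B_Y+M_Y)$ containing $W$, each of them $A$-invariant. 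This gives (i)--(ii) and conclusions (1)--(3). For the last assertion: $A\le\Gamma\le A_0$ acts trivially on $\mathcal D(Y_0,B_0+M_0)$, so by Lemma~\ref{lem:fixing-one-dual-complex} it acts trivially on $\mathcal D(Y,B_Y+M_Y)$, hence all generalized log canonical centers of $(Y,B_Y+M_Y)$ are $A$-invariant; and $A$ acts as the identity on the minimal log canonical center $W$ (after, if needed, an extra $A$-equivariant extraction making $W$ minimal, which is compatible with (1)--(3)), so Lemma~\ref{lem:equiv-P^1-link} yields that $A$ acts as the identity on every minimal generalized log canonical center of $(Y,B_Y+M_Y)$. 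Normality of $A$ in $G$ is obtained by passing to normal cores at the appropriate steps, enlarging indices only by bounded factors.

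The main obstacle is the upgrade performed in the second paragraph: Theorem~\ref{thm:trivial-action-on-D} only says the action on the dual complex is \emph{combinatorially} trivial, i.e.\ it fixes each log canonical center as a set, whereas the torus $\mathbb G_m^r$ of the conclusion can only be detected on a stratum on which the group genuinely acts as the identity. Closing that gap is what forces in the rational connectedness of minimal log canonical centers and the boundedness statement of Theorem~\ref{introthm:RC-action}, and it is also the place where the hypothesis $m\ge c(n,N)$ is essentially used --- both to rule out the generalized klt case and to keep a rank-$r$ abelian subgroup alive through the bounded-index reductions. A secondary but real bookkeeping point is to make the codimension of $W$ equal to $r$ rather than to the (possibly larger) rank of a Jordan abelian subgroup of $G$; this is why one feeds the rank-$r$ group $\Gamma$, and not the whole of $G$, into Lemma~\ref{lem:fixing-min-fix-right-cod}.
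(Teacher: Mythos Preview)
Your argument has a genuine gap at the point where you claim that the minimal generalized log canonical center $W_0$ of $(Y_0,B_0+M_0)$ is rationally connected. This is false in general: take $Y_0=\pp^3$ and $B_0=S$ a smooth quartic surface, so that $(\pp^3,S)$ is a dlt log Calabi--Yau pair on a Fano type variety whose unique (hence minimal) log canonical center is the K3 surface $S$, which is not rationally connected. Theorem~\ref{introthm:RC-action} therefore does not apply to the group acting on $W_0$, and your bridge from the combinatorial fixed locus to a pointwise-fixed subvariety collapses. The paper in fact poses the rational connectedness of the fixed center as an open problem (Question~\ref{question-2}), and the discussion following it makes clear that a positive answer would yield stronger consequences than what is proved.

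The paper's proof circumvents this obstruction by an induction on dimension that is substantially more involved than your outline. It first reduces from generalized pairs to pairs (Step~2) using Proposition~\ref{prop:MRC-gklt}: rather than assuming $W_0$ is rationally connected, one passes to its MRC quotient $W'\to Z$ and, when $Z$ is positive-dimensional with $K_Z\sim_\qq 0$, uses Lemma~\ref{lem:dom-lcc} together with the inductive hypothesis for \emph{pairs} to force the action to be fiberwise, so that Theorem~\ref{introthm:RC-action} can be applied to the rationally connected general fiber. The inductive step for pairs (Steps~3--5) then runs an equivariant $K_X$-MMP, splits into a fibration case handled by the canonical bundle formula (Lemma~\ref{lem:G-equiv-cbf}) and induction on the base, and a terminal case where $X_2$ is a bounded canonical Fano and the key external input \cite[Theorem~4.1]{Mor20b} produces an honest torus $\mathbb G_m^r$ acting on $(X_2,B_2)$, from which the toroidal structure is read off directly. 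Your shortcut via Lemma~\ref{lem:fixing-min-fix-right-cod} would indeed be cleaner if the rational connectedness were available, but as things stand the inductive MMP argument and the appeal to \cite{Mor20b} are not optional.
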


Before we proceed to the proof,
we note that $G$ is almost abelian, with respect to $\dim X$, 
due to~\cite[Proposition 3.1]{BFMS20}.
Hence, the condition $\zz_m^r\leqslant G$ can be imposed in the normal abelian subgroup of $G$.

\begin{proof}
We will prove the theorem in several steps.\\

{\it Step 1:} We will introduce a normal abelian subgroup of $G$ and make some reductions.\\

By~\cite[Proposition 3.1]{BFMS20}, we can find a normal abelian subgroup $A_0$ of $G$ of index bounded by $c(n)$
of rank at least $r$.
By Theorem~\ref{thm:almost-fixed-D-local}, we can find a normal abelian subgroup $A \triangleleft A_0$, so that every generalized log canonical center of $(X,B+M)$ is invariant under the action of $A$.
By Theorem~\ref{thm:almost-fixed-D-local}, we can further assume that every generalized log canonical center of any $A$-equivariant dlt modification of $(X,B+M)$ is invariant under the action of $A$.
Due to Lemma~\ref{lem:fixing-min-fix-right-cod}, it suffices to find an $A$-equivariant generalized dlt modification of $(X,B+M)$ and a generalized lc center which is fixed pointwise under the action of $A$.
If $A$ acts as the identity on a generalized dlt place, then it must act as the identity on all generalized minimal log canonical centers due to Lemma~\ref{lem:equiv-P^1-link}.
Hence, our aim is to prove the existence of an $A$-invariant minimal generalized dlt center which is fixed pointwise by $A$.\\

{\it Step 2:} In this step, we prove that the statement for log pairs of dimension $n$, i.e., when the b-divisor $M$ is trivial, implies the statement for generalized pairs of dimension $n$.\\

Let $(X,B+M)$ be an $A$-equivariant log canonical pair as in the statement.
Let $(Y,B_Y+M_Y)$ be a
$A\qq$-factorial
$A$-equivariant dlt modification.
We know that $A$ fixes all the generalized log canonical centers of $(Y,B_Y+M_Y)$.
By Proposition~\ref{lem:extraction-FT}, we know that $Y$ is a Fano type variety.
In particular, $Y$ is a Mori dream space.
We claim that the log pair $(Y,B_Y)$ is $A$-equivariantly $\qq$-complemented.
Indeed, the divisor $M_Y$ is $A$-equivariant, pseudo-effective, and its diminished base locus is small.
We run an $A$-equivariant $M_Y$-minimal model program which terminates with a good minimal model.
Let $Y\dashrightarrow Y'$ be such
$A$-equivariant minimal model program.
Let $M_{Y'}$ be push-forward of $M_Y$.
Then, the generalized pair
$(Y',B_{Y'}+M_{Y'})$ is $A$-equivariant generalized log canonical and $M_{Y'}$ is semiample.
Hence, we can find an $A$-invariant efective divisor $0\leq \Gamma_{Y'}\sim_\qq M_{Y'}$ so that the log pair
$(Y',B_{Y'}+\Gamma_{Y'})$ is $A$-equivariant and log canonical.
Hence, its push-forward $(Y,B_Y+\Gamma_Y)$ to $Y$ is $A$-equivariant and log canonical.
Thus, the log pair $(Y,B_Y)$ is $A$-equivariantly $\qq$-complemented.
By Lemma~\ref{lem:Q-com-N-com}, up to replacing $N$ with a multiple that only depends on the dimension, we may assume that $(Y,B_Y)$ admits an $A$-equivariant $N$-complement.
Let $(Y,B'_Y)$ be such $A$-equivariant $N$-complement.
This means that $(Y,B'_Y)$ has log canonical singularities, is $A$-invariant, and $N(K_Y+B'_Y)\sim 0$.
Note that every log canonical center of $(Y,B_Y+M_Y)$ is a log canonical center of $(Y,B'_Y)$.
Indeed, the log canonical centers of $(Y,B_Y+M_Y)$ are given by strata of $\lfloor B_Y\rfloor$ and 
$B'_Y\geq B_Y$.

Observe that $(Y,B'_Y)$ is an $A$-equivariant $N$-complement of dimension $n$.
Hence, the statement of the theorem holds for the log pair $(Y,B'_Y)$.
Let $W\subset Y$ be a minimal generalized log canonical center of $(Y,B_Y+M_Y)$.
We claim that a normal subgroup of $A$, of index bounded by a constant only depending on $n$ and $N$, 
acts on $W$ as the identity.
Let $A_W$ be the quotient subgroup of $A$ acting on $W$.
If $W$ is a minimal log canonical center of $(Y,B'_Y)$ then the statement follows.
Indeed, in such a case, a subgroup of $A$ of index bounded by a constant in $n$ and $N$ acts as the identity on $W$.
Hence, we may assume that $W$ is not a minimal log canonical center of $(Y,B'_Y)$.
Since the pair $(Y,B_Y+M_Y)$ is generalized dlt, then by the adjunction formula, we can write 
\[
(K_Y+B_Y+M_Y)|_W \sim_\qq K_W+B_W+M_W.
\]
The generalized pair $(W,B_W+M_W)$ is an $A_W$-equivariant $N$-complement of the variety $W$.
Note that the generalized pair
$(W,B_W+M_W)$ is generalized klt, being $W$ a minimal generalized log canonical center of the generalized pair $(Y,B_Y+M_Y)$.
Furthermore, the log pair defined by adjunction
\[
(K_Y+B'_Y)|_W \sim_\qq K_W+B'_W
\]
is an $A_W$-equivariant generalized $N$-complement.
However, $(W,B'_W)$ is not generalized klt, as $W$ is not a minimal generalized log canonical center of $(Y,B_Y)$.
By Proposition~\ref{prop:MRC-gklt},
we can find a $A_W$-equivariant birational contraction 
$W\dashrightarrow W'$ and an $A_W$-equivariant projective contraction $W'\rightarrow Z$ so that either: 
\begin{enumerate} 
\item $W'$ is rationally connected, or
\item $Z$ is positive dimensional, has klt singularities, $K_Z\sim_\qq 0$, and general fibers of $W'\rightarrow Z$ are rationally connected.
\end{enumerate}
Let $(W',B_{W'}+M_{W'})$ be the generalized pair obtained by push-forward of $(W,B_W+M_W)$ to $W'$.
If $(1)$ holds, then the generalized pair $(W,B_W+M_W)$ is a $A_W$-equivariant generalized klt $N$-complement. 
By Theorem~\ref{introthm:RC-action}, we conclude that the order of $A_W$ is bounded by a constant which only depends on $\dim W\leq n-1$ and $N$.
Thus, we may replace $A$ with the kernel of the surjective homomorphism $A\rightarrow A_W$ and assume that $A$ acts as the identity on the rationally connected variety $W$.
By the first step, we conclude that the statement of the theorem holds for $(X,B+M)$

Hence, we can assume that $(2)$ holds.
Let $(W',B'_{W'})$ be the log pair obtained by push-forward of $(W,B'_W)$ to $W'$.
By Lemma~\ref{lem:dom-lcc}, we conclude that all the log canonical centers of $(W',B'_{W'})$ dominate 
$Z$.
Recall that the action of $A_W$ on a
minimal log canonical center of
$(W',B'_{W'})$ is trivial. 
We conclude that the action 
induced on the base of the $A_W$-equivariant projective contraction $W'\rightarrow Z$ is trivial as well.
Thus, we conclude that the $A_W$-action on $W'$ is fiberwise over $Z$.
Let $F$ be a general fiber of the projective contraction $W'\rightarrow Z$.
By the above considerations, we have that $A_W$ acts on $F$.
Furthermore, the generalized pair
\[
K_F+B_F+M_F \sim_\qq (K_{W'}+B_{W'}+M_{W'})|_F
\]
is an $A_W$-equivariant generalized klt $N$-complement of the rationally connected variety $F$.
By Theorem~\ref{introthm:RC-action}, we conclude that the order of $A_W$ must be bounded above by a constant that only depends on $\dim F\leq n-1$ and $N$.
Thus, we may replace $A$ by the kernel of the surjective homomorphism
$A\rightarrow A_W$ and assume that $A$ acts as the identity on $W$. By the first step, we conclude that the statement of the theorem holds for $(X,B+M)$.\\

Now, we proceed to prove that the statement of the theorem in dimension at most $n-1$, implies the statement of the theorem in dimension $n$ for log pairs. Let $(X,B)$ be a log pair of dimension $n$ as in the statement of the theorem.\\

{\it Step 3:} In this step, we take a higher $A$-equivariant birational model of $(X,B)$ and run an $A$-equivariant minimal model program.\\

Let $(X,B)$ be a $n$-dimensional $A$-equivariant log pair satisfying the assumptions of the theorem. 
Let $X_0 \rightarrow X$ be an $A$-equivariant canonicalization of a $A$-equivariant $A\qq$-factorial dlt modifcation of $(X,B)$.
We denote by $(X_0,B_0)$ the log pair obtained from $(X,B)$ by log pullback to $X_0$.
We run an $A$-equivariant minimal model program for $K_{X_0}$.
Note that $X_0$ has $G\qq$-factorial canonical singularities.
Since $X_0$ is rationally connected, this minimal model program terminates with an $A$-equivariant Mori fiber space 
$\phi_1 \colon X_1\rightarrow Z_1$.
As usual, we denote by $B_1$ the push-forward of $B_0$ to $X_1$.
By Lemma~\ref{lem:contr-minimal-rel-dim}, we can find an $A$-equivariant birational model $(X_2,B_2)$ of $(X_1,B_1)$
that admits an $A$-equivariant contraction $\phi_2\colon X_2\rightarrow Z_2$ with minimal relative dimension.
In particular, we have that $X_2$ is canonical and $-K_{X_2}$ is ample over $Z_2$.\\

{\it Step 4:} In this step, we prove the statement of the theorem for $(X,B)$ when 
$Z_2$ is positive dimensional.\\

We assume that $\dim Z_2>0$.
Note that $Z_2$ is a Fano type variety as it is the image of a Fano type variety via a projective contraction.
We have a short exact sequence
\[
1\rightarrow A_F \rightarrow A\rightarrow A_Z\rightarrow 1, 
\]
where $A_F$ acts fiberwise and $A_Z$ acts on the base $Z_2$.
By Lemma~\ref{lem:G-equiv-cbf}, up to replacing $N$ with a bounded multiple, the canonical bundle formula induces an $A_Z$-equivariant
$N$-complement $(Z_2,B_{Z_2}+M_{Z_2})$.
Without loss of generality, we may assume that
$\zz_m^{k_1} \leqslant A_F$ and 
$\zz_m^{k_2} \leqslant A_Z$ with $k_1+k_2=r$.
By the induction hypothesis, the statement of the theorem holds for $(Z_2,B_{Z_2}+M_{Z_2})$.\\

Let $(Z'_2,B_{Z'_2}+M_{Z'_2})$ be a $A_Z$-equivariant gdlt modification of $(Z_2,B_{Z_2}+M_{Z_2})$.
By Lemma~\ref{lem:special-model}, we may replace $(X_2,B_2)$ with an $A$-equivariant log crepant model of $(X_2',B_2')$ which satisfies the following conditions:
\begin{enumerate}
\item $(X'_2,B'_2)$ has generalized lc singularities,
\item the preimage of ${\rm glcc}(Z'_2,B_{Z'_2}+M_{Z'_2})$ is contained in $E':=\lfloor B'_2 \rfloor_{\rm vert}$, 
\item the generalized pair 
$(X'_2, E')$ is dlt, and
\item the contraction $\phi'\colon X'_2\rightarrow Z'_2$ has relative $A$-Picard rank one on the complement of ${\rm glcc}(Z'_2,B_{Z'_2}+M_{Z'_2})$.
\end{enumerate}
We run an $A$-equivariant
$(K_{X'_2}+E')$-MMP over $Z_2'$
which terminates with an $A$-equivariant Mori fiber space
$\phi''\colon X''_2\rightarrow Z''_2$
to a higher model of $Z'_2$.
We denote by $B''_2$ the push-forward of $B'_2$ to $X''_2$ and by $E''$ the push-forward of $E'$ to $X''_2$.
We denote by 
\[
(Z''_2,B_{Z''_2}+M_{Z''_2})
\]
the log pullback of
$(Z'_2,B_{Z'_2}+M_{Z'_2})$ to $Z''_2$.
We claim that the birational map
$Z''_2\rightarrow Z'_2$ only extract log canonical centers of $(Z'_2,B_{Z'_2}+M_{Z'_2})$.
Indeed, by the condition (4) of the contraction $\phi'$, this MMP is an isomorphism over the complement of 
${\rm glcc}(Z'_2,B_{Z'_2}+M_{Z'_2})$.
Hence, every divisor extracted by $Z_2''\rightarrow Z'_2$ is in the image via $\phi''$ of a divisor contained in $E''$.
This implies that every divisor extracted in $Z_2''$ appears in $B_{Z''_2}$ with coefficient one.
Note that $\phi''$ has $A$-equivariant relative Picard rank one. 
Recall that each prime component of $E''$ is $A$-invariant.
Hence, every divisorial glc center of $(X'_2,E'')$ must map to a divisorial glc center
of $(Z''_2,B_{Z''_2}+M_{Z''_2})$.
Vice-versa, every divisorial glc center of 
$(Z''_2,B_{Z''_2}+M_{Z''_2})$ must be the image of a unique divisorial glc center
of $(X'_2,E'')$.
We conclude that there is an identification of dual complexes
\[
\mathcal{D}(X_2'',E'')
\simeq \mathcal{D}(Z_2'',B_{Z_2''}+M_{Z''_2}).
\]

By~\cite[Proposition 40]{dFKX17}, we conclude that the generalized pair 
$(Z''_2,B_{Z_2''}+M_{Z_2''})$ has generalized qdlt singularities.
By the inductive hypothesis, we know that $(Z_2',B_{Z'_2}+M_{Z'_2})$ contains a generalized log canonical center of codimension $k_2$ which is fixed pointwise by $A_Z$.
Furthermore, $A_Z$ acts formally toric at the generic point of this generalized log canonical center.
We call such generalized log canonical center $W_{Z'_2}$.
By Lemma~\ref{lem:qld-mod-dlt}, we know that 
$(Z_2'',B_{Z''_2}+M_{Z''_2})$ admits a generalized log canonical center $W$ which maps birationally to $W_{Z'_2}$.
In particular, $A_Z$ fixes $W$ pointwise and it acts formally toric at the generic point of $W$.
Let $V$ be the unique generalized log canonical center of $(X''_2,E'')$ that maps onto $W$.
Let 
\[
K_V+E''_V \sim_\qq 
K_{X''_2}+E''|_V,
\]
be the generalized dlt pair obtained by adjunction.
Let $A_V$ be the quotient group of $A$ acting on $V$.
Then, we have that 
$(V,E''_V)$ has $A_V$-equivariant generalized dlt singularities
and $-(K_V+E''_V)$ is ample over $W$.
Furthermore, $(V,E''_V)$ has gklt singularities over the generic point of $W$.
Since $A_Z$ acts as the identity on $W$, we conclude that $A_V$ acts fiberwise on the contraction $W\rightarrow V$.
By the above considerations, we conclude that the general fiber $F$ of $W\rightarrow V$ is a $A_V$-equivariant Fano type variety.

Let 
\[
K_V+B'_V \sim_\qq 
(K_{X'_2}+B'_2)|_V
\]
be the generalized log canonical pair obtained by adjunction.
Note that this pair is a $A_V$-equivariant $N$-complement.
Let 
\[
K_F+B'_F\sim_\qq 
(K_V+B'_V)|_F
\]
be the generalized pair obtained by adjunction to the general fiber.
Then, $(F,B'_F)$ is a $A_V$-equivariant $N$-complement for the Fano type variety $F$.
Note that the dimension of $F$ is at most $n-1$.
Let $f$ be the dimension of $F$.
By induction on the dimension, we know that a subgroup $A_{0,V} \triangleleft A_V$, of index at most $c_0(f,N)$, acts as the identity on a minimal log canonical center of an $A_{0,V}$-equivariant dlt modification of $(F,B'_F)$.
By Lemma~\ref{lem:triv-act-in-out}, we conclude that a subgroup $A_0\triangleleft A$, of index at most $c_0(f,N)$, acts as the identity on a minimal log canonical center of a $A_0$-equivariant dlt modification of $(X,B)$.
By the first step, we conclude that the statement of the theorem holds for $(X,B+M)$.\\

{\it Step 5:} In this step, we prove the statement of the theorem for $(X,B)$ when $Z_2$ is zero dimensional.\\

Since $Z_2$ is zero-dimensional, we have that the log pair $(X_2,B_2)$ is log bounded.
Indeed, $X_2$ is a $A$-equivariant canonical Fano variety and the coefficients of $B_2$ are in a finite set which only depends on $N$ (see, e.g.,~\cite[Theorem 3.3]{FM20}).
Recall that $N(K_{X_2}+B_2)\sim 0$.
By the proof of~\cite[Theorem 4.1]{Mor20b}, we conclude that, up to replacing $A$ with a normal subgroup of index $c(n)$, we have that
$A< \mathbb{G}_m^r \leqslant {\rm Aut}(X,B)$.
Hence, $X$ admits a $r$-dimensional torus action
and $(X,B)$ is a torus invariant pair.
Let $\widetilde{X}\rightarrow X$ be a
projective torus equivariant birational morphism that admits a good quotient for the torus action.
This means that we can find 
a projective morphism $\widetilde{X}\rightarrow Z$ which is a good quotient for the torus action.
The general fiber of $\widetilde{X}\rightarrow Z$ is a projective toric variety $T$.
Write $(\widetilde{X},B_{\widetilde{X}})$ for the log pullback of $(X,B)$ to $\widetilde{X}$.
Note that $(\widetilde{X},B_{\widetilde{X}})$
is a log Calabi-Yau sub-pair.
Let $(T,B_T)$ be the restriction of the log Calbai-Yau sub-pair
$(\widetilde{X},B_{\widetilde{X}})$
to the general fiber of
$\widetilde{X}\rightarrow Z$.
Note that $(T,B_T)$ is
a torus invariant
log Calabi-Yau sub-pair.
Hence, $B_T$ must be the reduced toric boundary.
Note that the dimension of $T$ is $r$.
Hence, the 
log sub-pair $(\widetilde{X},B_{\widetilde{X}})$ admits a qdlt log canonical center $W$ of codimension $r$ which is fixed pointwise by $A$.
By performing toroidal blow-ups at $W$, we obtain a log smooth log Calabi-Yau $A$-invariant sub-pair $(\widetilde{X}',B_{\widetilde{X}'})$ so that: 
\begin{enumerate}
\item There are $A$-invariant prime components $E_1,\dots,E_r$ of $\lfloor B_{\widetilde{X}'}\rfloor_{> 0}$, and
\item $A$ acts as the identity on the irreducible intersection
$W':=E_1\cap\dots\cap E_r$.
\end{enumerate}
In particular, we have that $A<\mathbb{G}_m^r \leqslant {\rm Aut}(\widetilde{X}',E_1+\dots+E_r;\eta_{W'})$.
Let $\widetilde{B}$ be the effective divisor on $\widetilde{X}'$ obtained from $B_{\widetilde{X}'}$ by increasing to one all the coefficients of 
$B_{\widetilde{X}'}$ at exceptional divisor of $\widetilde{X}'\rightarrow X$.
Then, the log pair $(\widetilde{X}',\widetilde{B})$ is $A$-invariant and dlt.
We run an $A$-equivariant MMP for
$K_{\widetilde{X}'}+\widetilde{B}$ over $X$ with scaling of an $A$-invariant ample divisor.
This MMP terminates with an $A$-equivariant $A\qq$-factorial dlt modification of $(X_2,B_2)$
and is an isomorphism at the generic point of $W'$.
Indeed, 
\[
K_{\widetilde{X}'} + \widetilde{B}\sim_{\qq,X} F \geq 0,
\]
where $F$ is an effective divisor that does not contain $W'$.
Since $F$ is contained in the diminished base locus of $K_{\widetilde{X}'}+\widetilde{B}$ over $X$, then it must be contracted after finitely many steps of this minimal model program.
This $A$-equivariant dlt modification satisfies the statement of the theorem.
Indeed, the $A$-equivariant birational map $\widetilde{X}'\rightarrow Y$ is an isomorphism at the generic point of $W'$.
Thus, the properties $(1)$-$(3)$ hold on $Y$.\\

{\it Step 5:} 
In this step, we finish the proof of the theorem.\\

By the third, fourth, and fifth steps, the statement of the theorem in dimension at most $n-1$
implies the statement of the theorem in dimension $n$ for log pairs, i.e.,
for generalized pairs with trivial nef part.
By the second step,
the statement of the theorem in dimension $n$ for log pairs implies the statement of the theorem in dimension $n$.
The case of dimension one is covered by the fourth step.
The proof of the theorem follows.
\end{proof}

Now, we turn to prove Theorem~\ref{introthm:rank-vs-reg-global} about the rank and the regularity of Fano type varieties.

\begin{proof}[Proof of Theorem~\ref{introthm:rank-vs-reg-global}]
By Theorem~\ref{thm:rank-vs-reg}, we can find
a dlt modification $(Y,B_Y)$ of $(X,B)$
so that $(Y,B_Y)$ admits a log canonical center
$W\subset Y$ of codimension $r$.
This implies that $\mathcal{D}(Y,B_Y)$ has dimension at least $r-1$.
\end{proof}

To conclude this section, we prove Theorem~\ref{introthm:rank-vs-reg-local} 
about the rank and the regularity of a klt singularity.

\begin{proof}[Proof of Theorem~\ref{introthm:rank-vs-reg-local}]
Let $(X;x)$ be a $n$-dimensional klt singularity.
Assume that $\zz_m^r \leqslant {\rm Aut}(X;x)$ for some $m\geq c(n)$.
By Lemma~\ref{lem:G-equiv-N-comp}, there exists a $\zz_m^r$-equivariant boundary $B\geq 0$
so that $(X,B;x)$ is log canonical
and $N(K_X+B)~0$ around $x\in X$.
Let $(Y,B_Y)$ be a $G$-equivariant dlt modification of $(X,B)$.
By Theorem~\ref{introthm:almost-fixed-local}, up to replacing $\zz_m^r$ by a normal subgroup which only depends on the dimension of $X$, 
we may assume that $\zz_m^r$ fixes all the log canonical centers of $(Y,B_Y)$.
Let $E$ be a divisorial log canonical center of $(Y,B_Y)$.
Let $(E,B_E)$ be the pair obtained by adjunction.
Let $G_E$ be the quotient group of $G$ acting non-trivially on $E$.
Without loss of generality, we may assume that
$\zz_m^{r-1}\leqslant G_E$.
By Theorem~\ref{introthm:rank-vs-reg-global}, we conclude that $\mathcal{D}(E,B_E)$ is at least $(r-2)$-dimensional.
Hence, we conclude that
$\mathcal{D}(Y,B_Y)$ is at least $(r-1)$-dimensional.
This finishes the proof.
\end{proof}

\section{Toroidalization of finite actions}

In this section, we prove the main theorem of the paper. The following theorem is a generalization of Theorem~\ref{introthm:group-toroidal}.
The following theorem states that we can perform a toroidalization of finite actions on generalized klt singularities.
This fact is a generalization of the Jordan property for finite subgroups of ${\rm GL}_n(\kk)$ (see, e.g.,~\cite{Jor1873}).

\begin{theorem}\label{thm:gen-group-toroidal}
Let $n$ and $p$ be positive integers
and $\Lambda$ be a set of rational numbers with only rational accumulation points.
There exists a constant $c:=c(n,p,\Lambda)$,
only depending on $n,p$ and $\Lambda$, 
satisfying the following.
Let $G$ be a finite group.
Let $(X,\Delta+M;x)$ be a $G$-equivariant $n$-dimensional generalized klt singularity satisfying the following:
\begin{enumerate}
    \item[(a)] The coefficients of $\Delta$ belong to $\Lambda$, and
    \item[(b)] $pM$ is Cartier in the quotient where it descends.
\end{enumerate}
Then, there exists:
\begin{enumerate}
    \item[(i)] A normal abelian subgroup
    $A\triangleleft G$ of index at most $c$ and rank $r\leq n$,
    \item[(ii)] a $G$-equivariant $c$-complement
    $(X,B+M)$ of $(X,\Delta+M)$, and 
    \item[(iii)] an $A$-equivariant projective birational morphism
    $\pi\colon Y\rightarrow X$, 
\end{enumerate}
so that the following statements hold:
\begin{enumerate}
    \item the exceptional locus of $\pi$ is divisorial and consists of $r$ prime divisors $E_1,\dots,E_r$, 
    \item each $E_i$ is $A$-invariant and a generalized log canonical place of $(X,B+M)$,
    \item the intersection $E_1\cap\dots\cap E_r$ is non-trivial,
    \item $A$ fixes a component $Z$ of $E_1\cap\dots\cap E_r$,
    \item $(Y,E_1+\dots+E_r)$ is formally toric at the generic point $\eta_Z$ of $Z$, 
    \item $A$ acts as the identity on $Z$, and 
    \item $A<\mathbb{G}_m^r \leqslant {\rm Aut}(Y,E_1+\dots+E_r,\eta_Z)$.
\end{enumerate}
In particular, the action of $A$ on $Y$ is toroidal at the generic point of $Z$.
\end{theorem}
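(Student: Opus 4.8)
\emph{Plan of proof.} The strategy is to induct on $n$, reducing the $n$-dimensional local statement to the $(n-1)$-dimensional global statement of Theorem~\ref{thm:rank-vs-reg}: I would extract a single $G$-invariant Koll\'ar component $E$ over $x$, run the global theorem on $E$, and transfer the toroidal structure produced on $E$ back to a model of $X$ by adjoining $E$ itself as one more boundary divisor. The base case $n=1$ is immediate, since a one-dimensional generalized klt germ is a smooth curve point and every finite $G\leqslant{\rm Aut}(X;x)$ is cyclic and already lies in $\mathbb{G}_m$; so assume $n\geq 2$.

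First I would fix the complement and the abelian subgroup. By Lemma~\ref{lem:G-equiv-N-comp} there is a $G$-equivariant $N$-complement $(X,B+M)$ of $(X,\Delta+M;x)$ with $N=N(n,p,\Lambda)$, which may be chosen so that $x$ is a generalized log canonical center; this supplies item (ii), and we take $c$ larger than every constant that occurs. By \cite[Proposition~3.1]{BFMS20} there is a normal abelian $A_0\triangleleft G$ of index at most $c_0(n)$ and rank at most $n$. If the exponent of $A_0$ is bounded by the relevant constant, then $|G|$ is bounded and the theorem holds trivially with $A=\{1\}$, $r=0$ and $\pi=\id$, all remaining conclusions being vacuous. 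Otherwise $A_0$ contains $\zz_m^{r_0}$ with $r_0\geq 1$ maximal and $m$ exceeding any prescribed bound. By Lemma~\ref{lem:gen-inv-plt} there is a $G$-equivariant projective birational morphism $\pi_1\colon Y_1\rar X$ extracting a unique $G$-invariant prime divisor $E$ over $x$ with $-E$ ample over $X$ and $(Y_1,E+B_{Y_1}+M_{Y_1})$ generalized plt, so that $E$ is a generalized log Fano, hence a Fano type variety of dimension $n-1$; generalized adjunction (Lemma~\ref{lem:G-equiv-adj}) together with Lemma~\ref{lem:Q-com-N-com} equips $E$ with a $G_E$-equivariant generalized $N$-complement $(E,B_E+M_E)$, where $G_E$ is the quotient of $G$ acting on $E$.

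Next I would run the inductive hypothesis on $E$. The kernel $K$ of $G\rar G_E$ acts on the conormal line of $E$ at $\eta_E$, hence is cyclic; therefore the image of $\zz_m^{r_0}$ in $G_E$ contains $\zz_{m'}^{r_0-1}$ with $m'$ still exceeding the bound $c(n-1,N)$ from Theorem~\ref{thm:rank-vs-reg}. That theorem then furnishes a normal abelian $A_E\triangleleft G_E$ of index at most $c(n-1,N)$, an $A_E$-equivariant generalized dlt modification $(E',B_{E'}+M_{E'})$ of $(E,B_E+M_E)$, a generalized log canonical center $W$ of codimension $r_0-1$ fixed pointwise by $A_E$, the $r_0-1$ divisorial log canonical centers $E_2',\dots,E_{r_0}'$ through $W$ that are $A_E$-invariant, and a monomorphism $A_E<\mathbb{G}_m^{r_0-1}\leqslant{\rm Aut}(E',E_2'+\dots+E_{r_0}';\eta_W)$. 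Realizing $E'\rar E$ inside a $G$-equivariant model $Y_2\rar Y_1\rar X$, let $E_1$ be the strict transform of $E$ and let $E_2,\dots,E_{r_0}$ be the divisorial generalized log canonical places of $(Y_1,E+B_{Y_1}+M_{Y_1})$ over $x$ restricting under adjunction to $E_2',\dots,E_{r_0}'$; these are $A_0$-invariant generalized log canonical places of $(X,B+M)$, and $E_1\cap\dots\cap E_{r_0}$ has a component equal to $W$, of codimension $r_0$ in $Y_2$. Running an $A_0$-equivariant minimal model program exactly as in the final steps of the proof of Theorem~\ref{thm:rank-vs-reg}, together with Lemma~\ref{lem:fixing-min-fix-right-cod} and Lemma~\ref{lem:triv-act-in-out}, I would pass to a model $\pi\colon Y\rar X$ whose exceptional locus is precisely $E_1,\dots,E_{r_0}$ and which is an isomorphism near $\eta_W$; set $r:=r_0$ and $Z:=W$. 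Since $(Y_1,E)$ is generalized plt, the germ of $Y$ at $\eta_Z$ is formally a cyclic quotient of a product $\mathbb{A}^1\times(E'\text{ near }\eta_W)$, and combining the formally toric structure on $(E',E_2'+\dots+E_r';\eta_W)$ from Theorem~\ref{thm:rank-vs-reg} with the transverse quotient coming from $E_1$ shows that $(Y,E_1+\dots+E_r)$ is formally toric at $\eta_Z$, with a torus $\mathbb{G}_m^{r-1}\times\mathbb{G}_m=\mathbb{G}_m^r\leqslant{\rm Aut}(Y,E_1+\dots+E_r;\eta_Z)$. Taking $A$ to be the preimage in $G$ of $A_E$: every lift of an element of $A_E$ preserves $E_1$ and acts on its conormal line by a character, while $K$ acts trivially on $E\supseteq W$ and through the transverse $\mathbb{G}_m$, so the action of $A$ on the germ at $\eta_Z$ is via a faithful homomorphism $A\hookrightarrow\mathbb{G}_m^r$; hence $A$ is abelian of index $[G_E:A_E]\leq c$, it fixes $Z$ pointwise, and $A<\mathbb{G}_m^r\leqslant{\rm Aut}(Y,E_1+\dots+E_r;\eta_Z)$, and Lemma~\ref{lem:triv-act-in-out} gives that $A$ acts as the identity on every minimal generalized log canonical center of $(Y,B_Y+M_Y)$.

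The hard part will be the last step: assembling a single model $\pi\colon Y\rar X$ whose exceptional divisors are exactly $E_1,\dots,E_r$ without altering the germ at $\eta_Z$, and — more delicately — verifying that the $\mathbb{G}_m^{r-1}$ acting formally torically on $E$ near $\eta_W$ and the transverse $\mathbb{G}_m$ attached to the Koll\'ar component $E_1$ glue to a genuine $\mathbb{G}_m^r$ acting formally torically on $(Y,E_1+\dots+E_r;\eta_Z)$, with $A$ landing inside it as a faithful subgroup. The bookkeeping that keeps the rank of $A$, the codimension of $Z$, and the number of the $E_i$ all equal to $r$, and keeps the index of $A$ bounded by $c$, will also require care.
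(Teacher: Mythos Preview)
Your overall architecture matches the paper's: produce a bounded $G$-equivariant complement, pass to a $G$-invariant divisor $E$ over $x$, apply Theorem~\ref{thm:rank-vs-reg} on $E$ to get $r-1$ further divisors $E_2,\dots,E_r$ and a codimension-$(r-1)$ center $W\subset E'$, then set $E_1=E$ and assemble the model. The paper extracts $E$ as a divisorial glc center of a $G$-equivariant gdlt modification of the complement $(X,B+M)$ rather than as a Koll\'ar component of $(X,\Delta+M)$, and this difference is where your write-up has a gap.

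You fix the complement $(X,B+M)$ first, then extract the Koll\'ar component $E$ for the \emph{klt} pair $(X,\Delta+M)$, and then manufacture a fresh $N$-complement $(E,B_E+M_E)$ on $E$ via Lemma~\ref{lem:Q-com-N-com}. But that new complement on $E$ is not the adjunction of $(X,B+M)$, so the glc places $E'_2,\dots,E'_{r_0}$ produced by Theorem~\ref{thm:rank-vs-reg} on $E$ have no reason to be glc places of $(X,B+M)$; conclusion~(2) of the theorem is then unjustified. The paper avoids this by taking $E$ to be a divisorial glc center of a gdlt modification of $(X,B+M)$ itself: then generalized adjunction of $(X,B+M)$ to $E$ already yields a $G_E$-equivariant $N$-complement on $E$, and every glc place coming out of Theorem~\ref{thm:rank-vs-reg} on $E$ is automatically a glc place of $(X,B+M)$. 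Your plt blow-up can be salvaged the same way, provided you arrange that $E$ is a glc place of $(X,B+M)$ (e.g.\ by extracting it from $(X,(1-\epsilon)B+\epsilon\Delta+M)$ and using that $x$ is a glc center of $(X,B+M)$) and then run Theorem~\ref{thm:rank-vs-reg} on the adjoint of $(X,B+M)$ rather than on an independent complement.

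Two smaller remarks. Your worry about gluing $\mathbb{G}_m^{r-1}$ with the transverse $\mathbb{G}_m$ is not an issue: once you are on a gdlt model where $E_1,\dots,E_r$ meet along $W$, the pair is log smooth at $\eta_W$, so $\mathbb{G}_m^r\leqslant{\rm Aut}(Y,E_1+\dots+E_r;\eta_W)$ is automatic, and since $A$ fixes each $E_i$ and $W$ pointwise it lands in that torus (the paper phrases this via Jordan for ${\rm GL}_r$, but your diagonalization argument is equally valid). Finally, the MMP you need to cut the exceptional locus down to exactly $E_1,\dots,E_r$ is not in Theorem~\ref{thm:rank-vs-reg}; the paper runs, over $X$, an $F$-MMP for each extra $A$-invariant exceptional prime $F$ and then an ample-model contraction for a suitable $\Gamma$ supported away from $\eta_W$, so that the map is an isomorphism near $\eta_W$ and the exceptional locus becomes purely divisorial.
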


\begin{proof} 
By Lemma~\ref{lem:G-equiv-N-comp}, 
we can produce a $G$-equivariant $N$-complement
$(X,B+M;x)$ of $(X,\Delta+M;x)$.
This $N$ only depends on $n,p$ and $\Lambda$.
By Theorem~\ref{thm:almost-fixed-D-local}, we may replace $G$ by a subgroup whose index only depends on the dimension, so that this subgroup acts trivially on the dual complex of $(X,B+M;x)$.
Thus, we may assume that $G$ acts trivially on $\mathcal{D}(X,B+M;x)$.
Let $(Y_0,B_{Y_0}+M_{Y_0})$ be a $G$-equivariant generalized dlt modification of $(X,B+M;x)$.
Let $(E,B_E+M_E)$ be the pair obtained by adjunction to a divisorial generalized log canonical center of $(Y_0,B_{Y_0}+M_{Y_0})$.
Let $G_E$ be the quotient group of $G$ acting non-trivially on $E$.
By~\cite[Theorem 3.1]{BFMS20},
we know that there exists a normal abelian subgroup
$A_0\triangleleft G_E$ of index at most $c_0(n-1)$.
Hence, replacing $G$ with the preimage of $A_0$ in $G$,
we may assume that $G_E$ is an abelian group.
Hence, we may write
$G_E \simeq \zz_{m_1} \oplus \dots \oplus \zz_{m_{r-1}}$
with $m_1\mid \dots \mid  m_{r-1}$ and $r-1\leq n-1$.
By Theorem~\ref{thm:rank-vs-reg}, there exists a constant
$c_1:=c_1(n-1,N)$, only depending on $n-1$ and $N$, satisfying the following.
If $m_1 \geq c_1$, then there exists:
\begin{enumerate}
    \item[(i)] A normal abelian subgroup $A_E\triangleleft G_E$ of index at most $c_1$, and
    \item[(ii)] an $A_E$-equivariant generalized gdlt modification $(E',B_{E'}+M_{E'})$ of $(E,B_E+M_E)$.
\end{enumerate}
Furthermore, the $A_E$-equivariant gdlt modification satisfies the following conditions: 
\begin{enumerate}
    \item There exists a generalized log canonical center $W\subset E'$ of $(E',B_{E'}+M_{E'})$ of codimension $r-1$ which is fixed pointwise by $A_E$,
    \item $A_E$ fixes all the divisorial log canonical centers $E_1,\dots,E_{r-1}$ of $(E',B_{E'}+M_{E'})$ containing $W$, and
    \item there is a monomorphism
    $A_E < \mathbb{G}_m^{r-1}\leqslant {\rm Aut}(E',E_1+\dots+E_{r-1};\eta_W)$.
\end{enumerate}
We may assume that $m_1\geq c_1$.
Otherwise, we can replace $G_E$ with
$\zz_{m_2}\oplus\dots\oplus \zz_{m_r}$,
replace $r-1$ with $r-2$, 
and proceed inductively.
Let $A$ be the preimage of $A_E$ in $G$.
Note that $A$ has index at most $c_1$ in $G$.
The group $A$ is an extension of a cyclic group
and an abelian group of rank $r$.\\

\noindent
\textit{Claim:} There exists a constant $c_2(r)$, 
only depending on $r$, satisfying the following.
There exists a normal abelian subgroup of $A$
of index at most $c(r)$.

\begin{proof}[Proof of the Claim]
By the conditions $(1)$-$(3)$, there exists
an $A$-equivariant gdlt modification $(Y_1,B_{Y_1}+M_{Y_1})$ of $(Y_0,B_{Y_0}+M_{Y_0})$ that satisfies the following conditions:
\begin{enumerate}
    \item[(a)] There exists a generalized log canonical center $W\subset Y_1$ of $(Y_1,B_{Y_1}+M_{Y_1})$ of codimension $r$ which is fixed pointwise by $A$, and
    \item[(b)] $A$ fixes all the divisorial log canonical centers $E_1,\dots,E_r$ of $(Y_1,B_{Y_1}+M_{Y_1})$ containing $W$.
\end{enumerate}
Indeed, the conditions $(1)$-$(3)$ above are preserved by passing to higher $A_E$-equivariant gdlt modifications of $(E',B_{E'}+M_{E'})$.
Hence, it suffices to find a sufficiently high $A$-equivariant gdlt modification of $(Y_0,B_{Y_0}+M_{Y_0})$ on which the center of $E'$ is a divisor.
For the generalized log canonical center,
we can take $W\subset E'$ provided by $(1)$.
Note that $W$ has codimension $r$ on $Y$.
For condition (b), it suffices to set $E_r=E'$.
Localizing at the generic point of $W$, 
we have that $A<{\rm GL}_r(\kk)$.
By~\cite{Jor1873}, we know that there exists a normal abelian subgroup $A'\triangleleft A$ of index bounded above by $c_2(r)$
and rank at most $r$.
This finishes the proof of the claim.
\end{proof} 

Replacing $A$ with the subgroup $A'$ constructed in the claim,
we may assume that $A$ is a normal abelian group of rank $r$.
By condition (b) above, we have a monomorphism 
$A<\mathbb{G}_m^r \leqslant {\rm Aut}(Y,E_1+\dots+E_r;\eta_W)$.
Observe that $A\triangleleft G$ has index at most $c_1c_2(r)$.
We can set $c(n,p,\Lambda):=c_1(n-1,N)c_2(r)$.
Indeed, $N$ only depends on $n,p$ and $\Lambda$.
On the other hand, 
the $A$-equivariant gdlt modification 
$(Y_1,B_{Y_1}+M_{Y_1})$ satisfies $(2)$-$(7)$ as in the statement of the theorem.
Thus, it suffices to run a suitable minimal model program to achieve condition $(1)$.

Let $F$ be a prime exceptional divisor of $Y_1\rightarrow X$ which is not in the support of $E_1+\dots+E_r$.
The divisor $F$ is $A$-invariant.
Since $Y_1\rightarrow X$ is a Fano type morphism, then we can run an $A$-invariant $F$-MMP over $X$.
This minimal model program will contract $F$ and it does not contract other prime divisors over $X$.
Note that each such prime divisor is $A$-invariant.
We can proceed inductively with all the exceptional prime divisors of $Y\rightarrow X$ which are not in the support of $E_1+\dots+E_r$.
Hence, we will obtain $A$-equivariant projective morphism $Y_2\rightarrow X$ which only extracts the strict transform of the divisors $E_1,\dots, E_r$.
By abuse of notation, we denote these strict transforms $E_1,\dots, E_r$, respectively.
The $A$-equivariant birational map
$Y_1\dashrightarrow Y_2$ is an isomorphism at the generic point of $W$, hence the conditions $(1)$-$(3)$ are still satisfied in the model $Y_2$.
Furthermore, since the model $Y_2$ is $A\qq$-factorial and the divisors $E_1,\dots,E_r$ are $A$-fixed, then they are $\qq$-Cartier.
Write $\phi_2\colon Y_2\rightarrow X$ for the induced projective morphism.
Write
\[
\phi_2^*(B-\Delta) =
\Gamma + \alpha_1 E_1 +\dots + \alpha_r E_r,
\]
for positive rational numbers
$\alpha_1,\dots, \alpha_r$.
The $A$-invariant effective divisor $\Gamma$ does not contain the generic point of $W$ in its base locus over $X$.
We run an $A$-equivariant MMP for $\Gamma$ over $X$, which terminates with a good minimal model and then we take its
$A$-equivariant ample model $Y\rightarrow X$.
By abuse of notation, for each $i$, we will denote by $E_i$, the push forward on $Y$ of $E_i\subset Y_2$.
Observe that on $Y$ the divisor 
\[
-(\alpha_1 E_1+ \dots + \alpha_r E_r)
\]
is ample over the base.
Hence, the exceptional locus of $\pi\colon Y\rightarrow X$ is divisorial and consists of the $r$ prime divisors $E_1,\dots, E_r$. This shows $(1)$.
Each prime divisor $E_i$ is $A$-invariant.
By construction, each $E_i$ is a generalized log canonical place of $(X,B+M)$.
This shows $(2)$.
The conditions (4)-(7) are satisfied by $(Y,E_1+\dots+E_r;W)$ as they are satisfied in the model $Y_1$ and 
$Y_1\dashrightarrow Y$ is an
$A$-equivariant isomorphism on a neighborhood of a general point of $W$. This concludes the proof.
\end{proof}

\begin{remark}{\em 
In Theorem~\ref{thm:gen-group-toroidal},
if we assume that the germ $X$ is $A\qq$-factorial,
then we can further prove that the divisors $E_i$ are $\qq$-Cartier.
Indeed, we can take the small $A\qq$-factorialization $Y_0$ of the model $Y$.
Since $X$ is $A\qq$-factorial, then $Y_0\rightarrow Y$ is an isomorphism over the complement of $\supp(E_1+\dots+E_r)$.
In this new model, the exceptional divisor is still divisorial and it consists of the $r$ divisors $E_1,\dots, E_r$.
Indeed, the pull-back of $-(\alpha_1E_1+\dots+\alpha_rE_r)$ to $Y_0$ is semiample, hence every curve extracted by $Y_0\rightarrow X$ must be contained in the support of $E_1+\dots+E_r$.
The variety $Y$ is $\qq$-factorial at the generic point of $E_1\cap\dots \cap E_r$, so all the properties $(1)$-$(7)$ are preserved by the small $A\qq$-factorialization.
Since each $E_i$ is $A$-invariant, then they are $\qq$-Cartier in this higher model $Y_0$.
}
\end{remark} 

\begin{remark}{\em 
We remark that the proof of Theorem~\ref{thm:gen-group-toroidal} implies that~\cite[Theorem 2]{BFMS20} holds over any algebraically closed field of characteristic zero, when working with the \'etale fundamental group.
In the proof of~\cite[Theorem 2]{BFMS20}, we use the Whitney retraction to an exceptional divisor, which is a complex analytic technique.
On the other hand, Theorem~\ref{thm:rank-vs-reg} and Theorem~\ref{thm:gen-group-toroidal} only use~\cite[Proposition 3.1]{BFMS20}, which holds over any field of characteristic zero.
}
\end{remark}

\section{Fundamental groups of klt singularities}

In this section, we discuss the implications of the main theorem on the fundamental group of klt singularities.
Passing to the universal cover, 
we are in the situation of finite actions on klt singularities, so Theorem~\ref{thm:gen-group-toroidal} can be applied.
We introduce versions of Corollary~\ref{introcor:p1-toroidal}
and Theorem~\ref{introthm:Jordan-reg} for generalized klt pairs. 

\begin{corollary}
Let $n$ be a positive integer. There exists a constant $c(n)$, 
only depending on $n$, that satisfies the following conditions.
Let $(X,B+M;x)$ be a $n$-dimensional generalized klt singularity. Then, there exists:
\begin{enumerate}
    \item A projective birational morphism $\phi \colon Y\rightarrow X$, 
    \item an effective divisor $B_Y$ on $Y$, and
    \item a klt toric singularity $(Y,B_Y;y)$,
\end{enumerate}
so that the induced homomorphism
\[
\phi_* \colon
\pi_1^{\rm reg}(Y,B_Y;y) \rightarrow 
\pi_1^{\rm reg}(X,B+M;x)
\]
has cokernel 
of order at most $c(n)$.
In particular, $\pi_1^{\rm reg}(X,B+M;x)$ admits 
a normal abelian subgroup of rank at most ${\rm codim}(Y,y)$ and index at most $c(n)$.
\end{corollary}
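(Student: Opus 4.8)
The plan is to deduce the statement from Theorem~\ref{thm:gen-group-toroidal} by passing to the universal cover associated to the regional fundamental group and then descending the resulting toroidalization.

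First, recall that the regional fundamental group $G:=\pi_1^{\rm reg}(X,B+M;x)$ is finite (see~\cite{Xu14,Bra20}). Let $q\colon (\widetilde{X},\widetilde{B}+\widetilde{M};\tilde{x})\rightarrow (X,B+M;x)$ be the corresponding Galois cover with group $G$; it is quasi-\'etale in the appropriate orbifold sense, $G$ acts on $\widetilde{X}$ with quotient $X$, the germ $(\widetilde{X},\widetilde{B}+\widetilde{M};\tilde{x})$ is again a generalized klt singularity, and it has trivial regional fundamental group at $\tilde{x}$. After replacing $(\widetilde{X},\widetilde{B}+\widetilde{M};\tilde{x})$ by a $G$-equivariant $N$-complement (Lemma~\ref{lem:G-equiv-N-comp}), we may assume that the coefficient and Cartier-index hypotheses of Theorem~\ref{thm:gen-group-toroidal} are met with a constant depending only on $n$.

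Now apply Theorem~\ref{thm:gen-group-toroidal} to the $G$-equivariant generalized klt singularity $(\widetilde{X},\widetilde{B}+\widetilde{M};\tilde{x})$. It produces a normal abelian subgroup $A\triangleleft G$ of index at most $c(n)$ and rank $r$, an $A$-equivariant projective birational morphism $\widetilde{\pi}\colon \widetilde{Y}\rightarrow \widetilde{X}$ extracting prime divisors $E_1,\dots,E_r$, a component $Z$ of $E_1\cap\dots\cap E_r$ which $A$ fixes pointwise with $(\widetilde{Y},E_1+\dots+E_r)$ formally toric at $\eta_Z$, and a monomorphism $A<\mathbb{G}_m^r\leqslant{\rm Aut}(\widetilde{Y},E_1+\dots+E_r;\eta_Z)$. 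In particular $Z$ has codimension $r$ in $\widetilde{Y}$, and $A$ is already a normal abelian subgroup of $\pi_1^{\rm reg}(X,B+M;x)$ of rank $r$ and index at most $c(n)$; once the morphism $\phi$ below is built with ${\rm codim}(Y,y)=r$, this yields the last assertion of the corollary. To obtain $\phi$, I would first make $\widetilde{\pi}$ equivariant for the whole of $G$: take a $G$-equivariant projective birational $W\rightarrow \widetilde{X}$ dominating $\widetilde{Y}$ and all its translates $g\widetilde{Y}$, chosen so that $W\rightarrow \widetilde{Y}$ is an isomorphism over a neighbourhood of $\eta_Z$ (possible because the translates $gZ$ with $g\notin A$ are distinct from $Z$). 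Setting $Y:=W/G$, the induced $\phi\colon Y\rightarrow X$ is projective birational. Let $y$ be a general point of the image of $Z$ in $Y$; since $A$ acts on the formally toric germ $(\widetilde{Y},\sum_i E_i;\eta_Z)$ through the subtorus $\mathbb{G}_m^r$, the quotient germ $(Y,B_Y;y)$, with $B_Y$ the image of $\sum_i E_i$, is again a formally toric, hence klt toric, singularity, of codimension ${\rm codim}(Y,y)=r$.

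It remains to control the homomorphism $\phi_*\colon \pi_1^{\rm reg}(Y,B_Y;y)\rightarrow \pi_1^{\rm reg}(X,B+M;x)=G$. The sub-cover $\widetilde{X}/A\rightarrow X$ has regional fundamental group $A$ and realizes the inclusion $A\hookrightarrow G$; comparing it with the birational contraction $Y\rightarrow \widetilde{X}/A$ through which $\phi$ factors shows that the image of $\phi_*$ has index at most $[G:A]\leqslant c(n)$ in $G$, so $\phi_*$ has cokernel of order at most $c(n)$. The hardest part will be the descent step: arranging that $\phi\colon Y\rightarrow X$ be genuinely birational while preserving the formally toric structure of $(\widetilde{Y},\sum_i E_i)$ at $\eta_Z$ through the $G$-equivariantization and the quotient, together with the bookkeeping needed to identify $\pi_1^{\rm reg}(\widetilde{X}/A;\bar{x})$ with $A$ and to conclude the cokernel bound for $\phi_*$.
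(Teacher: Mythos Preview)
Your overall strategy---pass to the universal cover and apply Theorem~\ref{thm:gen-group-toroidal}---matches the paper's, but the descent step is handled quite differently, and your version has a real gap.

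The paper does \emph{not} attempt to make the $A$-equivariant extraction $\widetilde{Y}\rightarrow\widetilde{X}$ into a $G$-equivariant one. Instead it simply takes $Y:=X'''/A_1$, the quotient of the toroidal model by the abelian subgroup itself. Since $A_1$ acts through the torus at the formally toric point $x'''$, the quotient $(Y,B_Y;y)$ is automatically a klt toric germ, and the composite $Y\rightarrow X$ realizes $A_1$ as the image of $\phi_*$. (Note that this $\phi$ is generically finite of degree $[G:A_1]\le c(n)$ rather than literally birational; the word ``birational'' in the statement should be read loosely.) The paper also inserts a preliminary step you omit: it passes to a $G$-equivariant small $G\qq$-factorialization and then invokes Theorem~\ref{thm:almost-fixed-points} to locate an almost-fixed point in the fiber before applying Theorem~\ref{thm:gen-group-toroidal} there.

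Your alternative---build a $G$-equivariant $W$ dominating all translates $g\widetilde{Y}$ and then quotient by $G$---breaks at exactly the point you flag as hardest. The claim that $W\rightarrow\widetilde{Y}$ can be taken to be an isomorphism over a neighbourhood of $\eta_Z$ is not justified by ``the translates $gZ$ are distinct from $Z$''. Every exceptional divisor $E_i$ of $\widetilde{Y}\rightarrow\widetilde{X}$ lies over the single point $\tilde{x}$, and the same is true of the exceptional divisors $gE_i$ of each translate $g\widetilde{Y}\rightarrow\widetilde{X}$. Hence the centers on $\widetilde{Y}$ of the valuations $gE_i$ (for $g\notin A$) all lie in the fiber over $\tilde{x}$, and there is no reason they avoid $Z$; any common refinement $W$ will in general have to blow up along $Z$. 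Even if you could arrange this, you would still need to know that the $G$-stabilizer of a general point of $Z_W$ is exactly $A$ (not merely contains $A$) to conclude that the quotient germ is toric---another point not addressed. The paper's approach of quotienting only by $A_1$ sidesteps both issues entirely.
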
 

\begin{proof}
Let $(X,B+M;x)$ be a $n$-dimensional klt singularity.
Let $(X',B'+M';x')$ be the universal cover of $(X,B+M;x)$ and 
$G$ be the finite group acting on $(X',B'+M')$.
Here, $x'$ is the pre-image of $x$.
We know that $x'$ is a $G$-fixed point on $X'$.
Note that the singularity $(X',B'+M';x)$ is 
generalized klt due to Proposition~\ref{prop:quot-gen-pair}.
Let $\phi' \colon X''\rightarrow X'$
be a $G$-equivariant small $G\qq$-factorialization of $X'$.
We know that the $G$-equivariant birational map
$X''\rightarrow X'$ is of Fano type.
By Theorem~\ref{thm:almost-fixed-points}, there exists a constant $c_0(n)$, only depending on $n$, and a normal subgroup $A_0\triangleleft G$, of index at most $c_0(n)$, so that $A_0$ admits a fixed point on ${\phi'}^{-1}(x')$.
Let $x''$ be such a fixed point.
By Theorem~\ref{thm:gen-group-toroidal},
there exists a constant $c_1(n)$, only depending on $n$,
satisfying the following:
\begin{enumerate}
    \item There exists a normal subgroup $A_1\triangleleft A_0$ of index at most $c_1(n)$, 
    \item there exists an $A_1$-equivariant projective birational morphism $X'''\rightarrow X''$, and 
    \item $X'''$ admits a formally toric point $x'''\in X'''$ on which $A_1$ acts formally toric. 
\end{enumerate}
Here, $x'''$ is the generic point of the center $Z$ constructed in Theorem~\ref{thm:gen-group-toroidal}.
Let $(Y,B_Y)$ be the log quotient of $X'''$ by $A_1$ 
(see Proposition~\ref{prop:quot-gen-pair}). Then, we have a commutative diagram as follows:
\[
\xymatrix{
A_1 \acts  (X''',x''')\ar[rr] \ar[d] & & (Y,B_Y) \ar[dd]^-{\phi} \\
A_0 \acts (X'',x'') \ar[d]^-{\phi'} & & \\
G \acts (X',\Delta';x') \ar[rr] & & (X,\Delta;x).
}
\]
Let $y$ be the image of $x'''$ on $Y$.
By construction, the point $(Y,B_Y;y)$ is a toric singularity.
Furthermore, the natural homomorphism
\begin{equation}\label{eq:almost-surj}
\phi_* \colon \pi_1^{\rm reg}(Y,B_Y;y)
\rightarrow \pi_1^{\rm reg}(X,\Delta;x)\simeq G,
\end{equation} 
surjects onto the normal subgroup $A_1\leqslant G$.
Note that $A_1$ is a normal abelian subgroup of rank at most ${\rm codim}(Y,y)$.
Moreover, the cokernel of the homomorphism~\eqref{eq:almost-surj} has order at most $c_0(n)c_1(n)$.
This finishes the proof of the corollary.
\end{proof}

\begin{theorem}
\label{thm:Jordan-reg-gen}
Let $n$ and $r$ be a positive integer with $r\leq n-1$.
Then, there exists a constant $c=c(n)$, only depending on $n$, satisfying the following property. Let $x\in (X,B+M)$ be a $n$-dimensional generalized klt singularity
of regularity $r$.
Then, there is an exact sequence
\begin{equation}
\label{eq:reg-fund}
\xymatrix{
1\ar[r] &
A\ar[r] &
\pi_1^{\rm reg}(X,B+M,x) \ar[r]&
N\ar[r] &
1,
}
\end{equation}
where $A$ is a finite 
abelian group of rank at 
most $r+1$ 
and index at most $c(n)$.
Furthermore, since the regional fundamental group surjects onto 
the local 
fundamental group of the singularity, 
we obtain an exact sequence 
\begin{equation}
\label{eq:loc-fund}
\xymatrix{
1\ar[r]&
A'\ar[r]&
\pi_1^{\rm loc}(X,x) \ar[r]&
N'\ar[r]&
1,
}
\end{equation}
where $A'$ is a finite abelian group of rank at most $r+1$ and index at most $c(n)$.
\end{theorem}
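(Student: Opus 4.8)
The plan is to reduce Theorem~\ref{thm:Jordan-reg-gen} to Theorem~\ref{thm:gen-group-toroidal} together with Theorem~\ref{thm:rank-vs-reg-local} (more precisely, the relation between the rank of a finite abelian action and the regularity, which bounds the codimension appearing in the toroidalization). First I would recall that $\pi_1^{\mathrm{reg}}(X,B+M;x)$ is finite (by~\cite{Xu14,Bra20} and Proposition~\ref{prop:quot-gen-pair}, which transfers klt-ness through the quotient), so we may pass to the universal cover $(X',B'+M';x')$ of the germ and obtain a finite group $G$ acting on $(X',B'+M';x')$ with $x'$ a $G$-fixed point and $\pi_1^{\mathrm{reg}}(X,B+M;x)\simeq G$. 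By Proposition~\ref{prop:quot-gen-pair}, $(X',B'+M';x')$ is again a generalized klt singularity; moreover by Lemma~\ref{lem:reg-under-quot} its regularity is still $r$ (regularity is invariant under the quotient since the quotient pair is qdlt at the relevant centers). The goal is then the purely group-theoretic statement: a finite group $G$ acting on an $r$-regular $n$-dimensional generalized klt germ fixing the closed point has a normal abelian subgroup of rank at most $r+1$ and index bounded in terms of $n$.

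The main step is to apply Theorem~\ref{thm:gen-group-toroidal} to the $G$-action on $(X',B'+M';x')$. This produces a normal abelian subgroup $A\triangleleft G$ of index at most $c(n)$ and rank $r'$, and an $A$-equivariant birational modification $\pi\colon Y\to X'$ extracting $r'$ prime divisors $E_1,\dots,E_{r'}$, with $A<\mathbb{G}_m^{r'}\leqslant{\rm Aut}(Y,E_1+\dots+E_{r'};\eta_Z)$ for a component $Z$ of $E_1\cap\dots\cap E_{r'}$. The rank $r'$ of this abelian group is exactly $\mathrm{codim}(Y,\eta_Z)$, i.e. the number of divisors $E_i$, and by construction $(Y,E_1+\dots+E_{r'})$ carries a gdlt structure whose dual complex contains an $(r'-1)$-simplex; hence $\mathcal{D}$ of the corresponding complement has dimension at least $r'-1$. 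Combined with the definition of regularity and Lemma~\ref{lem:reg-under-quot}, this forces $r'-1\le r$, that is $r'\le r+1$. This is the crux: the toroidalization theorem turns the large abelian action into $r'$ log canonical places whose combinatorics must be accommodated inside the regularity $r$, so the rank is bounded by $r+1$. (The $+1$ over the naive bound $r$ comes from the fact that a $(k-1)$-dimensional dual complex records $k$ divisors meeting at a point.) Setting $A$ to be this subgroup of $G\simeq\pi_1^{\mathrm{reg}}(X,B+M;x)$ and $N:=G/A$ gives the exact sequence~\eqref{eq:reg-fund} with $A$ abelian of rank at most $r+1$ and index at most $c(n)$.

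For the local statement~\eqref{eq:loc-fund}, I would use the surjection $\pi_1^{\mathrm{reg}}(X,B+M;x)\twoheadrightarrow\pi_1^{\mathrm{loc}}(X;x)$ and simply push the exact sequence forward: let $A'$ be the image of $A$ under this surjection. Then $A'$ is a quotient of $A$, hence abelian of rank at most $r+1$, and its index in $\pi_1^{\mathrm{loc}}(X;x)$ divides the index of $A$ in $\pi_1^{\mathrm{reg}}(X,B+M;x)$, so it is at most $c(n)$; taking $N':=\pi_1^{\mathrm{loc}}(X;x)/A'$ yields~\eqref{eq:loc-fund}. The only subtlety to check carefully is that passing to the universal cover does not change the regularity — this follows from Lemma~\ref{lem:reg-under-quot} applied to the Galois cover $(X',B'+M')\to(X,B+M)$ and the fact that any $G$-invariant complement on $X$ pulls back to a complement on $X'$ with the same dual complex (Proposition~\ref{prop:quot-gen-pair} gives the bijection between equivariant log canonical centers upstairs and log canonical centers downstairs, hence an equivariant identification of dual complexes). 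I expect the extraction of the bound $r'\le r+1$ from Theorem~\ref{thm:gen-group-toroidal} — bookkeeping the codimension of $Z$ against the regularity — to be the main obstacle, though it is a direct consequence of the definitions once the toroidalization is in hand.
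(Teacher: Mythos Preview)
Your overall strategy coincides with the paper's: pass to the universal cover, invoke the toroidalization theorem, and read off the rank bound from the regularity of the descended complement. The deduction $r'-1\le r$ from the $(r'-1)$-simplex of glc places, and the push-forward along $\pi_1^{\mathrm{reg}}\twoheadrightarrow\pi_1^{\mathrm{loc}}$, are exactly as in the paper.

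There is, however, a genuine gap in how you obtain a constant depending \emph{only} on $n$. You apply Theorem~\ref{thm:gen-group-toroidal} to the $G$-equivariant pair $(X',B'+M';x')$, but that theorem's constant is $c(n,p,\Lambda)$, where $\Lambda$ is the coefficient set of the boundary and $p$ is the Cartier index of the nef part \emph{in the quotient where it descends}. The quotient here is $(X,B+M)$, and Theorem~\ref{thm:Jordan-reg-gen} imposes no hypothesis on the coefficients of $B$ or the index of $M$; so your index bound is $c(n,B,M)$, not $c(n)$. The paper avoids this by discarding $M$ and the non-standard part of $B$: it takes the standard approximation $B^s\le B$, produces an $N$-complement $(X,\Gamma)$ of $(X,B^s)$ with $N=N(n)$ and $\operatorname{reg}(X,\Gamma;x)=r$, pulls it back to a $G$-equivariant $N$-complement $(X',\Gamma')$, and then runs the toroidalization with this complement already in hand. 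Now all constants depend only on $n$, and since $\operatorname{reg}(X',\Gamma';x')=r$ (Lemma~\ref{lem:reg-under-quot} applied to this specific pair), the rank bound $r'\le r+1$ follows.

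A smaller point: your assertion that the \emph{singularity} regularity of $(X',B'+M';x')$ equals $r$ via Lemma~\ref{lem:reg-under-quot} is imprecise---that lemma compares a fixed $G$-equivariant pair with its quotient, not the supremum over all (possibly non-equivariant) complements on $X'$. Only the inequality $\ge r$ is immediate. What the argument actually uses is that the $G$-equivariant complement coming out of the toroidalization descends to a complement of $(X,B+M)$ with the same regularity, hence $\le r$; you should phrase it that way rather than asserting equality of singularity regularities.
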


\begin{proof}
Let $(X',B'+M';x')\rightarrow (X,B+M;x)$ be the universal cover of the generalized klt singularity.
By Proposition~\ref{prop:quot-gen-pair}, we know that
$(X',B'+M';x')$ is a generalized klt singularity.
Let $G$ be the finite group acting on $(X',B'+M';x')$.
Then, $x'$ is a $G$-fixed point.
Let $\Delta \geq 0$ be an effective divisor so that $(X,B+\Delta+M;x)$ is a $\qq$-complement computing the regularity of the generalized pair $(X,B+M)$ at $x$, i.e., 
\[
{\rm reg}(X,B+\Delta+M;x) =
{\rm reg}(X,B+M).
\]
Let $B^s\leq B$ be the standard approximation of $B$.
We can find a $N$-complement $(X,\Gamma;x)$ of $(X,B^s;x)$ so that
\[
{\rm reg}(X,\Gamma;x)= {\rm reg}(X,B+\Delta+M;x).
\]
Here, $N$ only depends on the dimension $n$ and the set of standard coefficients, which is independent of the dimension.
Let $(X',\Gamma')$ be the log pullback of $(X,\Gamma)$ to $X'$.
Since $\Gamma\geq B^s$, then 
$(X',\Gamma')$ is a log pair.
Since $(X,\Gamma)$ is log canonical, then
$(X',\Gamma')$ is log canonical due to
Proposition~\ref{prop:quot-gen-pair}.
By Lemma~\ref{lem:reg-under-quot}, we have that
\[
{\rm reg}(X',\Gamma';x') =
{\rm reg}(X,\Gamma;x) =
{\rm reg}(X,B+\Delta+M;x) =
{\rm reg}(X,B+M;x) = r.
\]
By Theorem~\ref{thm:gen-group-toroidal}, we
can find a normal subgroup
$A\triangleleft G$ of index at most $c(n)$,
so that $A$ has rank at most $r$.
This proves the existence of the exact sequence~\eqref{eq:reg-fund}.
Since we have a natural surjection
$\pi_1^{\rm reg}(X,\Delta;x) \rightarrow \pi_1^{\rm loc}(X;x)$, then the existence of
the exact sequence~\eqref{eq:loc-fund} follows.
\end{proof}

We continue this subsection by discussing the fundamental group of the smooth locus of Fano type varieties.
Given a Fano type variety $X$, we know that
$X^{\rm reg}$ has finite fundamental group (see, e.g.,~\cite[Theorem 2]{Bra20}).
Furthermore, by~\cite[Proof of Theorem 2]{BFMS20}, we know that there exists a closed point $x\in X$ for which
\[
\pi_1^{\rm reg}(X;x) \rightarrow \pi_1(X^{\rm reg})
\]
is almost surjective with respect to the dimension.
This means that almost every loop of $X^{\rm sm}$ is
homotopic to a loop arbitrarily close to the point $x$.
It is natural to try to understand which closed points $x\in X$ have this property.
The following theorem shows that we can choose such $x\in X$ to be a general closed point in a log canonical center of some $\qq$-complement of $X$.

\begin{theorem}
Let $n$ be a positive integer.
There exists a constant $c(n)$, only depending on $n$, 
satisfying the following.
Let $X$ be a $n$-dimensional Fano type variety.
Then, there exists 
\begin{enumerate}
    \item A normal abelian subgroup $A\triangleleft \pi (X^{\rm reg})$ of indext at most $c(n)$ and rank at most $r\leq n$, 
    \item a boundary $B$ on $X$ so that $(X,B)$ is log Calabi-Yau, and 
    \item a log canonical center $W$ of $(X,B)$ of codimension at least $r$ so that
    \[
    \pi_1^{\rm reg}(X;x)\rightarrow \pi_1(X^{\rm reg}),
    \]
    has cokernel of order at most $c(n)$
    for $x\in W$ general. 
\end{enumerate}
\end{theorem}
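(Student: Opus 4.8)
The plan is to pass to the universal cover of $X^{\mathrm{reg}}$, reduce to an equivariant Fano type situation, apply Theorem~\ref{thm:rank-vs-reg}, and then read off the cokernel statement from the local structure of the cover. Set $G:=\pi_1(X^{\mathrm{reg}})$, which is finite by \cite[Theorem 2]{Bra20}, and let $f\colon\widetilde X\to X$ be the associated quasi-\'etale Galois cover with group $G$; since $X$ is of Fano type and $f$ is \'etale in codimension one, $\widetilde X$ is again of Fano type (pull back a klt boundary $\Delta$ with $-(K_X+\Delta)$ big and nef). Using Lemma~\ref{lem:Q-com-N-com} with the trivial group, $X$ admits an $N$-complement $(X,B)$ with $N=N(n)$; its log pullback $(\widetilde X,\widetilde B):=f^{*}(X,B)$ is then a $G$-equivariant log canonical log Calabi--Yau pair with $N(K_{\widetilde X}+\widetilde B)\sim 0$, and $X$ is the quotient $\widetilde X/G$, so Proposition~\ref{prop:quot-gen-pair} applies (with all ramification indices equal to one).

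Next I would locate the log canonical center. By \cite[Proposition 3.1]{BFMS20} there is a normal abelian subgroup $A_{0}\triangleleft G$ of index at most $c_{0}(n)$; write $A_{0}\cong\zz_{d_{1}}\oplus\cdots\oplus\zz_{d_{s}}$ with $d_{1}\mid\cdots\mid d_{s}$ and $s\leq n$, and let $c_{1}:=c_{1}(n,N)$ be the constant of Theorem~\ref{thm:rank-vs-reg}. Let $r$ be the number of indices $i$ with $d_{i}\geq c_{1}$. If $r\geq 1$ then $\zz_{d_{s-r+1}}^{\,r}\leqslant A_{0}$ with $d_{s-r+1}\geq c_{1}$, so Theorem~\ref{thm:rank-vs-reg} applied to $\widetilde X$, the group $A_{0}$, and the $A_{0}$-equivariant $N$-complement $(\widetilde X,\widetilde B)$ produces a normal abelian subgroup $A\triangleleft A_{0}$ of index at most $c_{1}$ and rank $r$, an $A$-equivariant generalized dlt modification $\widetilde\pi\colon\widetilde Y\to\widetilde X$, and a generalized log canonical center $\widetilde W\subset\widetilde Y$ of codimension $r$ on which $A$ acts as the identity. (If $r=0$ then $|A_{0}|<c_{1}^{\,n}$, hence $|G|$ is bounded in terms of $n$; set $A:=\{1\}$, $r:=0$, and take $W$ to be a log canonical center of some $\qq$-complement of $X$ — the estimate below is then automatic.) Replace $A$ by its normal core in $G$: this keeps $A$ abelian of rank at most $r$ and of index bounded in terms of $n$, and it still acts trivially on $\widetilde W$. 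Since $\widetilde\pi$ is a birational contraction and $f$ is finite, $W_{\widetilde X}:=\widetilde\pi(\widetilde W)$ is a log canonical center of $(\widetilde X,\widetilde B)$ with $\codim W_{\widetilde X}\geq r$ on which $A$ acts trivially, and $W:=f(W_{\widetilde X})$ is a log canonical center of $(X,B)$ with $\codim W\geq r$ by Proposition~\ref{prop:quot-gen-pair}. This gives (1), (2), and the codimension assertion in (3).

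Finally, for $x\in W$ general choose a general point $\widetilde x\in W_{\widetilde X}$ above $x$; since $A$ acts trivially on $W_{\widetilde X}$ we get $A\subseteq\operatorname{Stab}_{G}(\widetilde x)$. For a sufficiently small neighbourhood $U\ni x$, the components of $f^{-1}(U)$ are normal neighbourhoods of the distinct points of $f^{-1}(x)$, each with connected regular locus, and the Galois correspondence identifies the image of $\pi_{1}^{\mathrm{reg}}(X;x)\to\pi_{1}(X^{\mathrm{reg}})=G$ with the stabilizer in $G$ of the component through $\widetilde x$, which for $U$ small equals $\operatorname{Stab}_{G}(\widetilde x)$. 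Hence the cokernel of $\pi_{1}^{\mathrm{reg}}(X;x)\to\pi_{1}(X^{\mathrm{reg}})$ has order $[G:\operatorname{Stab}_{G}(\widetilde x)]\leq[G:A]$, which is bounded in terms of $n$ alone; letting $c(n)$ dominate all constants introduced above finishes the proof.

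I expect the main obstacle to lie in two places. First, the group-theoretic bookkeeping in the second paragraph: extracting a sufficiently large $\zz_{m}^{r}$ inside $A_{0}$ so that Theorem~\ref{thm:rank-vs-reg} is applicable, and then restoring normality in all of $\pi_{1}(X^{\mathrm{reg}})$ by passing to the normal core while keeping the index bounded and the action on $\widetilde W$ still trivial. Second, and more delicate, is the last step: carefully identifying the image of $\pi_{1}^{\mathrm{reg}}(X;x)\to\pi_{1}(X^{\mathrm{reg}})$ with the decomposition subgroup $\operatorname{Stab}_{G}(\widetilde x)$, which requires choosing $x$ general in $W$ and $U$ small enough that the pieces of the quasi-\'etale cover over $U$ are connected in codimension one and meet $f^{-1}(x)$ in single points.
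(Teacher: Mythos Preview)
Your proof is correct and follows essentially the same approach as the paper: pass to the universal quasi-\'etale cover, use the Jordan property from \cite{BFMS20} to obtain an abelian subgroup, apply Theorem~\ref{thm:rank-vs-reg} to locate a log canonical center fixed pointwise by $A$, and push down to $X$. You are in fact more careful than the paper in a couple of places---explicitly producing the $N$-complement on $X$ before pulling back, passing to the normal core to secure $A\triangleleft G$, and spelling out the identification of the image of $\pi_1^{\rm reg}(X;x)\to G$ with the decomposition subgroup---while the paper leaves these implicit.
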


\begin{proof}
By the Jordan property for klt singularities,
we can find a normal abelian subgroup 
$A\leqslant \pi_1(X^{\rm reg})$ of index at most $c'(n)$ and rank at most $n$.
Let $r$ be the rank of $A$.
Let $X'\rightarrow X$ be the normal closure of $X$ in the function field of the universal cover of $X^{\rm reg}$.
Then $X'$ admits a $G$-action,
with $G:=\pi_1(X^{\rm reg})$.
By Theorem~\ref{thm:rank-vs-reg}, there exists a constant $c''(n)$, only depending on $n$, satisfying the following:
\begin{enumerate}
    \item A normal abelian subgroup $A_0\triangleleft A$ of rank $r$ and index at most $c''(n)$, 
    \item an $A_0$-equivariant $c''(n)$-complement $(X',B')$, 
    \item an $A_0$-equivariant dlt modification $(Y,B_Y)$ of $(X',B')$, and 
    \item a log canonical center $W_Y$ of $(Y,B_Y)$ of codimension $r$ in $Y$ on which $A_0$ acts as the identity.
\end{enumerate}
Let $W$ be the image of $W_Y$ on $X$.
$W$ is a log canonical center of $(X,B)$, the log quotient of $(X',B')$ on $X$.
Furthermore, $W$ has codimension at least $r$.
Then, by construction, we have that for $x\in W$ general, the image of the homomorphism 
\[
\pi_1^{\rm reg}(X;x)\rightarrow \pi_1(X^{\rm reg})
\]
contains the normal abelian subgroup $A_0\triangleleft G$.
Note that $A_0$ has index at most $c(n):=c'(n)c''(n)$ in $G$.
This finishes the proof.
\end{proof}

We finish this subsection by stating some applications of Theorem~\ref{thm:Jordan-reg-gen}. The proofs of the following corollaries are analogous to those in~\cite{BFMS20}, by replacing~\cite[Theorem 1]{BFMS20} with
Theorem~\ref{thm:Jordan-reg-gen}.
The generalizations of the results of~\cite{BFMS20} to the category of generalized pairs are straightforward.

The following corollary allows us to control the number of generators of the regional fundamental group of a generalized klt singularity.
Furthermore, it says that the number of generators of large order is bounded above by the regularity.

\begin{corollary}
Let $n$ and $r$ be two positive integers with $r\leq n-1$.
There exists a constant $b(n)$, only depending on $n$, satisfying the following.
Let $x\in (X,B+M;x)$ be a $n$-dimensional $r$-regular gklt singularity.
Then, the fundamental group $\pi_1^{\rm reg}(X,B+M;x)$ is generated by at most $b(n)$ elements.
Furthermore, there are at most $r$ elements with order larger than $b(n)$.
\end{corollary}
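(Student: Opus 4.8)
The plan is to deduce the corollary from Theorem~\ref{thm:Jordan-reg-gen} together with elementary finite group theory, following the arguments of~\cite{BFMS20}. Write $H:=\pi_1^{\rm reg}(X,B+M;x)$. Applying Theorem~\ref{thm:Jordan-reg-gen} to the $n$-dimensional $r$-regular generalized klt singularity $(X,B+M;x)$ produces an exact sequence $1\to A\to H\to N\to 1$ with $A$ a finite abelian group of rank at most $r+1$ and $|N|=[H:A]\le c(n)$. Since $r\le n-1$, the group $A$ is generated by at most $r+1\le n$ elements, while $N$, having order at most $c(n)$, is generated by at most $\lceil\log_2 c(n)\rceil$ elements. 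Lifting a generating set of $N$ to $H$ and adjoining a generating set of $A$ exhibits $H$ as generated by at most $b(n):=n+\lceil\log_2 c(n)\rceil$ elements (to be enlarged below), which gives the first assertion.

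For the second assertion, decompose $A=\zz_{d_1}\oplus\dots\oplus\zz_{d_s}$ into its invariant factors, so that $d_1\mid d_2\mid\dots\mid d_s$ and $s\le r+1$. The divisibility chain shows that if $d_1$ were large then $\zz_{d_1}^{\,s}\leqslant A\leqslant H$, so that, passing to the quasi-\'etale Galois cover $(X',B'+M';x')\to(X,B+M;x)$ associated to $H$ — on which $H$ acts fixing the point $x'$ over $x$ and \emph{freely in codimension one}, and which is again generalized klt by Proposition~\ref{prop:quot-gen-pair} — we would obtain an action of a group of the form $\zz_m^{\,s}$ with $m$ arbitrarily large. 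The rank-versus-regularity results of Section~\ref{sec:rank-reg}, namely Theorem~\ref{introthm:rank-vs-reg-local} applied to $(X';x')$ together with the invariance of the regularity under quotients (Lemma~\ref{lem:reg-under-quot}), then control the regularity of $(X,B+M;x)$ from below in terms of $s$; since that regularity equals $r$, one concludes that $d_1$ is bounded by a constant depending only on $n$, and hence that at most $r$ of the invariant factors $d_i$ exceed that constant. One finally checks, exactly as in~\cite{BFMS20}, that the boundedly many generators of $H$ mapping nontrivially to $N$ can be chosen, or traded against the generators of $A$ using the divisibility chain, so as not to contribute additional elements of unbounded order. Enlarging $b(n)$ to dominate all the constants thus produced yields the claim.

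The main obstacle is the sharpness of the count: obtaining exactly $r$ — rather than $r+1$, or $r+O_n(1)$ — generators of order exceeding $b(n)$. This requires genuinely using that the cover carries an action which is free in codimension one, so that the toroidalization of Theorem~\ref{thm:gen-group-toroidal} and the rank-versus-regularity statements apply without the loss of a dimension that the a priori bound of Theorem~\ref{thm:Jordan-reg-gen} suffers (which only gives rank $r+1$), mirroring the toric model where a finite abelian subgroup of $\mathrm{GL}_n(\kk)$ acting freely in codimension one has rank at most $n-1$. It also requires a careful choice of generating set of $H$ that does not separate $A$ from a transversal wastefully, so that the generators coming from the bounded quotient $N$ can be absorbed; both points are handled by the same bookkeeping as in the proof of the corresponding statements of~\cite{BFMS20}.
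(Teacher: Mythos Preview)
Your overall plan matches the paper's: deduce the corollary from Theorem~\ref{thm:Jordan-reg-gen} by elementary finite group theory, following~\cite{BFMS20} verbatim with the improved Jordan property substituted in. The paper gives no further details, so at that level your proposal is exactly what is intended, and your first paragraph (bounding the total number of generators) is fine.

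However, the specific argument you sketch in the second paragraph to obtain the sharp count of $r$ large-order generators is circular. You argue that if $d_1$ is large then $\zz_{d_1}^{\,s}\leqslant H$ acts on the cover, so by Theorem~\ref{introthm:rank-vs-reg-local} (and Lemma~\ref{lem:reg-under-quot}) the regularity is at least $s-1$; since the regularity is $r$, you conclude $d_1$ is bounded. But this deduction only yields $s-1\le r$, i.e.\ $s\le r+1$, which is exactly what Theorem~\ref{thm:Jordan-reg-gen} already gave you; it does not bound $d_1$ at all, and so does not rule out all $r+1$ invariant factors being large. The rank-versus-regularity input is one dimension too weak for this step as you have written it.

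You correctly flag that the missing ingredient is that the universal cover carries an action which is \emph{free in codimension one}, and you point to the toroidal model and to the bookkeeping in~\cite{BFMS20}. That is indeed where the extra unit comes from, but you have not actually explained how: invoking Theorem~\ref{introthm:rank-vs-reg-local} alone, as you do, ignores the codimension-one freeness entirely. To close the gap you must argue, as in~\cite{BFMS20}, that inside the toroidal chart provided by Theorem~\ref{thm:gen-group-toroidal} the condition of no quasi-reflections forces one of the toroidal coordinates of $A$ to be redundant (equivalently, the image of $A$ in $\mathbb{G}_m^{r'}$ meets one coordinate $\mathbb{G}_m$ only in a subgroup of bounded order), which is what kills the smallest invariant factor. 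Without making that step explicit, your second paragraph does not prove the ``at most $r$'' claim.
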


Analogously,
the following corollary
allows us to control the number of generators of the local Class group.
Furthermore, we conclude that the number of elements in the local Class group which have large order is bounded by the regularity.

\begin{corollary}
Let $n$ and $r$ be two positive integers with $r\leq n-1$.
There exists a constant $b(n)$, only depending on $n$, satisfying the following.
Let $x\in (X,B+M;x)$ be a $n$-dimensional $r$-regular gklt singularity.
Then, the torsion subgroup
${\rm Cl}(X;x)_{\rm tor}$ is generated by at most $b(n)$ elements.
Furthermore, there are at most $r$ elements with order larger than $b(n)$.
\end{corollary}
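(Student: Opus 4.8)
The plan is to deduce the statement from Theorem~\ref{thm:Jordan-reg-gen} together with the standard identification of the torsion part of the local class group with the abelianization of the local fundamental group, followed by an elementary group-theoretic count; this mirrors the way the corresponding statements are obtained in~\cite{BFMS20} from~\cite[Theorem 1]{BFMS20}. First I would record that, since $(X,B+M;x)$ is generalized klt, the germ $X$ is of klt type, so by~\cite{Xu14,Bra20} the group $\pi_1^{\rm loc}(X;x)$ is finite. The correspondence between torsion Weil divisor classes on the germ and connected cyclic quasi-\'etale covers of the punctured germ then yields an isomorphism of finite abelian groups $\,{\rm Cl}(X;x)_{\rm tor}\simeq {\rm Hom}(\pi_1^{\rm loc}(X;x),\qq/\zz)\simeq \pi_1^{\rm loc}(X;x)^{\rm ab}$; in particular ${\rm Cl}(X;x)_{\rm tor}$ and $\pi_1^{\rm loc}(X;x)^{\rm ab}$ have the same minimal number of generators and the same invariant factors.

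Next I would feed in the exact sequence~\eqref{eq:loc-fund} from Theorem~\ref{thm:Jordan-reg-gen}: there is a normal abelian subgroup $A'\triangleleft\pi_1^{\rm loc}(X;x)$ of index at most $c(n)$ whose rank is at most $r$ (this is the subgroup produced by Theorem~\ref{thm:gen-group-toroidal} in the proof of Theorem~\ref{thm:Jordan-reg-gen}). Since $\pi_1^{\rm loc}(X;x)$ is generated by a minimal generating set of $A'$ together with at most $\lceil\log_2 c(n)\rceil$ lifts of generators of the quotient, it is generated by at most $r+\lceil\log_2 c(n)\rceil\le n+\lceil\log_2 c(n)\rceil=:b(n)$ elements, hence so is ${\rm Cl}(X;x)_{\rm tor}\simeq\pi_1^{\rm loc}(X;x)^{\rm ab}$. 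For the last assertion, write $H:={\rm Cl}(X;x)_{\rm tor}\simeq\bigoplus_{i=1}^{k}\zz/d_i$ with $d_1\mid\cdots\mid d_k$, let $\bar A'$ be the image of $A'$ in $H$ and set $m:=[H:\bar A']$, so $m$ divides $[\pi_1^{\rm loc}(X;x):A']$ and hence $m\le c(n)$. Then $mH\subseteq\bar A'$, and $\bar A'$, being a homomorphic image of $A'$, is generated by at most $r$ elements, so $mH$ is generated by at most $r$ elements. Since $mH\simeq\bigoplus_{i}\zz/(d_i/\gcd(d_i,m))$, at most $r$ of the $d_i$ fail to divide $m$; as $m\le c(n)\le b(n)$, any $d_i>b(n)$ fails to divide $m$, so at most $r$ of the invariant factors $d_i$ exceed $b(n)$, which is the claim.

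All the geometric substance here is already contained in Theorem~\ref{thm:Jordan-reg-gen} (and, through it, in Theorem~\ref{thm:gen-group-toroidal}), so I do not expect a genuine obstacle; the two auxiliary ingredients — the duality ${\rm Cl}(X;x)_{\rm tor}\simeq(\pi_1^{\rm loc}(X;x))^{\rm ab}$ for klt type germs, and the observation that a finite abelian group containing a rank-$r$ subgroup of index at most $c$ has at most $r$ invariant factors larger than $c$ — are elementary and standard (cf.~\cite{BFMS20}). The one point that needs care is the precise rank bound on the abelian part in~\eqref{eq:loc-fund}: to land the sharp count ``$r$'' one should invoke the normal abelian subgroup of rank $\le r$ coming out of the proof of Theorem~\ref{thm:Jordan-reg-gen} (extracted directly from Theorem~\ref{thm:gen-group-toroidal}) rather than the slightly weaker bound $r+1$ appearing in its printed statement.
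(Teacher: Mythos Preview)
Your proposal is correct and follows exactly the route the paper indicates: the paper does not spell out a proof but states that it is ``analogous to those in~\cite{BFMS20}, by replacing~\cite[Theorem 1]{BFMS20} with Theorem~\ref{thm:Jordan-reg-gen},'' and that is precisely what you do. One small point worth tightening: the natural duality for torsion Weil divisor classes (via quasi-\'etale cyclic covers) identifies ${\rm Cl}(X;x)_{\rm tor}$ with the abelianization of the \emph{regional} fundamental group $\pi_1^{\rm reg}(X,B+M;x)$ rather than $\pi_1^{\rm loc}(X;x)$, so you should invoke the exact sequence~\eqref{eq:reg-fund} instead of~\eqref{eq:loc-fund}; your argument is otherwise unchanged, and your observation about extracting the sharper rank bound $r$ (rather than $r+1$) directly from the proof of Theorem~\ref{thm:Jordan-reg-gen} via Theorem~\ref{thm:gen-group-toroidal} is well taken.
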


In~\cite[Theorem 1.10]{GKP16}, 
the authors proved the existence of a simultaneous index one cover for $n$-dimensional klt singularities.
In~\cite[Theorem 4]{BFMS20}, 
the authors proved that such simultaneous index one cover can be achieved by $n$ cyclic covers followed by a cover of degree bounded by a constant on $n$.
The following corollary states that for a $n$-dimensional $r$-regular gklt singularity, 
such simultaneous index one cover can be achieved by $r$ cyclic covers followed by a cover of degree at most $b(n)$.

\begin{corollary} 
Let $n$ and $r$ be two positive integers with $r\leq n-1$.
There exists a constant $b(n)$,
which only depends on $n$,
that satisfies the following.
Let $(X,B+M;x)$ be a $n$-dimensional $r$-regular gklt singularity.
Then, there exists a sequence of finite quasi-\'etale covers
\[
\xymatrix{
X &
\ar[l] X_1 &
\ar[l] X_2 &
\ar[l] \dots &
\ar[l] X_{j},
}
\]
with $j\leq r+1$, 
that satisfies the following conditions:
\begin{enumerate}
    \item For any $1 \leq i \leq j$, $x\in X$ has a unique preimage $x_i\in X_i$;
    \item every integral 
    $\qq$-Cartier divisor on $X_j$ is a Cartier divisor;
    \item the quotient $X\longleftarrow X_1$ has degree at most $b(n)$; and 
    \item for any $1 \leq i \leq j-1$, the quotient $X_i \longleftarrow X_{i+1}$ is cyclic.
\end{enumerate}
In particular, $x_{j}\in X_{j}$ is a canonical singularity.
\end{corollary}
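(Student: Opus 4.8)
The plan is to run the argument of~\cite[Theorem 4]{BFMS20}, feeding in Theorem~\ref{thm:Jordan-reg-gen} in place of~\cite[Theorem 1]{BFMS20}; the improved control on the rank is exactly what lets us finish with $r+1$ covers instead of $n+1$.

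I would first recall the simultaneous index-one cover $X'\to X$ of~\cite[Theorem 1.10]{GKP16}: it is a finite quasi-\'etale morphism, \'etale in codimension one, over which every integral $\qq$-Cartier Weil divisor becomes Cartier, such that $x$ has a unique preimage $x'$ and $(X',x')$ is a canonical singularity. Replacing $X'$ by the Galois closure of a small neighbourhood of $x'$ over $X$, I obtain a finite quotient $q\colon \pi_1^{\rm loc}(X;x)\twoheadrightarrow Q$ whose associated cover $\phi\colon X_j\to X$ is Galois with deck group $Q$, is \'etale in codimension one, dominates $X'$, and retains the index-one property, the unique-preimage property, and the canonical-singularity property (all of which pass to further quasi-\'etale covers, the last because a klt singularity with Cartier canonical class is canonical). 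Every cover sitting between $X$ and $X_j$ then corresponds to a subgroup of $Q$ and is in particular quasi-\'etale, klt, and has a single point over $x$.

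Next I would apply Theorem~\ref{thm:Jordan-reg-gen}: as $(X,B+M;x)$ is $r$-regular gklt, the exact sequence~\eqref{eq:loc-fund} provides a normal abelian subgroup $A'\triangleleft\pi_1^{\rm loc}(X;x)$ of rank at most $r+1$ and index at most $c(n)$; and, as in the proof of the preceding corollary (equivalently, by the generator count of~\cite{BFMS20}), $\pi_1^{\rm loc}(X;x)$ is generated by at most $b_0(n)$ elements, at most $r$ of which have order exceeding $b_0(n)$. Passing to $Q$ and to the finite-index subgroup $\bar A':=q(A')\triangleleft Q$, the same count applied to a bounded-index subgroup, via the elementary lemma of~\cite{BFMS20}, shows that the finite abelian group $\bar A'\cong\zz_{d_1}\oplus\dots\oplus\zz_{d_{r+1}}$, with $d_1\mid\dots\mid d_{r+1}$, has at most $r$ of its invariant factors larger than some constant $b_1(n)$; since there are $r+1$ of them, this forces $d_1\le b_1(n)$.

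Finally I would assemble the tower. Set $B:=\zz_{d_2}\oplus\dots\oplus\zz_{d_{r+1}}\le\bar A'\le Q$ and $G_i:=\zz_{d_{i+2}}\oplus\dots\oplus\zz_{d_{r+1}}$ for $0\le i\le r$, so that $B=G_0\supseteq G_1\supseteq\dots\supseteq G_r=\{1\}$ with each quotient $G_i/G_{i+1}\cong\zz_{d_{i+2}}$ cyclic. Putting $X_1:=X_j/B$ and $X_{i+1}:=X_j/G_i$ for $1\le i\le r$ (and discarding the trivial steps where $d_{i+2}=1$), I get $X_{r+1}=X_j$, the map $X_1\to X$ of degree $[Q:\bar A']\cdot d_1\le c(n)\,b_1(n)=:b(n)$, and cyclic morphisms $X_i\to X_{i-1}$ for $2\le i\le r+1$. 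Combined with the properties of $X_j$ from the first step, this yields (1)--(4) with $j\le r+1$ and $(X_j,x_j)$ canonical. I expect the only real subtlety, as in~\cite[\S 4]{BFMS20}, to be the bookkeeping that keeps the passage $\pi_1^{\rm loc}(X;x)\to Q\to(\text{subgroups})$ compatible with ``\'etale in codimension one'' and ``unique point over $x$'', together with checking that the generator count survives restriction to $\bar A'$; the genuinely new geometric content is entirely carried by Theorem~\ref{thm:Jordan-reg-gen}.
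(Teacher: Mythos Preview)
Your proposal is correct and is exactly the approach the paper intends: the paper gives no detailed proof of this corollary, stating only that it is analogous to~\cite[Theorem 4]{BFMS20} with~\cite[Theorem 1]{BFMS20} replaced by Theorem~\ref{thm:Jordan-reg-gen}, which is precisely what you carry out. Your expanded account of the tower construction and the bookkeeping with $Q$, $\bar A'$, and the invariant factors is faithful to that template.
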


\section{Examples and Questions}

In this section, 
we provide some examples showing that the hypotheses of our theorems cannot be weakened.
Furthermore, we propose some questions about finite actions on rationally connected varieties.

\begin{example}
\label{ex:elliptic}
{\em 
In this example, we show that the Fano type assumption in Theorem~\ref{introthm:rank-vs-reg-global} is indeed necessary.
Analogously, the statement of Theorem~\ref{introthm:rank-vs-reg-local} does not hold for non-lc singularities or for lc singularities that do not support klt singularities.

First, we consider an elliptic curve $E$.
The curve $E$ is Calabi-Yau and $K_E\sim 0$.
Furthermore, for each $n$, we can find $\zz_n\leqslant {\rm Aut}(E)$ by the multiplication of a $n$-torsion point. 
The regularity of $E$ equals $-\infty$, being the boundary empty.
We conclude that the statement of theorem~\ref{introthm:rank-vs-reg-global} does not hold if we drop the Fano type assumption.

Analogously, we can consider the cone $C$ over an elliptic curve $E$.
The germ $C$ has a strictly log canonical singularity. 
For each $n$, we still have 
$\zz_n\leqslant {\rm Aut}(C)$ by considering the action of $n$-torsion points in the cone.
However, the dual complex $\mathcal{D}(C)$ is empty, so the statement of Theorem~\ref{introthm:rank-vs-reg-local} does not hold for lc pairs which do not support klt singularities.

Finally, we consider the Fermat curve
\[
X_n:=\{x^n+y^n+z^n=0\}\subset \pp^2.
\]
Let $C_n$ be the cone over the Fermat curve.
Then, the curve $C_n$ is not log canonical, as the blow-up of the vertex extracts a unique non-lc place.
Observe that we have a monomorphism
$\zz_n^2\leqslant {\rm Aut}(C_n)$ by multiplication of roots of unity.
Thus, the germ $C_n$ has regularity one, while it admits a large finite abelian group of rank three.
This shows that the statement of Theorem~\ref{introthm:rank-vs-reg-local} does not hold if we allow non-lc singularities. 
}
\end{example}

\begin{example}\label{ex:no-control-B}
{\em 
In this example, we show that the control of the Cartier index in Theorem~\ref{introthm:rank-vs-reg-global} is necessary.
Consider $\pp^1$ with the action of $\zz_m$ by the multiplication by roots of unity.
Let $B$ be the reduced divisor defined as the orbit of $\{1\}$ under this action, i.e., 
the support of $B$ is the union of all $m$-roots of unity.
Consider $B_m:=\frac{2}{m}B$.
Then, the pair $(\pp^1,B_m)$ is log Calabi-Yau.
Furthermore, we have that 
\[
m(K_{\pp^1}+B_m) \sim 0.
\]
On the other hand, the pair $(\pp^1,B_m)$ is klt and admits a $\zz_m$-action.
In particular, its regularity equals $-\infty$.
So the statement of Theorem~\ref{introthm:rank-vs-reg-global} does not hold if we don't control the Cartier index of the log Calabi-Yau pair.
}
\end{example}

\begin{example}
\label{ex:no-control-M}
{\em 
In this example, 
we stress the importance of controlling the Cartier index of an equivariant generalized pair in the quotient where it descends. 
Let $\zz_m \leqslant \mathbb{G}_{m,t} < {\rm Aut}(\pp^1_t)$ and
$t_0,t_\infty \in \pp^1_t$ be the two torus invariant points.
Consider the quotient
\[
\phi\colon 
\pp^1_t \rightarrow \pp^1_s
\] 
by the cyclic action $\zz_m$.
Let $s_0$ (resp. $s_\infty)$
be the image of
$t_0$ (resp. $t_\infty)$.
Consider the $\zz_m$-equivariant generalized klt pair 
\begin{equation}
\label{eq:gen-pair-ex}
\left(\pp^1_t,
\frac{1}{2}(t_0+t_\infty)+
\frac{1}{2}(t_0+t_\infty) 
\right), 
\end{equation} 
where \[B_t=M_t=\frac{1}{2}(t_0+t_\infty).\]
Note that the previous generalized pair is log Calabi-Yau and gklt.
We have that 
\[
K_{\pp^1_t}+B_t+M_t = 
\phi^*\left( 
K_{\pp^1_s}+\frac{2m-1}{2m}(s_0+s_\infty) + \frac{1}{2m}(s_0+s_\infty)
\right) 
\]
where 
\[
B_{s,m}= \frac{2m-1}{2m}(s_0+s_\infty)\text{ and }
M_{s,m}=\frac{1}{2m}(s_0+s_\infty).
\]
Note that the following conditions are satisfied:
\begin{enumerate}
    \item The coefficients of $B_t$ belongs to $\{\frac{1}{2}\}$,
    \item the Cartier index of $M_t$ is $2$, and
    \item the coefficients of $B_{s,m}$ belongs to a set of rational numbers satisfying the descending chain condition.
\end{enumerate}
However, the Cartier index of $M_{s,m}$ equals $2m$.
Indeed, the generalized pair~\eqref{eq:gen-pair-ex} admits $\zz_m$ actions for $m$ arbitrarily large, although its dual complex is trivial. Thus, this example shows that 
the statement Theorem~\ref{introthm:rank-vs-reg-global} fails for generalized pairs, even if we control the Cartier index of the moduli part in the model where it descends.
To remedy this situation,
we need to control the Cartier index of the moduli part in the quotient where it descends.
}
\end{example}

We finish this section by proposing two questions about finite actions on rationally connected varieties.

\begin{question}
\label{question-1}
{\em 
Let $X$ be a $n$-dimensional rationally connected variety.
Assume that $X$ is $\qq$-complemented, i.e., 
there exists a boundary $B$ on $X$ so that
$(X,B)$ is log canonical and $K_X+B\sim_\qq 0$.
Assume that $\zz_m^r\leqslant {\rm Aut}(X)$ for $m$ large enough compared with the dimension. 
Can we find $\Gamma\geq 0$ on $X$ so that $(X,\Gamma)$ is log Calabi-Yau of regularity at least $r-1$?
}
\end{question}

The previous question is essentially Theorem~\ref{introthm:rank-vs-reg-global} for rationally connected varieties.
It is natural to expect a positive answer for the previous question. Indeed, log Calabi-Yau rationally connected varieties behave similar to Fano type varieties (see, e.g.,~\cite{BDCS20}).
The main obstruction in giving an affirmative answer to the previous question, is the existence of bounded complements for rationally connected varieties.
In~\cite{FMX19}, the authors prove the existence of bounded complements for log Calabi-Yau pairs of dimension $3$.
Combining the techniques of this article and~\cite[Theorem 1]{FMX19}, we expect a positive answer
to Question~\ref{question-1} in the case of low dimensional rationally connected varieties.

\begin{question}
\label{question-2} 
Let $(X,B)$ be a log Calabi-Yau pair of dimension $n$.
Assume that $X$ is of Fano type.
Let $A:=\zz_m^r \leqslant {\rm Aut}(X,B)$ for $m$ large compared with the dimension.
Can we find an $A$-equivariant dlt modification $(Y,B_Y)$ of $(X,B)$ so that:
\begin{enumerate}
    \item There exists an $A$-invariant log canonical center $W$ of $(Y,B_Y)$,
    \item $A$ acts as the identity on $W$, and 
    \item $W$ is rationally connected.
\end{enumerate}
\end{question}

The previous question is Theorem~\ref{thm:rank-vs-reg}
with the extra outcome that the fixed log canonical center $W$ is rationally connected.
Following the philosophy of Question~\ref{question-1}, 
one could further ask Question~\ref{question-2} for rationally connected varieties.
As shown in Theorem~\ref{thm:rank-vs-reg} and its proof, 
finite actions on Fano type varieties resemble torus actions on Fano type varieties.
Likely, we expect that finite actions on rationally connected varieties resemble torus actions on rationally connected varieties.
In the case of a torus action $\mathbb{T}$ on a rationally connected variety $X$, we can find a 
$\mathbb{T}$-equivariant birational morphism
$\widetilde{X}\rightarrow X$ so that $\widetilde{X}$ admits a good quotient for the $\mathbb{T}$-action.
We denote by $\widetilde{X}\rightarrow Z$ such good quotient.
Note that $Z$ is again rationally connected,
being the image of a rationally connected variety.
Furthermore, the morphism $\widetilde{X}\rightarrow Z$ admits a section. 
This section is fixed pointwise by $\mathbb{T}$.
Moreover, it is a log canonical center of every
$\mathbb{T}$-invariant log Calabi-Yau structure on $X$.
The fixed log canonical center $W$ would play the role of this section in the statement of Question~\ref{question-2}.

To conclude this section, we explain how a positive answer to Question~\ref{question-2} would have an application to the study of klt singularities.
Let $(X,\Delta;x)$ be a $n$-dimensional klt singularity of regularity $r$ so that $A:=\zz_m^{r+1}\leqslant \pi_1^{\rm reg}(X,\Delta;x)$ for $m$ large enough.
Due to Theorem~\ref{introthm:Jordan-reg}, this is the largest fundamental group that we can have among $n$-dimensional $r$-regular klt singularities.
By Theorem~\ref{thm:gen-group-toroidal}, we could find a projective birational morphism $Y\rightarrow X$ satisfying:
\begin{enumerate}
    \item the exceptional locus of $\pi$ is divisorial and consists of $r$ prime divisors $E_1,\dots, E_r$,
    \item the intersection $E_1\cap \dots \cap E_r=Z$ is irreducible, and
    \item there exists a boundary $B_Y$ supported on $E_1\cup\dots\cup E_r$ so that the homomorphism
    \[
    \pi_1^{\rm reg}(Y,B_Y+\Delta_Y;\eta_Z)\rightarrow \pi_1^{\rm reg}(X,B;x)
    \]
    is almost surjective.
\end{enumerate}
In particular, in this case, $Z$ is a minimal log canonical center.
Furthermore, by Theorem~\ref{thm:gen-group-toroidal}, we know that there exists a boundary $\Gamma_Y$ on $Y$ so that
$(Y,\Gamma_Y+E_1+\dots+E_r)$ is log canonical, $Z$ is a minimal log canonical center and 
\[
N(K_Y+\Gamma_Y+E_1+\dots+E_r)\sim 0.
\]
Assuming a positive answer to Question~\ref{question-2}, we could assume that $Z$ is rationally connected.
In particular, the pair
\[
K_Z+\Gamma_Z \sim_\qq (K_Y+\Gamma_Y+E_1+\dots+E_r)|_Z
\]
is klt and $N$-complemented.
By~\cite[Conjecture 1.5]{BDCS20}, we expect that such $Z$ belongs to a bounded family.
Hence, the blow-up $Y\rightarrow X$ would generalize the extraction of the unique potential log canonical place of an exceptional klt singularity.
It is natural to expect the singularity $(X,\Delta;x)$ to degenerate to a multi-graded orbifold cones over $Z$ (see, e.g.,~\cite{Mor18b,HLM20}).

\bibliographystyle{habbrv}
\bibliography{bib}

\end{document}